\newcommand{\N}{\mathbb{N}}
\newcommand{\R}{\mathbb{R}}
\newcommand{\C}{\mathbb{C}}
\newcommand{\lr}[1]{\langle #1 \rangle}
\newcommand{\eps}{\varepsilon}
\newcommand{\dt}{\partial_t}
\newcommand{\dx}{\partial_x}
\newcommand{\fd}{\lr{\dx}}
\newcommand{\No}{\mathcal{N}}
\newcommand{\Noe}{\wt{\No}}
\newcommand{\wt}[1]{\widetilde{ #1 }}
\newcommand{\ttilde}[1]{\wt{\wt{ #1 }}}
\newtheorem{thm}{Theorem}[section]
\newtheorem{prop}[thm]{Proposition}
\newtheorem*{dfn}{Definition}
\newtheorem{lem}[thm]{Lemma}
\newtheorem{cor}[thm]{Corollary}
\newtheorem{claim}{Claim}
\theoremstyle{remark}
\newtheorem{rmk}[thm]{Remark}
\newtheorem*{rmk*}{Remark}
\DeclareMathOperator{\supp}{supp}
\DeclareMathOperator*{\wlim}{weak-lim}
\newcommand{\Lo}{\mbox{{{\bf{\L}}}}}
\title[Scattering for 1d NLKG with exp nonlinearity]{Scattering for the one-dimensional Klein-Gordon equation with exponential nonlinearity}
\author[M. Ikeda]{Masahiro Ikeda}
\address[M. Ikeda]{Department of Mathematics,
Faculty of Science and Technology, Keio University,
3-14-1, Hiyoshi, Kohoku-ku, Yokohama, 223-8522, Japan/
Center for Advanced Intelligence Project, RIKEN, Japan
}
\email{masahiro.ikeda@keio.jp/masahiro.ikeda@riken.jp}
\author[T. Inui]{Takahisa Inui}
\address[T. Inui]{
Department of Mathematics, Graduate School of Science, Osaka University Toyonaka, Osaka 560-0043, Japan
}
\email{inui@math.sci.osaka-u.ac.jp}
\author[M. Okamoto]{Mamoru Okamoto}
\address[M. Okamoto]{
Division of Mathematics and Physics, Faculty of Engineering,
Shinshu University,
4-17-1 Wakasato, Nagano City 380-8553, Japan
}
\email{m\_okamoto@shinshu-u.ac.jp}
\date{\today}
\begin{document}

\begin{abstract}
We consider the asymptotic behavior of solutions to the Cauchy problem for the defocusing nonlinear Klein-Gordon equation (NLKG) with exponential nonlinearity in the one spatial dimension with data in the energy space $H^1(\R) \times L^2(\R)$.
We prove that any energy solution has a global bound of the $L^6_{t,x}$ space-time norm, and hence scatters in $H^1(\R) \times L^2(\R)$ as $t\rightarrow\pm \infty$. The proof is based on the argument by Killip-Stovall-Visan (Trans. Amer. Math. Soc. 364 (2012), no. 3, 1571--1631). However, since well-posedness in $H^{1/2}(\R) \times H^{-1/2}(\R)$ for NLKG with the exponential nonlinearity holds only for small initial data, we use the $L_t^6 W^{s-1/2,6}_x$-norm for some $s>\frac{1}{2}$ instead of the $L_{t,x}^6$-norm, where $W_x^{s,p}$ denotes the $s$-th order $L^p$-based Sobolev space.
\end{abstract}

\maketitle

\tableofcontents

\section{Introduction}

We consider the asymptotic behavior of solutions to the Cauchy problem to the Klein-Gordon equation with exponential nonlinearity
\begin{equation} \label{NLKGexp}
u_{tt} - u_{xx} +u + \No (u) =0,
\end{equation}
where $u: \R _t \times \R _x \rightarrow \R$ is an unknown function and $\No:\R\rightarrow \R$ is defined by
\[
\No (u) := \left\{\exp (|u|^2)-1-|u|^2\right\}u.
\]
In this paper, we assume that the initial data belong to the energy space, i.e. $u(0) \in H^1(\R)$ and $u_t(0) \in L^2(\R)$.
Here the energy space is the set of data for which the energy of the solution
\begin{align} \label{energy}
E(u(t), u_t(t)) =  \int _{\R} \left\{ \frac{1}{2} |u_t (t,x)|^2 + \frac{1}{2} |u_x (t,x)|^2 + \frac{1}{2} |u(t,x)|^2 + \frac{1}{2} \Noe(u(t,x)) \right\} dx
\end{align}
is finite, where $\Noe:\R\rightarrow\R$ is defined by
\[
\Noe (u) := \exp (|u|^2)-1-|u|^2-\frac{1}{2} |u|^4.
\]
It is known that the energy of the solution to (\ref{NLKGexp}) is conserved.
We note that the identity $\Noe' (u) = 2 \No (u)$ holds, or equivalently the equation $\wt{N}(u) = \No(u)u$ is valid for any $u\in \mathbb{R}$.
We also note that as the nonlinearity $\No (u)$ belongs to the energy-subcritical and defocusing ($\Noe (u)\ge 0$) case, global well-posedness in the energy space $H^1(\R)\times L^2(\R)$ is a simple consequence of the energy conservation law. However the asymptotic behavior of the solution is not clear at all.

Klein-Gordon equations are fundamental hyperbolic ones and their nonlinear problems are intensively studied by many researchers (see for example \cite{NakOza01, Nak08, KSV12, KilVis13} and references therein).
Especially, Nakamura and Ozawa \cite{NakOza01} first studied well-posedness for the Klein-Gordon equation with exponential-type nonlinearity.
We note that the exponential-type nonlinearity is corresponding to energy-critical in two space dimensions (see for example \cite{NakOza98, IMM06}).
Ibrahim, Masmoudi and Nakanishi \cite{IMN11} (see also \cite{IMMN09}) studied long-time behavior of solutions in the focusing case below the ground state.
The scattering threshold for the focusing energy-critical nonlinear Klein-Gordon equation is given by that for the massless stationary equation.

On the other hand, Nakanishi \cite{Nak08} proved that space-time bounds for the focusing mass-critical Klein-Gordon equation imply space-time bounds for the corresponding nonlinear Schr\"{o}dinger equation.
This is a reflection of the fact that the Klein-Gordon equation degenerates to the Schr\"{o}dinger equation in the nonrelativistic limit.
By using this fact, Killip, Stovall and Visan \cite{KSV12} proved that any energy solution tends to a free solution in the energy space $H^1(\R^{2})\times L^2(\R^{2})$ in the defocusing case and characterized the dichotomy between this behavior and blowup for initial data with energy less than that of the ground state in the focusing case.
According to this argument, the scattering threshold for \eqref{NLKGexp} is given by that for the corresponding nonlinear Schr\"{o}dinger equation.
Moreover, by taking $L^2$-scaling transformation, we expect that the $L^2$-supercritical part vanishes, and hence harmless.

The equation \eqref{NLKGexp} is regarded as mass-critical because the nonlinearity $\No (u)$ contains the quintic part $\frac{1}{2}|u|^4u$ by the Taylor expansion.
However, there is no energy-critical nonlinearity in one spatial dimension, and thus \eqref{NLKGexp} belogns to the energy-subcritical case.

From the viewpoint of Trudinger-Moser's inequality, the growth rate as $\exp (|u|^2)$ at infinity seems to be optimal at the level of $H^{1/2}(\R)$.
We note that the $L^{\infty}(\R)$-norm is out of control of the $H^{1/2} (\R)$-norm even when the latter is small.
Accordingly, well-posedness in $H^{1/2}(\R)$ for the exponential-type nonlinearity requires the smallness of initial data (see \cite{NakOza01} for more detail).
In order to treat large initial data, it needs to assume $(u_0, u_1) \in H^s(\R) \times H^{s-1}(\R)$ for $s>\frac{1}{2}$.
This fact causes several technical difficulties in our analysis.

Our main goal in this paper is to prove that global strong solutions exist and have finite space-time norms.

\begin{thm} \label{thm:scat}
Let $(u_0, u_1) \in H^1(\R) \times L^2(\R)$ and $s \in (\frac{1}{2}, \frac{11}{12})$.
Then, there exists a unique global solution $u$ to \eqref{NLKGexp} with initial data $(u(0), u_t(0))=(u_0,u_1)$.
Moreover, this solution obeys the global space-time bounds
\[
\| u \|_{L_t^{\infty} H_x^1} + \| u_t \|_{L_t^{\infty} L_x^2} + \| \fd^{s-1/2} u \|_{L_{t,x}^6} \le C(E(u_0,u_1)).
\]
As a consequence, the solution scatters both forward and backward in time, that is, there exist scattering states $(u_0^{\pm}, u_1^{\pm}) \in H^1(\R) \times L^2(\R)$ such that
\[
\begin{pmatrix} u(t) \\ u_t(t) \end{pmatrix}
- \begin{pmatrix} \cos (\fd t) & \fd^{-1} \sin (\fd t) \\ - \fd \sin (\fd t) & \cos (\fd t) \end{pmatrix}
\begin{pmatrix} u_0^{\pm} \\ u_1^{\pm} \end{pmatrix}
\to
\begin{pmatrix} 0 \\ 0 \end{pmatrix}
\]
in $H^1(\R) \times L^2(\R)$ as $t \to \pm \infty$, where double-sign corresponds.
\end{thm}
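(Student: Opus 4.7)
My plan is to follow the Killip-Stovall-Visan blueprint \cite{KSV12}, carrying along a loss of derivative to stay in the range $s>\frac12$ forced by Trudinger-Moser. The three main stages are: (i) a Cauchy theory and stability lemma at subcritical regularity $H^s \times H^{s-1}$, (ii) a concentration-compactness reduction to a minimal non-scattering solution, and (iii) a non-relativistic limit reducing this minimal solution to one for the 1d mass-critical NLS, which is excluded by Dodson's scattering theorem.

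For (i), I would apply 1d Strichartz estimates for the Klein-Gordon propagator to the Duhamel formula and use Trudinger-Moser-type bounds to control $\No(u)$, fitting it into the dual Strichartz norm via a polynomial expression in $\|\fd^{s-1/2} u\|_{L^6_{t,x}}$. The restriction $s \in (\frac12, \frac{11}{12})$ appears here: it is the range in which both the quintic (mass-critical) part of $\No$ and its $L^2$-supercritical tail can simultaneously be absorbed into the Strichartz estimate. This yields local well-posedness and small-data scattering in $H^s \times H^{s-1}$; a standard bootstrap then upgrades this to a stability/perturbation lemma stating that any near-solution with finite $L^6_t W^{s-1/2,6}_x$-norm and small error is approximated in this norm by a true solution. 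Persistence of $H^1 \times L^2$ regularity ensures the scattering statement can ultimately be transferred back to the energy space.

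For (ii), set
\[
L(E_0) := \sup \bigl\{ \|\fd^{s-1/2} u\|_{L^6_{t,x}(\R \times \R)} : u \text{ solves } \eqref{NLKGexp}, \; E(u,u_t) \le E_0 \bigr\},
\]
and suppose for contradiction that $E_c := \sup\{E_0 : L(E_0) < \infty\}$ is finite. A Bahouri-Gerard-type linear profile decomposition for the Klein-Gordon flow, adapted to the regularity $H^s \times H^{s-1}$ and combined with the stability lemma, then produces a minimal-energy non-scattering solution $u_*$ whose orbit in $H^1 \times L^2$ is precompact modulo spatial translations: any profile at a nontrivial frequency or scale would carry energy strictly below $E_c$ and hence scatter by induction. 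For (iii), since $\No(u) = \tfrac12 |u|^4 u + O(|u|^6 u)$, rescaling $u_*$ so that its dominant frequencies concentrate at zero drives the higher-order remainder to zero in the relevant norms, and the rescaled solutions converge to a finite-energy solution of the defocusing 1d mass-critical NLS
\[
i\phi_t + \tfrac12 \phi_{xx} = \tfrac12 |\phi|^4 \phi.
\]
Dodson's global-in-time $L^6_{t,x}$-bound for $\phi$ is then pulled back through the stability lemma to yield a uniform bound on $\|\fd^{s-1/2} u_*\|_{L^6_{t,x}}$, a contradiction. The scattering conclusion of Theorem \ref{thm:scat} then follows from the resulting a priori bound by the standard Duhamel-tail argument.

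The main obstacle is step (ii). Because the space-time norm we propagate lives at regularity $H^{1/2}$ while the data providing compactness live at $H^1$, the profile decomposition must track two distinct scales, and the exponential (rather than polynomial) character of $\No$ complicates the orthogonality of nonlinear interactions between profiles: the degree-$\ge 7$ terms have to be disposed of uniformly in the profile parameters, not merely on each fixed profile. This is precisely where the upper bound $s < \frac{11}{12}$ enters, leaving enough room in the nonlinear estimate to absorb the higher-order tail while still recovering the quintic limit used in step (iii).
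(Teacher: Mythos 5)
Your steps (i) and (ii) are consistent with the paper's strategy (Proposition \ref{prop:WP}, Proposition \ref{prop:stability}, Theorems \ref{IS} and \ref{lpd}, and Proposition \ref{P:palais smale}), but step (iii) misplaces where the NLS/Dodson input enters and proposes an exclusion mechanism for the minimal element that cannot work here.

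In the paper, the non-relativistic reduction to the mass-critical NLS (Section \ref{sec:IS}, Theorem \ref{isolate}) is invoked \emph{inside} the concentration-compactness argument: it is the device that produces global nonlinear profiles with uniform spacetime bounds for the profiles whose scale parameter satisfies $\lambda_n^j \to \infty$. Those profiles do not scatter ``by induction on energy'' --- Dodson's theorem is applied unconditionally (it gives the bound \eqref{E:bounded ST norm} for all masses in the defocusing case), and this happens before the minimal solution is even constructed. After the Palais--Smale proposition (Proposition \ref{P:palais smale}) is applied, the surviving scenario is \textbf{Case I-C}: $\lambda_n^1 \equiv 1$ and $t_n^1 \equiv 0$, so the minimal non-scattering solution $u_*$ is almost periodic modulo translations in $H^1 \times L^2$ \emph{at fixed unit scale}. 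There is no free scaling parameter left, and since the Klein--Gordon equation has a mass term, no rescaling of this fixed $u_*$ converges to an NLS solution; the non-relativistic limit of Theorem \ref{isolate} is taken along an ansatz $e^{-it}D_{\lambda_n}w(t/\lambda_n^2,\cdot)$ with $\lambda_n \to \infty$, not by rescaling a given solution after the fact.

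The actual final exclusion in the paper (Section 8, Theorem \ref{thm8-1}) is a virial/Morawetz-type argument, not an NLS reduction. The minimal solution is shown to have zero momentum (otherwise a Lorentz boost using Corollary \ref{cor:boostsol} would produce a minimal object with strictly smaller energy). Then Lemma \ref{lem0.1} controls the spatial center function ($|x(t)| = o(|t|)$), Lemma \ref{lem0.3} gives a coercivity bound $\|u\|_{L^2}^2 \le \eps + C(\eps)\|u_x\|_{L^2}^2$ from the frequency-compactness of almost periodicity, and a localized virial identity together with the sign condition $3\wt\No(u) \le \No(u)u$ leads to a contradiction with the boundedness of $V_R$. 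Your proposal would need to replace your step (iii) with an argument of this kind; as written, ``rescale $u_*$ and apply Dodson'' does not apply to a fixed-scale almost-periodic Klein--Gordon solution.
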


The main point in the theorem is to derive the $L_{t,x}^6$-spacetime bound and concomitant proof of scattering.

On one hand, the lower bound of $s$ ($s>\frac{1}{2}$) comes from the well-posedness result (Proposition \ref{prop:WP}) as mentioned above.
We note that if the exponential-type nonlinearity $\No (u)$ is replaced with the power-type nonlinearity $|u|^4u$, which belongs to mass-critical case, the limiting case $s=\frac{1}{2}$ is allowed (see Corollary \ref{cor:scat} below more precisely).
Since we rely on the well-posedness in $H^s(\R)$ with $s>\frac{1}{2}$ to obtain the theorem, the stability theory (Proposition \ref{prop:stability}), which is a variant of well-posedness, requires some regularity.
Accordingly, we have to treat the inverse Strichartz estimate (Theorem \ref{IS}) (or the linear profile decomposition (Theorem \ref{lpd})) and the nonlinear decoupling (Proposition \ref{denon}) with some regularity.
On the other hand, the upper bound of $s$ ($s< \frac{11}{12}$) is needed to show the inverse Strichartz estimate (Theorem \ref{IS}).
More precisely, we use this gap between the upper bound and $1$ when we apply the Littlewood-Paley decoupling (Lemma \ref{AD}).
However, since it is enough to set $s$ slightly larger than $\frac{1}{2}$, this does not cause any problem. 

For the Klein-Gordon equation with the focusing exponential-type nonlinearity $-\No (u)$, we can expect that the dichotomy between scattering and blowup for initial data with energy less than that of the ground state holds. However it is not known existence of the ground state to the equation. Moreover we do not have Trudinger-Moser type inequality in one spatial dimension, which is useful in two spatial dimensions. We hence only consider the defocusing case.

The exponential-type nonlinearity $\No (u)$ contains the quintic nonlinearity $\frac{1}{2}|u|^4u=\frac{1}{2}u^5$, which belongs to the mass-critical case in one spatial dimension, and an infinite sum of higher-order nonlinearities, which belong to the mass-supercritical and energy-subcritical case.
By neglecting the higher-order part, the similar result as Theorem \ref{thm:scat} holds for the Klein-Gordon equation with the single power-type nonlinearity:
\begin{equation} \label{NLKG}
u_{tt} - u_{xx} +u + \mu \frac{1}{2} u^5 =0,
\end{equation}
where $\mu=+1$, which is known as the defocusing equation.
When $\mu=-1$, the nonlinearity is said to be focusing.
With a slight abuse of notation, we define the energy of \eqref{NLKG} by
\[
E(u(t), u_t(t)) =  \int _{\R} \left\{ \frac{1}{2} |u_t (t,x)|^2 + \frac{1}{2} |u_x (t,x)|^2 + \frac{1}{2} |u(t,x)|^2 + \frac{\mu}{12} |u(t,x)|^6 \right\} dx.
\]
Since the local well-posedness in $H^{1/2}(\R)\times H^{-1/2}(\R)$ to (\ref{NLKG}) is valid for arbitrary initial data in $H^{1/2}(\R)\times H^{-1/2}(\R)$, $s=\frac{1}{2}$ in the corresponding theorem for (\ref{NLKG}) of Theorem \ref{thm:scat} is allowed. Different from the case with the exponential-type nonlinearity, we can treat (\ref{NLKG}) with the focusing nonlinearity ($\mu=-1$).
In the focusing case, a static solution $u(t,x) = \sqrt[4]{2} Q(x)$ is expected as the threshold that the dichotomy between scattering and blowup holds. Here, $Q$ is the unique positive even Schwartz solution to the elliptic equation
\begin{equation} \label{GroundS}
Q'' + Q^5 =Q,\ \ \ \text{in}\ H^1(\R).
\end{equation}
Namely, $Q$ can be written as
\[
Q(x) = \frac{\sqrt[4]{3}}{\sqrt{\cosh 2x}}.
\]
It is not appropriate to measure the ‘size’ of the initial data purely in terms of the energy because of the negative sign appearing in front of the potential energy term.
For this reason, we introduce a second notion of size, namely, the mass, which is defined by
\[
M(u(t)) := \int_{\R} |u(t,x)|^2 dx.
\]
Unlike the energy, this is not conserved.
We note that $E(\sqrt[4]{2}Q,0) = \frac{1}{2} M(\sqrt[4]{2}Q)$ holds.
We can obtain the following theorem for (\ref{NLKG}):

\begin{cor} \label{cor:scat}
Let $(u_0, u_1) \in H^1(\R) \times L^2(\R)$ and in the focusing case ($\mu=-1$) assume also that $M(u_0) < \sqrt{2} M(Q)$ and $E(u_0,u_1) < E( \sqrt[4]{2} Q,0)$.
Then, there exists a unique global solution $u$ to \eqref{NLKG} with initial data $(u(0), u_t(0))=(u_0,u_1)$.
Moreover, this solution obeys the global space-time bounds
\[
\| u \|_{L_t^{\infty} H_x^1} + \| u_t \|_{L_t^{\infty} L_x^2} + \| u \|_{L_{t,x}^6} \le C(E(u_0,u_1)).
\]
As a consequence, the solution scatters both forward and backward in time, that is, there exist $(u_0^{\pm}, u_1^{\pm}) \in H^1(\R) \times L^2(\R)$ such that
\[
\begin{pmatrix} u(t) \\ u_t(t) \end{pmatrix}
- \begin{pmatrix} \cos (\fd t) & \fd^{-1} \sin (\fd t) \\ - \fd \sin (\fd t) & \cos (\fd t) \end{pmatrix}
\begin{pmatrix} u_0^{\pm} \\ u_1^{\pm} \end{pmatrix}
\to
\begin{pmatrix} 0 \\ 0 \end{pmatrix}
\]
in $H^1(\R) \times L^2(\R)$ as $t \to \pm \infty$, where double-sign corresponds.
\end{cor}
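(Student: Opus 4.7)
The plan is to follow the Killip-Stovall-Visan concentration-compactness strategy used to establish Theorem \ref{thm:scat}, specialised to the pure quintic nonlinearity. The key structural simplification is that $\tfrac{1}{2}u^5$ is a smooth power at the scaling-critical regularity $s=\tfrac{1}{2}$, so the local theory in $H^{1/2}(\R)\times H^{-1/2}(\R)$ via a Strichartz contraction is available for \emph{arbitrary} initial data, with no Trudinger-Moser smallness obstruction. Consequently each of the auxiliary results developed for Theorem \ref{thm:scat} --- the stability lemma, the inverse Strichartz estimate, the linear profile decomposition, and the nonlinear decoupling --- admits a cleaner version at the endpoint $s=\tfrac{1}{2}$, in which the Littlewood-Paley decoupling that forced $s<\tfrac{11}{12}$ becomes unnecessary. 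The target space-time bound is then on $\|u\|_{L^6_{t,x}}$ rather than on $\|\fd^{s-1/2}u\|_{L^6_{t,x}}$.

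Granted those tools, the contradiction argument runs as in the exponential case: assuming the supremum $E_c$ of admissible energies is finite, the linear profile decomposition together with nonlinear decoupling produces a minimal blowup solution whose orbit is precompact in $H^1(\R)\times L^2(\R)$ modulo the Klein-Gordon symmetries. By the nonrelativistic limit, such a minimal element is described, after rescaling, by a profile $v$ satisfying the one-dimensional mass-critical nonlinear Schr\"odinger equation
\begin{equation*}
i\partial_t v + \tfrac{1}{2}\partial_{xx}v = \mu|v|^4 v.
\end{equation*}
In the defocusing case $\mu=+1$, Dodson's global well-posedness and scattering theorem for the mass-critical defocusing 1d NLS yields uniform $L^6_{t,x}$ bounds for every $L^2$-profile, immediately contradicting concentration of the minimal solution.

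For the focusing case $\mu=-1$, the hypotheses $M(u_0)<\sqrt{2}M(Q)$ and $E(u_0,u_1)<E(\sqrt[4]{2}Q,0)$ are calibrated so that, after the rescaling intertwining the nonrelativistic limit of \eqref{NLKG} with the standard-form NLS, the induced NLS data lies strictly below the ground-state mass of $Q$ --- precisely Dodson's threshold for focusing mass-critical 1d NLS scattering. The identity $E(\sqrt[4]{2}Q,0)=\tfrac{1}{2}M(\sqrt[4]{2}Q)$ (the static-solution virial balance), combined with the sharp Gagliardo-Nirenberg inequality, yields a coercive lower bound on the energy functional and hence a global $H^1\times L^2$ a-priori bound that rules out focusing-driven blowup. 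Dodson's sub-threshold theorem then again contradicts the existence of the minimal concentrating solution.

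The main obstacle is the variational bookkeeping in the focusing case: one must track how the strict mass/energy sub-threshold property decouples across profiles in the linear profile decomposition, and how the resulting inequalities survive the nonrelativistic rescaling, so that \emph{every} profile generating the would-be minimal blowup solution inherits sub-ground-state status for the relevant NLS. This is carried out using the asymptotic orthogonality of mass and energy across profiles, together with the explicit identification of the KG threshold $(\sqrt[4]{2}Q,0)$ with the NLS ground state $Q$. Away from this variational input, the remainder of the proof is a transcription of the arguments establishing Theorem \ref{thm:scat} with the exponential nonlinearity replaced by $\tfrac{\mu}{2}u^5$ and the regularity parameter $s$ set to $\tfrac{1}{2}$.
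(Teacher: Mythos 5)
Your proposal matches the paper's approach: the paper itself gives only a remark (\S\ref{S:6.1}), noting that \S\ref{sec:IS} carries over to the focusing case once Dodson's defocusing result (Theorem \ref{scaNLS}) is replaced by his sub-threshold focusing result (Theorem \ref{scaNLSf}, via Corollary \ref{cor:isolate}), and deferring the remaining concentration-compactness and variational analysis to \cite{KSV12} run at $s=\tfrac12$, exactly as you outline. The only cosmetic slip in your sketch is the normalization of Dodson's threshold --- for the paper's rescaled NLS \eqref{mNLSf} it is $\tfrac{4}{\sqrt{5}}M(Q)$ rather than $M(Q)$, because the extracted quintic coefficient is $\tfrac{5}{32}$.
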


We mention a remark on the proof of this corollary in \S \ref{S:6.1}.
Because the remaining argument is almost the same as that in \cite{KSV12}, we omit the details in this paper.

\subsection{Notation} \label{S:notation}

Let $\N _0$ denote the set of nonnegative integers $\N \cup \{ 0 \}$.

We denote the Fourier transform of $f$ by $\widehat{f}$, namely
\[
\widehat{f} (\xi ) := \frac{1}{\sqrt{2\pi}} \int _{\R} e^{-ix \xi} f(x) dx .
\]
We denote the characteristic function of an interval $I$ by $\bm{1}_{I}$.
We abbreviate $\bm{1}_{(0, \infty )}$ to $\bm{1} _{>0}$.
For an interval $I \subset \R$, we set
\[
S_I (u) := \left( \int_I \int_{\R} |u(t,x)|^6 dxdt \right)^{1/6}.
\]

Let $\sigma \in C_c^{\infty}(\R )$ with $\sigma (\xi ) = \begin{cases} 1, & \text{if } |\xi | \le 1, \\ 0, & \text{if } |\xi| \ge 2 . \end{cases}$
For $N \in 2^{\N _0}$, we define
\[
\widehat{P_N f} (\xi ) := \begin{cases} \sigma (\xi ) \widehat{f}( \xi ) , & \text{if } N=1, \\ \left( \sigma \big( \frac{\xi}{N} \big) - \sigma \big( \frac{2\xi}{N} \big) \right) \widehat{f}(\xi ), & \text{otherwise} . \end{cases}
\]

We denote the dual pair of a Banach space $X$ and its dual space $X'$ by $\lr{\cdot , \cdot}_{X,X'}$.
We denote the inner product in a Hilbert space $H$ by $\lr{\cdot, \cdot}_{H}$.

In estimates, we use $C$ to denote a positive constant that can change from line to line.
If $C$ is absolute or depends only on parameters that are considered fixed, then we often write $X \lesssim Y$, which means $X \le CY$.
When an implicit constant depends on a parameter $a$, we sometimes write $X \lesssim_{a} Y$.
We define $X \ll Y$ to mean $X \le C^{-1} Y$ and $X \sim Y$ to mean $C^{-1} Y \le X \le C Y$.

\section{Fundamental tools}

It is more convenient for us to recast Klein-Gordon equations as a first-order equation for a complex-valued function $v$ via the map
\[
(u,u_t) \mapsto v:= u+i \fd^{-1} u_t .
\]
This is easily seen to be a bijection between real-valued solutions of \eqref{NLKGexp} and complex-valued solutions of
\begin{equation} \label{rNLKGexp}
-iv _t + \fd v+ \fd ^{-1} \No(\Re v) =0.
\end{equation}
As such, local or global theories for the two equations are equivalent.

We will consistently use the letter $u$ to denote solutions to \eqref{NLKGexp} and $v$ the corresponding solutions to \eqref{rNLKGexp}.
Note that the energy and the scattering norm to (\ref{rNLKGexp}) are written as
\begin{align*}
& E(v(t)) := \int _{\R} \left\{ \frac{1}{2} |\fd v(t,x)|^2 + \frac{1}{2} \Noe(\Re v(t,x)) \right\} dx, \\
& S_I(u) = S_I (v) = \left( \int_I \int_{\R} |\Re v (t,x)|^6 dx dt \right)^{1/6}.
\end{align*}
We note that Sobolev's embedding in one dimension implies that
\[
\| v \|_{L_x^{\infty}} \le \| v \|_{H_x^1}.
\]
Hence, the energy is finite if $v(0) \in H^1(\R)$.

\subsection{Strichartz estimates}

Since we will use the Strichartz estimates with the scaling parameter $\lambda$, for the reader's convenience, we record them (see for example \cite{GinVel85}).

\begin{lem}[Dispersive estimate] \label{lem:dispersive}
For $t \neq 0$ and $2 \le p < \infty$, 
\[
\| e^{-i \lambda ^2t\lr{\lambda ^{-1} \dx}} f \| _{L_x^{p}}
\lesssim |t|^{1/p-1/2} \| \lr{\lambda ^{-1} \dx}^{3(1/2-1/p)} f \| _{L_x^{p'}} .
\]
Here, the implicit constant is independent of $\lambda$ and $p':= \frac{p}{p-1}$.
\end{lem}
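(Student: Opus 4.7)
The plan is to reduce to the unscaled ($\lambda=1$) case by exploiting an exact scaling symmetry, then to prove the endpoint $p=\infty$ bound by stationary phase, and finally to interpolate with the trivial $L^2$ estimate.

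\textbf{Step 1: Scaling reduction.} For a Schwartz $f$, let $g(y):=f(y/\lambda)$. Substituting $\xi=\lambda\eta$ in the Fourier representation one checks
\[
\bigl[e^{-i\lambda^{2}t\lr{\lambda^{-1}\dx}}f\bigr](x)=\bigl[e^{-i\lambda^{2}t\lr{\dx}}g\bigr](\lambda x),
\]
so $\|e^{-i\lambda^{2}t\lr{\lambda^{-1}\dx}}f\|_{L^{p}_{x}}=\lambda^{-1/p}\|e^{-i\lambda^{2}t\lr{\dx}}g\|_{L^{p}}$. A parallel computation gives $\|\lr{\lambda^{-1}\dx}^{\alpha}f\|_{L^{p'}}=\lambda^{-1/p'}\|\lr{\dx}^{\alpha}g\|_{L^{p'}}$. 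With $\alpha=3(\tfrac12-\tfrac1p)$ the powers of $\lambda$ on the two sides match $(-1/p+2(1/p-1/2)+1/p'=0)$, so it suffices to prove the estimate with $\lambda=1$.

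\textbf{Step 2: Endpoint and interpolation.} At $p=2$ Plancherel yields $\|e^{-it\lr{\dx}}g\|_{L^{2}}=\|g\|_{L^{2}}$, the trivial estimate with no time decay and no derivative loss. By Riesz--Thorin it is enough to establish the endpoint
\[
\bigl\|e^{-it\lr{\dx}}g\bigr\|_{L^{\infty}_{x}}\lesssim |t|^{-1/2}\bigl\|\lr{\dx}^{3/2}g\bigr\|_{L^{1}_{x}};
\]
the exponent $3(\tfrac12-\tfrac1p)$ of the derivative loss then comes out automatically from interpolating $3/2$ at $p=\infty$ against $0$ at $p=2$.

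\textbf{Step 3: The endpoint by stationary phase.} By Young's inequality the endpoint reduces to the pointwise bound $\|K_{t}\|_{L^{\infty}_{x}}\lesssim|t|^{-1/2}$ for the convolution kernel
\[
K_{t}(x)=\frac{1}{2\pi}\int_{\R}e^{i(x\xi-t\lr{\xi})}\lr{\xi}^{-3/2}\,d\xi.
\]
The phase $\phi(\xi)=x\xi-t\lr{\xi}$ satisfies $\phi'(\xi)=x-t\xi/\lr{\xi}$ and $\phi''(\xi)=-t/\lr{\xi}^{3}$. A unique stationary point $\xi_{0}=x/\sqrt{t^{2}-x^{2}}$ exists when $|x|<|t|$; there $\lr{\xi_{0}}\sim |t|/\sqrt{t^{2}-x^{2}}$ and $|\phi''(\xi_{0})|\sim(t^{2}-x^{2})^{3/2}/t^{2}$. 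The main stationary-phase contribution is thus of order $\lr{\xi_{0}}^{-3/2}|\phi''(\xi_{0})|^{-1/2}\sim |t|^{-1/2}$, independently of $x$. In the non-stationary region $|x|\gtrsim|t|$, repeated integration by parts combined with the decay of the symbol $\lr{\xi}^{-3/2}$ yields at least the same bound. A cleaner and standard implementation, which is the one of \cite{GinVel85}, is to decompose $K_{t}=\sum_{N\in 2^{\N_{0}}}K_{t}^{N}$ in Littlewood--Paley pieces, apply a van der Corput / $TT^{\ast}$ estimate on each dyadic shell to get $\|K_{t}^{N}\|_{L^{\infty}}\lesssim N^{3/2}|t|^{-1/2}\cdot N^{-3/2}=|t|^{-1/2}$ (the prefactor $N^{3/2}$ comes from the Hessian weight $\lr{\xi}^{3}$, the saved $N^{-3/2}$ from the symbol), and to sum the geometric series.

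The principal obstacle is the degeneracy of the Hessian at high frequencies: the Klein--Gordon phase is asymptotically linear, so $|\phi''(\xi)|\to 0$ as $|\xi|\to\infty$ and the naive stationary-phase factor $|\phi''|^{-1/2}$ is unbounded. The entire point of the derivative loss $\lr{\dx}^{3(1/2-1/p)}$ in the statement is to absorb exactly this degeneracy and make the kernel estimate scale-invariant; once this balance is identified the oscillatory-integral analysis proceeds as above.
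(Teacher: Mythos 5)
The paper does not prove this lemma; it records the estimate and refers to Ginibre--Velo \cite{GinVel85}, so there is no in-paper argument to compare against. Your outline does follow the standard path (scaling reduction, $L^1\to L^\infty$ endpoint by oscillatory integral analysis, interpolation against the trivial $L^2$ bound), and your Step~1 and the stationary-phase bookkeeping in Step~3 are correct: the stationary point $\xi_0=x/\sqrt{t^2-x^2}$, the computation $\langle\xi_0\rangle^{-3/2}|\phi''(\xi_0)|^{-1/2}\sim |t|^{-1/2}$, and the observation that the weight $\langle\partial_x\rangle^{3/2}$ is exactly what absorbs the Hessian degeneracy $\phi''=-t\langle\xi\rangle^{-3}$ are all right.

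However, Step~3 as written has a genuine gap. Your per-shell van der Corput bound is $\|K_t^N\|_{L^\infty}\lesssim N^{3/2}|t|^{-1/2}\cdot N^{-3/2}=|t|^{-1/2}$, which is \emph{independent of $N$}; summing over all dyadic $N$ therefore gives $\sum_N |t|^{-1/2}=\infty$, not a convergent geometric series. The point you are missing is that for a fixed $(t,x)$ only the $O(1)$ shells with $N\sim\langle\xi_0\rangle$ actually contain a critical point and require van der Corput; on all other shells $\phi'(\xi)=x-t\xi/\langle\xi\rangle$ is bounded away from zero, and non-stationary-phase integration by parts yields a gain over the crude $|t|^{-1/2}$ bound that makes the sum converge. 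Without quantifying that gain (or choosing some other summability mechanism), the argument does not close. A related imprecision is the invocation of Riesz--Thorin: since the operator $e^{-it\langle\partial_x\rangle}\langle\partial_x\rangle^{-3(1/2-1/p)}$ changes with $p$, you need Stein's analytic interpolation for the family $T_z=e^{-it\langle\partial_x\rangle}\langle\partial_x\rangle^{-3z/2}$, or else interpolate on each frequency-localized piece (where the operator is fixed) and reassemble via Littlewood--Paley --- which is precisely where the restriction $p<\infty$ in the statement enters.
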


We call a pair $(q,r)$ admissible if $4<q \le \infty$, $2 \le r <\infty$, and $\frac{2}{q}+\frac{1}{r}=\frac{1}{2}$.

\begin{lem}[Strichartz estimate] \label{Str}
For $\lambda >0$ and any admissible pairs $(q,r)$ and $(\wt{q}, \wt{r})$, we have
\begin{gather*}
\| e^{-i \lambda ^2t\lr{\lambda ^{-1} \dx}} f \| _{L_t^q L_x^r} \lesssim \| \lr{\lambda ^{-1} \dx}^{3(r-2)/4r} f \| _{L^2_x} , \\
\left\| \int _0^t e^{-i\lambda ^2(t-s) \lr{\lambda ^{-1} \dx}}  F(s) ds \right\| _{L_t^q L_x^r}
\lesssim \| \lr{\lambda ^{-1} \dx}^{3(1/\wt{r}'-1/r)/2} F \| _{L_t^{\wt{q}'} L_x^{\wt{r}'}} .
\end{gather*}
Here, the implicit constants are independent of $\lambda$ and $\wt{q}':= \frac{\wt{q}}{\wt{q}-1}$, $\wt{r}':= \frac{\wt{r}}{\wt{r}-1}$.
\end{lem}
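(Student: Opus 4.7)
The plan is to deduce both inequalities from the dispersive estimate of Lemma \ref{lem:dispersive} via the standard $TT^*$ argument in the non-endpoint Keel--Tao form. Write $U_\lambda(t) := e^{-i\lambda^2 t \fd_\lambda}$ with $\fd_\lambda := \lr{\lambda^{-1}\dx}$, and set $\alpha := \tfrac{3(r-2)}{4r} = \tfrac{3}{2}\bigl(\tfrac{1}{2}-\tfrac{1}{r}\bigr)$ for $r$ as in the admissible pair. Observe first that $U_\lambda(t)$ is a Fourier multiplier, so it is unitary on $L^2_x$ and commutes with every power of $\fd_\lambda$; in particular the dispersive estimate can be rewritten, uniformly in $\lambda$, as
\[
\| U_\lambda(t-s)\, \fd_\lambda^{-2\alpha} G(s) \|_{L^r_x} \lesssim |t-s|^{1/r-1/2} \|G(s)\|_{L^{r'}_x},
\]
since $2\alpha = 3\bigl(\tfrac{1}{2}-\tfrac{1}{r}\bigr)$ cancels the derivative loss on the right-hand side of Lemma \ref{lem:dispersive}.

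Now introduce the modified operator $T_\lambda f := U_\lambda(t)\, \fd_\lambda^{-\alpha} f$, whose adjoint satisfies $T_\lambda T_\lambda^* G(t) = \int_{\R} U_\lambda(t-s) \fd_\lambda^{-2\alpha} G(s)\, ds$. Minkowski followed by the displayed bound gives
\[
\| T_\lambda T_\lambda^* G \|_{L^q_t L^r_x}
\lesssim \Bigl\| \int_{\R} |t-s|^{1/r-1/2} \|G(s)\|_{L^{r'}_x}\, ds \Bigr\|_{L^q_t},
\]
and the Hardy--Littlewood--Sobolev inequality, whose admissibility condition reduces exactly to $\frac{2}{q}+\frac{1}{r}=\frac{1}{2}$ (which lies strictly inside the HLS range because we restrict to $4<q\le\infty$ and $2\le r<\infty$, excluding the forbidden 1D endpoint), yields $\| T_\lambda T_\lambda^* G \|_{L^q_t L^r_x} \lesssim \|G\|_{L^{q'}_t L^{r'}_x}$, uniformly in $\lambda$. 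By the usual $TT^*$ principle this is equivalent to $\| T_\lambda f \|_{L^q_t L^r_x} \lesssim \|f\|_{L^2_x}$, which, after undoing the substitution $f = \fd_\lambda^{\alpha} g$, is precisely the homogeneous Strichartz estimate stated in the lemma.

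For the inhomogeneous bound with a second admissible pair $(\wt q,\wt r)$, I would combine the homogeneous estimate on $T_\lambda$ with its $L^2$-dual (which gives boundedness of $T_{\wt{r}}^\ast$ from $L^{\wt q'}_t L^{\wt r'}_x$ into $L^2_x$, up to the symmetric derivative factor $\fd_\lambda^{\wt\alpha}$ with $\wt\alpha := \tfrac{3(\wt r-2)}{4\wt r}$), producing an untruncated-in-time estimate for $\int_{\R} U_\lambda(t-s) F(s)\, ds$. The Christ--Kiselev lemma, applicable because $\wt q' < q$ (both pairs are non-endpoint), then lets one restrict the $s$-integral to $[0,t]$ without loss, yielding the stated retarded bound; the total derivative loss on $F$ comes out to $\wt\alpha + \alpha = \tfrac{3}{2}\bigl(\tfrac{1}{\wt r'}-\tfrac{1}{r}\bigr)$, matching the statement. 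The only points requiring care are the uniformity in $\lambda$ (automatic, since $\fd_\lambda$ commutes with $U_\lambda$ and never interacts with the HLS step) and tracking the derivative bookkeeping when splitting $2\alpha$ symmetrically between $T_\lambda$ and $T_\lambda^*$; both are routine, and there is no real obstacle here, which is why the authors simply record the lemma with a citation to \cite{GinVel85}.
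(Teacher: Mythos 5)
Your proof is correct and supplies the details the paper omits; the lemma is recorded without proof and only a citation to Ginibre--Velo is given, so there is no in-text argument to compare against. The $TT^*$ factorization $T_\lambda f = U_\lambda(t)\fd_\lambda^{-\alpha}f$ followed by Hardy--Littlewood--Sobolev for the homogeneous estimate, then the composition $T_\lambda\circ\wt T_\lambda^*$ together with Christ--Kiselev for the retarded estimate, is the standard modern route and the derivative bookkeeping you carry out ($\alpha+\wt\alpha = \tfrac32(1/\wt r'-1/r)$) matches the statement exactly; uniformity in $\lambda$ is, as you observe, automatic because $\fd_\lambda$ is a Fourier multiplier commuting with $U_\lambda$. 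Two minor remarks: the HLS step genuinely requires $\gamma=\tfrac12-\tfrac1r>0$, so the admissible endpoint $(q,r)=(\infty,2)$ must be handled separately by unitarity rather than by HLS (you gesture at this but could state it explicitly), and the Christ--Kiselev lemma postdates Ginibre--Velo's 1985 paper, so their original treatment of the off-diagonal retarded estimate used a different device (duality and real interpolation); your version is a cleaner proof of the same result.
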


\subsection{Symmetries}

The following symmetries play an important role in our analysis.
We define the following operators and observe these properties:

\begin{itemize}
\item Translations: for any $y \in \R$, we define $[T_yf] (x) := f(x-y)$.
\item Lorentz boosts: For any $\nu \in \R \backslash \{ 0 \}$, we define
\[
L_{\nu} (t,x) := ( \lr{\nu} t- \nu x, \lr{\nu} x -\nu t).
\]
\begin{itemize}
\item $L_{\nu}$ preserves spacetime volume.
\item $L_{\nu}^{-1} = L_{-\nu}$ holds true.
\item $u$ is a solution to \eqref{NLKGexp} if and only if $u \circ L_{\nu}^{-1}$ is a solution to \eqref{NLKGexp}.
\end{itemize}
Indeed, by setting $(\wt{t} , \wt{x}) = (\lr{\nu} t- \nu x, \lr{\nu} x -\nu t)$, we have
\[
\left| \frac{\partial (\wt{t} , \wt{x})}{\partial (t,x)} \right| = \det \begin{pmatrix} \lr{\nu} & - \nu \\ -\nu & \lr{\nu} \end{pmatrix} = \lr{\nu}^2 - \nu ^2 =1.
\]
From 
\[
\partial _{\wt{t}}^2 = \lr{\nu} ^2 \dt^2 + 2 \nu \lr{\nu} \partial _{t} \dx + \nu ^2 \dx^2, \quad
\partial _{\wt{x}}^2 = \nu ^2 \dt^2 + 2 \nu \lr{\nu} \partial _{t} \dx + \lr{\nu} ^2 \dx^2,
\]
we have $\partial _{\wt{t}}^2 - \partial _{\wt{x}}^2 = \dt^2 - \dx^2$.
Moreover, $L_{-\nu} (\wt{t}, \wt{x}) = (t,x)$ holds.

\item $\displaystyle [\Lo _{\nu} f] (x) := [e^{-i \cdot \fd } f] \circ L_{\nu} (0,x) = \frac{1}{\sqrt{2\pi}} \int _{\R} e^{i \lr{\nu} x \xi} e^{i \nu x \lr{\xi}} \widehat{f}(\xi ) d\xi$.

\item Scaling (the scaling is useful although the Klein-Gordon equation does not possess scaling-invariant):
For any $\lambda >0$, $[D_{\lambda} f](x) := \lambda ^{-1/2} f \big( \frac{x}{\lambda} \big)$.
\end{itemize}

The action of $\Lo_{\nu}$ is easily understood on the Fourier side.
In particular, we have the following:

\begin{lem} \label{LB}

\begin{enumerate}[label=\rm{(\roman*)}]
\item \label{LB-inv}
$\Lo _{\nu}^{-1} = \Lo _{-\nu}$.

\item \label{LB-Fou}
$\mathcal{F}[ \Lo _{\nu}^{-1} f] (\wt{\xi}) = \frac{\lr{\xi}}{\lr{\wt{\xi}}} \widehat{f}(\xi )$, where $\wt{\xi} = l _{\nu}(\xi ) := \lr{\nu} \xi - \nu \lr{\xi}$. 

\item \label{LB-comm}
For $\nu , \, y \in \R$, $\Lo _{\nu}^{-1} T_y e^{i\tau \fd } = T_{\wt{y}} e^{i \wt{\tau} \fd } \Lo _{\nu}^{-1}$,
where $(\wt{\tau}, \wt{y}) = L_{\nu} (\tau ,y)$.

\item \label{LB-inn}
For any $s \in \R$, 
\[
\lr{\Lo _{\nu}^{-1} f,g}_{H^s} = \lr{f, m_s( -i\dx ) \Lo _{\nu} g}_{H^s}, \quad (\text{namely $\Lo _{\nu}$ is unitary in $H^{1/2}(\R )$})
\]
where $m_s(\xi ) := \big( \frac{\lr{\ell _{\nu}(\xi )}}{\lr{\xi}} \big) ^{2s-1}$.
Moreover, $\| m_s \| _{L^{\infty}_{\xi}} + \| m_s^{-1} \| _{L^{\infty}_{\xi}} \lesssim \lr{\nu}^{|2s-1|}$.

\item \label{LB-lin}
$[e^{-it \fd } \Lo_{\nu}^{-1} f] (x) = [e^{-i \cdot \fd } f] \circ L_{\nu} ^{-1}(t,x)$, 
$[e^{it \fd } \Lo_{\nu}^{-1} f] (x) = [e^{i \cdot \fd } f] \circ L_{\nu}(t,x)$.

\item \label{LB-dou}
For $\mu ,\nu \in \R$, $\Lo _{\mu}^{-1} \Lo _{\nu}^{-1} = \Lo _{\mu \lr{\nu} +\lr{\mu} \nu}^{-1}$.
\end{enumerate}
\end{lem}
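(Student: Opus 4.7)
The entire lemma follows from one direct computation of $\Lo_\nu$ on the Fourier side. Unpacking the definition with $L_\nu(0,x)=(-\nu x,\lr{\nu}x)$ yields
\[
[\Lo_\nu f](x)=\frac{1}{\sqrt{2\pi}}\int_\R e^{ix(\lr{\nu}\xi+\nu\lr{\xi})}\widehat{f}(\xi)\,d\xi.
\]
The central algebraic identity, obtained by squaring and using $\lr{\nu}^2-\nu^2=\lr{\xi}^2-\xi^2=1$, is $\lr{l_{\pm\nu}(\xi)}=\lr{\nu}\lr{\xi}\mp\nu\xi$; consequently the substitution $\eta=l_{-\nu}(\xi)$ has Jacobian $\lr{\eta}/\lr{\xi}$, and therefore $\mathcal{F}[\Lo_\nu f](\eta)=(\lr{\xi}/\lr{\eta})\widehat{f}(\xi)$.

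I would then derive (vi) by composing these Fourier formulas: $\Lo_\mu\Lo_\nu$ has symbol $\lr{\xi}/\lr{\zeta}$ at $\zeta=l_{-\mu}(l_{-\nu}(\xi))$, and a direct bilinear expansion using the same algebra shows $l_{-\mu}\circ l_{-\nu}=l_{-(\mu\lr{\nu}+\lr{\mu}\nu)}$. Specializing $\mu=-\nu$ collapses the right-hand side to the identity, so $\Lo_\nu\Lo_{-\nu}=\Id$, which is (i); reading the Fourier formula with $\nu$ replaced by $-\nu$ then yields (ii).

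Identity (iv) follows from (ii) and Plancherel: the substitution $\wt{\xi}=l_\nu(\xi)$ rewrites $\lr{\Lo_\nu^{-1}f,g}_{H^s}$ as $\int\lr{l_\nu(\xi)}^{2s}\widehat{f}(\xi)\overline{\widehat{g}(l_\nu(\xi))}\,d\xi$, while (ii) applied to $\Lo_\nu g$ gives $\widehat{\Lo_\nu g}(\xi)=(\lr{l_\nu(\xi)}/\lr{\xi})\widehat{g}(l_\nu(\xi))$, so that multiplication by $m_s(\xi)=(\lr{l_\nu(\xi)}/\lr{\xi})^{2s-1}$ produces exactly the right-hand side. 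The $L^\infty$ bound on $m_s$ reduces to $\lr{l_\nu(\xi)}\le\lr{\nu}\lr{\xi}+|\nu|\lr{\xi}\le 2\lr{\nu}\lr{\xi}$ together with the symmetric reverse bound obtained by applying $l_{-\nu}$.

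Identities (iii) and (v) are two facets of the same Minkowski inner product calculation. On the Fourier side, (v) becomes the claim
\[
[e^{\mp it\fd}\Lo_\nu^{-1}f](x)=\frac{1}{\sqrt{2\pi}}\int_\R e^{i(x\,l_\nu(\xi)\mp t\lr{l_\nu(\xi)})}\widehat{f}(\xi)\,d\xi,
\]
which follows from (ii) and matches the right-hand side after expanding $L_\nu^{\pm 1}(t,x)$. Identity (iii) reduces similarly to $-y\xi+\tau\lr{\xi}=-\wt{y}\,l_\nu(\xi)+\wt{\tau}\lr{l_\nu(\xi)}$ with $(\wt{\tau},\wt{y})=L_\nu(\tau,y)$; both are direct bilinear expansions using $l_\nu(\xi)=\lr{\nu}\xi-\nu\lr{\xi}$ and $\lr{l_\nu(\xi)}=\lr{\nu}\lr{\xi}-\nu\xi$. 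The principal obstacle is purely bookkeeping: consistently tracking signs of $\nu$ and distinguishing the Fourier frequency map $l_\nu$ from the spacetime map $L_\nu$.
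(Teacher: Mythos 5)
Your proof is correct and follows essentially the same route as the paper: a single Fourier-side change of variables gives the symbol formula for $\Lo_\nu$, from which all six parts follow by algebra with the identities $\lr{l_{\pm\nu}(\xi)}=\lr{\nu}\lr{\xi}\mp\nu\xi$ and $\lr{\nu}^2-\nu^2=1$. Your only deviation is logical ordering — you derive the composition law first and extract (i) by specializing $\mu=-\nu$, whereas the paper proves (i) and (ii) directly from the change-of-variables and treats (vi) last — but the underlying computation is identical.
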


\begin{proof}
A direct calculation shows that 
\begin{equation} \label{Jbracxi}
\lr{\wt{\xi}} = \lr{\nu} \lr{\xi} - \nu \xi
\end{equation}
and $l_{\nu}^{-1} = l_{-\nu}$.
From
\[
\frac{d \wt{\xi}}{d \xi} = \lr{\nu} - \frac{\nu \xi}{\lr{\xi}} = \frac{\lr{\wt{\xi}}}{\lr{\xi}},
\]
we have
\begin{equation} \label{LB-Foucal}
\begin{aligned}
\left[ \Lo _{-\nu} f \right] (x)
& = \frac{1}{\sqrt{2\pi}} \int _{\R} e^{i \lr{\nu} x \xi} e^{-i \nu x \lr{\xi}} \widehat{f}(\xi ) d\xi
= \frac{1}{\sqrt{2\pi}} \int _{\R} e^{i x \wt{\xi}} \widehat{f}(\xi ) d\xi \\
&= \frac{1}{\sqrt{2\pi}} \int _{\R} e^{i x \wt{\xi}} \widehat{f} ( \xi ) \frac{\lr{\xi}}{\lr{\wt{\xi}}} d\wt{\xi}
= \mathcal{F}^{-1}_{\xi} \left[ \frac{\lr{l_{-\nu}(\xi)}}{\lr{\xi}} \widehat{f} (l_{-\nu}(\xi)) \right] (x)
\end{aligned}
\end{equation}
Similarly,
\begin{equation} \label{LB-Foucal'}
\mathcal{F} [\Lo _{\nu} f] (\xi ) = \frac{\lr{\wt{\xi}}}{\lr{\xi }} \widehat{f}( \wt{\xi} ) ,
\end{equation}
which shows \ref{LB-inv}.
Accordingly, combining \eqref{LB-Foucal} with \ref{LB-inv}, we get \ref{LB-Fou}.

By \eqref{Jbracxi} and $-\wt{y} \wt{\xi} + \wt{\tau} \lr{\wt{\xi}} =  -y \xi + \tau \lr{\xi}$, we have
\begin{align*}
\mathcal{F}[\Lo _{\nu}^{-1} T_y e^{i\tau \fd } f] (\wt{\xi})
& = \frac{\lr{\xi}}{\lr{\wt{\xi}}} e^{-iy \xi} e^{i \tau \lr{\xi}} \widehat{f} (\xi )
= e^{-i \wt{y} \wt{\xi}} e^{i \wt{\tau} \lr{\wt{\xi}}} \mathcal{F}[ \Lo _{\nu}^{-1} f ] (\wt{\xi}) \\
& = \mathcal{F} [ T_{\wt{y}} e^{i \wt{\tau} \fd } \Lo _{\nu}^{-1} f] (\wt{\xi}),
\end{align*}
which concludes \ref{LB-comm}.

From \ref{LB-Fou} and \eqref{LB-Foucal'},
\begin{align*}
\lr{\Lo _{\nu}^{-1} f,g}_{H^s}
& = \int _{\R} \lr{\wt{\xi}}^{2s} \widehat{\Lo _{\nu}^{-1} f} (\wt{\xi}) \overline{\widehat{g}(\wt{\xi})} d\wt{\xi}
= \int _{\R} \lr{\wt{\xi}}^{2s} \widehat{f} (\xi) \overline{\widehat{g}(\wt{\xi})} d \xi \\
& = \int _{\R} \lr{\wt{\xi}}^{2s} \frac{\lr{\xi}}{\lr{\wt{\xi}}} \widehat{f} (\xi) \overline{\widehat{\Lo _{\nu} g}(\xi )} d \xi
= \lr{f, m_s( -i \dx ) \Lo _{\nu} g}_{H^s} .
\end{align*}
By \eqref{Jbracxi}, we have
\[
\frac{1}{\lr{\nu} + |\nu |} = \lr{\nu} - |\nu| \le \frac{\lr{\wt{\xi}}}{\lr{\xi}} = \lr{\nu} - \nu \frac{\xi}{\lr{\xi}} \le \lr{\nu}+|\nu |,
\]
which implies
\[
\| m_s \| _{L^{\infty}_{\xi}} + \| m_s^{-1} \| _{L^{\infty}_{\xi}} \lesssim \lr{\nu}^{|2s-1|}.
\]
Hence, we obtain \ref{LB-inn}.

The claim \ref{LB-lin} follows from \ref{LB-Fou} and \eqref{Jbracxi}:
\begin{align*}
[e^{\mp i \cdot \fd } f] \circ L_{\mp \nu} (t,x)
& = \frac{1}{\sqrt{2\pi}} \int e^{i(\lr{\nu} x\pm \nu t) \xi} e^{\mp i(\lr{\nu} t \pm \nu x) \lr{\xi}} \widehat{f}(\xi ) d \xi \\
& = \frac{1}{\sqrt{2\pi}} \int e^{ix( \lr{\nu} \xi -\nu \lr{\xi})} e^{\mp i(\lr{\nu} \lr{\xi} -\nu \xi ) t} \widehat{f}(\xi ) d \xi \\
& = \frac{1}{\sqrt{2\pi}} \int e^{i x \wt{\xi}} e^{\mp i\lr{\wt{\xi}}t} \widehat{f}(\xi ) d \xi \\
& = \frac{1}{\sqrt{2\pi}} \int e^{i x \wt{\xi}} e^{\mp i\lr{\wt{\xi}}t} \widehat{f}(\xi ) \frac{\lr{\xi}}{\lr{\wt{\xi}}} d \wt{\xi} \\
& = \frac{1}{\sqrt{2\pi}} \int e^{i x \wt{\xi}} \mathcal{F}[e^{\mp i\fd t} \Lo _{\nu}^{-1} f] (\wt{\xi}) d \wt{\xi} \\
& = [e^{\mp it \fd } \Lo_{\nu}^{-1} f] (x).
\end{align*}

Owing to \ref{LB-Fou}, we have
\[
\mathcal{F}[ \Lo _{\mu}^{-1} \Lo _{\nu}^{-1} f] (l_{\mu} (l _{\nu} (\xi)))
= \frac{\lr{l_{\nu} (\xi )}}{\lr{l_{\mu} (l _{\nu} (\xi))}} \mathcal{F} [ \Lo_{\nu}^{-1} f] ( l_{\nu} (\xi ))
= \frac{\lr{\xi}}{\lr{l_{\mu} (l _{\nu} (\xi))}} \widehat{f} ( \xi ) .
\]
Here, \eqref{Jbracxi} yields
\[
l_{\mu} (l _{\nu} (\xi)) = \lr{\mu} l_{\nu} (\xi ) - \mu \lr{l_{\nu}(\xi )}
= (\lr{\mu} \lr{\nu} + \mu \nu ) \xi - ( \mu \lr{\nu} + \lr{\mu} \nu ) \lr{\xi}
= l_{\mu \lr{\nu} + \lr{\mu} \nu} ( \xi ),
\]
which shows \ref{LB-dou}.
\end{proof}

\subsection{Useful lemmas}

In this subsection, we observe certain manipulations of symmetries that we will need in the proof of the inverse Strichartz inequality, Theorem \ref{IS}.

\begin{lem} \label{lem:compact}
Fix $h \in L^2(\R)$ and $B>0$.
Then, the set
\[
\mathcal{K} := \{ D_{\lambda} ^{-1} \Lo _{\nu}^{-1} m_0 ( -i \dx )^{-1} e^{i \nu x} D_{\lambda }h \colon |\nu | \le B, \, B^{-1} \le \lambda < \infty \}
\]
is precompact in $L^2(\R )$.
Moreover, the closure $\overline{\mathcal{K}}$ of $\mathcal{K}$ in $L^2(\R )$ does not contain $0$ unless $h \equiv 0$.

If $\widehat{h}$ is the characteristic function of $[-1,1]$, then
\[
\supp \widehat{g} \subset \{ |\xi | \lesssim \lr{B} \}
\]
all uniformly for $g \in \mathcal{K}$.
\end{lem}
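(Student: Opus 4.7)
The plan is to reduce everything to a single Fourier-side identity. First I would compute the Fourier transform of a generic element
\[
g_{\nu,\lambda} := D_{\lambda}^{-1}\Lo_\nu^{-1} m_0(-i\dx)^{-1} e^{i\nu x} D_\lambda h
\]
of $\mathcal{K}$. Working from the inside out, the scaling identity $\widehat{D_\lambda f}(\xi)=\lambda^{1/2}\widehat{f}(\lambda\xi)$ combined with modulation gives $\widehat{e^{i\nu x}D_\lambda h}(\xi)=\lambda^{1/2}\widehat{h}(\lambda(\xi-\nu))$. Multiplying by $m_0(\xi)^{-1}=\lr{\ell_\nu(\xi)}/\lr{\xi}$ and then applying $\Lo_\nu^{-1}$ via Lemma~\ref{LB}\ref{LB-Fou} causes the $\lr{\xi}/\lr{\wt{\xi}}$ factor from the Lorentz boost to cancel against $m_0^{-1}$, and the outer $D_\lambda^{-1}$ contributes a final $\lambda^{-1/2}$. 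The outcome is
\[
\widehat{g_{\nu,\lambda}}(\eta)=\widehat{h}\bigl(\Phi_{\nu,\lambda}(\eta)\bigr),\qquad \Phi_{\nu,\lambda}(\eta):=\lambda\bigl(\ell_{-\nu}(\eta/\lambda)-\nu\bigr)=\lr{\nu}\eta+\nu\lambda\bigl(\lr{\eta/\lambda}-1\bigr).
\]
Every conclusion in the lemma flows from this formula.

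For precompactness I would compactify the parameter set by adjoining $\lambda=+\infty$, defining $\widehat{g_{\nu,\infty}}(\eta):=\widehat{h}(\lr{\nu}\eta)$ (the pointwise limit of $\widehat{g_{\nu,\lambda}}$, since $\lambda(\lr{\eta/\lambda}-1)=\eta^2/[\lambda(1+\lr{\eta/\lambda})]\to 0$), and show that $(\nu,\lambda)\mapsto g_{\nu,\lambda}$ is continuous from the compact set $K:=[-B,B]\times([B^{-1},\infty)\cup\{\infty\})$ into $L^2(\R)$. At points with $\lambda<\infty$ this is immediate from dominated convergence applied to $|\widehat{h}(\Phi_{\nu,\lambda}(\eta))|^2$. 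At the boundary $\lambda\to\infty$ I would first treat $\widehat{h}\in C_c(\R)$ using uniform convergence of $\Phi_{\nu,\lambda}(\eta)\to\lr{\nu}\eta$ on compact $\eta$-intervals together with the uniform Jacobian bound $\Phi_{\nu,\lambda}'(\eta)\ge\lr{\nu}-|\nu|\ge(2\lr{B})^{-1}$, and then extend to general $\widehat{h}\in L^2(\R)$ by density, leveraging the uniform operator bound $\|\widehat{h}\circ\Phi_{\nu,\lambda}\|_{L^2}^2\le 2\lr{B}\|\widehat{h}\|_{L^2}^2$. Since $\mathcal{K}$ lies in the continuous image of the compact $K$, it is precompact.

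The other two claims also follow from the Fourier-side formula. The change of variables $u=\Phi_{\nu,\lambda}(\eta)$, whose Jacobian $\Phi_{\nu,\lambda}'$ lies in $[\lr{\nu}-|\nu|,\lr{\nu}+|\nu|]$, yields
\[
\|g_{\nu,\lambda}\|_{L^2}^2\ge\frac{\|h\|_{L^2}^2}{\lr{\nu}+|\nu|}\ge\frac{\|h\|_{L^2}^2}{2\lr{B}},
\]
uniformly on $\mathcal{K}$, so $0\notin\overline{\mathcal{K}}$ unless $h\equiv 0$. For the support statement, observe that $\Phi_{\nu,\lambda}(0)=0$ because $\ell_{-\nu}(0)=\nu$; so when $\widehat{h}=\bm{1}_{[-1,1]}$, $\widehat{g_{\nu,\lambda}}(\eta)\ne 0$ forces $|\Phi_{\nu,\lambda}(\eta)-\Phi_{\nu,\lambda}(0)|\le 1$, and the mean value theorem together with the upper Jacobian bound produces $|\eta|\le\lr{\nu}+|\nu|\lesssim\lr{B}$.

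The only genuine technicality is the $L^2$-continuity at $\lambda=\infty$ for arbitrary $\widehat{h}\in L^2(\R)$: one must split into the low-$\eta$ region (handled by uniform phase convergence on compacts) and the high-$\eta$ tail (controlled by $\|\widehat{h}\,\bm{1}_{|\cdot|>R}\|_{L^2}\to 0$ combined with the uniform Jacobian bound). Everything else reduces to the single substitution above.
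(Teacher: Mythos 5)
Your proof is correct, and it captures the structural fact the lemma is designed around: the composition $D_\lambda^{-1}\Lo_\nu^{-1}m_0(-i\dx)^{-1}e^{i\nu x}D_\lambda$ is built precisely so that, on the Fourier side, the $\lr{\xi}/\lr{\wt{\xi}}$ factor coming from $\Lo_\nu^{-1}$ cancels against $m_0(\xi)^{-1}=\lr{\ell_\nu(\xi)}/\lr{\xi}$, leaving a pure reparametrization $\widehat{g_{\nu,\lambda}}(\eta)=\widehat{h}\bigl(\Phi_{\nu,\lambda}(\eta)\bigr)$ with $\Phi_{\nu,\lambda}(\eta)=\lr{\nu}\eta+\nu\lambda(\lr{\eta/\lambda}-1)$. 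I verified your computation of $\widehat{g_{\nu,\lambda}}$, the Jacobian bounds $\lr{\nu}-|\nu|\le\Phi_{\nu,\lambda}'\le\lr{\nu}+|\nu|$, and the identity $\Phi_{\nu,\lambda}(0)=0$; all three claims of the lemma do follow from these, by the arguments you describe. The paper itself omits the proof (citing Lemma 2.8 of Killip--Stovall--Visan), so there is no in-paper argument to compare against, but yours is the natural and intended route.

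Two small remarks. First, for the interior-point continuity you write that it is "immediate from dominated convergence"; strictly speaking the dominating function is not free, since the integrand moves with $(\nu,\lambda)$, and the cleaner argument is the same one you already use at the $\lambda=\infty$ boundary: approximate $\widehat{h}$ by $C_c$ functions, use local uniform convergence of $\Phi_{\nu,\lambda}$ together with the lower Jacobian bound to control supports, and absorb the error via the uniform $L^2$ operator bound. Second, in the support statement you invoke the "upper Jacobian bound," but what the mean value theorem actually needs is the lower bound $\Phi_{\nu,\lambda}'\ge\lr{\nu}-|\nu|$, giving $1\ge|\Phi_{\nu,\lambda}(\eta)|\ge(\lr{\nu}-|\nu|)|\eta|$ and hence $|\eta|\le\lr{\nu}+|\nu|$; your constant is right, so this is only a wording slip. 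Finally, one step you leave implicit is that $\Phi_{\nu,\lambda}$ is a bijection of $\R$ onto $\R$ (strict monotonicity from the Jacobian lower bound, plus $\Phi_{\nu,\lambda}(\eta)\to\pm\infty$ as $\eta\to\pm\infty$), which is needed for the change of variables that gives the $L^2$ lower bound.
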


Since the proof of this Lemma is same as that of Lemma 2.8 in \cite{KSV12}, we omit the details here.

\begin{lem} \label{lem:aeconv}
\begin{enumerate}[label=\rm{(\roman*)}]
\item \label{lem:aeconvfin}
Suppose $g_n \rightharpoonup g$ weakly in $H^1(\R )$ and $\lambda _n \rightarrow \lambda \in (0, \infty )$.
Then, there is a subsequence so that
\[
[\lr{\lambda_n^{-1} \dx}^{1/2} e^{-i \lambda _n^2 t \lr{\lambda _n^{-1} \dx}} g_n ] (x) \rightarrow [ \lr{\lambda^{-1} \dx}^{1/2} e^{-i \lambda ^2 t \lr{\lambda^{-1} \dx}} g] (x)
\]
for almost every $(t,x) \in \R \times \R$.

\item \label{lem:aeconvfin2}
For $\lambda _n \rightarrow \lambda \in (0, \infty )$ and fixed $g \in H^1(\R )$,
\[
\lim _{n \rightarrow \infty} \big\| \lr{\lambda _n^{-1} \dx}^{1/2} e^{-i \lambda _n^2 t \lr{\lambda _n^{-1} \dx}} g - \lr{\lambda^{-1} \dx}^{1/2} e^{-i \lambda ^2 t \lr{\lambda^{-1} \dx}} g \big\| _{L_{t,x}^6} = 0.
\]

\item \label{lem:aeconvinf}
Let $s \ge \frac{1}{2}$.
Fix $\theta \in (0, \frac{1}{2})$ and suppose $g_n \rightharpoonup g$ weakly in $L^2(\R )$ and $\lambda _n \rightarrow \infty$.
Then, there is a subsequence so that
\[
[\lr{\lambda _n^{-1} \dx}^{s-1/2} e^{-i \lambda _n^2 t [\lr{\lambda _n^{-1} \dx}-1]} P_{\le \lambda _n^{\theta}} g_n ] (x) \rightarrow [ e^{i t \dx^2/2} g] (x)
\]
for almost every $(t,x) \in \R \times \R$.

\item \label{lem:aeconvinf2}
Let $s \ge \frac{1}{2}$.
For $\lambda _n \rightarrow \infty$ and fixed $g \in L^2(\R )$,
\[
\lim _{n \rightarrow \infty} \big\| \lr{\lambda _n^{-1} \dx}^{s-1/2} e^{-i \lambda _n^2 t [\lr{\lambda _n^{-1} \dx}-1]} P_{\le \lambda _n^{\theta}} g - e^{i t \dx^2/2} g \big\| _{L_{t,x}^6} = 0.
\]
\end{enumerate}
\end{lem}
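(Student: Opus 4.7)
The plan is to establish the two strong-convergence statements (ii) and (iv) first, and then deduce the a.e.\ convergence statements (i) and (iii) by splitting $T_n g_n - Tg = (T_n g - Tg) + T_n(g_n - g)$, where $T_n$ and $T$ denote the corresponding operators in each part.

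\textbf{Parts (ii) and (iv).}  The Strichartz estimate (Lemma~\ref{Str}) with admissible pair $(6,6)$ furnishes
\[
\|e^{-i\lambda_n^2 t\langle\lambda_n^{-1}\dx\rangle}f\|_{L^6_{t,x}}\lesssim\|\langle\lambda_n^{-1}\dx\rangle^{1/2}f\|_{L^2_x}
\]
with implicit constant independent of $\lambda_n$.  Combined with the bound $\langle\lambda_n^{-1}\xi\rangle^s\lesssim 1$ on $\supp P_{\le\lambda_n^\theta}$ (which holds because $\theta<\tfrac12$) and the standard Schr\"odinger Strichartz $\|e^{it\dx^2/2}g\|_{L^6_{t,x}}\lesssim\|g\|_{L^2}$, this yields uniform $H^1_x\to L^6_{t,x}$ (for (ii)) and $L^2_x\to L^6_{t,x}$ (for (iv)) bounds on the operator families.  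An $\varepsilon/3$ density argument reduces matters to $g$ with $\widehat g\in C_c^\infty(\R)$.  For such $g$, writing $T_n g - Tg = \mathcal F^{-1}[\phi_n(t,\xi)\widehat g(\xi)]$, the nonrelativistic Taylor expansion $\lambda_n^2[\langle\lambda_n^{-1}\xi\rangle-1]=\xi^2/2+O(\xi^4/\lambda_n^2)$ on $\supp\widehat g$ (for (iv)) and the continuity of the symbol in $\lambda$ (for (ii)) force $\phi_n\to 0$ pointwise in $(t,\xi)$ with $|\phi_n|\le 2$, so dominated convergence in $\xi$ produces locally uniform convergence in $(t,x)$.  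The dispersive estimate (Lemma~\ref{lem:dispersive}) applied on the fixed Fourier support of $\widehat g$ then supplies an $L^6_{t,x}$-integrable majorant, and Vitali's theorem yields the strong $L^6_{t,x}$ convergence.

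\textbf{Parts (i) and (iii).}  The term $(T_n g - Tg)$ tends to $0$ in $L^6_{t,x}$ by the previous step, so passing to a subsequence one obtains pointwise a.e.\ convergence of this piece.  For the drift $T_n(g_n-g)$, fix a Littlewood-Paley scale $N$ and split $g_n - g = P_{\le N}(g_n-g)+P_{>N}(g_n-g)$.  For the low-frequency piece: in (i), Rellich compactness of $H^1\hookrightarrow L^2_{\mathrm{loc}}$ together with a diagonal subsequence yields $P_{\le N}(g_n-g)\to 0$ pointwise a.e.; in (iii), $P_{\le N}$ is convolution with a fixed Schwartz function, so the weak $L^2$-convergence of $g_n$ already forces $P_{\le N}(g_n-g)(x)\to 0$ pointwise, uniformly on compact sets.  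Since on the spectral band $\{|\xi|\le 2N\}$ the symbol of $T_n$ converges uniformly to its target (the Klein-Gordon symbol in (i), $e^{it\xi^2/2}$ in (iii)), a direct Fourier-side calculation delivers $T_nP_{\le N}(g_n-g)\to 0$ pointwise a.e.  The high-frequency tail $T_n P_{>N}(g_n-g)$ is controlled in (i) via Strichartz together with $\|P_{>N}(g_n-g)\|_{H^{1/2}}\lesssim N^{-1/2}\|g_n-g\|_{H^1}$, giving $L^6_{t,x}$-smallness as $N\to\infty$ uniformly in $n$.  A final diagonalization across $N$ extracts the subsequence stated in the lemma.

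\textbf{Main obstacle.}  The hardest step is the high-frequency tail of the drift in part (iii): $L^2$-weak convergence of $g_n$ provides no uniform smallness of $\|P_{>N}(g_n-g)\|_{L^2}$, so a plain Strichartz estimate cannot make $T_n P_{>N}(g_n-g)$ small in $L^6_{t,x}$.  Here one must exploit the smoothing factor $\langle\lambda_n^{-1}\dx\rangle^{s-1/2}$ (whose symbol is bounded and tends to $1$ on $\supp P_{\le\lambda_n^\theta}$) together with the oscillatory structure of the Klein-Gordon phase $\lambda_n^2[\langle\lambda_n^{-1}\xi\rangle-1]$, and then choose a joint diagonal subsequence in $n$ and $N$ to obtain the pointwise a.e.\ decay needed for the conclusion.
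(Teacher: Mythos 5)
Your proposal for parts (ii) and (iv) is a workable alternative to the paper's: the paper reduces to Schwartz data, uses the dispersive estimate to dispatch $|t|\ge T$, and then for $|t|\le T$ interpolates an explicit $L^\infty_t L^2_x$ difference estimate against a $L^5_t L^{10}_x$ Strichartz bound, whereas you reduce to compactly Fourier-supported data and argue by symbol comparison and domination. Both routes are legitimate, though you should be careful that "the dispersive estimate supplies an $L^6_{t,x}$-integrable majorant" is not literally the content of Lemma~\ref{lem:dispersive} (which is an $L^{p'}_x \to L^p_x$ bound, not a pointwise envelope); for $\widehat g\in C_c^\infty$ a stationary-phase estimate gives the pointwise $\lr{t}^{-1/2}$ decay that actually makes the domination work.

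The serious problem is in your treatment of (i) and (iii). In part (i), your high-frequency step claims that Strichartz together with $\|P_{>N}(g_n-g)\|_{H^{1/2}}\lesssim N^{-1/2}\|g_n-g\|_{H^1}$ makes $T_nP_{>N}(g_n-g)$ small in $L^6_{t,x}$ uniformly in $n$. This undercounts the derivative cost. The operator is $T_n=\lr{\lambda_n^{-1}\dx}^{1/2}e^{-i\lambda_n^2t\lr{\lambda_n^{-1}\dx}}$, and the $L^6_{t,x}$ Strichartz estimate (Lemma~\ref{Str} with $(q,r)=(6,6)$) costs another half derivative, so
\[
\|T_nP_{>N}h\|_{L^6_{t,x}}\lesssim\|\lr{\lambda_n^{-1}\dx}P_{>N}h\|_{L^2_x}\sim_\lambda\|P_{>N}h\|_{H^1},
\]
and $\|P_{>N}(g_n-g)\|_{H^1}$ is only bounded, not small, since $g_n-g$ is merely bounded in $H^1$. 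In part (iii) you correctly identify in your "Main obstacle" paragraph that $L^2$-weak convergence gives no smallness of the high-frequency tail, but the suggested fix is not a fix: on $\supp P_{\le\lambda_n^\theta}$ the multiplier $\lr{\lambda_n^{-1}\xi}^{s-1/2}$ is bounded by $\lr{\lambda_n^{\theta-1}}^{s-1/2}\lesssim 1$ and tends to $1$, so it offers no gain; and for any fixed $N$, eventually $\lambda_n^\theta\gg N$ and the projection $P_{\le\lambda_n^\theta}$ does not kill $P_{>N}$. In short, the frequency-cutoff-plus-global-Strichartz scheme cannot close either (i) or (iii), and you have not supplied the missing idea.

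The paper's actual route to (i) and (iii) avoids any frequency splitting. It restricts to a space-time box $[-L,L]^2$, derives a local-smoothing-type bound of the form
\[
\|\lr{\dt}^{1/4}\fd^{1/4}\,\lr{\lambda_n^{-1}\dx}^{1/2}e^{-i\lambda_n^2t\lr{\lambda_n^{-1}\dx}}g_n\|_{L^2_{t,x}([-L,L]^2)}\lesssim_\lambda L^{1/2}\sup_n\|g_n\|_{H^1}
\]
(using the identity $|\dt|w=\lambda_n^2\lr{\lambda_n^{-1}\dx}w$ on the space-time Fourier side to trade $t$-derivatives for $x$-derivatives), then invokes Rellich compactness of $H^{1/4}([-L,L]^2)\hookrightarrow L^2([-L,L]^2)$ to extract an $L^2_{t,x}$-convergent, hence a.e.-convergent, subsequence, and finally identifies the limit by testing against $F\in C_c^\infty$. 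Part (iii) does the same with Schr\"odinger local smoothing after first removing the $\lr{\lambda_n^{-1}\dx}^{s-1/2}-1$ and nonrelativistic phase corrections. The crucial gain here is that local smoothing on a compact space-time region only costs $L^2_x$-data (for part (iii)) or $H^1_x$-data (for part (i)) with no loss, whereas your Strichartz-based high-frequency tail would require smallness in the full data norm, which weak convergence does not supply. To repair your argument you would essentially have to replace the high-frequency Strichartz step by exactly this local smoothing plus Rellich compactness argument.
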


\begin{proof}
Firstly we show that \ref{lem:aeconvfin}.
Owing to Cantor's diagonal argument, it is enough to consider almost everywhere convergence in $(t,x) \in [-L,L] ^2$ for any $L>0$.
A simple calculation yields
\begin{align*}
& \limsup _{n \rightarrow \infty} \| \lr{\dt} ^{1/4} \fd ^{1/4} \lr{\lambda_n^{-1} \dx}^{1/2} e^{-i \lambda _n^2 t \lr{\lambda _n^{-1} \dx}} g_n \| _{L^2_{t,x}([-L,L]^2)} \\
& \lesssim _{\lambda} \limsup _{n \rightarrow \infty} \| \fd g_n \| _{L^2_{t,x}([-L,L]^2)} \\
& \lesssim _{\lambda} L^{1/2} \sup _{n \in \N} \| g_n \| _{H^1} .
\end{align*}
Here, we note that the (space-time) Fourier support of $w:= e^{-i \lambda _n^2 t \lr{\lambda _n^{-1} \dx}} g_n$ is contained in $\{ (\tau ,\xi ) \in \R ^2 \colon |\tau | + \lambda _n ^2 \lr{\lambda _n^{-1} \xi} =0 \}$.
Namely, $|\dt| w = \lambda_n^2 \lr{\lambda _n^{-1} \dx} w$ holds.
Combining this with Rellich's theorem, that is compactness of the embedding $H^{1/4} ([-L,L]^2) \hookrightarrow L^2 ([-L,L]^2)$, we obtain an $L^2_{t,x}$ convergent subsequence which is also almost everywhere convergence.
We confirm that the limit is independent of $L$.
For all $F \in C_c^{\infty} ( \R \times \R )$,
\begin{align*}
& \lim _{n \rightarrow \infty} \lr{\lr{\lambda_n^{-1} \dx}^{1/2} e^{-i \lambda _n^2 t \lr{\lambda _n^{-1} \dx}} g_n , F}_{L^2_{t,x}} \\
& = \lim _{n \rightarrow \infty} \left\langle g_n, \int _{\R} \lr{\lambda_n^{-1} \dx}^{1/2} e^{i \lambda _n^2 t \lr{\lambda _n^{-1} \dx}} F dt \right\rangle _{L_x^2} \\
& = \left\langle g, \int _{\R} \lr{\lambda^{-1} \dx}^{1/2} e^{i \lambda ^2 t \lr{\lambda^{-1} \dx}} F dt \right\rangle _{L_x^2} \\
& = \lr{\lr{\lambda^{-1} \dx}^{1/2} e^{-i \lambda^2 t \lr{\lambda^{-1} \dx}} g , F}_{L^2_{t,x}} .
\end{align*}

For the proof of \ref{lem:aeconvfin2}, we may assume that $g$ is a Schwartz function because of Lemma \ref{Str}.
Lemma \ref{lem:dispersive} implies
\begin{align*}
& \big\| \lr{\lambda _n^{-1} \dx}^{1/2} e^{-i \lambda _n^2 t \lr{\lambda _n^{-1} \dx}} g \big\| _{L_{t,x}^6(|t| \ge T)} \lesssim T^{-1/6} \| \lr{\lambda_n^{-1} \dx}^{3/2} g \| _{L^{6/5}_x}, \\
& \big\| \lr{\lambda^{-1} \dx}^{1/2} e^{-i \lambda ^2 t \lr{\lambda^{-1} \dx}} g \big\| _{L_{t,x}^6(|t| \ge T)} \lesssim T^{-1/6} \| \lr{\lambda^{-1} \dx}^{3/2} g \| _{L^{6/5}_x}
\end{align*}
for all $T>0$.
Thus, we are left to control the region $|t| \le T$.
Since
\begin{gather*}
\begin{aligned}
\lr{\lambda _n^{-1} \xi}^{1/2} - \lr{\lambda^{-1} \xi}^{1/2}
& = \frac{\lr{\lambda _n^{-1} \xi} - \lr{\lambda^{-1} \xi}}{\lr{\lambda _n^{-1} \xi}^{1/2} + \lr{\lambda^{-1} \xi}^{1/2}} \\
& = - \frac{(\lambda_n - \lambda) (\lambda_n+\lambda) \xi^2}{\lambda_n^2 \lambda^2 (\lr{\lambda _n^{-1} \xi}^{1/2} + \lr{\lambda^{-1} \xi}^{1/2}) (\lr{\lambda _n^{-1} \xi} + \lr{\lambda^{-1} \xi})} ,
\end{aligned}
\\
\lambda _n^2t \lr{\lambda_n^{-1}\xi} - \lambda^2t \lr{\lambda^{-1}\xi}
= t \frac{(\lambda_n - \lambda)(\lambda_n+\lambda)(\lambda_n^2+\lambda^2+\xi^2)}{\lambda_n^2 \lr{\lambda_n^{-1}\xi} + \lambda^2 \lr{\lambda^{-1}\xi}} ,
\end{gather*}
we have
\begin{align*}
& \big\| \lr{\lambda _n^{-1} \dx}^{1/2} e^{-i \lambda _n^2 t \lr{\lambda _n^{-1} \dx}} g - \lr{\lambda^{-1} \dx}^{1/2} e^{-i \lambda ^2 t \lr{\lambda^{-1} \dx}} g \big\| _{L_t^{\infty} L_x^2} \\
& \le \big\| (\lr{\lambda _n^{-1} \dx}^{1/2} - \lr{\lambda^{-1} \dx}^{1/2}) e^{-i \lambda _n^2 t \lr{\lambda _n^{-1} \dx}} g \big\| _{L_t^{\infty} L_x^2} \\
& \qquad + \big\| \lr{\lambda^{-1} \dx}^{1/2} \big( e^{-i \lambda_n^2 t \lr{\lambda_n^{-1} \dx}} - e^{-i \lambda ^2 t \lr{\lambda^{-1} \dx}} \big) g \big\| _{L^{\infty}_t L^2_x} \\
& \lesssim \frac{|\lambda_n-\lambda| (\lambda_n+\lambda)}{\lambda_n^2 \lambda^2} \| g \| _{H^2} + T \frac{|\lambda_n-\lambda| (\lambda_n+\lambda)}{\lambda_n^2+\lambda^2} \{ ( \lambda_n^2+ \lambda^2) \| g \| _{L^2} + \| g \| _{H^2} \}
\end{align*}
On the other hand, by Lemma \ref{Str},
\[
\big\| \lr{\lambda _n^{-1} \dx}^{1/2} e^{-i \lambda _n^2 t \lr{\lambda _n^{-1} \dx}} g \big\| _{L^5_t L^{10}_x} + \big\| \lr{\lambda^{-1} \dx}^{1/2} e^{-i \lambda ^2 t \lr{\lambda^{-1} \dx}} g \big\| _{L^5_t L^{10}_x}
\lesssim_{\lambda} \| g \| _{H^{11/10}}.
\]
Interpolating those two estimates, by $(L_t^{\infty}L_x^{2},L_t^5L_x^{10})_{5/6}=L_{t,x}^6$, we obtain
\[
\lim _{n \rightarrow \infty} \big\| \lr{\lambda _n^{-1} \dx}^{1/2} e^{-i \lambda _n^2 t \lr{\lambda _n^{-1} \dx}} g - \lr{\lambda^{-1} \dx}^{1/2} e^{-i \lambda ^2 t \lr{\lambda^{-1} \dx}} g \big\| _{L^6_{t,x} (|t| \le T)} =0
\]
for each fixed $T>0$.

We consider the case \ref{lem:aeconvinf}.
As in the proof of \ref{lem:aeconvfin}, we may restrict the range of $(t,x)$ to $[-L,L]^2$.
Since
\[
\lr{\xi}^{s-1/2}-1 = \left( s-\frac{1}{2} \right) \xi^2 \int_0^1 a \lr{a \xi}^{s-5/2} da ,
\]
owling to Lemma \ref{Str}, we have
\begin{equation} \label{eq:covinfh}
\begin{aligned}
& \big\| e^{-i \lambda _n^2 t [\lr{\lambda _n^{-1} \dx}-1]} P_{\le \lambda _n^{\theta}} \big( \lr{\lambda _n^{-1}\dx}^{s-1/2} -1 \big) g_n \big\| _{L_{t,x}^6} \\
& \lesssim \lambda _n^{2(\theta -1)} \big\| P_{\le \lambda _n^{\theta}} \lr{\lambda _n^{-1}\dx}^{1/2} g _n \big\| _{L_{x}^2}
\lesssim \lambda_n^{2(\theta-1)} \| g_n \|_{L^2}
\rightarrow 0
\end{aligned}
\end{equation}
as $n \rightarrow \infty$.
It suffices to show that there is a subsequence so that
\[
[e^{-i \lambda _n^2 t [\lr{\lambda _n^{-1} \dx}-1]} P_{\le \lambda _n^{\theta}} g_n ] (x) \rightarrow [ e^{i t \dx^2/2} g] (x)
\]
for almost every $(t,x) \in [-L,L]^2$.
By
\begin{equation} \label{eq:KG-S}
\lambda _n^2 [ \lr{\lambda _n^{-1} \xi} - 1]
= \frac{\xi ^2}{\lr{\lambda _n^{-1} \xi} + 1}
= \frac{1}{2} \xi ^2 + \xi ^2 \frac{1-\lr{\lambda _n^{-1} \xi}}{2(\lr{\lambda _n^{-1} \xi} + 1)}
= \frac{1}{2} \xi ^2 -  \frac{\lambda _n^{-2} \xi ^4}{2(\lr{\lambda _n^{-1} \xi} + 1)^2} ,
\end{equation}
we deduce that for $\theta < \frac{1}{2}$,
\[
\| e^{-i \lambda _n^2 t [\lr{\lambda _n^{-1} \dx}-1]} P_{\le \lambda _n^{\theta}} - e^{i t \dx^2/2} P_{\le \lambda _n^{\theta}} \| _{L_x^2 \rightarrow L^2_x} \rightarrow 0
\]
as $n \rightarrow \infty$.
Thus, it reduces to observe that an $L^2([-L,L]^2)$-convergence of a subsequence of $e^{i t \dx^2/2} P_{\le \lambda _n^{\theta}} g_n$.
We recall the local smoothing estimate for the Schr\"odinger equation (see \cite{ConSau88, Sjo87, Veg88}):
\[
\int _{\R} \int _{[-L,L]} | [ \fd ^{1/2} e^{it \dx^2/2} f] (x) |^2 dx dt \lesssim L \| f \| _{L^2_x}^2 ,
\]
which implies
\[
\| \lr{\dt} ^{1/8} \fd ^{1/4} e^{it \dx^2/2} g_n \| _{L^2_{t,x}([-L,L]^2)} \lesssim L^{1/2} \| g_n \| _{L^2_x} .
\]
Rellich's theorem, or compactness of the embedding $H^{1/8} ([-L,L]^2) \hookrightarrow L^2 ([-L,L]^2)$, we obtain an $L^2_{t,x}$ convergent subsequence.

Finally, we prove \ref{lem:aeconvinf2}.
By \eqref{eq:covinfh}, it suffices to show that
\[
\lim _{n \rightarrow \infty} \big\| e^{-i \lambda _n^2 t [\lr{\lambda _n^{-1} \dx}-1]} P_{\le \lambda _n^{\theta}} g - e^{i t \dx^2/2} g \big\| _{L_{t,x}^6} = 0.
\]
By Lemma \ref{Str} and the Strichartz estimates for the Schr\"odinger equation, it suffices to treat the case where $g$ is a Schwartz function.
Lemma \ref{lem:dispersive} and the dispersive estimate for the Schr\"odinger equation imply
\[
\big\| e^{-i \lambda _n^2 t [\lr{\lambda _n^{-1} \dx}-1]} P_{\le \lambda _n^{\theta}} g \| _{L^6_{t,x}(|t| \ge T)} + \|  e^{i t \dx^2/2} g \big\| _{L_{t,x}^6(|t| \ge T)}
\lesssim T^{-1/6} \| g \| _{L^{6/5}_x}
\]
for all $T>0$.
Thus we are left to control the region $|t| \le T$.
Note that from \eqref{eq:KG-S}
\begin{align*}
& \big\| e^{-i \lambda _n^2 t [\lr{\lambda _n^{-1} \dx}-1]} P_{\le \lambda _n^{\theta}} g - e^{i t \dx^2/2} g \big\| _{L^{\infty}_t L^2_x (|t| \le T)} \\
& \le \big\| \big( e^{-i \lambda _n^2 t [\lr{\lambda _n^{-1} \dx}-1]} - e^{i t \dx^2/2}  \big) P_{\le \lambda _n^{\theta}} g \big\| _{L^{\infty}_t L^2_x(|t| \le T)} + \big\| e^{i t \dx^2/2} P_{\ge \lambda _n^{\theta}} g \big\| _{L^{\infty}_t L^2_x(|t| \le T)} \\
& \le ( \lambda _n^{-2} T + \lambda _n^{-4\theta}) \| g \| _{H^4_x} .
\end{align*}
On the other hand, by Lemma \ref{Str} and the Strichartz estimates for the Schr\"odinger equation,
\[
\big\| e^{-i \lambda _n^2 t [\lr{\lambda _n^{-1} \dx}-1]} P_{\le \lambda _n^{\theta}} g \big\| _{L^5_t L^{10}_x} + \big\| e^{i t \dx^2/2} g \big\| _{L^5_t L^{10}_x}
\lesssim \| g \| _{H^{11/10}_x}.
\]
Interpolating those two estimates, by $(L_t^{\infty}L_x^{2},L_t^5L_x^{10})_{5/6}=L_{t,x}^6$, we obtain
\[
\lim _{n \rightarrow \infty} \big\| e^{-i \lambda _n^2 t [\lr{\lambda _n^{-1} \dx}-1]} P_{\le \lambda _n^{\theta}} g - e^{i t \dx^2/2} g \big\| _{L^6_{t,x} (|t| \le T)} =0
\]
for each fixed $T>0$.
\end{proof}

\section{Local theory}

We summarize a well-posedness result for \eqref{NLKGexp}, which is equivalent to \eqref{rNLKGexp}.

\begin{prop} \label{prop:WP}
Let $s \ge s_0> \frac{1}{2}$ and $v_0 \in H^s (\R )$.
Then there exists a unique maximal lifespan solution $v : I \times \R \rightarrow \C$ to \eqref{rNLKGexp} with $v(0) = v_0$.
Furthermore, the following hold:
\begin{itemize}
\item If $T= \sup I$ is finite, then $\| v(t) \| _{H^{s_0}} \rightarrow \infty$ as $t \rightarrow T$.
\item The energy and momentum are finite and conserved if $s \ge 1$.
\item If $\| v_0 \| _{H^s}$ is sufficiently small, then $v$ is global and
\[
\| \fd ^{s+3/2r-3/4} v \| _{L_t^{q} (\R ; L_x^r(\R ))} \lesssim \| v_0 \| _{H^s}
\]
for any admissible pair $(q,r)$.
Moreover, there exists $v_+ \in H^s (\R )$ such that
\begin{equation} \label{eq:rscat}
\lim _{t \rightarrow \infty} \| v(t) - e^{-it \fd } v_+ \| _{H^s} =0.
\end{equation}
For each $v_+ \in H^s (\R )$, there exists a unique solution $v$ to \eqref{rNLKGexp} in a neighborhood of $+\infty$ satisfying \eqref{eq:rscat}.
In either case,
\[
E(v(t)) = \frac{1}{2} \| v_+ \| _{H^1}^2
\]
provided that $s \ge 1$.
Similar statements holds backward in time.
\item Let $J \subset \R$ and assume that $\| \Re v \| _{L_t^{\infty}(J; L_x^{\infty}(\R))} + S_J(\fd ^{s-1/2} v) <L$.
We have
\[
\| \fd ^{s+3/2r-3/4} v \| _{L_t^{q} (J ; L_x^r(\R ))} \lesssim_{L,s,q,r} \| v_0 \| _{H^s}
\]
for any admissible pair $(q,r)$.
\end{itemize}
\end{prop}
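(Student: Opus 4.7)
The plan is to partition $J$ into finitely many subintervals on which $\|\fd^{s-1/2}\Re v\|_{L^6_{t,x}}$ is small, close a Strichartz-based estimate on each piece via a fractional chain rule for $\No$, and sum the resulting bounds. By pigeonhole applied to the hypothesis $S_J(\fd^{s-1/2}v)<L$, and a small parameter $\eta=\eta(L)>0$ to be chosen, I partition $J=\bigcup_{j=1}^M[t_{j-1},t_j]$ into $M\lesssim (L/\eta)^6+1$ consecutive subintervals $J_j$ with $\|\fd^{s-1/2}\Re v\|_{L^6_{t,x}(J_j)}\le\eta$. Writing Duhamel's formula based at $t_{j-1}$ and applying Lemma \ref{Str} with $\lambda=1$ to an arbitrary admissible pair $(q,r)$ on the left and the self-dual pair $(6,6)$ on the right, bookkeeping of the derivative exponents yields
\[
\|\fd^{s+3/2r-3/4}v\|_{L^q_tL^r_x(J_j)}\lesssim\|v(t_{j-1})\|_{H^s}+\|\fd^{s-1/2}\No(\Re v)\|_{L^{6/5}_{t,x}(J_j)}.
\]

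The heart of the matter is a fractional chain rule for the exponential nonlinearity. Using the Taylor expansion $\No(u)=\sum_{k\ge 2}u^{2k+1}/k!$ (valid since $u=\Re v$ is real-valued) and applying the fractional Leibniz rule monomial-by-monomial with the H\"older splitting $5/6=4/6+1/6$ (placing $2k-4$ factors in $L^\infty$, four in $L^6$, and one in $L^6$ after $\fd^{s-1/2}$), one obtains
\[
\|\fd^{s-1/2}\No(\Re v)\|_{L^{6/5}_{t,x}(J_j)}\lesssim C(\|\Re v\|_{L^\infty_{t,x}})\|\Re v\|_{L^6_{t,x}(J_j)}^4\|\fd^{s-1/2}\Re v\|_{L^6_{t,x}(J_j)},
\]
where $C(\|\Re v\|_{L^\infty})=\sum_{k\ge 2}c_k\|\Re v\|_{L^\infty}^{2k-4}/k!\lesssim\exp(C\|\Re v\|_{L^\infty}^2)\le\exp(CL^2)$, since the combinatorial constants $c_k$ from fractional Leibniz grow only polynomially. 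Using the Bessel-potential boundedness $\|\Re v\|_{L^6}\lesssim\|\fd^{s-1/2}\Re v\|_{L^6}$ (available because $s>1/2$) together with $\|\fd^{s-1/2}\Re v\|_{L^6}\le\|\fd^{s-1/2}v\|_{L^6}$, the nonlinear term is controlled by $C(L)\eta^4\|\fd^{s-1/2}v\|_{L^6_{t,x}(J_j)}$. Taking $(q,r)=(6,6)$ and choosing $\eta=\eta(L)$ so small that the Strichartz constant times $C(L)\eta^4$ does not exceed $1/2$, I absorb the nonlinear term and obtain $\|\fd^{s-1/2}v\|_{L^6_{t,x}(J_j)}\le 2C_0\|v(t_{j-1})\|_{H^s}$. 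Feeding this back into the Strichartz estimate for a general admissible pair yields $\|\fd^{s+3/2r-3/4}v\|_{L^q_tL^r_x(J_j)}\le C_1(L)\|v(t_{j-1})\|_{H^s}$; specializing to $(q,r)=(\infty,2)$ gives $\|v(t_j)\|_{H^s}\le C_1(L)\|v(t_{j-1})\|_{H^s}$, which iterates to $\|v(t_{j-1})\|_{H^s}\le C_1(L)^{j-1}\|v_0\|_{H^s}$. Summing the $q$-th powers over $j=1,\dots,M$ then yields the claimed global bound, with an overall constant depending only on $L,s,q,r$.

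The main technical obstacle is the fractional chain rule itself: one must arrange the combinatorial constants in the fractional Leibniz rule applied to $u^{2k+1}$ to grow at most polynomially in $k$, so that the series $\sum_k c_k\|u\|_{L^\infty}^{2k-4}/k!$ converges to an expression controlled by $\exp(C\|u\|_{L^\infty}^2)$. A related subtlety is that the hypothesis supplies smallness of $\|\fd^{s-1/2}\Re v\|_{L^6}$ rather than of $\|\fd^{s-1/2}v\|_{L^6}$; hence the absorption step must be set up with exactly one factor of $\fd^{s-1/2}v$ on the right (to be absorbed) and four factors of $\fd^{s-1/2}\Re v$ (to supply the smallness $\eta^4$).
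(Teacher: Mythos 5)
The paper does not actually prove Proposition \ref{prop:WP}: it states ``This proposition is essentially proved by Nakamura and Ozawa \cite{NakOza01}. Hence, we omit the proof here.'' So there is no paper proof to compare against; I will instead assess your argument on its own terms and against the scope of the claimed statement.

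The main issue is one of scope. Your argument addresses only the final bullet (persistence of all Strichartz norms under the a priori control $\| \Re v \|_{L_t^{\infty}L_x^{\infty}(J)}+S_J(\fd ^{s-1/2}v)<L$). The proposition also asserts existence and uniqueness of a maximal-lifespan solution, the blow-up criterion, conservation of energy and momentum for $s\ge 1$, small-data global existence and scattering, and existence of wave operators. None of these is touched. Those parts are not incidental: the blow-up criterion and small-data theory are the foundation on which the concentration-compactness machine of the rest of the paper rests, and the last bullet is typically derived \emph{after} the local theory is in place (one uses the local well-posedness to know a priori that $\| \fd^{s-1/2}v\|_{L^6_{t,x}(J_j)}<\infty$ on each small subinterval, which is exactly what licenses the absorption step). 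You in fact silently use this finiteness when you absorb $C_0C(L)\eta^4\|\fd^{s-1/2}v\|_{L^6(J_j)}$ into the left-hand side; without a continuity/bootstrap argument (or the local theory supplying finiteness on short intervals) that step is not justified.

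For the bullet you do address, the skeleton is the standard induction-on-intervals argument and your derivative bookkeeping for the Strichartz exponents is correct: with $(q,r)$ admissible, $(\tilde q,\tilde r)=(6,6)$, and $F=\fd^{-1}\No(\Re v)$, the gain lands exactly on $\|\fd^{s-1/2}\No(\Re v)\|_{L^{6/5}_{t,x}}$, and taking $(q,r)=(6,6)$ and $(q,r)=(\infty,2)$ to close and iterate is right. Your observation that the partition must be made with respect to $\|\fd^{s-1/2}\Re v\|_{L^6}$ (not $\|\fd^{s-1/2}v\|_{L^6}$), supplying $\eta^4$ from the real part and leaving exactly one $\|\fd^{s-1/2}v\|_{L^6}$ factor to absorb, is a genuine point and correctly handled. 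The remaining gap you flag yourself --- uniformity of the fractional Leibniz constants $c_k$ over the Taylor series --- is real but not fatal: even exponential growth $c_k\lesssim C^k$ (which is what naive iteration of the two-factor Kato--Ponce rule produces) still yields a convergent series bounded by $\exp(C'\|\Re v\|_{L^\infty}^2)\le\exp(C'L^2)$; the estimate you need is a chain-rule bound for $\No$, and the paper implicitly uses the same kind of estimate throughout (e.g., in \eqref{est:error2'} and in the proof of Lemma \ref{L:nlpd stability setup}). In summary: what you prove is a correct sketch of one bullet modulo the two technical points above, but the bulk of the proposition --- the local well-posedness theory that Nakamura--Ozawa supplies --- is not addressed.
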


This proposition is essentially proved by Nakamura and Ozawa \cite{NakOza01}.
Hence, we omit the proof here.

We use the following stability theory, which is a consequence of the well-posedness result (Proposition \ref{prop:WP}).
The proof follows from minor modifications of that for the nonlinear Schr\"odinger equation, and hence we omit the details.
For the proof, see \cite{KKSV12, KSV12, TVZ08} for example.

\begin{prop} \label{prop:stability}
Let $s > \frac{1}{2}$ and let $I$ be an interval and $\wt{v}$ be a solution to
\[
-i\wt{v} _t + \fd \wt{v}+ \fd ^{-1} \mathcal{N}(\Re \wt{v}) +e_1+e_2=0 .
\]
Assume that
\[
\| \wt{v} \| _{L_t^{\infty} H_x^s (I \times \R )} \le M \quad \text{and} \quad
\| \fd ^{s-1/2} \Re \wt{v} \| _{L_{t,x}^6 (I \times \R )} \le L
\]
for some positive constants $M$ and $L$.
Let $t_0 \in I$ and let $v_0$ satisfy the condition
\[
\| v_0 - \wt{v}(t_0) \| _{H^s_x} \le M'
\]
for some positive constant $M'$.
If
\begin{gather*}
\| \fd ^{s-1/2} e^{-i (t-t_0) \fd } (v_0 - \wt{v}(t_0)) \| _{L_{t,x}^6 (I \times \R )} \le \eps , \\
\| \fd ^{s+1/2} e_1 \| _{L_{t,x}^{6/5} (I \times \R )} + \| e_2 \| _{L_t^1 H_x^{s} ( I \times \R )} \le \eps
\end{gather*}
for $0< \eps < \eps _1 =\eps _1 (L,M, M')$, then there exists a unique solution $v$ to \eqref{rNLKGexp} with initial data $v_0$ at time $t_0$.

Furthermore, the solution $v$ satisfies
\[
\| \fd ^{s-1/2} (v- \wt{v}) \| _{L_{t,x}^6 (I \times \R )} \le \eps C(L,M, M'), \quad
\| v- \wt{v} \| _{L_t^{\infty} H_x^{s} (I \times \R )} \le M' C(L,M, M') .
\]
\end{prop}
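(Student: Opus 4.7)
The plan is to follow the standard long-time perturbation argument, adapted to the exponential nonlinearity. First I partition $I$ into $N=N(L)$ consecutive subintervals $I_1,\dots,I_N$ on which
\[
\| \fd ^{s-1/2} \Re \wt{v} \| _{L_{t,x}^6 (I_j \times \R )} \le \eta
\]
for a small $\eta=\eta(L,M,M')$ to be chosen. On each $I_j$ I set $w_j:=v-\wt{v}$; then $w_j$ satisfies
\[
-i\partial_t w_j+\fd w_j+\fd^{-1}\bigl[\No(\Re(w_j+\wt v))-\No(\Re\wt v)\bigr]=e_1+e_2,
\]
with initial data $w_j(t_j)=v(t_j)-\wt v(t_j)$. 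Applying Duhamel together with the Strichartz estimate (Lemma \ref{Str}) in the admissible pair $(6,6)$, and paying the standard price of $\fd^{s-1/2}$ and $\fd^{s+1/2}$, yields
\[
\| \fd^{s-1/2}w_j\|_{L^6_{t,x}(I_j)} \lesssim
\| \fd^{s-1/2}e^{-i(t-t_j)\fd}w_j(t_j)\|_{L^6_{t,x}(I_j)}
+\bigl\|\fd^{s+1/2}[\No(\Re(w_j+\wt v))-\No(\Re\wt v)]\bigr\|_{L^{6/5}_{t,x}(I_j)}
+\eps,
\]
together with an analogous bound for $\|w_j\|_{L^\infty_tH^s_x(I_j)}$. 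The $\eps$ absorbs both $e_2$ and the error $e_1$ via the admissible dual pair.

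The crux is the nonlinear difference estimate. Writing $\No(u)=\sum_{k\ge2}\frac{|u|^{2k}u}{k!}$, the fundamental theorem of calculus gives, pointwise,
\[
|\No(a+b)-\No(a)|+|\No'(a+b)-\No'(a)|\lesssim |b|\bigl(|a|^4+|b|^4\bigr)\exp\!\bigl(C(|a|^2+|b|^2)\bigr).
\]
Since $s>\tfrac12$, Sobolev embedding bounds $\|\Re \wt v\|_{L^\infty_{t,x}}\lesssim M$ and $\|\Re w_j\|_{L^\infty_{t,x}}$ by the bootstrap hypothesis, so the exponential factor is absorbed into a harmless constant $C(M,M')$. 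A fractional chain rule (Kato--Ponce or Christ--Weinstein) transfers $\fd^{s+1/2}$ onto a single factor, and the resulting $L^{6/5}$ norm is estimated by H\"older with four $L^6_{t,x}$ factors carrying $\fd^{s-1/2}$-derivatives on either $w_j$ or $\wt v$. Using our smallness assumption on $\|\fd^{s-1/2}\Re\wt v\|_{L^6_{t,x}(I_j)}\le\eta$ and the bootstrap assumption $\|\fd^{s-1/2}w_j\|_{L^6_{t,x}(I_j)}\le 2\delta$ for a $\delta$ controlled by the initial data, one obtains a nonlinear contribution of size $C(M,M')\,\eta^4\,\|\fd^{s-1/2}w_j\|_{L^6_{t,x}(I_j)}+C(M,M')\delta^5$, which, after choosing $\eta$ small enough, can be absorbed into the left-hand side.

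Having closed the bootstrap on $I_1$, I iterate. The estimate for $w_j(t_{j+1})$ obtained on $I_j$ provides initial data for the problem on $I_{j+1}$, at the cost of a multiplicative constant $C(L,M,M')$; since there are only $N=N(L)$ subintervals, the cumulative constant depends only on $L$, $M$, $M'$. Summing yields the desired bounds on $I$ after requiring $\eps<\eps_1(L,M,M')$, with $\eps_1$ chosen so that each step's linear flow error stays below the smallness threshold.

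The main obstacle is the exponential nonlinearity: unlike the pure quintic case in \cite{KSV12}, one cannot reduce to a polynomial difference estimate and the fractional chain rule must be applied termwise in the power-series expansion of $\No$. The saving grace is that the $L^\infty$-bound on $\Re v$ and $\Re\wt v$ (via $H^s\hookrightarrow L^\infty$ for $s>\tfrac12$) makes the series $\sum_{k\ge2}\|\,|u|^{2k}\|_{L^\infty_{t,x}}/k!$ converge to $e^{CM^2}-1-CM^2$, and the fractional chain rule constants from each monomial sum in $k$ to a finite quantity, leaving only a standard quintic-type multilinear estimate to close.
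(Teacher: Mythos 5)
The paper does not actually prove this proposition; it simply states that the argument is a minor modification of the stability theory for power-type NLS and cites \cite{KKSV12, KSV12, TVZ08}. Your sketch is exactly that argument, correctly transported: partition $I$ into $N=N(L)$ pieces on which $\|\fd^{s-1/2}\Re\wt v\|_{L^6_{t,x}}\le\eta$, run Duhamel with the $(6,6)$--$(6/5,6/5)$ Strichartz estimates and a bootstrap on each piece, and iterate. The genuinely new feature relative to the quintic case, namely the power series for $\No$, you handle correctly: the $L^\infty$ control from $H^s\hookrightarrow L^\infty$ for $s>\frac12$ makes the $L^\infty$-tail summable against $1/k!$, and the fractional Leibniz constants grow at most polynomially in $k$, so the factorial kills them; this is exactly the pattern used elsewhere in the paper (compare the proof of \eqref{E:error from w}).

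Two small slips. In your Duhamel inequality the derivative on the bare nonlinear difference should be $\fd^{s-1/2}$, not $\fd^{s+1/2}$: the forcing is $\fd^{-1}[\No(\Re(w_j+\wt v))-\No(\Re\wt v)]$ and the inhomogeneous $(6,6)$--$(6/5,6/5)$ Strichartz price is one derivative, so $\fd^{s-1/2}\cdot\fd^{1}\cdot\fd^{-1}=\fd^{s-1/2}$; it is only for $e_1$ (which carries no $\fd^{-1}$) that $\fd^{s+1/2}$ appears, matching the hypothesis $\|\fd^{s+1/2}e_1\|_{L^{6/5}_{t,x}}\le\eps$. Second, since $\No''(u)=6u(e^{u^2}-1)+4u^3e^{u^2}=O(|u|^3 e^{u^2})$, your bound $|\No'(a+b)-\No'(a)|\lesssim|b|(|a|^3+|b|^3)e^{C(a^2+b^2)}$ is actually sharper than what you wrote; the weaker form suffices.

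Finally, since the whole novelty here is the exponential constant, it is worth making the constant-tracking explicit rather than implicit. The absorption constant on $I_j$ is of size $e^{C(M+Y_j)^2}$ with $Y_j:=\|w\|_{L^\infty_t H^s_x(I_j)}$, the very quantity being bootstrapped. The consistent way to close this is to fix the a priori bound $Y_j\le 2M'$ for all $j\le N$ in advance, choose $\eta$ small relative to $e^{-C(M+2M')^2}$ (so $N=N(L,M,M')$), and then check that the increments to $Y_j$ are $O(\eps)$, hence the a priori bound is preserved for $\eps<\eps_1(L,M,M')$; this is exactly why $\eps_1$ and the final constants depend on all of $L,M,M'$.
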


Next, we observe behaviors of the boosted solutions $u \circ L_{\nu}$.

\begin{lem} \label{lem:boostsol}
Given $(u_0, u_1) \in H^1(\R ) \times L^2 (\R )$, there is an $\eps >0$ and a local solution $u$ to \eqref{NLKGexp} with $(u(0), u_t(0)) = (u_0, u_1)$ and defined in the space time region $\Omega := \{ (t,x) \colon |t| - \eps |x| < \eps \}$.
Moreover,
\begin{align}
& \| u \| _{L_t^q L_x^r (\Omega )} := \| \bm{1}_{\Omega} u \| _{L_t^q L_x^r (\R \times \R )} < \infty \quad \text{for each admissible pair } (q,r), \notag \\
& \| u \| _{L_t^{\infty} (H_x^1 \times L_x^2) (\Omega )} := \sup _{t \in \R} \int _{\R} \bm{1}_{\Omega} (t,x) \left\{ |u_t(t,x)|^2+ |u_x(t,x)|^2 + |u(t,x)|^2 \right\} dx < \infty, \\
& \lim _{R \rightarrow \infty} \sup _{|t| < \eps R} \int _{|x| >R} \left\{ |u_t(t,x)|^2 + |u_x(t,x)|^2 + |u(t,x)|^2 \right\} dx =0 . \label{eq:boostsollim}
\end{align}
The solution $u$ with these properties is unique.
\end{lem}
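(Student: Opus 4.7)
The plan is to construct $u$ on $\Omega$ by gluing the short-time solution from Proposition \ref{prop:WP} with a globally existing small-data ``tail'' solution, and then to justify the gluing via finite speed of propagation for \eqref{NLKGexp}. Geometrically, $\Omega$ is covered by the union of a horizontal time slab inside the local existence interval and the ``exterior'' region $\{|x| > R + |t|\}$ for a well-chosen $R$, provided $\eps$ is taken small enough in terms of $T_0$ and $R$.

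Concretely, I first apply Proposition \ref{prop:WP} with $s=1$ to $v_0 := u_0 + i \fd^{-1} u_1 \in H^1(\R)$ to obtain a local solution $v^{\mathrm{loc}}$ to \eqref{rNLKGexp} on a time slab $(-T_0, T_0) \times \R$ with bounded $L^\infty_t H^1_x$-norm and finite Strichartz norms. Next, I introduce a tail truncation: fix an even $\chi \in C^\infty(\R)$ with $\chi \equiv 0$ on $\{|x| \le 1/2\}$, $\chi \equiv 1$ on $\{|x| \ge 1\}$, and set $\chi_R(x) := \chi(x/R)$, $(u_0^R, u_1^R) := (\chi_R u_0, \chi_R u_1)$. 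Dominated convergence gives $\|u_0^R\|_{H^1} + \|u_1^R\|_{L^2} \to 0$ as $R \to \infty$, so for $R$ sufficiently large $v_0^R := u_0^R + i \fd^{-1} u_1^R$ lies below the small-data threshold of Proposition \ref{prop:WP}, producing a \emph{global} solution $v^R$ to \eqref{rNLKGexp} whose $L^\infty_t H^1$-norm and full admissible Strichartz norms are controlled by $\|v_0^R\|_{H^1}$.

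Third, I invoke finite speed of propagation for \eqref{NLKGexp}: since $(u_0, u_1)$ and $(u_0^R, u_1^R)$ coincide on $\{|x| \ge R\}$, the standard energy-in-backward-light-cones argument (valid at energy regularity via Sobolev's embedding $H^1 \hookrightarrow L^\infty$ in one dimension) gives $u^{\mathrm{loc}}(t,x) = u^R(t,x)$ on $\{|t| < T_0\} \cap \{|x| > R + |t|\}$. I then set
\[
u(t,x) := \begin{cases} u^{\mathrm{loc}}(t,x), & |t| < T_0, \\ u^R(t,x), & |x| > R + |t|, \end{cases}
\]
which is consistent on the overlap. To see that $\Omega$ is covered by these two regions, note that if $(t,x) \in \Omega$ with $|t| \ge T_0$ then $|t| < \eps(1+|x|)$ forces $|x| > |t|/\eps - 1$, and an elementary calculation yields $|x| > R + |t|$ as soon as $\eps < T_0/(T_0 + R + 1)$, determining the admissible $\eps$.

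Finally, the estimates split: Strichartz and $L^\infty_t(H^1 \times L^2)$ norms over $\Omega$ are dominated by the corresponding norms of $u^{\mathrm{loc}}$ on $(-T_0,T_0) \times \R$ (finite by local well-posedness and energy conservation) plus those of $u^R$ on $\R \times \R$ (finite and small by global small-data theory). For the decay \eqref{eq:boostsollim}, given large $R'$ I redo the construction with $R = R'/2$; for $|t| < \eps R'$ and $|x| > R'$ the point lies in $\{|x| > R + |t|\}$, so $u = u^{R'/2}$ there, and the tail integral is controlled by $\|v_0^{R'/2}\|_{H^1}^2 \to 0$. Uniqueness is inherited from Proposition \ref{prop:WP} on the slab together with finite speed applied within $\Omega$ (one verifies that backward cones from points of $\Omega$ remain inside $\Omega$). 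The main obstacle I anticipate is making the finite speed of propagation rigorous for the exponential nonlinearity at $H^1 \times L^2$ regularity: writing $w := u^{\mathrm{loc}} - u^R$, the difference satisfies a linear Klein-Gordon equation with coefficient $\int_0^1 \No'(u^R + s w)\, ds$, which is $L^\infty_{t,x}$-bounded on the relevant region via the 1D Sobolev embedding so that the usual light-cone energy estimate closes, but the exponential growth of $\No'$ means this step should be stated with care.
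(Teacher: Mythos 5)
Your proof is correct and follows essentially the same route as the paper's: local existence from Proposition \ref{prop:WP} on a time slab, a small-data global solution from spatially truncated data, gluing via finite speed of propagation, the same choice $\eps < T_0/(T_0+R+1)$, and the decay bound \eqref{eq:boostsollim} obtained by comparing $u$ to small-data solutions with increasingly large truncation radii. The only substantive difference is that you flag and sketch a justification of finite speed of propagation at $H^1\times L^2$ regularity for the exponential nonlinearity (via the $L^\infty$ bound from 1D Sobolev embedding), a point the paper treats as standard without comment; that addition is sound and, if anything, makes the argument more self-contained.
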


Although the proof for this lemma follows from a minor modification of that of Corollary 3.5 in \cite{KSV12}, we will use a similar argument later in the proof of Proposition \ref{prop:isogen}, hence we give a proof this lemma.

\begin{proof}
Proposition \ref{prop:WP} shows that there exist $T_0 = T_0(u_0,u_1) >0$ and a unique local solution $u$ to \eqref{NLKGexp} having finite Strichartz norms on $[-T_0,T_0] \times \R$.
Let $\varphi$ denote a smooth cutoff function with $\varphi (x) =1$ for $|x| \ge 1$ and $\varphi (x) =0$ for $|x| \le \frac{1}{2}$.
There exists $R_0 >0$ such that
\[
\int _{\R} \left\{ \Big| \varphi \Big( \frac{x}{R_0} \Big) u_1(x) \Big|^2 + \Big| \dx \Big[ \varphi \Big( \frac{x}{R_0} \Big) u_0(x) \Big] \Big|^2 + \Big| \varphi \Big( \frac{v}{R_0} \Big) u_0(x) \Big|^2 \right\} dx
\]
is sufficiently small.
Then, by Proposition \ref{prop:WP}, there is a global solution $\wt{u}$ to \eqref{NLKGexp} with $( \wt{u}(0,x), \wt{u}_t(0,x)) = \big( \varphi \big( \frac{x}{R_0} \big) u_0(x), \varphi \big( \frac{x}{R_0} \big) u_1(x) \big)$.
From uniqueness and finite speed of propagation, $\wt{u}$ is an extension of $u$ to $\{ (t,x) \colon |x|-|t|> R_0 \}$.
Choosing $\eps < \frac{T_0}{1+R_0+T_0}$, we get a solution $u$ to \eqref{NLKGexp} on $\Omega$.

In what follows, we show \eqref{eq:boostsollim}.
Let $\wt{u}^{\wt{R}}$ be a solution \eqref{NLKGexp} with $(\wt{u}^{\wt{R}} (0,x), \wt{u}_t^{\wt{R}} (0,x)) = \big( \varphi \big( \frac{x}{\wt{R}} \big) u_0(x), \varphi \big( \frac{x}{\wt{R}} \big) u_1(x) \big)$.
Then, from small data theory in Proposition \ref{prop:WP},
\[
\lim _{\wt{R} \rightarrow \infty} \left( \| \wt{u}^{\wt{R}} \| _{L_t^{\infty} H_x^1} + \| \dt \wt{u}^{\wt{R}} \| _{L_t^{\infty} L_x^2} \right) =0.
\]
We note that $u$ and $\wt{u}^{\wt{R}}$ agree on $|x| - |t| > \wt{R}$.
By taking $\wt{R} < (1- \eps )R$ for $R>0$, we get $\{ |t| < \eps R, \, |x| >R \} \subset \{ |x| - |t| > \wt{R} \}$ and \eqref{eq:boostsollim}.
\end{proof}

\begin{rmk} \label{rmk:boostsol}
From the above proof, we find the following:
\begin{itemize}
\item Since a global solution exists, we may take any $0< \eps < 1$.
\item We can construct a solution $u$ to the linear Klein-Gordon equation with initial data $(u_0,u_1)$ satisfying \eqref{eq:boostsollim}.
\end{itemize}
\end{rmk}

Let $P(u(t))$ denote the momentum defined by
\[
P(u(t)) := - \int _{\R} \dt u(t,x) \dx u(t,x) dx.
\]

\begin{cor} \label{cor:boostsol}
Under the same assumption as in Lemma \ref{lem:boostsol}, for $\frac{|\nu|}{\lr{\nu}} <\eps$, we have the following:
\begin{enumerate}[label=\rm{(\roman*)}]
\item $(u \circ L_{\nu}) (t,x)$ is a strong solution to \eqref{NLKGexp} on $(-\eps , \eps ) \times \R$. \label{cor:boostsols1}
\item $\nu \mapsto (u \circ L_{\nu}, \dt [u \circ L_{\nu}]) (0, \cdot )$ is continuous with values in $H^1(\R ) \times L^2(\R )$. \label{cor:boostsols2}
\item Einstein's relation holds:
\[
(E(u \circ L_{\nu}, \dt[u \circ L_{\nu}]), P(u \circ L_{\nu})) = L_{\nu}^{-1} ( E(u,u_t), P(u)) . 
\]
In particular,
\[
E(u \circ L_{\nu}, \dt[u \circ L_{\nu}])^2 - P(u \circ L_{\nu})^2
\]
is independent of $t$ and $\nu$. \label{cor:boostsols3}
\end{enumerate}
\end{cor}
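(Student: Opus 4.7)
The plan is to prove the three parts in sequence, leveraging the Lorentz-boost symmetry from Section 2.2 and the local solution $u$ on $\Omega$ supplied by Lemma \ref{lem:boostsol}.

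For part (i), I first check that the hypothesis $|\nu|/\lr{\nu} < \eps$ makes $L_\nu$ send $(-\eps, \eps) \times \R$ into $\Omega$. Writing $(\wt t, \wt x) = L_\nu(t, x)$ and $\beta = \nu/\lr{\nu}$, a short calculation using $\lr{\nu}^2 - \nu^2 = 1$ gives $\wt t + \beta \wt x = t/\lr{\nu}$, so that
\[
|\wt t| \le |\beta||\wt x| + |t|/\lr{\nu} < \eps|\wt x| + \eps
\]
for $|t| < \eps$; this is exactly $(\wt t, \wt x) \in \Omega$. The Lorentz invariance of \eqref{NLKGexp} recorded in Section 2.2 then yields that $u \circ L_\nu$ is a strong solution on $(-\eps, \eps) \times \R$, and the $L^\infty_t H^1_x \cap W^{1,\infty}_t L^2_x$ regularity from Lemma \ref{lem:boostsol} is transferred by the change of variables.

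For part (ii), the data at new time zero have the explicit form
\[
(u \circ L_\nu)(0, x) = u(-\nu x, \lr{\nu} x), \quad \partial_t(u \circ L_\nu)(0, x) = \lr{\nu}\, u_t(-\nu x, \lr{\nu} x) - \nu\, u_x(-\nu x, \lr{\nu} x).
\]
When $(u_0, u_1)$ is smooth and compactly supported, $u$ is classical on $\Omega$ and dominated convergence yields continuity of $\nu \mapsto$ (these data) in $H^1(\R) \times L^2(\R)$. For general $(u_0, u_1) \in H^1 \times L^2$ I would approximate by smooth data, use Proposition \ref{prop:stability} to control the approximating local solutions uniformly on $\Omega$ in the energy norm, and pass to the limit. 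Uniformity in $\nu$ on bounded sets is secured by the boost bound $\lesssim \lr{\nu}^{|2s-1|}$ of Lemma \ref{LB}(iv), while the group law of Lemma \ref{LB}(vi) reduces continuity at any $\nu_0$ to continuity at $\nu = 0$.

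For part (iii), I would introduce the stress-energy tensor of \eqref{NLKGexp},
\[
T^{00} = \tfrac{1}{2}\bigl(u_t^2 + u_x^2 + u^2 + \Noe(u)\bigr), \quad T^{01} = T^{10} = -u_t u_x, \quad T^{11} = \tfrac{1}{2}\bigl(u_t^2 + u_x^2 - u^2 - \Noe(u)\bigr),
\]
and verify the conservation laws $\partial_\mu T^{\mu 0} = \partial_\mu T^{\mu 1} = 0$ using $\Noe'(u) = 2\No(u)$. With $\beta = \nu/\lr{\nu}$, expanding $E(u \circ L_\nu, \partial_t[u \circ L_\nu])$ and $P(u \circ L_\nu)$ at $t = 0$ and changing variable $y = \lr{\nu}x$ writes both as linear combinations of the three slice integrals $A_\nu, B_\nu, C_\nu$ of $T^{00}, T^{01}, T^{11}$ along the tilted slice $\{\tau = -\beta y\}$. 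Integrating each conservation law over the slab between $\{\tau = 0\}$ and this tilted slice, and using \eqref{eq:boostsollim} to kill the vertical boundary terms as $|y| \to \infty$, gives the pair of identities $A_\nu + \beta B_\nu = E(u, u_t)$ and $B_\nu + \beta C_\nu = P(u)$. Substituting these in and simplifying by $\lr{\nu}\beta = \nu$ produces
\[
E(u \circ L_\nu, \partial_t[u \circ L_\nu]) = \lr{\nu}\, E(u, u_t) + \nu\, P(u), \quad P(u \circ L_\nu) = \lr{\nu}\, P(u) + \nu\, E(u, u_t),
\]
which is exactly $L_\nu^{-1}(E(u, u_t), P(u))$ under the matrix reading of $L_\nu$; the invariance of $E^2 - P^2$ then follows from $\det L_\nu = 1$. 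The main obstacle I expect is part (ii), since naive trace theorems would lose half a derivative when restricting a spacetime function to the tilted slice, so the $H^1 \times L^2$ continuity must be threaded together from the Lorentz-boost structure (which preserves energy-type quantities up to a finite factor in $\lr{\nu}$) with a density/stability argument in which approximating solutions must be controlled uniformly on all of $\Omega$ as $\nu$ varies.
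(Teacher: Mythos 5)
Your treatment of parts (i) and (iii) is essentially the paper's: the inclusion $L_\nu((-\eps,\eps)\times\R)\subset\Omega$ via $\wt t+\beta\wt x=t/\lr\nu$, and for (iii) the nonlinear stress-energy tensor $\mathcal{T}$ of \eqref{NLKGexp}, whose divergence-free combinations $\mathfrak{p}^\mu=\lr\nu\mathcal{T}^{0\mu}+\nu\mathcal{T}^{1\mu}$ and $\mathfrak{q}^\mu=\nu\mathcal{T}^{0\mu}+\lr\nu\mathcal{T}^{1\mu}$ are integrated across the slab between $\{\tau=0\}$ and $L_\nu(t,\R)$, with \eqref{eq:boostsollim} killing the lateral boundary flux. (One small caveat on (i): in the paper, ``strong solution'' is \emph{concluded} from the continuity proved in part (ii); the Lorentz invariance alone only gives a distribution solution. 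And in (iii) the invariance of $E^2-P^2$ comes from $L_\nu$ being a hyperbolic rotation with $\lr\nu^2-\nu^2=1$, not merely from $\det L_\nu=1$.)

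Part (ii) is where you diverge, and where there is a genuine gap. You propose density in $H^1\times L^2$ plus the stability theory (Proposition \ref{prop:stability}) to control the approximating solutions ``uniformly on $\Omega$ in the energy norm.'' But Proposition \ref{prop:stability} only controls $L^\infty_t H^s_x$ and $L^6_t W^{s-1/2,6}_x$ norms on a time \emph{interval} $I\times\R$; it says nothing about traces on tilted spacelike slices $L_\nu(t,\R)$, and a naive Sobolev trace onto such a line loses half a derivative. Likewise, Lemma~\ref{LB}\ref{LB-inn} bounds the \emph{Fourier multiplier} $\Lo_\nu$, not the composition $u\circ L_\nu$ for a nonlinear $u$. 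The paper's actual resolution is the linear/nonlinear decomposition $u=u^{\mathrm{lin}}+\wt u$: the boosted linear part $u^{\mathrm{lin}}\circ L_\nu$ is rewritten via \eqref{LB-linear} in terms of $e^{\pm it\fd}\Lo_\nu^{\pm1}$ acting on the data, whose $\nu$-continuity in $H^1\times L^2$ is read off on the Fourier side using Lemma~\ref{LB}\ref{LB-Fou}; while for the residual $\wt u$ (which has \emph{zero} Cauchy data at $t=0$), a quadratic stress-energy tensor $\wt{\mathcal{T}}$ and Green's theorem reduce the slice-norm comparison to the bulk integral $\iint_{\Omega_{t,\nu}}|\mathcal{N}(u)\,\nabla_{s,y}\wt u|$, which vanishes as $\Omega_{t,\nu}\to\emptyset$. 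Your closing paragraph actually identifies this obstruction correctly, but the mechanism you propose to thread around it does not supply the needed uniform slice control; to make a density argument rigorous you would in any case need an energy-flux estimate on tilted slices, i.e.\ essentially the paper's stress-energy argument.
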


\begin{proof}
Let $(\wt{t}, \wt{x}) = L_{\nu} (t,x) = (\lr{\nu} t - \nu x, \lr{\nu}x - \nu t)$.
From $t = \frac{\wt{t} + \nu x}{\lr{\nu}}$, for $\frac{|\nu |}{\lr{\nu}} < \eps$, a calculation shows
\[
|\wt{t} | - \eps | \wt{x} |
< |\lr{\nu} t - \nu x| - \left| \nu x - \frac{\nu ^2}{\lr{\nu}} t \right|
\le \frac{|t|}{\lr{\nu}} .
\]
Thus, we have
\[
L_{\nu} ((-\eps , \eps ) \times \R ) \subset \Omega .
\]
Since $L_{\nu}$ preserves spacetime volume, by Lemma \ref{lem:boostsol} and Sobolev's embedding,
\begin{align*}
\int _{-\eps}^{\eps} \int _{\R} \wt{\mathcal{N}}(u \circ L_{\nu} ) (t,x) dx dt
& \le \iint _{\Omega} \Noe(u(t,x)) dx dt \\
& \le \| u \|_{L_{t,x}^6(\Omega)}^6 \sum _{l=3}^{\infty} \frac{1}{l!} \| u \| _{L_t^{\infty} H_x^1(\Omega)}^{2l-6} < \infty .
\end{align*}
From $\dt^2 (u \circ L_{\nu}) - \dx^2 (u \circ L_{\nu}) = (u_{tt} -u_{xx}) \circ L_{\nu}$, this shows that $u \circ L_{\nu}$ is a distribution solution to \eqref{NLKGexp} in $(-\eps , \eps ) \times \R$.

We will prove that
\[
(t,\nu ) \mapsto (u \circ L_{\nu}, \dt [u \circ L_{\nu}]) (t, \cdot )
\]
is a continuous function from $\big\{ |t| < \eps , \, \frac{|\nu |}{\lr{\nu}} < \eps \big\}$ to $H_x^1(\R ) \times L_x^2(\R )$.
If we obtain this continuity, then $u \circ L_{\nu}$ belongs to $C((-\eps ,\eps ) ; H^1(\R ) \times L^2(\R ))$, i.e., it is a strong solution, and we get \ref{cor:boostsols1} and \ref{cor:boostsols2}.

Let $u^{\text{lin}}$ denote the linear solution to the Klein-Gordon equation with the initial data $(u_0,u_1)$ at zero, namely
\[
u^{\text{lin}} (t,x) := \cos (t \fd ) u_0 (x) + \fd ^{-1} \sin (t \fd ) u_1(x).
\]
Let $\wt{u} = u - u^{\text{lin}}$ denote the difference.
From \ref{LB-lin} in Lemma \ref{LB},
\begin{equation} \label{LB-linear}
u^{\text{lin}} \circ L_{\nu}
= \frac{1}{2} \left\{ e^{it \fd } \Lo _{\nu}^{-1} + e^{-it \fd } \Lo _{\nu} \right\} u_0  + \frac{1}{2i } \left\{ e^{it \fd } \Lo _{\nu}^{-1} - e^{-it \fd } \Lo _{\nu} \right\} \fd ^{-1} u_1 .
\end{equation}
By \ref{LB-Fou} in Lemma \ref{LB}, the mapping
\[
(t,\nu ) \mapsto (u^{\text{lin}} \circ L_{\nu}, \dt [u^{\text{lin}} \circ L_{\nu}]) (t, \cdot )
\]
is continuous from $\big\{ (t, \nu ) \colon |t| < \eps , \, \frac{|\nu |}{\lr{\nu}} < \eps \big\}$ to $H_x^1(\R ) \times L_x^2(\R )$.
Next, we consider the effect of the Lorentz boosts on $\wt{u}$.
Firstly, we note that $\wt{u}$ satisfies
\[
\wt{u}_{tt} - \wt{u}_{xx} + \wt{u} = - \mathcal{N}(u) , \quad
\wt{u} (0,x) = \wt{u}_t (0,x) =0.
\]
By Lemma \ref{Str}, Lemma \ref{lem:boostsol}, and $\wt{u} = u - u^{\text{lin}}$, we get
\begin{equation} \label{eq:boostSt}
\| \wt{u} \| _{L_t^q L_x^r (\Omega )} + \| \dt \wt{u} \| _{L_t^{\infty} L_x^2 (\Omega )} + \| \dx \wt{u} \| _{L_t^{\infty} L_x^2 (\Omega )} < \infty
\end{equation}
for each admissible pair $(q,r)$.
From Lemma \ref{lem:boostsol} and Remark \ref{rmk:boostsol}, we also have
\begin{equation} \label{eq:boostsolest}
\lim _{R \rightarrow \infty} \sup _{|t| < \eps R} \int _{|x| >R} \left\{ |\wt{u}_t(t,x)|^2 + |\wt{u}_x(t,x)|^2+ |\wt{u}(t,x)|^2 \right\} dx =0.
\end{equation}
Let $\wt{\mathcal{T}}$ be the stress-energy tensor for the linear Klein-Gordon equation with respect to $\wt{u}$, which has the components
\begin{align*}
& \wt{\mathcal{T}}^{00} := \frac{1}{2} |\wt{u}_t|^2 + \frac{1}{2} |\wt{u}_x|^2 + \frac{1}{2} |\wt{u}|^2, \quad
\wt{\mathcal{T}}^{01} := \wt{\mathcal{T}} ^{10} = - \wt{u}_t \wt{u}_x, \\
& \wt{\mathcal{T}}^{11} := |\wt{u}_x|^2 - \wt{\mathcal{T}}^{00} + |\wt{u}_t|^2 = \frac{1}{2} |\wt{u}_t|^2 + \frac{1}{2} |\wt{u}_x|^2 - \frac{1}{2} |\wt{u}|^2 .
\end{align*}
We also define $\wt{\mathfrak{p}} = (\wt{\mathfrak{p}}^0, \wt{\mathfrak{p}}^1)$ by
\[
\wt{\mathfrak{p}}^0 := \lr{\nu} \wt{\mathcal{T}}^{00} + \nu \wt{\mathcal{T}}^{10} , \quad
\wt{\mathfrak{p}}^1 := \lr{\nu} \wt{\mathcal{T}}^{01} + \nu \wt{\mathcal{T}}^{11} .
\]
From $\partial _{t} = \lr{\nu} \partial _{\wt{t}} - \nu \partial _{\wt{x}}$ and $\partial _{x} = \lr{\nu} \partial _{\wt{x}} - \nu \partial _{\wt{t}}$,
\begin{align*}
& (\lr{\nu} \wt{\mathfrak{p}}^0 + \nu \wt{\mathfrak{p}}^1) \circ L_{\nu} \\
& = \lr{\nu}^2 \wt{\mathcal{T}}^{00} \circ L_{\nu} + 2\lr{\nu} \nu \wt{\mathcal{T}}^{01} \circ L_{\nu} + \nu ^2 \wt{\mathcal{T}}^{11} \circ L_{\nu} \\
& = \frac{\lr{\nu}^2+\nu ^2}{2} \left[ (\wt{u}_t \circ L_{\nu} )^2 + (\wt{u}_x \circ L_{\nu} )^2 \right] - 2 \lr{\nu} \nu (\wt{u}_t \circ L_{\nu}) (\wt{u}_x \circ L_{\nu}) + \frac{1}{2} (\wt{u} \circ L_{\nu} )^2 \\
& = \frac{1}{2} \left[ \{ \dt (\wt{u} \circ L_{\nu} ) \} ^2 + \{ \dx (\wt{u} \circ L_{\nu} ) \} ^2 \right] + \frac{1}{2} (\wt{u} \circ L_{\nu} )^2 .
\end{align*}
Since
\[
L_{\nu} (t, \R ) = \{ (\lr{\nu}t - \nu x , \lr{\nu} x - \nu t) \colon x \in \R \}
= \left\{ \left(\frac{t-\nu y}{\lr{\nu}} , y \right) \colon y \in \R \right\} ,
\]
a tangent vector of $L_{\nu} (t, \R )$ is $(-\nu,\lr{\nu})$.
Hence,
\begin{equation} \label{eq:stress-tens}
\begin{aligned}
\int _{L_{\nu} (t, \R )} (-\wt{\mathfrak{p}}^1dx + \wt{\mathfrak{p}}^0 dy)
& = \int _{\R} \left( \lr{\nu} \wt{\mathfrak{p}}^0 + \nu \wt{\mathfrak{p}}^1 \right) \circ L_{\nu} (t,x) dx \\
& = \frac{1}{2} \int _{\R} \left[ | \dt (\wt{u} \circ L_{\nu} ) | ^2 + | \dx (\wt{u} \circ L_{\nu} ) | ^2 +  |\wt{u} \circ L_{\nu} |^2 \right] (t,x) dx.
\end{aligned}
\end{equation}
For fixed $(t_0,\nu _0)$ with $|t_0|< \eps$ and $\frac{|\nu _0|}{\lr{\nu _0}} < \eps$, we set
\[
\Omega _{t, \nu} := \left\{ (s,y) \colon \frac{t_0- \nu _0 y}{\lr{\nu _0}} <s< \frac{t- \nu y}{\lr{\nu}} \right\} \cup \left\{ (s,y) \colon \frac{t- \nu y}{\lr{\nu}} < s< \frac{t_0- \nu _0 y}{\lr{\nu _0}} \right\}
\]
for $(t, \nu ) \in \R \times \R$.
Then,
\[
\partial \Omega _{t, \nu} = L_{\nu} ( t, \R ) \cup L_{\nu _0} (t_0, \R ) .
\]
From $|t_0| < \eps$ and $\frac{|\nu _0|}{\lr{\nu _0}}< \eps$, for $|t-t_0| \ll 1$ and $|\nu _0 - \nu| \ll 1$, we have $\Omega _{t, \nu} \subset \Omega$.
Indeed, for $(s,y) \in \Omega _{t, \nu}$,
\[
|s| - \eps |y| < \max \left\{ \frac{|t_0-\nu _0 y| - \nu _0|y|}{\lr{\nu _0}} , \frac{|t - \nu y| - \nu |y|}{\lr{\nu}} \right\} < \max (|t_0|, |t|) < \eps .
\]

By using the mollification technique, we may assume that $\wt{\mathfrak{p}}$ is smooth.
We apply Green's theorem to $\wt{\mathfrak{p}}$ on the region $\Omega _{t, \nu}$.
For $R>0$, let $\psi _{R} (s,y) = \sigma ( \frac{|s|+|y|}{R})$, where $\sigma$ is the cut-off function defined in \S \ref{S:notation}.
Then, by \eqref{eq:boostsolest} and \eqref{eq:stress-tens},
\begin{align*}
& \bigg| \frac{1}{2} \int _{\R} \left[ | \dt (\wt{u} \circ L_{\nu} ) | ^2 + | \dx (\wt{u} \circ L_{\nu} ) | ^2 +  |\wt{u} \circ L_{\nu} |^2 \right] (t,x) dx \\
& \quad - \frac{1}{2} \int _{\R} \left[ | \dt (\wt{u} \circ L_{\nu _0} ) | ^2 + | \dx (\wt{u} \circ L_{\nu _0} ) | ^2 +  |\wt{u} \circ L_{\nu _0} |^2 \right] (t_0 ,x) dx \bigg| \\
& = \bigg| \int _{L_{\nu} (t, \R )}(-\wt{\mathfrak{p}}^1dx + \wt{\mathfrak{p}}^0 dy) - \int _{L_{\nu _0} (t_0, \R )} (-\wt{\mathfrak{p}}^1dx + \wt{\mathfrak{p}}^0 dy) \bigg| \\
& = \bigg| \int _{\partial \Omega _{t, \nu}} (-\wt{\mathfrak{p}}^1dx + \wt{\mathfrak{p}}^0 dy) \bigg| \\
& = \bigg| \lim _{R \rightarrow \infty} \int _{\partial \Omega _{t, \nu}} (-\wt{\mathfrak{p}}^1 \psi_R dx + \wt{\mathfrak{p}}^0 \psi_R dy) \bigg| \\
& \le \limsup _{R \rightarrow \infty} \int _{\Omega _{t, \nu}} \left\{ | \psi _R \nabla _{s,y} \cdot \wt{\mathfrak{p}}| (s,y) + | \wt{\mathfrak{p}} \cdot \nabla _{s,y} \psi _{R}| (s,y) \right\} dy ds \\
& \le \int _{\Omega _{t, \nu}} | (\nabla _{s,y} \cdot \wt{\mathfrak{p}}) (s,y)|  dy ds + \limsup _{R \rightarrow \infty} \frac{1}{R} \int _{-\eps R}^{\eps R} \int _{|y| \sim R} | \lr{\nabla_{s,y}} \wt{u}(s,y) |^2 dy ds \\
& \le \int _{\Omega _{t, \nu}} | (\nabla _{s,y} \cdot \wt{\mathfrak{p}}) (s,y)|  dy ds .
\end{align*}
Since
\begin{align*}
\nabla _{t,x} \cdot \wt{\mathfrak{p}}
& = \dt \wt{\mathfrak{p}}^0 + \dx \wt{\mathfrak{p}}^1 \\
& =
\lr{\nu} (\wt{u}_{tt} \wt{u}_t + \wt{u}_{tx}\wt{u}_x + \wt{u}_t\wt{u}) - \nu (\wt{u}_{tt} \wt{u}_{x} + \wt{u}_t \wt{u}_{tx}) - \lr{\nu} (\wt{u}_{tx} \wt{u}_{x} + \wt{u}_t \wt{u}_{xx}) \\
& \hspace*{150pt} + \nu (\wt{u}_{tx} \wt{u}_t + \wt{u}_{xx} \wt{u}_x - \wt{u}_x \wt{u}) \\
& = - \mathcal{N}(u) (\lr{\nu} \wt{u}_t - \nu \wt{u}_x) ,
\end{align*}
by H\"older's inequality, we get
\begin{align*}
\iint _{\Omega _{t, \nu}} |\nabla _{s,y} \cdot \wt{\mathfrak{p}}| dy ds
& \le \lr{\nu} \iint _{\Omega _{t, \nu}} |\mathcal{N}(u(s,y)) \nabla _{s,y} \wt{u} (s,y)| dy ds \\
& \le \lr{\nu} \| u \| _{L_s^5 L_y^{10} (\Omega _{t, \nu})}^5 \| \nabla _{s,y} \wt{u} \| _{L_s^{\infty} L_y^2 (\Omega )} \sum _{l=2}^{\infty} \frac{1}{l!} \| u \| _{L_s^{\infty} H_y^1(\Omega)}^{2l-4} .
\end{align*}
From Lemma \ref{lem:boostsol}, \eqref{eq:boostSt}, and Lebesgue's dominated convergence theorem, this quantity on the right hand side as above goes to zero as $(t, \nu ) \rightarrow (t_0, \nu _0)$ because $\Omega _{t,\nu} \rightarrow \emptyset$.
This shows the desired continuity.

For the proof of \ref{cor:boostsols3}, we use the stress-energy tensor associated to the nonlinear Klein-Gordon equation:
\begin{align*}
& \mathcal{T}^{00} := \frac{1}{2} |u_t|^2 + \frac{1}{2} |u_x|^2 + \frac{1}{2} |u|^2+ \frac{1}{2} \Noe (u), \quad \mathcal{T}^{01} := \mathcal{T} ^{10} = - u_t u_x, \\
&\mathcal{T}^{11} := u_x^2 - \mathcal{T}^{00} + |u_t|^2 = \frac{1}{2} |u_t|^2 + \frac{1}{2} |u_x|^2 - \frac{1}{2} |u|^2 -\frac{1}{2} \Noe (u) .
\end{align*}
Since $u$ is a solution to \eqref{NLKGexp},
\[
\dt \mathcal{T}^{\alpha 0} + \dx \mathcal{T}^{\alpha 1} =0
\]
for all $\alpha \in \{ 0,1 \}$.
We also define $\mathfrak{p} = (\mathfrak{p}^0, \mathfrak{p}^1)$ and $\mathfrak{q} = (\mathfrak{q}^0, \mathfrak{q}^1)$ by
\begin{align*}
& \mathfrak{p}^0 := \lr{\nu} \mathcal{T}^{00} + \nu \mathcal{T}^{10} , \quad
\mathfrak{p}^1 := \lr{\nu} \mathcal{T}^{01} + \nu \mathcal{T}^{11} , \\
& \mathfrak{q}^0 := \nu \mathcal{T}^{00} + \lr{\nu} \mathcal{T}^{10} , \quad
\mathfrak{q}^1 := \nu \mathcal{T}^{01} + \lr{\nu} \mathcal{T}^{11} .
\end{align*}
Then,
\begin{align}
& \nabla _{t,x} \cdot \mathfrak{p}
= \lr{\nu} (\dt \mathcal{T}^{00} + \dx \mathcal{T}^{01}) + \nu ( \dt \mathcal{T}^{10} + \dx \mathcal{T}^{11}) =0, \label{eq:nlten} \\
& \nabla _{t,x} \cdot \mathfrak{q}
= \nu (\dt \mathcal{T}^{00} + \dx \mathcal{T}^{01}) + \lr{\nu} ( \dt \mathcal{T}^{10} + \dx \mathcal{T}^{11}) =0. \notag
\end{align}
Put
\[
\Xi _{t, \nu} := \left\{ (s,y) \colon 0 <s< \frac{t- \nu y}{\lr{\nu}} \right\} \cup \left\{ (s,y) \colon \frac{t- \nu y}{\lr{\nu}} < s< 0 \right\} .
\]
Then, $\Xi _{t, \nu} \subset \Omega$ for $|t|< \eps$ and $\frac{|\nu |}{\lr{\nu}} < \eps$,
\[
\partial \Xi _{t, \nu} = L_{\nu} ( t, \R ) \cup ( \{ 0 \} \times \R ) .
\]
Hence, the same calculation as in \eqref{eq:stress-tens} yields
\begin{align*}
\int _{\partial \Xi _{t, \nu}} (-\mathfrak{p}^1 dx + \mathfrak{p}^0 dy)
& = \int _{\R} \left( \lr{\nu} \mathfrak{p}^0 + \nu \mathfrak{p}^1 \right) \circ L_{\nu} (t,x) dx - \int _{\R} \mathfrak{p}^0 (t,x) dx \\
& = E(u \circ L_{\nu}, \dt [u \circ L_{\nu}]) -\left\{ \lr{\nu} E(u,u_t) +\nu P(u) \right\} , \\
\int _{\partial \Xi _{t, \nu}} (-\mathfrak{q}^1dx + \mathfrak{q}^0 dy)
& = \int _{\R} \left( \lr{\nu} \mathfrak{q}^0 + \nu \mathfrak{q}^1 \right) \circ L_{\nu} (t,x) dx - \int _{\R} \mathfrak{q}^0 (t,x) dx \\
& = P(u \circ L_{\nu}) - \left\{ \nu E(u,u_t) + \lr{\nu} P(u) \right\} .
\end{align*}
Applying Green's theorem, by \eqref{eq:boostsolest} and \eqref{eq:nlten}, we obtain
\begin{align*}
& | E(u \circ L_{\nu}, \dt [u \circ L_{\nu}]) -\left\{ \lr{\nu} E(u,u_t) +\nu P(u) \right\} | \\
& = \bigg| \lim _{R \rightarrow \infty} \int _{\partial \Xi _{t, \nu}} (-\mathfrak{p}^1 \psi_R dx + \mathfrak{p}^0 \psi_R dy) \bigg|
\le \limsup _{R \rightarrow \infty} \int _{\Xi _{t, \nu}} |\mathfrak{p} \cdot \nabla_{s,y} \psi _R|(s,y) dy ds =0 .
\end{align*}
Similarly, we have
\[
P(u \circ L_{\nu}) = \nu E(u,u_t) + \lr{\nu} P(u) .
\]
This completes the proof.
\end{proof}

\section{Refinements of the Strichartz inequality}

The goal of this section is to show an inverse Strichartz inequality (Theorem \ref{IS}), which is an essential ingredient in the concentration compactness argument.
For the exponential-type nonlinearity, we have to consider the initial data in $H^s(\R) \times H^{s-1}(\R)$ with $s>\frac{1}{2}$.
Accordingly, we need to consider $S_{\R} (\fd^{s-1/2} f)$ instead of $S_{\R} (f)$.

\begin{lem} \label{AD}
For $f \in H^{1/2}(\R)$, 
\[
\| e^{-it \fd } f \| _{L^6_{t,x}}^2 \lesssim \sup _{N \in 2^{\N _0}} \| e^{-it \fd } P_N f \| _{L^6_{t,x}} \| f \| _{H^{1/2}_x}.
\]
\end{lem}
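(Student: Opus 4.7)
Set $u := e^{-it\fd}f$, $u_N := e^{-it\fd}P_Nf$, and $u_{<M} := P_{<M}u = \sum_{N<M}u_N$. My plan is a bilinear refinement of the Strichartz estimate via Littlewood--Paley, analogous to Lemma~2.4 of \cite{KSV12} in two dimensions.

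I would start from the identity $\|u\|_{L^6_{t,x}}^2 = \||u|^2\|_{L^3_{t,x}}$ and the Littlewood--Paley expansion
\[
|u|^2 = \sum_N |u_N|^2 + 2\operatorname{Re}\sum_{M} u_{<M}\bar u_M.
\]
For the off-diagonal sum, the essential observation is that the space-time Fourier support of each piece $u_{<M}\bar u_M$ is essentially concentrated in the dyadic shell $\{|\xi|\sim M\}$ (the regime $N \sim M$ can be reorganized into the diagonal term). Almost orthogonality of these shells, together with the Littlewood--Paley square function theorem in $L^3_{t,x}$, then yields
\[
\Big\|\sum_M u_{<M}\bar u_M\Big\|_{L^3_{t,x}}^2 \lesssim \sum_M \|u_{<M}\bar u_M\|_{L^3_{t,x}}^2.
\]
Applying H\"older's inequality $L^6_{t,x}\cdot L^6_{t,x}\hookrightarrow L^3_{t,x}$, using the $L^6$-boundedness of $P_{<M}$ (so that $\|u_{<M}\|_{L^6}\lesssim \|u\|_{L^6}$), and extracting the supremum from one factor,
\[
\sum_M \|u_{<M}\|_{L^6}^2 \|u_M\|_{L^6}^2 \lesssim \|u\|_{L^6_{t,x}}^2 \cdot \sup_{M'}\|u_{M'}\|_{L^6_{t,x}} \cdot \sum_M \|u_M\|_{L^6_{t,x}}.
\]

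The remaining dyadic sum is then to be controlled by the Strichartz estimate (Lemma~\ref{Str}) at the admissible pair $(6,6)$, namely $\|u_M\|_{L^6_{t,x}} \lesssim \|\fd^{1/2}P_Mf\|_{L^2_x}$, combined with Cauchy--Schwarz and the Plancherel identification $\|f\|_{H^{1/2}_x}^2 \sim \sum_M\|\fd^{1/2}P_Mf\|_{L^2_x}^2$, organized so that only $\ell^2$-type sums appear. The diagonal contribution is handled in the same way, bounding $\sum_N\|u_N\|_{L^6}^2 \lesssim \sup_M\|u_M\|_{L^6}\cdot\|f\|_{H^{1/2}}$. Combining these estimates leads to a schematic inequality $\|u\|_{L^6}^2 \lesssim \|u\|_{L^6}\sqrt{S\,H} + S\,H$ with $S:=\sup_N\|u_N\|_{L^6_{t,x}}$ and $H:=\|f\|_{H^{1/2}_x}$, and Young's inequality $ab\le\tfrac12 a^2 + \tfrac12 b^2$ then closes the argument.

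\textbf{Main obstacle.} The delicate point is the endpoint nature of the Strichartz exponent $(6,6)$ in one dimension, which corresponds to the critical Sobolev regularity $s=\tfrac12$. A naive triangle-inequality bound of the form $\sum_M \|u_M\|_{L^6_{t,x}} \lesssim \|f\|_{H^{1/2}_x}$ fails by a logarithmic factor. The argument therefore has to be organized so that the single extracted factor of $\sup_M\|u_M\|_{L^6_{t,x}}$ precisely compensates for the loss between $\ell^1$ and $\ell^2$ summation over dyadic scales, leaving only $\ell^2$-type sums (which are controlled by $\|f\|_{H^{1/2}}$ via Plancherel). This is what forces the bilinear/square-function step to come \emph{before} the extraction of the supremum, rather than after.
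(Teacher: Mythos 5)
You have correctly identified the main obstacle, but your sketch does not actually resolve it, and I do not think the bilinear high--low decomposition can be made to produce the refinement. After the square function in $L^3_{t,x}$ and H\"older, your off-diagonal term is bounded by $\|u\|_{L^6}(\sum_M\|u_M\|_{L^6}^2)^{1/2}$; without extracting a supremum this closes only to the plain Strichartz bound $\|u\|_{L^6}\lesssim\|f\|_{H^{1/2}}$, and extracting one factor of $S:=\sup_N\|u_N\|_{L^6}$ leaves $S^{1/2}(\sum_M\|u_M\|_{L^6})^{1/2}$ --- an $\ell^1$-sum over dyadic scales. Exactly the same issue arises on the diagonal, where $\sum_N\|u_N\|_{L^6}^2\le S\sum_N\|u_N\|_{L^6}$ is again $S$ times an $\ell^1$-sum. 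Neither $\ell^1$-sum is controlled by $H:=\|f\|_{H^{1/2}}$, which is an $\ell^2$ quantity, so the schematic inequality $\|u\|^2\lesssim\|u\|\sqrt{SH}+SH$ is not what your computations deliver. Your proposed remedy --- doing the square-function step before the supremum extraction --- is already the order you followed, so it does not address the gap.

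The paper's proof works with $\|u\|_{L^6}^6$ rather than $\|u\|_{L^6}^2$: it applies the Littlewood--Paley square function estimate and then expands the resulting cube $\|\sum_N|u_N|^2\|_{L^3}^3$ into a \emph{trilinear} dyadic sum over $N_1\le N_2\le N_3$. Each of the six factors $|u_{N_j}|$ is then placed in a deliberately chosen Strichartz space: one copy from each $N_j$ in $L^6_{t,x}$ (these three factors yield $\sup_K\|e^{-it\fd}P_Kf\|_{L^6}^3$), while the remaining three go into the admissible pairs $L^5_tL^{10}_x$ (for $N_1$) and $L^{20/3}_tL^5_x$ (for $N_2,N_3$). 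By Lemma~\ref{Str} these cost $H^{3/5}$ and $H^{9/20}$ respectively, so the lowest frequency pays $N_1^{1/10}$ more than $H^{1/2}$ and the higher ones $N^{-1/20}$ less; the resulting off-diagonal weight $(N_1/N_2)^{1/10}$ is precisely what makes Schur's test sum. The bilinear high--low decomposition you imported from the two-dimensional $L^4$ estimate of \cite{KSV12} works there because Plancherel in $L^2_{t,x}$ supplies genuine orthogonality; at the one-dimensional $L^6$ endpoint there is no Plancherel, and the trilinear expansion with mixed admissible pairs is what replaces it.
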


\begin{proof}
We apply the Littlewood-Paley square function estimate and H\"older's inequality to have
\begin{align*}
& \| e^{-it \fd } f \| _{L^6_{t,x}}^6 \\
& \sim \bigg\| \bigg( \sum _{N \in 2^{\N _0}} |e^{-it \fd } P_N f|^2 \bigg) ^{1/2} \bigg\| _{L_{t,x}^6}^6 \\
& \lesssim \sum _{N_1,N_2,N_3 \in 2^{\N _0}} \int _{\R} \int _{\R} \prod _{j=1}^3 |e^{-it \fd } P_{N_j} f|^2 dx dt \\
& \lesssim \sum _{\substack{N_1,N_2,N_3 \in 2^{\N _0} \\ N_1 \le N_2 \le N_3}} \left( \prod _{j=1}^3 \| e^{-it\fd } P_{N_j} f \| _{L_{t,x}^6} \right) \| e^{-it\fd } P_{N_1} f \| _{L_t^5L_x^{10}} \\
& \hspace*{100pt} \times \prod _{k=2,3} \| e^{-it\fd } P_{N_k} f \| _{L_t^{20/3}L_x^{5}}.
\end{align*}
By the Strichartz estimate (Lemma \ref{Str}) and Schur's test (see for example Theorem 275 in \cite{HLP88}), we get
\begin{align*}
& \| e^{-it \fd } f \| _{L^6_{t,x}}^6 \\
& \lesssim \sup _{K \in 2^{\N _0}} \| e^{-it\fd } P_K f \| _{L_{t,x}^6}^3 \sum _{\substack{N_1,N_2,N_3 \in 2^{\N _0} \\ N_1 \le N_2 \le N_3}} \| P_{N_1} f \| _{H^{3/5}_x} \| P_{N_2} f \| _{H^{9/20}_x} \| P_{N_3} f \| _{H^{9/20}_x} \\
& \lesssim \sup _{K \in 2^{\N _0}} \| e^{-it\fd } P_K f \| _{L_{t,x}^6}^3 \| f \| _{H^{1/2}_x} \sum _{\substack{N_1, N_2 \in 2^{\N _0} \\ N_1 \le N_2}} \frac{N_1^{1/10}}{N_2^{1/10}} \| P_{N_1} f \| _{H^{1/2}_x} \| P_{N_2} f \| _{H^{1/2}_x} \\
& \lesssim \sup _{K \in 2^{\N _0}} \| e^{-it\fd } P_K f \| _{L_{t,x}^6}^3 \| f \| _{H^{1/2}_x} ^3. 
\end{align*}
\end{proof}

We need to divide each dyadic interval into positive and negative intervals.

For $N \in 2^{\N}$, we define by $P_N^+$ and $P_N^-$ the the Fourier multiplier with the symbol $\big( \sigma ( \frac{\xi}{N} ) - \sigma ( \frac{2\xi}{N} ) \big) \bm{1}_{>0} (\xi )$ and $\big( \sigma ( \frac{\xi}{N} ) - \sigma ( \frac{2\xi}{N} ) \big) \bm{1}_{>0} (-\xi )$ respectively.
For convenience, we set $P_{1}^{\pm}$ as $P_1$.

\begin{lem} \label{pmD}
\[
\| e^{-it \fd } P_N f \| _{L^6_{t,x}}^6 \lesssim N^{5/2} \sup _{\pm} \| e^{-it \fd } P_N^{\pm} f \| _{L^6_{t,x}} \| P_N f \| _{L^2_x}^5.
\]
\end{lem}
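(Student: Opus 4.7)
The plan is to decompose $P_N f = P_N^+ f + P_N^- f$, set $u^\pm := e^{-it\fd} P_N^\pm f$, expand the sixth power pointwise, and then balance one factor of $\sup_\pm \|u^\pm\|_{L^6_{t,x}}$ against five factors controlled by the diagonal Strichartz inequality. This is really just a binomial-expansion + Hölder + Strichartz calculation; the $\pm$ splitting is not used to exploit any transverse/bilinear cancellation but only to isolate the quantity $\sup_\pm \|u^\pm\|_{L^6_{t,x}}$ in the final bound.

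First I would write
\[
\|e^{-it\fd} P_N f\|_{L^6_{t,x}}^6 = \|u^+ + u^-\|_{L^6_{t,x}}^6 \lesssim \sum_{a=0}^{6} \iint_{\R\times\R} |u^+|^{a}\,|u^-|^{6-a}\,dx\,dt,
\]
which follows from $|z+w|^6\lesssim \sum_{a=0}^{6}|z|^a|w|^{6-a}$. To each term I apply Hölder in $L^6_{t,x}$ (with exponents $6/a$ and $6/(6-a)$ when $1\le a\le 5$; the pure cases are trivial) to obtain
\[
\iint |u^+|^{a} |u^-|^{6-a} \, dx\,dt \le \|u^+\|_{L^6_{t,x}}^{a} \|u^-\|_{L^6_{t,x}}^{6-a}.
\]

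Next, for every $a\in\{0,\dots,6\}$ at least one of the exponents is positive, so I factor off one copy of $\sup_\pm \|u^\pm\|_{L^6_{t,x}}$ and keep five remaining factors, each of the form $\|u^\pm\|_{L^6_{t,x}}$. On each of these I use the diagonal Strichartz estimate from Lemma \ref{Str} with the admissible pair $(q,r)=(6,6)$ (which satisfies $2/6+1/6=1/2$, $q>4$) and the frequency localization $\lr{\xi}\sim N$ on $\supp\widehat{P_N^\pm f}$:
\[
\|u^\pm\|_{L^6_{t,x}} \lesssim \|\lr{\dx}^{1/2} P_N^\pm f\|_{L^2_x} \lesssim N^{1/2}\|P_N^\pm f\|_{L^2_x} \le N^{1/2}\|P_N f\|_{L^2_x},
\]
where the last inequality is Plancherel applied to the sharp frequency cut-off $\bm{1}_{>0}(\pm\xi)$.

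Multiplying the five Strichartz factors yields $N^{5/2}\|P_N f\|_{L^2_x}^5$, while the sixth factor is at most $\sup_\pm \|e^{-it\fd}P_N^\pm f\|_{L^6_{t,x}}$. Summing over the finitely many values of $a$ gives the claim. I do not anticipate any genuine obstacle: the only thing to check is that in each of the seven terms one can successfully peel off one $L^6$-norm to become the $\sup_\pm$ factor, which is immediate once one notices that for $a=0$ (respectively $a=6$) the relevant factor is a copy of $\|u^-\|_{L^6_{t,x}}$ (respectively $\|u^+\|_{L^6_{t,x}}$), and for $1\le a\le 5$ both $\|u^+\|_{L^6_{t,x}}$ and $\|u^-\|_{L^6_{t,x}}$ are available.
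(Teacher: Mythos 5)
Your proposal is correct and matches the paper's argument in essence: both peel off a single $L^6_{t,x}$ factor to become $\sup_\pm \|e^{-it\fd}P_N^\pm f\|_{L^6_{t,x}}$ and bound the remaining five by the diagonal Strichartz estimate with the frequency localization $\lr{\xi}\sim N$. The paper does this slightly more economically by writing $\|\cdot\|_{L^6}^6 = \|\cdot\|_{L^6}\cdot\|\cdot\|_{L^6}^5$ and applying the triangle inequality to only the first factor, rather than expanding $(u^++u^-)^6$ binomially and then applying H\"older term by term, but the mechanism and the resulting bound are identical.
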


\begin{proof}
The Strichartz estimate (Lemma \ref{Str})  yields that
\begin{align*}
\| e^{-it \fd } P_N f \| _{L^6_{t,x}}^6
& =  \| e^{-it \fd } P_N f \| _{L^6_{t,x}}^{1+5} \\
& \lesssim \sup _{\pm} \| e^{-it \fd } P_N^{\pm} f \| _{L^6_{t,x}} \times \left( N^{1/2} \| P_N f \| _{L^2} \right) ^5 \\
& \lesssim N^{5/2} \sup _{\pm} \| e^{-it \fd } P_N^{\pm} f \| _{L^6_{t,x}} \| P_N f \| _{L^2}^5 . 
\end{align*}
\end{proof}

By taking $\nu$ appropriately, the Fourier support of $\Lo _{\nu}^{-1} P_N^{\pm} f$ lies inside an interval centered at the origin.

\begin{lem} \label{LB-cent}
For $N \in 2^{\N _0}$, let $\nu = \frac{5}{4}N$.
Then,
\[
\| \Lo _{\nu}^{-1} P_N^+ f \| _{L^6_{x}} + \| \Lo _{-\nu}^{-1} P_N^- f \| _{L^6_{x}}  \lesssim \| P_{\le 2} \Lo _{\nu}^{-1} f \| _{L^6_{x}} .
\]
\end{lem}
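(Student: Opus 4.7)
The strategy exploits the focussing effect of the Lorentz boost $\Lo_\nu^{-1}$ on high-frequency dyadic pieces. With the choice $\nu=\tfrac{5}{4}N$, the map $l_\nu(\xi)=\lr{\nu}\xi-\nu\lr{\xi}$ collapses the frequency window $\xi\in[N/2,2N]$ — the support of the symbol of $P_N^+$ — into a fixed bounded interval around the origin, independent of $N$. One can therefore trade the oscillatory projector $P_N^+$ for a low-frequency cutoff $P_{\le 2}$ after conjugating by $\Lo_\nu^{-1}$.

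By Lemma \ref{LB} \ref{LB-Fou} we write
\[
\Lo_\nu^{-1}P_N^+ f=M_N^+(-i\dx)\,\Lo_\nu^{-1}f,
\]
where
\[
M_N^+(\wt{\xi})=\Bigl(\sigma\bigl(l_{-\nu}(\wt{\xi})/N\bigr)-\sigma\bigl(2l_{-\nu}(\wt{\xi})/N\bigr)\Bigr)\bm{1}_{>0}\bigl(l_{-\nu}(\wt{\xi})\bigr),\qquad l_{-\nu}(\wt{\xi}):=\lr{\nu}\wt{\xi}+\nu\lr{\wt{\xi}}.
\]
Using the identity $l_\nu(\xi)=(\xi^2-\nu^2)/(\lr{\nu}\xi+\nu\lr{\xi})$, a direct calculation shows that for $\nu=5N/4$ and $\xi\in[N/2,2N]$ one has $l_\nu(\xi)\in[-21/20,\,39/80]\subset[-2,2]$, uniformly in $N$. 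Hence $\supp M_N^+\subset[-2,2]$; since the symbol $\sigma(\wt{\xi}/2)$ of $P_{\le 2}$ equals $1$ on $[-2,2]$, we obtain
\[
\Lo_\nu^{-1}P_N^+ f=M_N^+(-i\dx)\,P_{\le 2}\,\Lo_\nu^{-1}f.
\]

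It remains to prove that $M_N^+(-i\dx)$ is bounded on $L^6$ with operator norm uniform in $N$. Setting $\chi(\xi):=\sigma(\xi/N)-\sigma(2\xi/N)$, Fa\`a di Bruno's formula expresses $(M_N^+)^{(k)}(\wt{\xi})$ as a finite sum of terms of the form $\chi^{(m)}(l_{-\nu}(\wt{\xi}))\prod_i l_{-\nu}^{(j_i)}(\wt{\xi})$ with $\sum_i j_i=k$. On the relevant range $|\chi^{(m)}|\lesssim N^{-m}$, while $|l_{-\nu}^{(j)}(\wt{\xi})|\lesssim N$ for each $j\ge 1$ and $|\wt{\xi}|\lesssim 1$, so every term carries a factor $N^{-m}\cdot N^m=1$. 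Thus $M_N^+$ is a smooth cutoff supported in a fixed compact interval with derivatives bounded uniformly in $N$, and standard Fourier-multiplier estimates give $\|M_N^+(-i\dx)g\|_{L^6}\lesssim\|g\|_{L^6}$ with uniform constant. The estimate for $\Lo_{-\nu}^{-1}P_N^- f$ follows by the analogous argument with $\nu$ replaced by $-\nu$ and the frequency-sign constraint reversed, noting that the symmetric computation produces the same bounded-support window on the transformed side. The only non-routine point is the uniform multiplier bound, i.e. verifying the above cancellation between the $N^{-k}$ scaling of $\chi^{(k)}$ and the $N$-sized derivatives of $l_{-\nu}$; once that is in hand, the Fourier-support computation and the identity $\sigma(\wt{\xi}/2)\equiv 1$ on $[-2,2]$ complete the proof.
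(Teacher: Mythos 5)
Your proof is correct and follows essentially the same route as the paper's: compute that $l_\nu$ sends $[N/2,2N]$ into $[-2,2]$ uniformly (the paper gets the slightly cruder single bound $|l_\nu(\xi)|\le 39/20<2$), insert $P_{\le 2}$ for free, and then bound the remaining compactly supported multiplier with uniformly bounded derivatives via $L^1$ control of its kernel. Your use of Fa\`a di Bruno to track the $N^{-m}\cdot N^m$ cancellation is a cleaner way of stating the derivative estimate than the paper's displayed chain-rule formula, but the content is identical.
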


\begin{proof}
We only consider the estimate for $P_N^+$ and $N \in 2^{\N}$ because the remaining cases are similarly handled.
Set $\wt{\xi} := \lr{\nu} \xi - \nu \lr{\xi}$.
By $\supp \widehat{P_N^+ f} \subset \big[ \frac{N}{2}, 2N \big]$ and
\[
\wt{\xi}
= \lr{\nu} \xi - \nu \lr{\xi}
= \frac{(\xi + \nu )(\xi - \nu)}{\lr{\nu} \xi + \nu \lr{\xi}}
\]
we get
\[
|\wt{\xi}| \le \frac{\frac{13}{4} \times \frac{3}{4}}{2\times \frac{1}{2} \times \frac{5}{4}} = \frac{39}{20} <2
\]
for $\xi \in \supp \widehat{P_N^+ f}$.
Lemma \ref{LB} \ref{LB-Fou} yields that
\begin{align*}
\mathcal{F}[\Lo _{\nu}^{-1} P_N^+ f] ( \wt{\xi} )
& = \frac{\lr{\xi}}{\lr{\wt{\xi}}} \left( \sigma \Big( \frac{\xi}{N} \Big) - \sigma \Big( \frac{2\xi}{N} \Big) \right) \bm{1}_{>0} (\xi ) \widehat{f} ( \xi ) \\
& = \left( \sigma \Big( \frac{\xi}{N} \Big) - \sigma \Big( \frac{2\xi}{N} \Big) \right) \bm{1}_{>0} (\xi ) \mathcal{F}[P_{\le 2} \Lo _{\nu}^{-1} f] ( \wt{\xi} ) .
\end{align*}
We set
\[
\rho (\wt{\xi}) = \left( \sigma \Big( \frac{\xi}{N} \Big) - \sigma \Big( \frac{2\xi}{N} \Big) \right) \bm{1}_{>0} (\xi ) .
\]
Then, a computation shows that
\[
\frac{d^{\alpha} \rho}{d\wt{\xi} ^{\alpha}} (\wt{\xi}) = N^{-\alpha} \left( \frac{d^{\alpha} \sigma}{d\xi ^{\alpha}} \Big( \frac{\xi}{N} \Big) - 2^{\alpha} \frac{d^{\alpha} \sigma}{d\xi ^{\alpha}} \Big( \frac{2\xi}{N} \Big) \right) \bm{1}_{>0} (\xi ) \left( \frac{\lr{\xi}}{\lr{\wt{\xi}}} \right) ^{\alpha}
\]
and
\[
\sup _{\xi \in [N/2,2N]} \left| \frac{d^{\alpha} \rho}{d\wt{\xi} ^{\alpha}} (\wt{\xi})  \right| \lesssim _{\alpha} 1
\]
for all $\alpha \in \N _0$.
Thus, $\mathcal{F}^{-1} [\rho ] \in L^1$.
From $\Lo _{\nu}^{-1} P_N^+ f = \mathcal{F}^{-1}[\rho] \ast (P_{\le 2} \Lo _{\nu}^{-1} f)$ and Young's inequality, we have
\begin{align*}
\| \Lo _{\nu}^{-1} P_N^+ f \| _{L^6_x}
& = \| \mathcal{F}^{-1} [\rho ] \ast (P_{\le 2} \Lo _{\nu}^{-1} f) \| _{L^6_x}
\le \| \mathcal{F}^{-1} [\rho ] \| _{L^1} \| P_{\le 2} \Lo _{\nu}^{-1} f \| _{L^6_x} \\
& \lesssim \| P_{\le 2} \Lo _{\nu}^{-1} f \| _{L^6_x}.
\end{align*}
\end{proof}

We employ the following further decoupling, which is a consequence of the bilinear estimate proved by Tao \cite{Tao03} (see also \cite{KSV12, KilVis13}).

\begin{lem} \label{CD}
Assume that $f \in L^2(\R )$ and $\supp \widehat{f} \subset \{ \xi \in \R \colon |\xi | \le 4 \}$.
\[
\| e^{-it \fd } f \| _{L^6_{t,x}} ^3 \lesssim \sup _Q \left( |Q|^{-1/5} \| e^{-it \fd } P_Q f \| _{L^{10}_{t,x}} \right) \| f \| _{L^2_x}^2.
\]
Here, the supremum is take over all dyadic intervals with the length no more than eight and $P_Q f$ denotes the restriction operator (in $\xi$-space) of $f$ to $Q$.
\end{lem}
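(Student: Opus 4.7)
My plan is to follow the bilinear-to-linear refinement of Tao~\cite{Tao03}, adapted to this one-dimensional Klein-Gordon setting in the spirit of~\cite{KSV12, KilVis13}. I decompose $f = \sum_Q P_Q f$ where $Q$ ranges over dyadic subintervals of $[-4,4] \supset \supp \widehat{f}$ of length at most~$8$, and set $u_Q := e^{-it\fd} P_Q f$. Using the Littlewood-Paley square function equivalence $|e^{-it\fd} f|^2 \sim \sum_Q |u_Q|^2$ (valid in $L^p_{t,x}$ for $1 < p < \infty$) and cubing, I reduce to
\[
\| e^{-it\fd} f \|_{L^6_{t,x}}^6 \sim \sum_{Q_1, Q_2, Q_3} \| u_{Q_1} u_{Q_2} u_{Q_3} \|_{L^2_{t,x}}^2 ,
\]
so that bounding each trilinear $L^2_{t,x}$ norm suffices.

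The essential input is Tao's bilinear $L^2_{t,x}$ estimate: for dyadic $Q, Q' \subset [-4,4]$ with $|Q| = |Q'|$ and $\dist(Q, Q') \sim |Q|$,
\[
\| u_Q u_{Q'} \|_{L^2_{t,x}} \lesssim \dist(Q, Q')^{-1/2} \| P_Q f \|_{L^2_x} \| P_{Q'} f \|_{L^2_x} .
\]
This exploits transversality of the group velocity $\omega'(\xi) = \xi / \lr{\xi}$, which is strictly monotone and so takes separated values on separated intervals. Interpolating against the trivial bilinear bound $\|u_Q u_{Q'}\|_{L^3_{t,x}} \lesssim \|P_Q f\|_{L^2_x}\|P_{Q'} f\|_{L^2_x}$ coming from the Strichartz estimate at the admissible pair $(6,6)$ (Lemma~\ref{Str}) yields the refinement
\[
\| u_Q u_{Q'} \|_{L^{5/2}_{t,x}} \lesssim \dist(Q, Q')^{-1/5} \| P_Q f \|_{L^2_x} \| P_{Q'} f \|_{L^2_x} .
\]

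Applying H\"older in the form $\tfrac{1}{2} = \tfrac{1}{10} + \tfrac{2}{5}$ separates one factor from a Whitney-paired product:
\[
\| u_{Q_1} u_{Q_2} u_{Q_3} \|_{L^2_{t,x}} \le \| u_{Q_1} \|_{L^{10}_{t,x}} \, \| u_{Q_2} u_{Q_3} \|_{L^{5/2}_{t,x}} .
\]
Feeding in the refined bilinear estimate and the definition $M := \sup_Q |Q|^{-1/5} \| u_Q \|_{L^{10}_{t,x}}$, then squaring and summing over all dyadic triples via a Whitney/Schur argument, should produce a total bound of $M^2 \| f \|_{L^2_x}^4$; taking the square root yields the lemma.

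The principal technical obstacle is the last summation. One must organize the sum over dyadic triples $(Q_1, Q_2, Q_3)$ so that, for each configuration, two of the three intervals can be identified as a Whitney-separated pair, with the scale-ratio factor $|Q_1|^{1/5}/\dist(Q_2, Q_3)^{1/5}$ supplying the geometric decay needed for absolute convergence, while the $\ell^2$-accounting of the $\|P_{Q_j}f\|_{L^2_x}$ factors is combined by Cauchy-Schwarz to give $\|f\|_{L^2_x}^4$. Degenerate triples (where no Whitney separation is available, e.g.\ fully diagonal) require iterating the decomposition on finer dyadic subintervals. These summation details, though delicate, are essentially the same as in~\cite[\S 4]{KSV12} and~\cite{KilVis13} and constitute the main bookkeeping burden.
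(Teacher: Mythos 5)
Your outline captures the three key ingredients that drive this refinement — Tao's bilinear $L^2_{t,x}$ estimate exploiting the strictly monotone group velocity $\xi/\lr{\xi}$ on $[-4,4]$, interpolation with the trivial $L^3_{t,x}$ bound (from $(6,6)$-Strichartz) to obtain the $L^{5/2}_{t,x}$ gain of $\dist(Q,Q')^{-1/5}$, and the H\"older split $\tfrac12 = \tfrac{1}{10}+\tfrac{2}{5}$ that extracts the $L^{10}_{t,x}$ factor — and the exponent arithmetic is correct throughout. The paper itself offers no proof, only the citation to Tao~\cite{Tao03} and \cite{KSV12,KilVis13}, so there is no internal proof to compare with; your route is the one those references take.

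However, the opening reduction has a genuine gap. The equivalence $\|e^{-it\fd}f\|_{L^6}\sim\|(\sum_Q|u_Q|^2)^{1/2}\|_{L^6}$ is not a Littlewood--Paley square function: the $Q$ here are equal-length subintervals of $[-4,4]$, not dyadic annuli, and for such intervals the direction you need, $\|g\|_{L^p}\lesssim\|(\sum_Q|P_Q g|^2)^{1/2}\|_{L^p}$, fails for $p>2$ (test $g=\sum_{j=1}^N e^{ijx}\phi(x)$ with $\widehat{\phi}$ supported in $[-\tfrac14,\tfrac14]$: the left side is $\sim N^{(p-1)/p}$ while the right is $\sim N^{1/2}$, forcing $p\le 2$). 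So the claimed reduction to $\sum_{Q_1,Q_2,Q_3}\|u_{Q_1}u_{Q_2}u_{Q_3}\|_{L^2}^2$ is not justified. In the standard argument one instead applies a Whitney decomposition to the \emph{bilinear} quantity, $|u|^2=\sum_{Q\sim Q'}u_Q\bar u_{Q'}$, and it is the Fourier supports $Q-Q'$ of these summands — which, taken over all scales, form a bounded-overlap cover of the frequency line organized into genuine Littlewood--Paley annuli $\{|\zeta|\sim 2^{-k}\}$ — that supply the almost-orthogonality. The deferred summation has a related issue: peeling off $\|u_{Q_1}\|_{L^{10}}\le M|Q_1|^{1/5}$ discards $\|P_{Q_1}f\|_{L^2}$ entirely, so summing freely over $Q_1$ at scale $\ell$ costs a factor $\sim\ell^{-1}$ against a gain of only $\ell^{2/5}$, which diverges across scales; the bookkeeping must constrain $Q_1$ to lie between and at the scale of the separated pair $(Q_2,Q_3)$, so that only $O(1)$ choices of $Q_1$ accompany each Whitney pair. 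These two points are where the "delicate summation" you defer actually requires a different organization of the decomposition, not merely more care with the one you set up.
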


\begin{thm}[Inverse Strichartz inequality] \label{IS}
Let $\{ f_n \} \in H^1(\R )$ and let $s \in [\frac{1}{2}, \frac{11}{12})$.
Suppose that
\[
\lim _{n \rightarrow \infty} \| f_n \|_{H^1} =A  \quad \text{and} \quad \lim _{n \rightarrow \infty} \| \fd ^{s-1/2} e^{-it\fd } f_n \| _{L^6_{t,x}} =\eps >0.
\]
Then, by passing to a subsequence, there exist $\phi \in L^2(\R )$, $\{ \lambda _n \} \subset [ \frac{1}{8}, \infty )$, $\{ \nu _n \} \subset \R$, and $\{ (t_n ,x_n ) \} \subset \R \times \R$ so that we have the following:
\begin{itemize}
\item $\lambda _n \rightarrow \lambda _{\infty} \in [\frac{1}{8}, \infty ]$ and $\nu _n \rightarrow \nu$ in $\R$.
\item $\lambda _{\infty} < \infty$ $\Rightarrow$ $\phi \in H^1(\R )$.
\item By setting $\phi _n := \begin{cases} T_{x_n} e^{it_n \fd } \Lo _{\nu _n} D_{\lambda _n} \phi , & \text{if } \lambda _{\infty} < \infty , \\ T_{x_n} e^{it_n \fd } \Lo _{\nu _n} D_{\lambda _n} P_{\le \lambda _n^{\theta}} \phi , & \text{if } \lambda _{\infty} = \infty, \end{cases}$ with $\theta = \frac{1}{100}$, the following hold:
\begin{align}
& \lim _{n \rightarrow \infty} \left( \| f_n \|_{H^1}^2 - \| f_n - \phi _n \| _{H^1}^2 - \| \phi _n \|_{H^1}^2 \right) =0, \label{IS-dec} \\
& \liminf _{n \rightarrow \infty} \| \phi _n \|_{H^1} \gtrsim \eps \Big( \frac{\eps}{A} \Big) ^{113} , \label{IS-nontr} \\
& \limsup _{n \rightarrow \infty} \| \fd ^{s-1/2} e^{-it \fd } ( f_n - \phi _n) \| _{L^6_{t,x}} \le \eps \left[ 1- c \Big( \frac{\eps}{A} \Big) ^{C} \right] ^{1/6}, \label{IS-bound} \\
& D_{\lambda _n}^{-1} \Lo _{\nu _n}^{-1} T_{x_n}^{-1} e^{-it_n \fd } f_n \rightharpoonup \phi \text{ weakly in } \begin{cases} H^1(\R), & \text{if } \lambda _{\infty} < \infty , \\ L^2(\R), & \text{if } \lambda _{\infty} =\infty . \end{cases} \label{IS-wlim}
\end{align}
\end{itemize}
Here, $c$ and $C$ are positive constants.
\end{thm}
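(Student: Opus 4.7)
The plan is to follow the Killip-Stovall-Visan concentration-compactness strategy, adapted to the Sobolev-shifted Strichartz norm $\|\fd^{s-1/2}e^{-it\fd}(\cdot)\|_{L^6_{t,x}}$ and the Lorentz-boost symmetry of the one-dimensional Klein-Gordon equation. I will extract concentration parameters $(N_n,\pm_n,\nu_n,\lambda_n,t_n,x_n)$ from a chain of frequency and spatial refinements, identify a weak-limit profile $\phi$ in the naturally rescaled space, and deduce the nontriviality, orthogonality, and Strichartz-dichotomy conclusions.

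First I would localize in frequency. Applying Lemma~\ref{AD} to $\fd^{s-1/2}f_n$, which lies in $H^{1/2}(\R)$ with norm $\|\fd^{s-1/2}f_n\|_{H^{1/2}}=\|f_n\|_{H^s}\le A$ (using $s\le 1$), produces a dyadic scale $N_n$ with $\|e^{-it\fd}P_{N_n}\fd^{s-1/2}f_n\|_{L^6_{t,x}}\gtrsim\eps^2/A$. Lemma~\ref{pmD} then selects a sign $\pm_n$ so that, together with Bernstein and $\|P_{N_n}f_n\|_{L^2}\lesssim\lr{N_n}^{-1}A$, the half-space piece $P_{N_n}^{\pm_n}\fd^{s-1/2}f_n$ inherits a positive-power-of-$\eps/A$ lower bound; the upper bound $s<\tfrac{11}{12}$ is what prevents the accumulated $N_n$-exponents from degenerating at this step. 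Setting $\nu_n=\pm_n\tfrac54 N_n$, Lemma~\ref{LB-cent} transports the half-space piece onto the low-frequency function $P_{\le 2}\Lo_{\nu_n}^{-1}\fd^{s-1/2}f_n$, and the $L^6_{t,x}$ lower bound survives through Lemma~\ref{LB}\,\ref{LB-lin} and volume-preservation of Lorentz boosts, up to a harmless $\lr{\nu_n}^{-1/6}$ Jacobian.

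Next, Tao's bilinear decoupling (Lemma~\ref{CD}) applied to this low-frequency function produces a dyadic interval $Q_n\subset[-8,8]$ of length $|Q_n|=\lambda_n^{-1}$ with $\lambda_n\ge\tfrac18$, and an $L^{10}_{t,x}$ lower bound scaled by $|Q_n|^{-1/5}$. Interpolating this bound between $L^5_t L^{10}_x$ (Lemma~\ref{Str}) and $L^\infty_{t,x}$ (Bernstein on $P_{Q_n}$), I extract $(t_n,x_n)\in\R\times\R$ such that the rescaled sequence $F_n:=D_{\lambda_n}^{-1}\Lo_{\nu_n}^{-1}T_{x_n}^{-1}e^{-it_n\fd}f_n$ satisfies $|F_n(0)|\gtrsim(\eps/A)^C$. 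Lemma~\ref{lem:compact} supplies the precompactness needed to extract, along a subsequence, $\lambda_n\to\lambda_\infty\in[\tfrac18,\infty]$, $\nu_n\to\nu\in\R$, and a weak limit $F_n\rightharpoonup\phi$ (in $H^1$ when $\lambda_\infty<\infty$, in $L^2$ when $\lambda_\infty=\infty$). In the Schr\"odinger regime $\lambda_\infty=\infty$ I insert the cutoff $P_{\le\lambda_n^\theta}$ with $\theta=\tfrac{1}{100}<\tfrac12$, so that Lemma~\ref{lem:aeconv}\,\ref{lem:aeconvinf}-\ref{lem:aeconvinf2} supplies a.e.\ convergence of the linear evolution to $e^{it\dx^2/2}\phi$ together with quantitative control of the correction $\lr{\lambda_n^{-1}\dx}^{s-1/2}-1$.

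Conclusion~\eqref{IS-wlim} is then immediate, and~\eqref{IS-nontr} follows because the pointwise lower bound $|F_n(0)|\gtrsim(\eps/A)^C$ passes through a.e.\ convergence and the norm-controlling properties of $T_{x_n},\Lo_{\nu_n},D_{\lambda_n}$ (Lemma~\ref{LB}\,\ref{LB-inn}). For~\eqref{IS-dec} I apply Br\'ezis-Lieb to the $H^1$ inner product, using the same a.e.\ convergence of the linear evolutions together with bounded weak compactness, so that the cross term $\lr{f_n,\phi_n}_{H^1}$ converges to $\|\phi\|_{H^1}^2$. For~\eqref{IS-bound}, I would subtract the definite $\phi$-contribution $\|\fd^{s-1/2}e^{-it\fd}\phi_n\|_{L^6_{t,x}}^6\gtrsim\eps^6(\eps/A)^C$ from $\|\fd^{s-1/2}e^{-it\fd}f_n\|_{L^6_{t,x}}^6$ via a refined Fatou/Vitali argument, the cross terms vanishing by Lemma~\ref{lem:aeconv}\,\ref{lem:aeconvfin2}/\ref{lem:aeconvinf2}. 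The principal obstacle I anticipate is the regularity and scaling bookkeeping throughout the chain: keeping the powers of $N_n$, $\lambda_n$, and $\lr{\nu_n}$ harmless while preserving a definite positive power of $\eps/A$ at every stage is delicate, and the upper bound $s<\tfrac{11}{12}$ is exactly what provides the margin needed at the Lemma~\ref{AD} step so that the total exponent comes out positive.
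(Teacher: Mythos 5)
Your skeletal chain reproduces the paper's strategy faithfully: Lemma~\ref{AD} applied to $\fd^{s-1/2}f_n$ (using $\|\fd^{s-1/2}f_n\|_{H^{1/2}}=\|f_n\|_{H^s}\le A$) selects $N_n$; Lemma~\ref{pmD} with Bernstein picks $\pm_n$ and absorbs the $N_n$-powers, the margin $s<\tfrac{11}{12}$ being exactly what makes the net exponent of $N_n$ nonpositive; the boost $\wt\nu_n=\pm_n\tfrac54 N_n$ with Lemma~\ref{LB-cent} re-centers onto $P_{\le2}\Lo_{\wt\nu_n}^{-1}f_n$; Lemma~\ref{CD} supplies $Q_n$, and interpolation to $L^\infty_{t,x}$ gives a concentration point; the $\lambda_\infty=\infty$ case is handled via $P_{\le\lambda_n^\theta}$ and Lemma~\ref{lem:aeconv}. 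Those steps are correct and in essentially the same order as the paper. However, two steps at the heart of the theorem are argued incorrectly.

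\textbf{Nontriviality of $\phi$.} You claim the pointwise lower bound ``passes through a.e.\ convergence'' to give \eqref{IS-nontr}, and you assign Lemma~\ref{lem:compact} the role of producing $\lambda_n\to\lambda_\infty$, $\nu_n\to\nu$, and the weak limit $F_n\rightharpoonup\phi$. Neither statement is right: a.e.\ or weak convergence says nothing about the value at a single point, and the extraction of $\lambda_n$, $\nu_n$, and the weak limit is ordinary Bolzano--Weierstrass plus Banach--Alaoglu from the size bounds, not Lemma~\ref{lem:compact}. The actual mechanism --- and the real role of Lemma~\ref{lem:compact} --- is a duality step: the quantity $\lambda_n^{1/2}\big[P_{Q_n}e^{-i\wt t_n\fd}\Lo_{\wt\nu_n}^{-1}f_n\big](-\wt x_n)$ is re-expressed as the pairing $\lr{h_n,F_n}_{L^2}$ with $h_n:=D_{\lambda_n}^{-1}\Lo_{\xi_n}^{-1}m_0(-i\dx)^{-1}e^{i\xi_nx}D_{\lambda_n}h$, $h$ a fixed band-limited mollifier. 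Lemma~\ref{lem:compact} says precisely that $\{h_n\}$ is \emph{strongly} precompact in $L^2$; along a subsequence $h_n\to h_\infty$ strongly, and a strongly convergent sequence paired against a weakly convergent one converges, giving $|\lr{h_\infty,\phi}_{L^2}|\gtrsim\eps(\eps/A)^{89}$ (cf.\ \eqref{IS-duality}). That is what proves $\phi\neq0$ and \eqref{IS-nontr}; without this argument your proposal has no route to the quantitative lower bound.

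\textbf{The Strichartz decrement \eqref{IS-bound}.} You assert $\|\fd^{s-1/2}e^{-it\fd}\phi_n\|_{L^6_{t,x}}^6\gtrsim\eps^6(\eps/A)^C$ without justification, but this does not follow from \eqref{IS-nontr}: a lower bound on $\|\phi\|_{L^2}$ does not control $\|e^{it\dx^2/2}\phi\|_{L^6_{t,x}}$ from below. The extra input is again the test-function structure: Lemma~\ref{lem:compact} (with $\widehat h=\bm 1_{[-1,1]}$) gives $h_\infty$ a compact Fourier support of size $\lesssim(A/\eps)^{C_1}$; truncating to $\wt h=P_{\le M}\chi_r h_\infty$ and proving the kernel estimate \eqref{IS-tildehbound} yields, by pairing $\lr{\wt h,\phi}$ against the free Schr\"odinger flow, exactly the lower bound \eqref{IS-boubel}. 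That calculation is essential and cannot be suppressed.

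Two smaller points. First, ``Br\'ezis--Lieb for the $H^1$ inner product'' is a misnomer: \eqref{IS-dec} is the Hilbert-space identity $\|f_n\|_{H^1}^2-\|f_n-\phi_n\|_{H^1}^2-\|\phi_n\|_{H^1}^2 = 2\lr{f_n-\phi_n,\phi_n}_{H^1}$, disposed of by Lemma~\ref{LB}\ref{LB-inn} and \eqref{IS-wlim}. Second, the inverse-Fatou step in \eqref{IS-bound} also needs the a.e.\ vanishing of the high-frequency tail $P_{>\lambda_n^\theta}g_n$ (the paper's Claim~\ref{clam1}, via a Sobolev bound for the very high band and local smoothing for the middle band), not only the correction $\lr{\lambda_n^{-1}\dx}^{s-1/2}-1$ that you note.
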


\begin{proof}
We write a subsequence with the same subscript as the original sequence.
By Lemma \ref{AD}, we can find dyadic numbers $N_n \in 2^{\N _0}$ satisfying
\[
\eps ^2 A^{-1} \lesssim \| \fd^{s-1/2} e^{-it\fd } P_{N_n} f_n \| _{L^6_{t,x}} .
\]
Applying Lemma \ref{pmD}, we have that there exists a sing $\pm _{n} \in \{ +, - \}$ such that
\begin{align*}
(\eps ^2 A^{-1} )^6
& \lesssim N_n^{5/2} \| \fd^{s-1/2} e^{-it \fd} P_{N_n}^{\pm _n} f_n \| _{L^6_{t,x}} (N_n^{s-3/2} A)^5 \\
& \sim N_n^{6s-11/2} A^5 \| e^{-it \fd } P_{N_n}^{\pm _n} f_n \| _{L^6_{t,x}} .
\end{align*}
Owing to $s<\frac{11}{12}$, we get
\begin{equation}
\eps ^{12} A^{-11} \lesssim \liminf _{n \rightarrow \infty} \| e^{-it \fd } P_{N_n}^{\pm _n} f_n \|_{L^6_{t,x}} . \label{IS-est1}
\end{equation}
On the other hand, Lemma \ref{Str} implies
\[
\| e^{-it \fd } P_{N_n}^{\pm _n} f_n \| _{L^6_{t,x}} \lesssim N_n^{-1/2} A,
\]
which concludes that $N_n \lesssim \big( \frac{A}{\eps} \big) ^{24}$.

Set $\wt{\nu} _n := \pm _n \frac{5}{4}N_n$.
Since the Lorentz boosts preserve the volume, by \ref{LB-lin} in Lemmas \ref{LB}, Lemma \ref{LB-cent}, and \eqref{IS-est1}, we have
\begin{equation} \label{IS-est2}
\begin{aligned}
\eps ^{12} A^{-11}
& \lesssim \liminf _{n \rightarrow \infty} \| [e^{-i \cdot \fd } P_{N_n}^{\pm _n} f_n] \circ L _{\wt{\nu} _n}^{-1} \| _{L^6_{t,x}} \\
& = \liminf _{n \rightarrow \infty} \| e^{-it \fd } \Lo _{\wt{\nu} _n}^{-1} P_{N_n}^{\pm _n} f_n \| _{L^6_{t,x}} \\
& \lesssim \liminf _{n \rightarrow \infty} \| e^{-it \fd } P_{\le 2} \Lo _{\wt{\nu} _n}^{-1} f_n \| _{L^6_{t,x}} .
\end{aligned}
\end{equation}
From \ref{LB-inn} in Lemma \ref{LB},
\begin{equation} \label{IS-est3}
\| P_{\le 2} \Lo _{\nu _n}^{-1} f_n \| _{L^2_x}
\lesssim \| \Lo _{\nu _n}^{-1} f_n \| _{H^{1/2}_x}
= \| f_n \|_{H^{1/2}_x}
\lesssim A .
\end{equation}
From \eqref{IS-est2}, \eqref{IS-est3}, and Lemma \ref{CD}, there exists an interval $Q_n$ with the length no more than eight such that
\begin{equation} \label{IS-CD}
(\eps ^{12} A^{-11})^3 \lesssim A ^2 \lambda _n^{1/5} \| e^{-it\fd } P_{Q_n} P_{\le 2} \Lo _{\wt{\nu}_n}^{-1} f_n \| _{L^{10}_{t,x}} ,
\end{equation}
where $\lambda _n^{-1} \in (0,8]$ is the length of $Q_n$ and $Q_n$ are in $|\xi | \le 4$.
By the $L^p$-boundedness of $P_{\le 2}$, Lemma \ref{Str}, and \eqref{IS-est3}, we have
\begin{align*}
\| e^{-it\fd } P_{Q_n} P_{\le 2} \Lo _{\wt{\nu}_n}^{-1} f_n \| _{L^{10}_{t,x}}
&\lesssim \| e^{-it\fd } P_{Q_n} \Lo _{\wt{\nu}_n}^{-1} f_n \| _{L^{10}_{t,x}} \\
& \lesssim \| e^{-it\fd } P_{Q_n} \Lo _{\wt{\nu}_n}^{-1} f_n \| _{L^6_{t,x}} ^{3/5} \| e^{-it\fd } P_{Q_n} \Lo _{\wt{\nu}_n}^{-1} f_n \| _{L^{\infty}_{t,x}} ^{2/5} \\
& \lesssim A ^{3/5} \| e^{-it\fd } P_{Q_n} \Lo _{\wt{\nu}_n}^{-1} f_n \| _{L^{\infty}_{t,x}} ^{2/5}.
\end{align*}
Combining it with \eqref{IS-CD}, we get
\[
\lambda _n^{-1/2} \eps ^{90} A^{-89}
\lesssim \| e^{-it\fd } P_{Q_n} \Lo _{\wt{\nu}_n}^{-1} f_n \| _{L^{\infty}_{t,x}} .
\]
Therefore, there exists $(\wt{t}_n, \wt{x}_n) \in \R \times \R$ so that
\begin{equation} \label{IS-p1}
\lambda _n^{-1/2} \eps ^{90} A^{-89}
\lesssim |P_{Q_n} e^{-i\wt{t}_n\fd } \Lo _{\wt{\nu}_n}^{-1} f_n|(-\wt{x}_n)
\end{equation}
Let $\xi _n$ be the center of $Q_n$.
Owing to $|\lambda _n^{-1}| \le 8$ and $|\xi _n| \lesssim 1$, by passing to a subsequence, we have the limits $\lambda _{\infty} \in [\frac{1}{8}, \infty ]$ and $\xi _{\infty} \in \R$ respectively.
By Lemma \ref{LB} \ref{LB-inn} and $\lr{\wt{\nu}_n} \lesssim N_n \lesssim \big( \frac{A}{\eps} \big) ^{24}$,
\begin{align*}
\| D_{\lambda _n}^{-1} \Lo _{\xi _n}^{-1} T_{\wt{x}_n}^{-1} e^{-i\wt{t}_n\fd } \Lo _{\wt{\nu}_n}^{-1} f_n \| _{L^2}
& = \| \Lo _{\xi _n}^{-1} T_{\wt{x}_n}^{-1} e^{-i\wt{t}_n\fd } \Lo _{\wt{\nu}_n}^{-1} f_n \| _{L^2}
\lesssim \| \Lo _{\wt{\nu}_n}^{-1} f_n \| _{L^2} \\
& \lesssim \lr{\wt{\nu}_n} \| f_n \|_{L^2}
\lesssim \Big( \frac{A}{\eps} \Big) ^{24} A.
\end{align*}
If $\lambda _{\infty} < \infty$, this sequence is also $H^1$-bounded because of $\| D_{\lambda _n}^{-1} f \| _{H^1} \lesssim \lr{\lambda _{\infty}} \| f \| _{H^1}$.
By passing to a subsequence, there exists $\phi \in L^2 (\R)$ satisfying
\begin{equation} \label{IS-est4}
\wlim _{n \rightarrow \infty} D_{\lambda _n}^{-1} \Lo _{\xi _n}^{-1} T_{\wt{x}_n}^{-1} e^{-i\wt{t}_n\fd } \Lo _{\wt{\nu}_n}^{-1} f_n = \phi \in \begin{cases} H^1(\R) , & \text{if } \lambda _{\infty} <\infty , \\ L^2(\R) , & \text{if } \lambda _{\infty} = \infty . \end{cases}
\end{equation}

Let $h := \mathcal{F}^{-1}[\bm{1}_{[-1/2,1/2]}]$.
Lemma \ref{LB} \ref{LB-inn} implies that
\begin{equation} \label{IS-est5}
\begin{aligned}
& \lambda _n^{1/2} [ P_{Q_n} e^{-i \wt{t}_n \fd } \Lo _{\wt{\nu}_n}^{-1} f_n] (-\wt{x}_n) \\
& = \lambda _n^{1/2} [ P_{\xi _n+ [-1/(2\lambda_n), 1/(2\lambda_n)]} e^{-i \wt{t}_n \fd } \Lo _{\wt{\nu}_n}^{-1} f_n] (-\wt{x}_n) \\
& = \frac{\lambda _n^{1/2}}{\sqrt{2\pi}} \int _{\R} e^{-i \wt{x}_n \xi} \widehat{h}( \lambda _n (\xi - \xi _n)) \mathcal{F} [ e^{-i \wt{t}_n \fd } \Lo _{\wt{\nu}_n}^{-1} f_n] (\xi ) d\xi \\
& = \lr{T_{\wt{x}_n} e^{i\xi _n x} D_{\lambda _n} h, e^{-i \wt{t}_n \fd } \Lo _{\wt{\nu}_n}^{-1} f_n}_{L^2_x} \\
& = \lr{\underbrace{D_{\lambda _n}^{-1} \Lo _{\xi _n}^{-1} m_0( -i\dx )^{-1} e^{i\xi _n x} D_{\lambda _n} h}_{=:h_n}, D_{\lambda _n}^{-1} \Lo _{\xi _n}^{-1} T_{\wt{x}_n}^{-1} e^{-i \wt{t}_n \fd } \Lo _{\wt{\nu}_n}^{-1} f_n }_{L^2_x} .
\end{aligned}
\end{equation}
Then, $\| h_n \| _{L^2} \lesssim 1$.
From Lemma \ref{lem:compact}, by passing to a subsequence, we have the strong limit $h_{\infty} \in L^2(\R ) \backslash \{ 0 \}$.
Hence, it follows from \eqref{IS-p1}, \eqref{IS-est4}, \eqref{IS-est5}, and a duality argument that
\begin{equation} \label{IS-duality}
\| \phi \| _{L^2}
\gtrsim \lim _{n \rightarrow \infty} | \lr{h_n, \phi }_{L^2}|
= \lim _{n \rightarrow \infty} \left| \lambda _n^{1/2} [ P_{Q_n} e^{-i \wt{t}_n \fd } \Lo _{\wt{\nu}_n}^{-1} f_n] (-\wt{x}_n) \right|
\gtrsim \eps \Big( \frac{\eps}{A} \Big) ^{89} ,
\end{equation}
which leads to $\phi \neq 0$.
By \ref{LB-comm} and \ref{LB-dou} in Lemma \ref{LB},
\[
D_{\lambda _n}^{-1} \Lo _{\xi _n}^{-1} T_{\wt{x}_n}^{-1} e^{-i\wt{t}_n\fd } \Lo _{\wt{\nu}_n}^{-1}
= D_{\lambda _n}^{-1} \Lo _{\xi _n}^{-1} \Lo _{\wt{\nu}_n}^{-1} T_{x_n}^{-1} e^{-it_n\fd }
= D_{\lambda _n}^{-1} \Lo _{\nu _n}^{-1} T_{x_n}^{-1} e^{-it_n\fd }
\]
where $(-\wt{t}_n, -\wt{x}_n) = L_{\wt{\nu}_n}(-t_n,-x_n)$ and $\nu _n = \xi _n \lr{\wt{\nu}_n} + \lr{\xi _n} \wt{\nu}_n$.
By \eqref{IS-est4}, we obtain \eqref{IS-wlim}.
Moreover, from $|\nu _n| \lesssim \lr{\xi _n} \lr{\wt{\nu}_n} \lesssim N_n \lesssim \big( \frac{A}{\eps} \big) ^{24}$, by passing to a subsequence, we have the limit $\nu \in \R$ of $\{ \nu _n \}$.

In the sequel, we only consider the case $\lambda _{\infty} = \infty$ because the case $\lambda _{\infty} < \infty$ is similarly handled.
By \ref{LB-inn} in Lemma \ref{LB},
\[
\| \phi _n \|_{H^1} = \| \Lo _{\nu _n} D_{\lambda _n} P_{\le \lambda _n^{\theta}} \phi \| _{H^1}
\gtrsim \lr{\nu _n}^{-1} \| D_{\lambda _n} P_{\le \lambda _n^{\theta}} \phi \| _{H^1}
\gtrsim \lr{\nu _n}^{-1} \| P_{\le \lambda _n^{\theta}} \phi \| _{L^2} .
\]
By \eqref{IS-duality} and $|\nu _n| \lesssim \big( \frac{A}{\eps} \big) ^{24}$, we get
\[
\liminf _{n \rightarrow \infty} \| \phi _n \|_{H^1} \gtrsim \Big( \frac{\eps}{A} \Big) ^{24} \| \phi \| _{L^2} \gtrsim \eps \Big( \frac{\eps}{A} \Big) ^{113},
\]
which shows \eqref{IS-nontr}.

Next, we show \eqref{IS-dec}.
A direct calculation yields
\[
\| f_n \|_{H^1}^2 - \| f_n - \phi _n \| _{H^1}^2 - \| \phi _n \|_{H^1}^2  = 2 \lr{f_n- \phi _n, \phi _n}_{H^1} ,
\]
By \ref{LB-inn} in Lemma \ref{LB},
\begin{align*}
& \lr{f_n- \phi _n, \phi _n}_{H^1} \\
& = \lr{T_{x_n}^{-1} e^{-it_n \fd }f_n- \Lo _{\nu _n} D_{\lambda _n} P_{\le \lambda _n^{\theta}} \phi , \Lo _{\nu _n} D_{\lambda _n} P_{\le \lambda _n^{\theta}} \phi}_{H^1} \\
& = \lr{\Lo _{\nu _n} ^{-1} T_{x_n}^{-1} e^{-it_n \fd }f_n- D_{\lambda _n} P_{\le \lambda _n^{\theta}} \phi , m_1( -i\dx ; -\nu _n )^{-1}  D_{\lambda _n} P_{\le \lambda _n^{\theta}} \phi }_{H^1} \\
& = \lr{D_{\lambda _n}^{-1} \Lo _{\nu _n} ^{-1} T_{x_n}^{-1} e^{-it_n \fd }f_n- P_{\le \lambda _n^{\theta}} \phi , \lr{\lambda _n^{-1} \dx}^2 m_1( -i \lambda _n^{-1} \dx ; -\nu _n )^{-1} P_{\le \lambda _n^{\theta}} \phi}_{L^2} .
\end{align*}
Here, $m_1( \xi ; \nu _n ) = \frac{\lr{l_{\nu_n} (\xi )}}{\lr{\xi}} = \frac{\lr{\lr{\nu _n} \xi - \nu _n \lr{\xi}}}{\lr{\xi}}$ and
\[
P_{\le \lambda _n^{\theta}} \phi \rightarrow \phi , \quad
\lr{\lambda _n^{-1} \dx}^2 m_1( -i\lambda _n^{-1} \dx ; -\nu _n)^{-1}  P_{\le \lambda _n^{\theta}} \phi \rightarrow \lr{\nu _{\infty}}^{-1} \phi \quad \text{in } L^2(\R )
\]
as $n \rightarrow \infty$.
Therefore, by \eqref{IS-wlim}, we get \eqref{IS-dec}.

Finally, we show \eqref{IS-bound}.
We note that \ref{LB-lin} in Lemma \ref{LB} implies
\[
\| e^{-it\fd} \Lo_{\nu} f\|_{L_{t,x}^6}
= \| [e^{-i\cdot \fd} f] \circ L_{-\nu}^{-1} \|_{L_{t,x}^6}
= \| e^{-it\fd} f \| _{L_{t,x}^6}.
\]
Making the change of variable $t=\lambda _n^2 \mathfrak{t}$, $x=\lambda _n \mathfrak{x}$, we have
\begin{equation} \label{changev}
\begin{aligned}
& \| \fd ^{s-1/2} e^{-it \fd } (f_n - \phi _n ) \| _{L^6_{t,x}} \\
& = \| \fd ^{s-1/2} e^{-it \fd } ( \Lo _{\nu _n}^{-1} T_{x_n}^{-1} e^{-it_n \fd } f_n - D_{\lambda _n} P_{\le \lambda _n^{\theta}} \phi ) \| _{L^6_{t,x}} \\
& = \| D_{\lambda_n} \lr{\lambda _n^{-1} \dx}^{s-1/2} e^{-it \lr{\lambda _n^{-1} \dx}} ( D_{\lambda _n}^{-1} \Lo _{\nu _n}^{-1} T_{x_n}^{-1} e^{-it_n \fd } f_n -P_{\le \lambda _n^{\theta}} \phi ) \| _{L^6_{t,x}} \\
& = \| \lr{\lambda _n^{-1} \partial _{\mathfrak{x}}}^{s-1/2} e^{-i \lambda _n^2 \mathfrak{t} [\lr{\lambda _n^{-1} \partial _{\mathfrak{x}}}-1]} ( D_{\lambda _n}^{-1} \Lo _{\nu _n}^{-1} T_{x_n}^{-1} e^{-it_n \lr{\partial _{\mathfrak{x}}}} f_n - P_{\le \lambda _n^{\theta}} \phi ) \| _{L^6_{\mathfrak{t},\mathfrak{x}}} .
\end{aligned}
\end{equation}
Set
\[
g_n := D_{\lambda _n}^{-1} \Lo _{\nu _n}^{-1} T_{x_n}^{-1} e^{-it_n \lr{\partial _{\mathfrak{x}}}} f_n.
\]
We claim that the high frequency parts tends to zero.
\begin{claim} \label{clam1}
By passing to a subsequence,
\[
\lr{\lambda _n^{-1} \partial _{\mathfrak{x}}}^{s-1/2} e^{-i \lambda _n^2 \mathfrak{t} [\lr{\lambda _n^{-1} \partial _{\mathfrak{x}}}-1]} P_{>\lambda _n^{\theta}} g_n \rightarrow 0
\]
for almost every $(\mathfrak{t},\mathfrak{x}) \in \R \times \R$.
\end{claim}

\begin{proof}
From $\lambda _n \rightarrow \infty$, we may assume $\lambda _n > n^{4s/\theta}$ by passing to a subsequence.
By Lemma \ref{Str} and Lemma \ref{LB} \ref{LB-inn}, more precisely $|\lr{l_{-\nu_n}(\xi)}| \sim_{\eps,A} \lr{\xi}$,
\begin{align*}
\| \lr{\lambda _n^{-1} \partial _{\mathfrak{x}}}^{s-1/2} e^{-i \lambda _n^2 \mathfrak{t} [\lr{\lambda _n^{-1} \partial _{\mathfrak{x}}}-1]} P_{\ge n \lambda _n} g_n \|_{L^6_{\mathfrak{t},\mathfrak{x}}}
& \lesssim
\| \lr{\lambda _n^{-1} \partial _{\mathfrak{x}}}^{s} P_{\ge n \lambda _n} g_n \|_{L^2_{\mathfrak{x}}} \\
& \lesssim_{\eps, A}
\| \lr{\partial _{\mathfrak{x}}}^{s} P_{\gtrsim n} f_n \|_{L^2_{\mathfrak{x}}}
\lesssim n^{s-1} A \rightarrow 0
\end{align*}
as $n \rightarrow \infty$.
Next, we focus on the middle frequency case $P_{\lambda_n^{\theta} < \cdot \le n \lambda _n} g_n$.
As in the proof of Lemma \ref{lem:aeconv}, we may restrict the range of $(t,x)$ to $[-L,L]^2$.
The local smoothing estimate for the Klein-Gordon equation (see, for example, \cite{ConSau88, KPV91}) yields
\begin{align*}
& \int_{\R} \int_{[-L,L]} | \lr{\lambda _n^{-1} \partial _{\mathfrak{x}}}^{s-1/2} e^{-i \lambda _n^2 \mathfrak{t}  [\lr{\lambda _n^{-1} \partial _{\mathfrak{x}}}-1]} P_{\lambda_n^{\theta} < \cdot \le n \lambda _n} g_n|^2 d\mathfrak{x} d\mathfrak{t} \\
& \lesssim L \| |\partial _{\mathfrak{x}}|^{-1/2} \lr{\lambda_n^{-1} \dx}^{s} P_{\lambda_n^{\theta} < \cdot \le n \lambda _n} g_n \|_{L^2}^2
\lesssim_{\eps,A} L n^s \lambda_n^{-\theta/2} \| f_n \|_{L^2}^2
\rightarrow 0
\end{align*}
as $n \rightarrow \infty$.
Therefore, by passing to a subsequence, we obtain the almost everywhere convergence.
\end{proof}

From \ref{lem:aeconvinf} and \ref{lem:aeconvinf2} in Lemma \ref{lem:aeconv}, \eqref{IS-wlim}, and \eqref{changev}, the inverse Fatou lemma (see for example \cite[Theorem 1.9]{LieLos01} or \cite[Lemma 2.10]{KSV12}) shows that
\begin{align*}
& \limsup _{n \rightarrow \infty} \| \fd ^{s-1/2} e^{-it \fd } (f_n - \phi _n ) \| _{L^6_{t,x}}^6 \\
& \le \limsup _{n \rightarrow \infty} \| \lr{\lambda _n^{-1} \partial _{\mathfrak{x}}}^{s-1/2} e^{-i \lambda _n^2 \mathfrak{t} [\lr{\lambda _n^{-1} \partial _{\mathfrak{x}}}-1]} g_n \| _{L^6_{\mathfrak{t},\mathfrak{x}}}^6 - \| e^{i \mathfrak{t} \partial _{\mathfrak{x}}^2/2} \phi \| _{L^6_{\mathfrak{t},\mathfrak{x}}}^6 \\
& = \limsup _{n \rightarrow \infty} \| \fd ^{s-1/2} e^{-it \fd } f_n \| _{L^6_{t,x}}^6 - \| e^{i \mathfrak{t} \partial _{\mathfrak{x}}^2/2} \phi\| _{L^6_{\mathfrak{t},\mathfrak{x}}}^6 .
\end{align*}
Thus, it suffices to show that
\begin{equation} \label{IS-boubel}
\| e^{i \mathfrak{t} \partial _{\mathfrak{x}}^2/2} \phi\| _{L^6_{\mathfrak{t},\mathfrak{x}}} \gtrsim \eps \left( \frac{\eps}{A} \right) ^{C} .
\end{equation}
From \eqref{IS-duality}, we have
\[
| \lr{h_{\infty}, \phi} _{L^2} | \gtrsim \eps \Big( \frac{\eps}{A} \Big) ^{89} .
\]
By Lemma \ref{lem:compact} and the definition of $h_{\infty}$, 
there exists $C_1>0$ such that $\supp \widehat{h}_{\infty} \subset \{ |\xi| \le ( \frac{A}{\eps} )^{C_1} \}$.
Accordingly, we have $h_{\infty} = P_{\le M} h_{\infty}$ and
\[
|\lr{h_{\infty}, P_{\le M} \phi} _{L^2}| \gtrsim \eps \Big( \frac{\eps}{A} \Big) ^{89} ,
\]
where $M := 2(\frac{A}{\eps} )^{C_1}$.
Let $\chi _r$ denote a smooth cut-off to $\{ |x| \le r \}$.
Since $\chi _r \rightarrow 1$ as $r \rightarrow \infty$, there exists $C_2>0$ so that
\[
|\lr{\wt{h} , \phi}_{L^2}| \gtrsim \eps \Big( \frac{\eps}{A} \Big) ^{89} ,
\]
where $r :=(\frac{A}{\eps} )^{C_2}$ and $\wt{h} := P_{\le M} \chi _r h_{\infty}$.
Hence, the proof of \eqref{IS-boubel} is reduced to prove
\begin{equation} \label{IS-tildehbound}
\sup _{|\mathfrak{t}| \le 1} \| e^{i \mathfrak{t} \partial _{\mathfrak{x}}^2/2} \wt{h} \| _{L_{\mathfrak{x}}^{6/5}} \lesssim \Big( \frac{A}{\eps} \Big) ^{C'}.
\end{equation}
Indeed, if \eqref{IS-tildehbound} holds, we obtain
\begin{align*}
\| e^{i \mathfrak{t} \partial _{\mathfrak{x}}^2/2} \phi\| _{L^6_{\mathfrak{t},\mathfrak{x}}}
& \ge \left\| \| e^{i \mathfrak{t} \partial _{\mathfrak{x}}^2/2} \phi \| _{L_{\mathfrak{x}}^6} \right\| _{L^6_{\mathfrak{t}}([-1,1])} \\
& \gtrsim \Big( \frac{\eps}{A} \Big) ^{C'} \left\| \lr{e^{i \mathfrak{t} \partial _{\mathfrak{x}}^2/2} \wt{h}, e^{i \mathfrak{t} \partial _{\mathfrak{x}}^2/2} \phi}_{L_{\mathfrak{x}}^{6/5},L_{\mathfrak{x}}^6} \right\| _{L^6_{\mathfrak{t}}([-1,1])} \\
& = \Big( \frac{\eps}{A} \Big) ^{C'} \left\| \lr{\wt{h}, \phi}_{L_{\mathfrak{x}}^{2}} \right\| _{L^6_{\mathfrak{t}}([-1,1])}
\gtrsim \eps \left( \frac{\eps}{A} \right) ^{C'+89}.
\end{align*}
In the sequel, we show \eqref{IS-tildehbound}.
Setting $\varrho (\xi ):= e^{-i\mathfrak{t} \xi ^2 /2} \sigma \big( \frac{\xi}{M} \big)$, we write
\[
e^{i \mathfrak{t} \partial _{\mathfrak{x}}^2/2} \wt{h} = \mathcal{F}^{-1}[\varrho ] \ast (\chi _r h_{\infty} ).
\]
Since
\[
\sup _{|\mathfrak{t}| \le 1, \xi \in \R} \left| \frac{d^{\alpha} \varrho}{d \xi ^{\alpha}} (\xi ) \right| \lesssim_{\alpha} M^{\alpha C_1}
\]
for all $\alpha \in \N _0$, we get
\begin{align*}
\| \mathcal{F}^{-1}[\varrho ] \| _{L^1}
\lesssim \| \lr{\cdot} \mathcal{F}^{-1}[ \varrho ] \| _{L^2}
\lesssim \| \varrho \| _{H^1}
\lesssim M^{3/2}.
\end{align*}
Thus, by Young's inequality, we obtain
\begin{align*}
\sup _{|\mathfrak{t}| \le 1} \| e^{i \mathfrak{t} \partial _{\mathfrak{x}}^2/2} \wt{h} \| _{L_{\mathfrak{x}}^{6/5}}
& = \sup _{|\mathfrak{t}| \le 1} \| \mathcal{F}^{-1}[\varrho ] \ast (\chi _r h_{\infty} ) \| _{L_{\mathfrak{x}}^{6/5}}
\le \sup _{|\mathfrak{t}| \le 1} \| \mathcal{F}^{-1}[\varrho ] \| _{L^1} \| \chi _r h_{\infty} \| _{L_{\mathfrak{x}}^{6/5}} \\
& \le \sup _{|\mathfrak{t}| \le 1} \| \mathcal{F}^{-1}[\varrho ] \| _{L^1} \| \chi _r \| _{L^{3}} \| h_{\infty} \| _{L_{\mathfrak{x}}^{2}}
\lesssim M^{3/2} r^{1/3} \\
&\lesssim \left( \frac{A}{\eps} \right) ^{3C_1/2+C_2/3},
\end{align*}
which concludes the proof.
\end{proof}

\begin{rmk}
After passing to a further subsequence, we can take the parameters in the conclusion of Theorem \ref{IS} satisfy the following:
\begin{itemize}
\item If $\lambda_n$ does not convergence to $+\infty$, then $\lambda_n \equiv 1$ and $\nu_n \equiv 0$.
\item Irrespective of the behavior of $\lambda_n$, we have either $t_n/\lambda_n^2 \to \pm \infty$ or $t_n \equiv 0$.
\end{itemize}
This fact follows from the same argument as in Corollary 4.10 in \cite{KSV12}.
\end{rmk}

\section{Linear profile decomposition}
In this section, at first, we state the linear profile decomposition as follows. 
\begin{thm}[Linear profile decomposition]
\label{lpd}
Let $\{ v_n \} \subset H^1(\R )$ be bounded and let $s \in [ \frac{1}{2}, \frac{11}{12})$. Then after passing to a subsequence, there exists $J_0\in [1,\infty]$ such that for any integer $j\in [1,J_0)$, there also exit the following:
\begin{itemize}
\item a function $\phi^j\in L^2(\mathbb{R})\backslash\{0\}$, 
\item a sequence $\{\lambda_n^j\}\subset[1,\infty)$ such that either $\lambda _n^j \rightarrow \infty$ or $\lambda_n^j\equiv 1$,
\item a sequence $\{\nu_n^j\} \subset \R$ such that $\nu_n^j\rightarrow\nu^j\in \R$, which is identically $0$ if $\lambda_n^j\equiv 1$, 
\item a sequence $\{(t_n^j,x_n^j)\}\subset \R\times \R$ such that either $t_n^j/(\lambda_n^j)^2\rightarrow\pm\infty$ or $t_n^j\equiv 0$.
\end{itemize}
Let $P^j_n$ denote the projections defined by\\
\[
P_n^j\phi^j := \begin{cases} \phi^j\in H^1(\R), & \text{if } \lambda_n^j\equiv 1, \\
 P_{\le (\lambda_n^j)^{\theta}}\phi^j, & \text{if } \lambda_n^j\rightarrow\infty, \end{cases}\ \text{with}\ \theta = \frac{1}{100}.
\]
Then for any $J\in [1,J_0)$, we have a decomposition
\begin{equation}
\label{4-1}
    v_n=\sum_{j=1}^JT_{x_n^j}e^{it_n^j\langle\partial_x\rangle}\Lo _{\nu _n^j}P_n^j\phi^j+w_n^J,
\end{equation}
satisfying
\begin{align}
&\lim_{J\rightarrow\infty}\limsup_{n\rightarrow\infty}\|\langle\partial_x\rangle^{s-1/2}e^{-it\langle\partial_x\rangle}w_n^J\|_{L_{t,x}^6(\R\times\R)}=0\label{4-2}\\
& \lim _{n \rightarrow \infty} \left\{ \| v_n \|_{H^1}^2 - \sum_{j=1}^J\| T_{x_n^j} e^{it_n^j \lr{\partial _x}} \Lo _{\nu _n^j} D_{\lambda _n^j} P_n^j \phi^j \| _{H^1}^2 - \| w _n^J \|_{H^1}^2 \right\} =0, \label{LD-dec-2} \\
&D_{\lambda _n^j}^{-1} \Lo _{\nu _n^j}^{-1} T_{x_n^j}^{-1} e^{-it_n^j \lr{\partial _x}} w_n^J \rightharpoonup 0 \text{ weakly in } L^2_x(\mathbb{R})\ \text{for any}\ j\le J. \label{LD-wlim-2}
\end{align}
Finally, we have the following asymptotic orthogonality condition: for any $j\ne j'$,
\begin{equation}
      \lim_{n\rightarrow\infty}\left\{\frac{\lambda_n^j}{\lambda_n^{j'}}+\frac{\lambda_n^{j'}}{\lambda_n^{j}}+\lambda_n^j|\nu_n^j-\nu_n^{j'}|+\frac{\left|s_n^{jj'}\right|}{(\lambda_n^{j'})^2}+\frac{\left|y_n^{jj'}\right|}{\lambda_n^{j'}}\right\}=0,\label{4-5}
\end{equation}
where $(-s_n^{jj'},y_n^{jj'}):=L_{\nu_n^{j'}}(t_n^{j'}-t_n^j,x_n^{j'}-x_n^j)$.
\end{thm}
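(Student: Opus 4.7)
\medskip
\noindent\textbf{Proof proposal.} The plan is the standard inductive extraction of bubbles based on the inverse Strichartz inequality (Theorem \ref{IS}), run so as to produce at each stage a profile together with a residual whose Strichartz norm has been reduced by a definite fraction. Set $w_n^{0} := v_n$ and $A := \limsup_{n\to\infty}\|v_n\|_{H^1}$, and at stage $J\ge 1$ put
\[
\eps_J := \limsup_{n\to\infty}\|\fd^{s-1/2}e^{-it\fd}w_n^{J-1}\|_{L^6_{t,x}}.
\]
If $\eps_J = 0$, stop and set $J_0 = J$. Otherwise, pass to a subsequence and apply Theorem \ref{IS} to $\{w_n^{J-1}\}$ to obtain $\phi^{J}\in L^2$, scales $\lambda_n^J$ with limit $\lambda_\infty^J$, boosts $\nu_n^J\to\nu^J$, and space-time translations $(t_n^J,x_n^J)$. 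Using the remark following Theorem \ref{IS}, we may arrange $\lambda_n^J\equiv 1$ (with $\nu_n^J\equiv 0$) when $\lambda_\infty^J<\infty$ and $t_n^J/(\lambda_n^J)^2\to\pm\infty$ or $t_n^J\equiv 0$. Set $\phi_n^J$ as in Theorem \ref{IS} and define $w_n^J := w_n^{J-1} - \phi_n^J$. The decoupling \eqref{IS-dec} gives, by induction on $J$, the claim \eqref{LD-dec-2}.

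\medskip
\noindent For \eqref{4-2}, combine the bound \eqref{IS-bound} with \eqref{LD-dec-2}: at each step $\eps_J^6 \le \eps_{J-1}^6[1 - c(\eps_{J-1}/A)^{C}]$ (and $\eps_J\le A$ uniformly), from which a routine iteration shows $\eps_J\to 0$ as $J\to\infty$; if the algorithm terminates at finite $J_0$, take $\eps_J\equiv 0$ for $J\ge J_0$. For the weak-limit assertion \eqref{LD-wlim-2}, note that when $j=J$ it is exactly \eqref{IS-wlim} applied to $w_n^{J-1}$; for $j<J$ it follows because each subsequent $\phi_n^k$ with $k>j$ is, by the forthcoming orthogonality, asymptotically orthogonal in $L^2$ after unwinding by $D_{\lambda_n^j}^{-1}\Lo_{\nu_n^j}^{-1}T_{x_n^j}^{-1}e^{-it_n^j\fd}$.

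\medskip
\noindent The main obstacle is the asymptotic orthogonality \eqref{4-5}. I will argue by contradiction along the lines of Keraani: suppose \eqref{4-5} fails for some pair $j<j'$, so after extracting a subsequence at least one of the quantities $\lambda_n^j/\lambda_n^{j'}+\lambda_n^{j'}/\lambda_n^j$, $\lambda_n^j|\nu_n^j-\nu_n^{j'}|$, $|s_n^{jj'}|/(\lambda_n^{j'})^2$, or $|y_n^{jj'}|/\lambda_n^{j'}$ stays bounded (and in fact we may take it to have a finite limit). Using the group identities from Lemma \ref{LB}, in particular \ref{LB-comm} and \ref{LB-dou}, rewrite
\[
D_{\lambda_n^{j'}}^{-1}\Lo_{\nu_n^{j'}}^{-1}T_{x_n^{j'}}^{-1}e^{-it_n^{j'}\fd}\bigl(T_{x_n^j}e^{it_n^j\fd}\Lo_{\nu_n^j}D_{\lambda_n^j}P_n^j\phi^j\bigr)
\]
as $D_{\lambda_n^{j'}}^{-1}\Lo_{\mu_n}^{-1}T_{y_n^{jj'}}e^{-is_n^{jj'}\fd}D_{\lambda_n^j}P_n^j\phi^j$ with $\mu_n = \nu_n^j\lr{-\nu_n^{j'}}+\lr{\nu_n^j}(-\nu_n^{j'})$. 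Under the assumed boundedness, \ref{lem:aeconvfin} and \ref{lem:aeconvinf} of Lemma \ref{lem:aeconv} together with Lemma \ref{lem:compact} produce (along a subsequence) a nonzero weak $L^2$-limit of this expression. On the other hand, $\phi_n^j = T_{x_n^j}e^{it_n^j\fd}\Lo_{\nu_n^j}D_{\lambda_n^j}P_n^j\phi^j$ is precisely the portion subtracted off at step $j$, and by the inductive validity of \eqref{LD-wlim-2} applied to $w_n^{j'-1}$ tested against this bubble, the weak limit in question must vanish, a contradiction.

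\medskip
\noindent Once \eqref{4-5} is established, it also verifies that the profiles are mutually asymptotically orthogonal in the sense needed to finish \eqref{LD-wlim-2} for $j<J$, closing the induction. The extraction of a single diagonal subsequence handling all countably many profiles and orthogonality conditions is handled by Cantor's diagonal argument, completing the proof modulo the routine bookkeeping.
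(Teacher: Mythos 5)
Your proposal follows the Keraani-style iterative extraction scheme that the paper implements only by citing Theorem 5.1 of \cite{KSV12}, so the overall structure --- apply the inverse Strichartz inequality repeatedly, obtain \eqref{LD-dec-2} from \eqref{IS-dec}, drive the residual Strichartz norm to zero from \eqref{IS-bound}, obtain \eqref{LD-wlim-2} at stage $j=J$ from \eqref{IS-wlim}, and prove asymptotic orthogonality by contradiction --- is the same as the intended argument. Note in passing that the printed condition \eqref{4-5} cannot hold with limit $=0$, since $\lambda_n^j/\lambda_n^{j'}+\lambda_n^{j'}/\lambda_n^j\ge 2$; as in \cite{KSV12} it should read that the displayed sum tends to $+\infty$, and your negation (all the parameters remain bounded along a subsequence, not just ``at least one'') implicitly uses that corrected version.

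One step of the orthogonality argument does not close as written. After noting that bounded parameters together with Lemmas \ref{lem:compact} and \ref{lem:aeconv} give a nonzero strong $L^2$ limit of $D_{\lambda_n^{j'}}^{-1}\Lo_{\nu_n^{j'}}^{-1}T_{x_n^{j'}}^{-1}e^{-it_n^{j'}\fd}\phi_n^j$, you assert that \eqref{LD-wlim-2} ``tested against this bubble'' forces that limit to vanish. But pairing $w_n^{j'-1}$ against $\phi_n^j$ via \eqref{LD-wlim-2} only yields $\lr{w_n^{j'-1},\phi_n^j}_{L^2}\to 0$, which does not by itself contradict the nonvanishing of the strong limit: it would merely say that the (nonzero) strong limit is $L^2$-orthogonal to the weak limit $\phi^{j'}$, which is not impossible. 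The contradiction one actually wants is that $D_{\lambda_n^{j'}}^{-1}\Lo_{\nu_n^{j'}}^{-1}T_{x_n^{j'}}^{-1}e^{-it_n^{j'}\fd}w_n^{j'-1}\rightharpoonup 0$: factor it as the transfer operator $D_{\lambda_n^{j'}}^{-1}\Lo_{\nu_n^{j'}}^{-1}T_{x_n^{j'}}^{-1}e^{-it_n^{j'}\fd}\,T_{x_n^{j}}e^{it_n^{j}\fd}\Lo_{\nu_n^{j}}D_{\lambda_n^{j}}$ applied to $D_{\lambda_n^{j}}^{-1}\Lo_{\nu_n^{j}}^{-1}T_{x_n^{j}}^{-1}e^{-it_n^{j}\fd}w_n^{j'-1}$, observe that under bounded parameters the transfer operator and its adjoint converge in the strong operator topology (again Lemmas \ref{lem:compact} and \ref{lem:aeconv}), and compose with the inductive weak limit $D_{\lambda_n^{j}}^{-1}\Lo_{\nu_n^{j}}^{-1}T_{x_n^{j}}^{-1}e^{-it_n^{j}\fd}w_n^{j'-1}\rightharpoonup 0$ from \eqref{LD-wlim-2} to get weak convergence to $0$; this contradicts the nonzero weak limit $\phi^{j'}$ supplied by \eqref{IS-wlim}. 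This is a local rewrite rather than a change of approach, and with it your proof matches the template of \cite{KSV12}.
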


Theorem \ref{lpd} follows from the inverse Strichartz inequality (Theorem \ref{IS}) and a straightforward modification of the proof of Theorem 5.1 in \cite{KSV12}.
Hence, we omit the details of the proof here.

We will use the following energy decoupling in \S \ref{sec:7}.
Because our energy has the exponential-type term, we need some modifications of the proof of Proposition 5.3 in \cite{KSV12}.

\begin{prop}[Energy decoupling]
Let $\{v_n\}_{n=0}^{\infty}$ be a bounded sequence in $H^1(\R)$. Then after passing to a subsequence, the linear profile decomposition (\ref{4-1}) satisfies the following: for any $J<J_0$, 
\begin{equation}
\label{4-9}
\lim_{n\rightarrow\infty}\left\{E(v_n)-\sum_{j=1}^JE\left(T_{x_n^j}e^{it_n^j\langle\partial_x\rangle}\Lo _{\nu _n^j}D_{\lambda_n^j}P_n^j\phi^j\right)-E(w_n^J)\right\}=0.
\end{equation}
\end{prop}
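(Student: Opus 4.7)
The plan is to split $E(v_n)$ into its quadratic ($H^1$) part and its nonlinear potential part, dispatch the first part by the already-established $H^1$ decoupling \eqref{LD-dec-2}, and then expand the nonlinearity $\widetilde{\No}$ as a Taylor series in order to reduce the problem to an $L^{2l}$-decoupling statement for each $l\ge 3$, treated in the same spirit as the $L^6$-decoupling in \cite{KSV12}. Writing
\[
\widetilde{\No}(u) \;=\; \sum_{l=3}^{\infty} \frac{u^{2l}}{l!},
\]
and observing that Sobolev's embedding in one dimension gives $\|v_n\|_{L^\infty_x}\lesssim\|v_n\|_{H^1}\le C$ uniformly in $n$, together with the analogous bounds for $\phi_n^j:=T_{x_n^j}e^{it_n^j\langle\dx\rangle}\Lo_{\nu_n^j}D_{\lambda_n^j}P_n^j\phi^j$ and $w_n^J$ (which come from \eqref{LD-dec-2} and the boundedness of $\{v_n\}$ in $H^1$), the integrals $\int\widetilde{\No}(\Re v_n)\,dx$, $\int\widetilde{\No}(\Re\phi_n^j)\,dx$, $\int\widetilde{\No}(\Re w_n^J)\,dx$ may all be expanded term-by-term and each bounded by $\sum_{l\ge 3}C^{2l-2}\|v\|_{L^2}^2/l!$, which is summable. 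Dominated convergence will then reduce the entire statement to proving, for every fixed $l\ge 3$ and every fixed $J$, the decoupling
\begin{equation}\label{eq:Llpowdec}
\lim_{n\to\infty}\Bigl\{\int_\R(\Re v_n)^{2l}\,dx-\sum_{j=1}^{J}\int_\R(\Re\phi_n^j)^{2l}\,dx-\int_\R(\Re w_n^J)^{2l}\,dx\Bigr\}=0.
\end{equation}

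For \eqref{eq:Llpowdec}, I would substitute the profile decomposition into $(\Re v_n)^{2l}$ and expand by the multinomial theorem. The terms in which all factors come from a single profile $\phi_n^{j_0}$ or all from $w_n^J$ give exactly the three pieces appearing in \eqref{eq:Llpowdec}; every other (``cross'') term involves either two distinct profiles or a mix of some $\phi_n^{j}$ with $w_n^J$, and must be shown to tend to zero. For cross terms of the form $\int (\Re\phi_n^{j})^{p}(\Re\phi_n^{j'})^{q}\,dx$ with $j\ne j'$, $p,q\ge 1$, I would change variables to the frame of the $j$-th profile via the composition $D_{\lambda_n^j}^{-1}\Lo_{\nu_n^j}^{-1}T_{x_n^j}^{-1}e^{-it_n^j\langle\dx\rangle}$ and use the asymptotic orthogonality \eqref{4-5}: after this transformation, $\phi_n^j$ becomes $P_n^j\phi^j$, which converges strongly, while the transformed $\phi_n^{j'}$ either escapes in position (via $y_n^{jj'}/\lambda_n^{j'}\to\infty$), in modulation (via $\lambda_n^j|\nu_n^j-\nu_n^{j'}|\to\infty$), in time (via $s_n^{jj'}/(\lambda_n^{j'})^2\to\pm\infty$, causing the linear flow to disperse it to $L^{2l}\to 0$), or in scale (via $\lambda_n^j/\lambda_n^{j'}+\lambda_n^{j'}/\lambda_n^j\to\infty$), so that the product converges weakly to zero against the strongly convergent piece. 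These arguments are completely analogous to the $L^6$-case of \cite[Prop.~5.3]{KSV12}; the only new ingredient is that the exponent $2l$ may be arbitrarily large, but this is harmless thanks to the uniform $L^\infty$ bound, which allows H\"older interpolation to replace any $L^{2l}$-dispersion with an $L^6$- or $L^r$-dispersion coming from Lemma~\ref{lem:dispersive} and Lemma~\ref{lem:aeconv}.

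For the cross terms involving $w_n^J$, after applying the same change of variables aligning with one of the profiles present in the cross term, the transformed remainder converges weakly to zero in $L^2$ by \eqref{LD-wlim-2}, while each of the remaining factors (whether another profile after the chain of transformations or the distinguished profile itself) becomes, after H\"older splitting using the uniform $L^\infty$ bound, a strongly $L^2$-convergent object or a bounded $L^\infty$ factor multiplying something that converges strongly in $L^2$. Hence the product tends to zero. Summing over the finitely many cross-term types in the multinomial expansion finishes \eqref{eq:Llpowdec} for each $l$.

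The main obstacle will be controlling the infinite Taylor tail uniformly so that the passage to the limit commutes with the summation in $l$. This is where the $H^1\hookrightarrow L^\infty$ embedding (specific to one space dimension) is essential: it gives the pointwise bound $|\Re v_n|^{2l}\le \|v_n\|_{L^\infty}^{2l-2}|v_n|^2\le C^{2l-2}|v_n|^2$, and similar bounds for $\phi_n^j$ and $w_n^J$ (using that the $H^1$-norms of these pieces stay bounded by $\|v_n\|_{H^1}+o_n(1)$ thanks to \eqref{LD-dec-2}), so that the tails of $\sum_l(\cdot)/l!$ are uniformly dominated and may be discarded before taking $n\to\infty$. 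Once the Taylor series is truncated, only finitely many $L^{2l}$-decouplings are invoked and the proof is closed. Since the remaining manipulations mirror those of \cite[Prop.~5.3]{KSV12}, the details can reasonably be abridged in the final write-up.
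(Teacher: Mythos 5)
Your proposal is correct and takes essentially the same route as the paper: reduce the quadratic part of the energy to the already-proved $H^1$ decoupling \eqref{LD-dec-2}, Taylor-expand $\widetilde{\No}$, use the $H^1\hookrightarrow L^\infty$ (equivalently, $H^1\hookrightarrow L^{2l}$) embedding to dominate the tail and pass to the limit term-by-term, and then invoke the $L^{2l}$-decoupling exactly as in \cite[Prop.~5.3]{KSV12}. The only cosmetic difference is that the paper reduces to the single-profile case $J=1$ and argues by induction, whereas you expand the full multinomial for general $J$; the cross-term analysis you sketch is precisely the content of the KSV12 lemma that the paper cites without repeating.
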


\begin{proof}
We will prove that the energy decouples in the inverse Strichartz theorem (Theorem \ref{IS}), that is, in the case $J=1$. The general case follows by induction. Moreover, we proved (\ref{LD-dec-2}). Thus it suffices to prove that
\begin{equation}
\label{4-11-1}
\lim_{n\rightarrow\infty}\left\{\int_{\mathbb{R}}\widetilde{\mathcal{N}}(\Re v_n)dx-\int_{\mathbb{R}}\widetilde{\mathcal{N}}(\Re \phi_n)dx-\int_{\mathbb{R}}\widetilde{\mathcal{N}}(\Re w_n)dx\right\}=0,
\end{equation}
where
\[
       \phi_n:=T_{x_n^j}e^{it_n^j\langle\partial_x\rangle}\Lo _{\nu _n^j}D_{\lambda_n^j}P_n^j\phi.
\]
with $P_n=1$ if $\lambda_n\equiv 1$ and $P_n=P_{\le \lambda_n^{\theta}}$ if $\lambda_n\rightarrow\infty$. In order to prove (\ref{4-11-1}), it suffices to prove that for any $l\in \mathbb{N}$ with $l\ge 3$
\begin{equation}
\label{4-12-1}
\lim_{n\rightarrow\infty}\left\{\|\Re v_n\|_{L_x^{2l}}^{2l}-\|\Re \phi_n\|_{L_x^{2l}}^{2l}-\|\Re w_n\|_{L_x^{2l}}^{2l}\right\}=0. 
\end{equation}
Indeed, since $\{v_n\}$ is bounded in $H^1(\mathbb{R})$, where we set $A:=\sup_n\|v_n\|_{H^1}$, by the Sobolev embedding $H^1(\mathbb{R})\subset L^{2l}(\mathbb{R})$, we have
\begin{align*}
     &\sum_{l=3}^{\infty}\frac{1}{l!}\left|\|\Re v_n\|_{L_x^{2l}}^{2l}-\|\Re \phi_n\|_{L_x^{2l}}^{2l}-\|\Re w_n\|_{L_x^{2l}}^{2l}\right|\\
     &\le \sum_{l=3}^{\infty}\frac{1}{l!}\left(\|\Re v_n\|_{H_x^1}^{2l}+\|\Re \phi_n\|_{H_x^1}^{2l}+\|\Re w_n\|_{H_x^1}^{2l}\right)\\
     &\le 3\sum_{l=3}^{\infty}\frac{A^{2l}}{l!} <3\exp (A^2) <\infty,
\end{align*}
which enables us to apply the dominated convergence theorem, to get (\ref{4-11-1})

The proof of (\ref{4-12-1}) is same as the proof of (5.11) in Proposition 5.3 in \cite{KSV12}. So we omit the details, which completes the proof of the proposition.
\end{proof}

\begin{prop}[Decoupling of nonlinear profiles]
\label{denon}
Let $\psi^j$ and $\psi^{j'}$ be in $C_0^{\infty}(\mathbb{R}\times\mathbb{R})$. Let $\nu_n^j,\nu_n^{j'}, (t_n^j,x_n^j), (t_n^{j'},x_n^{j'}), \lambda_n^j, \lambda_n^{j'}$ be parameters given in Theorem \ref{lpd}. We define $\psi_n^j$ by
\begin{equation}
\label{4-12}
\left[\psi_n^j(\cdot+t_n^j,\cdot+x_n^j)\circ L_{\nu_n^j}^{-1}\right](t,x):=\frac{e^{-it}}{\sqrt{\lambda_n^j}}\psi^j\left(\frac{t}{(\lambda_n^j)^2},\frac{x}{\lambda_n^j}\right)
\end{equation}
and $\psi_n^{j'}$ is defined in the similar manner. Then under the orthogonality condition (\ref{4-5}), we have
\begin{equation}
\label{4-13}
\lim_{n\rightarrow\infty}\|\psi_n^j\psi_n^{j'}\|_{L_{t,x}^3}=0.
\end{equation}
\end{prop}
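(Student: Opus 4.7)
The plan is to reduce $\|\psi_n^j\psi_n^{j'}\|_{L^3_{t,x}}$ to a scale-invariant integral over the unit-scale supports of $\psi^j$ and $\psi^{j'}$ and then to exploit \eqref{4-5} to see that these supports separate. The argument splits according to whether the two scales are comparable.

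\emph{Case A: different scales.} If $\lambda_n^j/\lambda_n^{j'} + \lambda_n^{j'}/\lambda_n^j \to \infty$, by symmetry I may assume $\lambda_n^j/\lambda_n^{j'}\to 0$. Since Lorentz boosts and translations are volume-preserving, $\|\psi_n^k\|_{L^p_{t,x}} = \|\wt\psi_n^k\|_{L^p_{t,x}}$ for all $p$, and direct calculation gives $\|\wt\psi_n^j\|_{L^3_{t,x}} = (\lambda_n^j)^{1/2}\|\psi^j\|_{L^3}$ and $\|\wt\psi_n^{j'}\|_{L^\infty_{t,x}} = (\lambda_n^{j'})^{-1/2}\|\psi^{j'}\|_{L^\infty}$. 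H\"older then yields
\[
\|\psi_n^j\psi_n^{j'}\|_{L^3_{t,x}} \le \|\psi_n^j\|_{L^3_{t,x}}\|\psi_n^{j'}\|_{L^\infty_{t,x}} \lesssim (\lambda_n^j/\lambda_n^{j'})^{1/2}\to 0.
\]

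\emph{Case B: comparable scales.} After passing to a subsequence I may take $\lambda_n^j = \lambda_n^{j'} =: \lambda_n$. Change variables by $(t,x) = L_{-\nu_n^j}(\tau,y) + (t_n^j,x_n^j)$, which has unit Jacobian, so $\psi_n^j(t,x) = \wt\psi_n^j(\tau,y)$. Using the composition law for Lorentz boosts (the spacetime analogue of Lemma \ref{LB}\ref{LB-dou}) together with the identity $L_{\nu_n^{j'}}(t_n^j-t_n^{j'},\,x_n^j-x_n^{j'}) = (s_n^{jj'},\,-y_n^{jj'})$ (immediate from the definition of $(s_n^{jj'},y_n^{jj'})$ and linearity), one finds
\[
\psi_n^{j'}(t,x) = \wt\psi_n^{j'}\bigl(L_{-\mu_n}(\tau,y) + (s_n^{jj'},\,-y_n^{jj'})\bigr), \qquad \mu_n := \lr{\nu_n^{j'}}\nu_n^j - \nu_n^{j'}\lr{\nu_n^j}.
\]
As $\nu_n^j,\nu_n^{j'}\to\nu$, one checks $|\mu_n|\sim|\nu_n^j-\nu_n^{j'}|$, so $\mu_n\to 0$ and $\lambda_n|\mu_n|\to\infty$ iff $\lambda_n|\nu_n^j-\nu_n^{j'}|\to\infty$. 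Rescale $\tau=\lambda_n^2\mathfrak t$, $y=\lambda_n\mathfrak x$; the Jacobian $\lambda_n^3$ exactly cancels the $\lambda_n^{-3/2}$ prefactors from $|\wt\psi_n^j|^3$ and $|\wt\psi_n^{j'}|^3$, giving
\[
\|\psi_n^j\psi_n^{j'}\|_{L^3_{t,x}}^3 = \int_{\R^2}|\psi^j(\mathfrak t,\mathfrak x)|^3\,|\psi^{j'}(A_n,B_n)|^3\,d\mathfrak t\,d\mathfrak x,
\]
with $A_n = \lr{\mu_n}\mathfrak t + (\mu_n/\lambda_n)\mathfrak x + s_n^{jj'}/\lambda_n^2$ and $B_n = \lr{\mu_n}\mathfrak x + \mu_n\lambda_n\mathfrak t - y_n^{jj'}/\lambda_n$. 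Since $\lr{\mu_n}\to 1$ and $\mu_n/\lambda_n\to 0$, and at least one of $|\mu_n|\lambda_n$, $|s_n^{jj'}|/\lambda_n^2$, $|y_n^{jj'}|/\lambda_n$ diverges by \eqref{4-5}, for a.e.\ $(\mathfrak t,\mathfrak x)$ in the compact support of $\psi^j$ at least one of $A_n,B_n$ tends to $\pm\infty$ (in the boost subcase $|\mu_n|\lambda_n\to\infty$ one excludes only the null set $\{\mathfrak t=0\}$). Since $\psi^{j'}$ has compact support, the integrand tends to $0$ a.e.\ and is dominated by $\|\psi^{j'}\|_\infty^3|\psi^j|^3\in L^1$; dominated convergence finishes the proof.

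The main technical obstacle is the algebraic reduction in Case B: correctly composing the two Lorentz boosts and reading off the shift $(s_n^{jj'},-y_n^{jj'})$ with the correct signs, from which the explicit form of $\mu_n$ and the comparability $|\mu_n|\sim|\nu_n^j-\nu_n^{j'}|$ follow. Once the scale-invariant integral is in place, the rest is elementary Hölder and dominated convergence.
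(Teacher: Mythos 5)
Your proof is correct in substance and follows exactly the route the paper intends: the paper offers no details and simply defers to Proposition~5.5 of \cite{KSV12}, and your argument is a faithful expansion of that reference's strategy (H\"older when scales separate; otherwise a Lorentz-and-scaling change of variables that reduces the $L^3$-norm of the product to a scale-invariant integral, followed by dominated convergence using the orthogonality condition). Your computation of the composed boost $L_{\nu_n^{j'}}L_{-\nu_n^j} = L_{-\mu_n}$ with $\mu_n = \lr{\nu_n^{j'}}\nu_n^j - \nu_n^{j'}\lr{\nu_n^j}$, the comparability $|\mu_n| \sim |\nu_n^j - \nu_n^{j'}|$ (which follows from the fact that $\mu_n/(\lr{\nu_n^j}\lr{\nu_n^{j'}})$ is the difference $g(\nu_n^j)-g(\nu_n^{j'})$ of the Lipschitz diffeomorphism $g(a)=a/\lr{a}$), and the reading-off of $A_n,B_n$ are all right. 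You also correctly read the condition \eqref{4-5} as saying the bracketed sum tends to $\infty$ (not $0$ as typeset), which is the standard meaning.

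Two technical points deserve cleanup, though neither is a real gap. First, in Case~B you cannot literally take $\lambda_n^j=\lambda_n^{j'}$; when both $\to\infty$ with bounded but unequal ratio you should keep a factor $c_n := \lambda_n^j/\lambda_n^{j'}\to c\in(0,\infty)$, which then multiplies $A_n,B_n$ by bounded convergent constants and does not affect the conclusion. Second, the pointwise a.e.\ convergence step should be stated after passing to a further subsequence along which each of $\mu_n\lambda_n$, $s_n^{jj'}/\lambda_n^2$, $y_n^{jj'}/\lambda_n$ either converges or diverges; when both $\mu_n\lambda_n$ and $y_n^{jj'}/\lambda_n$ diverge at a commensurable rate the excluded null set is a line $\{\mathfrak t = \mathfrak t_0\}$ rather than $\{\mathfrak t = 0\}$. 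Your parenthetical only covers the simplest subcase, but the dominated convergence argument does go through in all cases once these subsequences are fixed.
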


\begin{proof}
This proposition can be proved in a similar manner as Proposition 5.5 in \cite{KSV12}.
\end{proof}

\section{Isolating NLS inside the nonlinear Klein-Gordon equation}
\label{sec:IS}

We recall the result obtained by Dodson \cite{Dod16} for the mass-critical nonlinear Schr\"odinger equation:
\begin{equation} \label{mNLS}
\Big( i \dt + \frac{1}{2} \dx^2 \Big)  w = \frac{5}{32} |w|^4 w .
\end{equation}

\begin{thm}[\cite{Dod16}] \label{scaNLS}
Let $w_0 \in L^2 (\R )$.
Then, there exists a unique global solution $w\in C(\R;L_x^2(\R))$ to \eqref{mNLS} with $w(0)= w_0$.
Furthermore, the solution $w$ satisfies the following estimate:
\[
\| w \| _{L^6_{t,x} (\R \times \R )} \le C(M(w_0)),
\]
As a consequence, $w$ scatters as $t\rightarrow \pm \infty$ in $L^2(\R)$, that is, there exists $w_{\pm} \in L^2 (\R )$ such that
\[
\lim _{t \rightarrow \pm \infty} \| w(t) - e^{it \dx^2 /2} w_{\pm} \| _{L^2_x} =0,
\]
where the double-sign corresponds. Conversely, for any $w_{\pm} \in L^2 (\R )$, there exists a unique global solution $w$ to \eqref{mNLS} so that the above holds.
\end{thm}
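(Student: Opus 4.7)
The plan is to follow the concentration-compactness / rigidity roadmap developed by Kenig--Merle and adapted to the mass-critical setting by Killip--Tao--Visan, Tao--Visan--Zhang, and finally Dodson in the hardest one-dimensional defocusing case. First I would record the standard ingredients: local well-posedness in $L^2(\R)$ with a small-data scattering/stability theory built on the Strichartz estimates for $e^{it\dx^2/2}$ (pairs $(q,r)$ with $\frac{2}{q}+\frac{1}{r}=\frac{1}{2}$, in particular $L_{t,x}^6$), together with the observation that \eqref{mNLS} enjoys the full group of mass-critical symmetries (translations, phase rotations, Galilean boosts, and the parabolic scaling $w(t,x)\mapsto \lambda^{1/2}w(\lambda^2 t,\lambda x)$). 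This reduces the claim to the a priori bound $\|w\|_{L^6_{t,x}}\le C(M(w_0))$; scattering in $L^2$ and the existence of wave operators then follow by now-standard Strichartz arguments.

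The core is a contradiction argument: assume the finite $L^6_{t,x}$-bound fails for some critical mass $M_c<\infty$. Using a Keraani-type $L^2$ linear profile decomposition adapted to the Schr\"odinger group (exploiting the inverse Strichartz inequality, which in the Schr\"odinger setting is cleaner than Theorem \ref{IS} above), together with the stability theory, one extracts a minimal-mass non-scattering solution $w$ of \eqref{mNLS}. The standard Palais--Smale argument then shows that the orbit $\{w(t):t\in I_{\max}\}$ is precompact in $L^2(\R)$ modulo the symmetry group; a further analysis of how the parameters can evolve reduces the problem to three enemies: the soliton-like scenario (with scaling parameter $N(t)\equiv 1$ and bounded spatial center), a quasi-soliton/self-similar-type scenario with $N(t)\to 0$, and a finite-time blowup-like scenario with $N(t)\to\infty$ on a sequence.

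To rule out these enemies I would first prove additional regularity: for any almost periodic solution with bounded $L^2$ mass one upgrades $w(t)\in H^s$ for some $s>0$ via the double-Duhamel trick combined with a long-time Strichartz estimate. The decisive new ingredient (and the hardest step) is the one-dimensional frequency-localized interaction Morawetz inequality due to Dodson: for the high-frequency part $w_{\ge N}$ one derives, after a delicate commutator analysis of the Morawetz action with weight $|x-y|$, an estimate of the schematic form
\[
\|\, |\dx|^{1/2}(|w_{\ge N}|^2)\,\|_{L^2_{t,x}}^2 \lesssim o(1)\cdot \|w\|_{L_t^\infty L_x^2}^3 + \text{acceptable error},
\]
with errors controlled by the long-time Strichartz bound and the almost periodicity. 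This is precisely the step where the 1D mass-critical problem is much harder than higher dimensions because one has no interaction Morawetz with stronger derivatives available.

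Finally, combining the Morawetz output with the compactness of the orbit one shows that $N(t)$ cannot drift to $0$ nor to $\infty$, eliminating the two noncompact scenarios; the remaining soliton-like scenario is then defeated by noting that the averaged frequency-localized Morawetz estimate forces $w\equiv 0$, contradicting $\|w\|_{L^6_{t,x}}=\infty$. The wave operator statement (existence and uniqueness of $w$ with prescribed $L^2$ asymptotic data $w_\pm$) is a direct consequence of the scattering bound: one solves the integral equation $w(t)=e^{it\dx^2/2}w_\pm -\tfrac{5i}{32}\int_t^{\pm\infty} e^{i(t-s)\dx^2/2}(|w|^4w)(s)\,ds$ by a fixed-point argument in a suitable Strichartz space near $t=\pm\infty$, then extends globally via Theorem \ref{scaNLS} itself. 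The main obstacle throughout is the frequency-localized interaction Morawetz step, which in 1D requires the long-time Strichartz inequality and a very careful handling of the low-frequency error terms that have no analogue in the higher-dimensional theory.
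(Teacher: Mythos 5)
Theorem \ref{scaNLS} is not proved in this paper: it is quoted directly from Dodson's article \cite{Dod16} and used as a black box (the theorem label even reads ``[\cite{Dod16}]''). There is therefore no in-paper proof for your attempt to be compared against, and the correct treatment---as the authors do---is simply to cite. Your outline is a fair high-level summary of how Dodson establishes the result: local theory and Strichartz estimates, $L^2$ profile decomposition plus stability to extract a minimal-mass almost periodic solution, long-time Strichartz estimates to upgrade regularity, and the one-dimensional frequency-localized interaction Morawetz inequality to eliminate the surviving scenarios, with the wave-operator statement following from a fixed-point argument near $t=\pm\infty$. Two caveats are worth noting. First, the ``three enemies'' taxonomy you give (soliton with $N(t)\equiv 1$, quasi-soliton/self-similar with $N(t)\to 0$, blowup-like with $N(t)\to\infty$) hews more closely to the original Killip--Tao--Visan reduction than to Dodson's actual scheme, which normalizes $N(t)\le 1$ and then distinguishes a rapid-frequency-cascade scenario (ruled out by additional regularity and energy conservation) from a quasi-soliton scenario (ruled out by the frequency-localized Morawetz inequality). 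Second, and more substantively, every step that carries genuine difficulty---the long-time Strichartz inequality, the commutator and low-frequency error analysis inside the frequency-localized Morawetz estimate, and the regularity upgrade via the double-Duhamel argument---is named rather than executed, so what you have written is a roadmap to \cite{Dod16}, not a self-contained proof of the theorem.
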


\begin{rmk}
The coefficient on the right hand side of \eqref{mNLS} is needed to extract $|v|^4v$ on the nonlinearity of \eqref{rNLKGexp}.
Indeed, we will use the following equality:
\begin{equation} \label{extract}
(\Re z)^5
= \frac{1}{16} ( 10 |z| ^4 \Re z + 5 |z|^2 \Re z^3 + \Re z^5)
= \frac{1}{16} ( 5 |z| ^4 (z+\overline{z}) + 5 |z|^2 \Re z^3 + \Re z^5)
\end{equation}
for $z \in \C$.
When $z = r e^{i \theta}$ in polar form, \eqref{extract} is equivalent to
\[
\cos^5 \theta = \frac{1}{16} ( 10 \cos \theta + 5 \cos 3\theta + \cos 5\theta).
\]
\end{rmk}

The goal in this section is to prove the following theorem.

\begin{thm} \label{isolate}
Let $\nu _n \rightarrow \nu \in \R$, $\lambda _n \rightarrow \infty$, and $\{ t_n \} , \, \{ x_n \} \subset \R$ be given.
Assume that either $t_n \equiv 0$ or $t_n /\lambda_n^2 \rightarrow \pm \infty$.
Let $\phi \in L^2(\R )$.
If we define
\[
\phi _n := T_{x_n} e^{it_n \fd } \Lo _{\nu _n} D_{\lambda _n} P_{\le \lambda _n^{\theta}} \phi
\]
for $\theta = \frac{1}{100}$, then for each $n$ sufficiently large, there exists a unique global solution $v_n$ to \eqref{rNLKGexp} with initial data $v_n(0)= \phi _n$, which satisfies
\[
\| v_n \| _{L_t^{\infty} (\R;H_x^1(\R))} + S_{\R} (\fd ^{1/2} v_n) \lesssim _{M(\phi )} 1.
\]
Furthermore, for any $s \in[ \frac{1}{2},1]$ and $\eps >0$, there exist $N_{\eps} \in \N$ and a function $\psi _{\eps} \in C_c^{\infty} (\R \times \R )$ such that for all $n >N_{\eps}$,
\[
\left\| \Re \Big\{ (\fd^{s-1/2} v_n) \circ L_{\nu _n}^{-1} (t+ \wt{t}_n, x+ \wt{x}_n) - \frac{e^{-it}}{\lambda _n^{1/2}} \psi _{\eps} \Big( \frac{t}{\lambda _n^2}, \frac{x}{\lambda _n} \Big) \Big\} \right\| _{L_{t,x}^6} < \eps ,
\]
where $(\wt{t}_n, \wt{x}_n) := L_{\nu _n} (t_n ,x_n)$.
\end{thm}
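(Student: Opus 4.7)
The plan is to construct an approximate solution to \eqref{rNLKGexp} from Dodson's NLS solution and then upgrade it to the true solution via the stability proposition. First, by the Lorentz invariance from Corollary~\ref{cor:boostsol}\ref{cor:boostsols1} together with Lemma~\ref{LB}\ref{LB-comm},\ref{LB-lin}, the boosted-and-translated function
\[
\wt v_n(t,x):=v_n\circ L_{\nu_n}^{-1}(t+\wt t_n,\,x+\wt x_n)
\]
is again a solution of \eqref{rNLKGexp}, whose initial data (modulo the free propagator $e^{it_n\fd}$) is the rescaled low-frequency profile $D_{\lambda_n}P_{\le\lambda_n^\theta}\phi$. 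The target estimate reduces to a spacetime comparison between $\fd^{s-1/2}\wt v_n$ and the ansatz
\[
V_n(t,x):=\frac{e^{-it}}{\sqrt{\lambda_n}}\,\psi_\eps\!\Big(\frac{t-t_n}{\lambda_n^2},\,\frac{x}{\lambda_n}\Big),
\]
where $\psi_\eps\in C_c^\infty(\R\times\R)$ is chosen close, in $L^6_{t,x}\cap L^\infty_tL^2_x$, to the global NLS solution $w$ from Theorem~\ref{scaNLS}; here $w$ is taken with initial datum $\phi$ when $t_n\equiv 0$, or with scattering state $w_{\mp}=\phi$ when $t_n/\lambda_n^2\to\pm\infty$.

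Substituting $V_n$ into \eqref{rNLKGexp} yields a defect $e_1+e_2$. The linear part $e_1$, accounting for the discrepancy between the Klein-Gordon symbol $\fd-1$ and the Schr\"odinger symbol $-\tfrac12\dx^2$, is $O(\lambda_n^{-2})$ in the Strichartz spaces needed by Proposition~\ref{prop:stability} thanks to the exact identity \eqref{eq:KG-S} and the frequency localization of $\psi_\eps(\cdot/\lambda_n)$ at scale $\lesssim\lambda_n^{-1+\theta}$. For the nonlinear defect $e_2$, decompose $\tfrac12(\Re V_n)^5$ via \eqref{extract}: after undoing the $e^{-it}$ modulation, the mass-resonant mode produces exactly $\tfrac{5}{32}|\psi_\eps|^4\psi_\eps$ (matching the NLS equation \eqref{mNLS} that $\psi_\eps\approx w$ satisfies, up to $\fd^{-1}=1+O(\dx^2)$ on low frequencies), while the three non-resonant pieces carry phases $e^{\pm 2it},e^{\pm 4it},e^{\pm 6it}$ that are off the Klein-Gordon characteristic, so integration by parts in $t$ delivers $O(\lambda_n^{-2})$ gains. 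The exponential tail $\No(\Re V_n)-\tfrac12(\Re V_n)^5$ is bounded by a convergent Taylor series $\sum_{k\ge 3}\tfrac{1}{k!}\|V_n\|_{L^\infty}^{2(k-2)}|V_n|^5$, which is $O(\lambda_n^{-1})$ since $\|V_n\|_{L^\infty}\lesssim\lambda_n^{-1/2}\|\psi_\eps\|_{L^\infty}$. The initial-data smallness $\|V_n(0)-\wt v_n(0)\|_{H^s}\to 0$ is clear when $t_n\equiv 0$, and in the scattering case follows from Lemma~\ref{lem:aeconv}~\ref{lem:aeconvinf2} combined with the scattering of $w$. Applying Proposition~\ref{prop:stability} over a partition of $\R$ into finitely many Strichartz-subcritical intervals (determined by the spacetime bound on $w$) yields $\wt v_n$ with the required uniform $L^\infty_tH^1_x+L^6_{t,x}$ bounds and the $\eps$-closeness to $V_n$; restoring the Lorentz boost completes the proof.

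The main obstacle is showing $\|e_2\|_{L^1_tH^s_x}\to 0$ uniformly in $s\in[\tfrac12,1]$: this norm is not invariant under the rescaling $(t,x)\mapsto(\lambda_n^2 t,\lambda_n x)$, so the $O(\lambda_n^{-2})$ gain coming from integration by parts against the non-resonant phases $e^{\pm 2ikt}$ must survive the scaling cost of a fractional $x$-derivative $\fd^s$ falling on the rescaled profile. A careful fractional-Leibniz estimate on the Schwartz tail of $\psi_\eps$, together with the uniform-in-$\nu$ boundedness of the multiplier $m_s$ from Lemma~\ref{LB}\ref{LB-inn}, will be needed to close the argument; an ancillary but nontrivial point is that the stability step must be carried out over a partition of $\R$ rather than in one shot, since \eqref{NLKGexp} is only energy-subcritical (not mass-critical) in $H^s$.
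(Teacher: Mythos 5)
Your structural outline — approximate the boosted solution by the rescaled NLS evolution of Dodson and then invoke Proposition~\ref{prop:stability} — is the paper's route, but several steps that you either omit or misidentify are load-bearing, and as written the argument does not close.  First, the non-resonant quintic pieces (your $e^{\pm 2it}$, $e^{\pm 4it}$, $e^{6it}$ modes) are \emph{not} small in the stability norms: no amount of ``integration by parts in $t$'' will make the forcing term itself small in $L^1_tH^s_x$ or $\fd^{-s-1/2}L^{6/5}_{t,x}$.  What the paper actually does (Lemma~\ref{lem:moderror}) is \emph{modify the ansatz}: it solves the inhomogeneous linear Klein--Gordon equation with each non-resonant forcing $e_j$ exactly, shows these corrections $f_{n,j}$ are $O(\lambda_n^{-2+2\theta})$ because the driving phase lies off the characteristic, and then replaces $\wt v_n$ by $\ttilde v_n = \wt v_n - \sum_j f_{n,j}$.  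The modified ansatz picks up a genuinely small commutator error, which is what feeds into Proposition~\ref{prop:stability}.  Without this substitution step, the argument you sketch stalls.  Second, you work with a compactly supported-in-time $\psi_\eps$ but never address the defect on $\{|t - t_n| \gg \lambda_n^2\}$.  The paper truncates the approximate solution to $|t| \le T\lambda_n^2$ and propagates linearly outside; it then needs the nontrivial stationary-phase estimate of Proposition~\ref{prop:largedataint} to show $\|\fd^{3/2}\wt v_n\|_{L^6_{t,x}((T\lambda_n^2,\infty)\times\R)} \to 0$ as $T \to \infty$.  This is essential to make the $L^1_tH^2_x$ norm of the linear symbol error $e_1$ summable; on the full line it is not, so simply leaving the ansatz unconstrained in time fails.

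Two further points.  The nonlinear errors should be measured in the dual Strichartz norm $\|\fd^{s+1/2}\cdot\|_{L^{6/5}_{t,x}}$, which is scale-covariant under the $(\lambda_n^2 t, \lambda_n x)$ change of variables; your identified ``main obstacle'' with $L^1_tH^s_x$ dissolves once you use the norm Proposition~\ref{prop:stability} is designed for (cf.\ the estimates \eqref{est:error2'} and \eqref{est:error2}).  Finally, the Lorentz boost: you cannot start by applying $L_{\nu_n}^{-1}$ to $v_n$, since $v_n$'s existence is what you're proving, and in any case Corollary~\ref{cor:boostsol} is local in time.  The paper first builds the global solution $v_n^0$ in the $\nu_n\equiv 0$ frame, then boosts $u_n^0 = \Re v_n^0$ by $L_{\nu_n}$ to get $u_n^1$; showing $u_n^1$ is a strong global solution and $\|v_n^1(0) - \phi_n\|_{H^1} \to 0$ is nontrivial and requires the stress-energy / Green's theorem argument of Proposition~\ref{prop:isogen} (including the spacetime estimate $\|u_n^0\|_{L^6_{t,x}(\Omega_n)} \to 0$, which uses \eqref{eq:isonuzero}).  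Your statement that the boosted initial data equals the profile ``modulo the free propagator'' is false on the nose and needs the full argument.  The subdivision-of-$\R$ step you flag at the end is unnecessary: Proposition~\ref{prop:stability} already yields a global conclusion given global spacetime bounds on the approximate solution.
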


\begin{proof}
First, we consider the case $\nu _n \equiv 0$.
Then, since the Klein-Gordon equation is invariant under space translations, we may assume $x_n \equiv 0$.
That is, $\phi _n = e^{it_n \fd } D_{\lambda _n} P_{\le \lambda _n^{\theta}} \phi$ and we will show that
\[
\left\| (\fd^{s-1/2} v_n) (t+t_n ,x) - \frac{e^{-it}}{\lambda _n^{1/2}} \psi _{\eps} \Big( \frac{t}{\lambda _n^2}, \frac{x}{\lambda _n} \Big) \right\| _{L_{t,x}^6} < \eps .
\]

\begin{itemize}
\item In the case $t_n \equiv 0$, we let $w_n$ and $w_{\infty}$ be the solutions to \eqref{mNLS} with $w_n(0) = P_{\le \lambda _n^{\theta}} \phi$ and $w_{\infty} (0) = \phi$, respectively.

\item In the case $t_n/\lambda _n^2 \rightarrow -\infty$ ($+\infty$), we let $w_n$ and $w_{\infty}$ be the solutions to  \eqref{mNLS} that scatter forward (backward) to $e^{it \dx^2 /2} P_{\le \lambda _n^{\theta}} \phi$ and $e^{it \dx^2 /2} \phi$, respectively.
\end{itemize}
Theorem \ref{scaNLS} implies
\begin{equation} \label{boundwn}
S_{\R} (w_n) + S_{\R} (w_{\infty} ) \lesssim _{M(\phi )} 1.
\end{equation}

From the construction of $w_n$, we get the following.

\begin{lem} \label{lem:cor-per}
For $s \ge 0$, we have
\begin{align}
& \| |\dx |^s w_n\| _{L_t^{\infty} L_x^2} + \| |\dx |^s w_n\| _{L_{t,x}^6} \lesssim _{M(\phi )} \lambda _n^{s \theta}, \label{cor-per1} \\
& \| \fd ^s \dt w_n \| _{L_{t,x}^6} \lesssim _{M(\phi )} \lambda _n^{(s+2) \theta} . \notag
\end{align}
Furthermore, the identity is valid:
\begin{equation} \label{cor-per2}
\lim _{n \rightarrow \infty} \left\{ \| w_n-w_{\infty}\| _{L_t^{\infty} L_x^2} + \| w_n-w_{\infty}\| _{L_{t,x}^6} + \| D_{\lambda _n} (w_n - P_{\le \lambda _n^{\theta}} w_{\infty}) \| _{L_t^{\infty} H^{s}_x} \right\} =0.
\end{equation}
\end{lem}

\begin{proof}
Since
\[
\||\dx |^s P_{\le \lambda _n^{\theta}} \phi\| _{L^2} \lesssim \lambda _n^{s \theta} \|\phi\| _{L^2} ,
\]
\eqref{cor-per1} follows from a corollary of the local well-posedness.

By the fractional Leibniz rule and Sobolev embeddings $\dot{W}^{2/15,6} (\R ) \hookrightarrow L^{30} (\R )$, $\dot{H}^{7/15} (\R ) \hookrightarrow L^{30} (\R )$, we have
\begin{align*}
\| \fd ^s \dt w_n \| _{L_{t,x}^6}
& \lesssim \| \fd ^s \dx^2 w_n \| _{L_{t,x}^6} + \| \fd ^s w_n \| _{L_t^6 L_x^{30}} \| w_n \| _{L_t^{\infty} L_x^{30}}^4 \\
& \lesssim _{M(\phi )} \lambda _n^{(s+2)\theta} + \| \fd ^{s+2/15} w_n \| _{L_{t,x}^6} \| \fd ^{7/15} w_n \| _{L_t^{\infty} L_x^{2}}^4 \\
& \lesssim _{M(\phi )} \lambda _n^{(s+2)\theta}.
\end{align*}

The estimates for the first and second parts on the left hand side of \eqref{cor-per2} follows from the stability theory for the mass-critical Schr\"odinger equation (see \cite{TVZ08, KilVis13}).
We consider the third part on the left hand side of \eqref{cor-per2}.
From
\[
w_n-P_{\le \lambda ^{\theta}} w_{\infty}
= P_{\ge \lambda _n} w_n + P_{\le \lambda _n} (w_n-w_{\infty}) + P_{\lambda _n^{\theta} \le \cdot \le \lambda _n} w_{\infty} ,
\]
we get
\begin{align*}
& \| D_{\lambda _n} (w_n - P_{\le \lambda _n^{\theta}} w_{\infty})\| _{L_t^{\infty} H^{s}_x} \\
& = \| \lr{\lambda _n ^{-1} \dx}^s (w_n - P_{\le \lambda _n^{\theta}} w_{\infty})\| _{L_t^{\infty} L_x^2} \\
& \lesssim \lambda _n^{-s} \| |\dx |^s w_n\| _{L_t^{\infty} L_x^2} + \| w_n-w_{\infty}\| _{L_t^{\infty} L_x^2} + \| P_{\ge \lambda _n^{\theta}} w_{\infty}\| _{L_t^{\infty} L_x^2}.
\end{align*}
Since
\[
\| P_{\ge \lambda _n^{\theta}} w_{\infty}\| _{L_t^{\infty} L_x^2([T,\infty ) \times \R )}
\lesssim \| w_{\infty}-e^{it\dx^2 /2} w_+\| _{L_t^{\infty} L_x^2 ([T, \infty ) \times \R )} + \|P_{\ge \lambda _n^{\theta}} w_+\| _{L_x^2} ,
\]
choosing $T$ sufficiently large, we can make the first part on the right hand side small.
Then, letting $\lambda _n \rightarrow \infty$, we have
\[
\lim _{n \rightarrow \infty} \|P_{\ge \lambda _n^{\theta}} w_{\infty}\| _{L_t^{\infty} L_x^2([-T,T] \times \R )} =0,
\]
which shows the desired bound.
\end{proof}

Let $T \gg 1$ to be determined later.
We define
\[
\wt{v}_n(t) :=
\begin{cases}
e^{-it} D_{\lambda _n} w_n (t/\lambda _n^2), & \text{if } |t| \le T \lambda _n^2 , \\
e^{-i(t-T\lambda _n^2) \fd } \wt{v}_n(T \lambda _n^2), & \text{if } t > T\lambda _n ^2 , \\
e^{-i(t+T\lambda _n^2) \fd } \wt{v}_n(-T \lambda _n^2), & \text{if } t <- T\lambda _n ^2 .
\end{cases}
\]
By the Strichartz estimate (Lemma \ref{Str}) amd Lemma \ref{lem:cor-per}, we have
\begin{equation} \label{eq:iso-bound-sta0}
\begin{aligned}
\| |\dx|^{s} \wt{v}_n \| _{L_{t,x}^6}
&\lesssim \| |\dx|^s D_{\lambda_n} w_n (t/\lambda_n^2) \|_{L_{t,x}^6} + \| |\dx|^s e^{-i (t-T\lambda_n^2)} D_{\lambda_n} w_n (T) \|_{L_{t,x}^6 (T\lambda_n^2, \infty)} \\
&\quad + \| |\dx|^s e^{-i (t+T\lambda_n^2)} D_{\lambda_n} w_n (T) \|_{L_{t,x}^6 (-\infty,-T\lambda_n^2, \infty)} \\
&\lesssim \lambda_n^{-s} \| |\dx|^s w_n \|_{L_{t,x}^6} + \lambda_n^{-s} \| |\dx|^{s} \lr{\lambda_n^{-1} \dx}^{1/2} w_n \|_{L_t^{\infty} L_x^2} ) \\
&\lesssim \lambda_n^{-s(1-\theta)}
\end{aligned}
\end{equation}
for any $s \ge 0$.
Moreover, \eqref{boundwn} implies that
\begin{equation} \label{eq:iso-bound-sta}
\begin{aligned}
\| \wt{v}_n \| _{L_t^{\infty} H_x^{2}} + \| \fd ^{3/2} \wt{v}_n \| _{L_{t,x}^6}
& \lesssim \| D_{\lambda _n} w_n \| _{L_t^{\infty} H_x^{2}} + \| \wt{v}_n \| _{L_{t,x}^6} + \| |\dx|^{3/2} \wt{v}_n \|_{L_{t,x}^6} \\
& \lesssim _{M(\phi )} 1+\lambda _n^{-2} \| |\dx |^2 w_n \| _{L_t^{\infty} L_x^2} + \lambda _n^{-3(1-\theta)/2} \\
& \lesssim _{M(\phi )} 1.
\end{aligned}
\end{equation}

Here, we note that
\begin{equation} \label{eq:iso-inidata}
\lim _{T \rightarrow \infty} \limsup _{n \rightarrow \infty} \| \wt{v}_n (-t_n) - \phi _n \| _{H^{2}} =0 .
\end{equation}
Indeed, in the case $t_n \equiv 0$, \eqref{eq:iso-inidata} holds because $\wt{v}_n(-t_n) = D_{\lambda _n} P_{\le \lambda _n^{\theta}} \phi = \phi _n$.
On the other hand, in the case $t_n / \lambda _n^2 \rightarrow -\infty$, for any $T>0$ and $n \gg 1$, we have $-t_n> T\lambda _n^2$.
Hence,
\begin{align*}
& \|\wt{v}_n (-t_n) - \phi _n\| _{H^{2}} \\
& = \|D_{\lambda_n} w_n(T) - e^{-i T\lambda _n^2 (\fd -1)} D_{\lambda _n} P_{\le \lambda _n^{\theta}} \phi\| _{H^{2}} \\
& \le \|D_{\lambda _n}  [ w_n(T) - P_{\le \lambda _n^{\theta}} w_{\infty}(T) ]\| _{H^{2}} + \|D_{\lambda _n} P_{\le \lambda _n^{\theta}} [w_{\infty} (T) - e^{-i T \lambda _n^2 (\lr{\lambda _n^{-1} \dx}-1)}\phi ]\| _{H^{2}}
\end{align*}
Thanks to Lemma \ref{lem:cor-per}, the first part goes to zero.
The second part is estimated as follows.
\begin{align*}
& \|D_{\lambda _n} P_{\le \lambda _n^{\theta}} [w_{\infty} (T) - e^{-i T \lambda _n^2 (\lr{\lambda _n^{-1} \dx}-1)}\phi ]\| _{H^{2}} \\
& = \|\lr{\lambda _n^{-1} \dx}^2 P_{\le \lambda _n^{\theta}} [w_{\infty} (T) - e^{-i T \lambda _n^2 (\lr{\lambda _n^{-1} \dx}-1)}\phi ]\| _{L^2} \\
& \lesssim \| P_{\le \lambda _n^{\theta}}  [w_{\infty} (T) - e^{-i T \lambda _n^2 (\lr{\lambda _n^{-1} \dx}-1)}\phi ] \| _{L^2} \\
& \lesssim \|w_{\infty}(T) - e^{iT \dx^2 /2} \phi\| _{L^2} + \| \bm{1}_{|\xi| \le 2 \lambda_n^{\theta}} [1-e^{-i T \{ \lambda _n^2 (\lr{\lambda _n^{-1} \xi}-1)-\frac{1}{2} |\xi |^2\}}] \hat{\phi}\| _{L^2} .
\end{align*}
By \eqref{eq:KG-S} and Lebesgue's dominated convergence theorem, \eqref{eq:iso-inidata} holds.

\begin{prop}[Large time intervals] \label{prop:largedataint}
With the notation above,
\[
\lim _{T \rightarrow \infty} \limsup _{n \rightarrow \infty} \| \fd ^{3/2} \wt{v}_n \| _{L_{t,x}^6 ((T\lambda _n^2, \infty ) \times \R )} =0
\]
and analogously on the time interval $(-\infty , -T\lambda _n^2)$.
\end{prop}

\begin{proof}
From Theorem \ref{scaNLS}, there exists $w_+ \in L^2 (\R )$ such that
\[
\lim _{t \rightarrow \infty} \|w_{\infty} (t) - e^{it \dx^2 /2} w_+\| _{L^2} =0 .
\]
We decompose the integrand as follows: For $t> T\lambda_n^2$,
\begin{align*}
\wt{v}_n (t)
& = e^{-i(t- T\lambda _n^2) \fd } \wt{v}_n(T \lambda _n^2) \\
& = e^{-i(t- T\lambda _n^2) \fd } \left( \wt{v}_n(T \lambda _n^2) - e^{-iT \lambda _n^2} D_{\lambda_n} e^{iT\dx^2 /2} P_{\le \lambda _n^{\theta}} w_+ \right) \\
& \qquad + e^{-i(t- T\lambda _n^2) \fd } e^{-iT \lambda _n^2} D_{\lambda_n} e^{iT\dx^2 /2} P_{\le \lambda _n^{\theta}} w_+ \\
& =: h_1(t) + h_2(t).
\end{align*}
The Strichartz estimate (Lemma \ref{Str}) implies
\begin{align*}
& \| \fd ^{3/2} h_1 \| _{L_{t,x}^6 ((\lambda _n^2 T, \infty ) \times \R )} \\
& \lesssim \left\| \wt{v}_n(T \lambda _n^2) - e^{-iT \lambda _n^2} D_{\lambda_n} e^{iT\dx^2 /2} P_{\le \lambda _n^{\theta}} w_+ \right\| _{H^{2}} \\
& = \left\| e^{-iT \lambda _n^2} D_{\lambda _n} \left( w_n(T) - e^{iT\dx^2 /2} P_{\le \lambda _n^{\theta}} w_+ \right) \right\| _{H^{2}} \\
& \le \| D_{\lambda _n} (w_n(T) - P_{\le \lambda _n^{\theta}} w_{\infty}(T))\| _{H^{2}} + \left\| D_{\lambda _n} P_{\le \lambda _n^{\theta}} \left( w_{\infty}(T) - e^{iT\dx^2 /2} w_+ \right) \right\| _{H^{2}} .
\end{align*}
By Lemma \ref{lem:cor-per}, we can estimate the first part.
On the estimate of the second part,
\begin{align*}
& \left\| D_{\lambda _n} P_{\le \lambda _n^{\theta}} \left( w_{\infty}(T) - e^{iT\dx^2 /2} w_+ \right) \right\| _{H^{2}} \\
& \lesssim \left\| w_{\infty}(T) - e^{iT\dx^2 /2} w_+ \right\| _{L^2} + \lambda _n^{-2} \left\| |\dx |^2 P_{\le \lambda _n^{\theta}} \left( w_{\infty}(T) - e^{iT\dx^2 /2} w_+ \right) \right\| _{L^2} \\
& \lesssim \left\| w_{\infty}(T) - e^{iT\dx^2 /2} w_+ \right\| _{L^2} \rightarrow 0 \quad (T \rightarrow \infty ).
\end{align*}

Next, we estimate the Strichartz norm of $h_2$.
By the Strichartz estimate and
\[
\| \fd ^{1/2} D_{\lambda_n} P_{\le \lambda _n^{\theta}} (f-g) \| _{L_x^2}
\lesssim \| f-g \| _{L^2},
\]
we may assume that $w_+$ is a Schwartz function with compact frequency support.
Then, we have
\begin{equation} \label{boundh2}
\begin{aligned}
& \| \fd ^{3/2} h_2 \| _{L_{t,x}^6 ((T \lambda _n^2, \infty ) \times \R )} \\
& = \| D_{\lambda_n} \lr{\lambda _n^{-1} \dx}^{3/2} e^{-i(t- \lambda _n^2 T) \lr{\dx/\lambda _n}} e^{-iT \lambda _n^2} e^{iT\dx^2 /2} P_{\le \lambda _n^{\theta}} w_+ \| _{L_{t,x}^6 ((T \lambda _n^2, \infty ) \times \R )} \\
& \lesssim \lambda _n^{-1/3} \| e^{-i(t- \lambda _n^2 T) \lr{\dx/\lambda _n}} e^{-iT \lambda _n^2} e^{iT\dx^2 /2} P_{\le \lambda _n^{\theta}} w_+ \| _{L_{t,x}^6 ((T \lambda _n^2, \infty ) \times \R )}.
\end{aligned}
\end{equation}
We write
\begin{equation} \label{zetacon}
e^{-i(t- \lambda _n^2 T) \lr{\dx/\lambda _n}} e^{-iT \lambda _n^2} e^{iT\dx^2 /2} P_{\le \lambda _n^{\theta}} w_+
= \zeta \ast w_+,
\end{equation}
where 
\[
\zeta (x) = \frac{1}{\sqrt{2\pi}} \int _{\R} e^{i k(\xi )} \sigma \Big( \frac{\xi}{\lambda _n^{\theta}} \Big) d\xi , \quad
k(\xi ) := x \xi - (t-T\lambda _n^2) \lr{\lambda _n^{-1} \xi} - T \lambda _n^2 - T \xi ^2/2.
\]
We note that
\begin{align*}
& k'(\xi ) = x - (\lambda _n^{-2}t-T) \frac{\xi}{\lr{\lambda _n^{-1} \xi}} - T \xi , \\
& k''(\xi ) = - (\lambda _n^{-2} t-T) \frac{1}{\lr{\lambda _n^{-1} \xi}^3} - T .
\end{align*}
Since $k'$ is a strictly decreasing function and $k'(\xi ) \rightarrow \mp \infty$ as $\xi \rightarrow \pm \infty$, there exists $\xi _0 \in \R$ such that $k'(\xi _0)=0$.
Moreover,
\begin{equation*}
|k'' (\xi )| \sim \lambda _n^{-2} t
\end{equation*}
for $|\xi | \le 2 \lambda _n^{\theta}$ and $t \ge T \lambda _n^2$.
Indeed, for $t \ge 2 T \lambda _n^2$,
\[
\frac{1}{2} \lambda_n^{-2}t \le \lambda _n^{-2} t- T \lesssim |k''(\xi)| \le |\lambda_n^{-2}t-T|+T = \lambda_n^{-2}t .
\]
On the other hand, for $T \lambda _n^2 \le t \le 2 T \lambda_n^2$,
\[
\frac{1}{2} \lambda_n^{-2}t \le T \le |k''(\xi)| \le |\lambda_n^{-2}t-T| +T = \lambda_n^{-2}t .
\]

Let $\eps >0$ be a positive constant to be chosen later.
If $|\xi_0|\le 4\lambda_n^{\theta}$, we get
\[
|k'(\xi )| = |k'(\xi ) - k'(\xi _0)| = \left| (\xi - \xi _0) \int _0^1 k'' (s \xi + (1-s) \xi _0) ds \right|
\gtrsim |\xi-\xi_0| \lambda_n^{-2}t
\]
for $|\xi|\le 2\lambda_n^{\theta}$.
On the other hand, if $\xi_0>4\lambda_n^{\theta}$, noting that $k'(4\lambda_n^{\theta})>0$ because $k'$ is decreasing, we get
\[
|k'(\xi)| = k'(\xi) >k'(\xi)-k'(4\lambda_n^{\theta}) = (\xi-4\lambda_n^{\theta}) \int_0^1 k'' (s \xi + (1-s) 4\lambda_n^{\theta}) ds 
\gtrsim |\xi-4\lambda_n^{\theta}| \lambda _n^{-2} t
\]
for $|\xi|\le 2\lambda_n^{\theta}$.
The case $\xi_0<-4\lambda_n^{\theta}$ is similarly handled.
Therefore, by the integration by parts, we have
\begin{align*}
& \left| \int _{[-2\lambda _n^{\theta}, 2\lambda _n^{\theta}] \backslash [\xi _0 - \eps , \xi _0 + \eps ]} e^{i k(\xi )} \sigma \Big( \frac{\xi}{\lambda _n^{\theta}} \Big) d \xi \right| \\
& \lesssim \frac{1}{\eps \lambda _n^{-2}t} + \int _{[-2\lambda _n^{\theta}, 2\lambda _n^{\theta}] \backslash [\xi _0 - \eps , \xi _0 + \eps ]} \bigg( \frac{|k''(\xi)|}{k'(\xi)^2} + \frac{\lambda_n^{-\theta}}{|k'(\xi)|} \bigg) d \xi
\lesssim \frac{\lambda _n^2}{t \eps} .
\end{align*}
Thus, we obtain
\begin{align*}
| \zeta  (x)|
& \lesssim \left| \int _{[-2\lambda _n^{\theta}, 2\lambda _n^{\theta}] \backslash [\xi _0 - \eps , \xi _0 + \eps ]} e^{i k(\xi )}\sigma \Big( \frac{\xi}{\lambda _n^{\theta}} \Big) d \xi \right| + \left| \int _{[\xi _0 - \eps , \xi _0 + \eps ]} e^{i k(\xi )} \sigma \Big( \frac{\xi}{\lambda _n^{\theta}} \Big)  d \xi \right| \\
& \lesssim \frac{\lambda _n^2}{t \eps} + \eps .
\end{align*}
Here, taking $\eps = \lambda _n/t^{1/2}$, we get
\begin{equation*}
\| \zeta \| _{L_x^{\infty}} \lesssim \frac{\lambda _n}{t^{1/2}}
\end{equation*}
for $t \ge \lambda _n T$.

Hence, we have
\[
\| \zeta \ast w_+ \| _{L_x^{\infty}}
\lesssim \| \zeta \| _{L_x^{\infty}} \| w_+ \| _{L_x^1}
\lesssim \frac{\lambda _n}{t^{1/2}} .
\]
By \eqref{zetacon} and interpolating this estimate with the trivial $L^2$ bound, we obtain
\[
\| e^{-i(t- T \lambda _n^2) \lr{\dx/\lambda _n}} e^{-iT \lambda _n^2} e^{iT\dx^2 /2} P_{\le \lambda _n^{\theta}} w_+ \| _{L_x^6} \lesssim \left( \frac{\lambda _n}{t^{1/2}} \right) ^{2/3} .
\]
Integrating with respect to time, we have
\[
\text{R.H.S. of \eqref{boundh2}}
\lesssim T^{-1/6},
\]
which concludes the proof.
\end{proof}

On the middle interval $I_n := [-T\lambda _n^2, T\lambda _n^2]$, because $w_n$ is a solution to \eqref{mNLS}, a direct calculation shows that
\begin{align*}
& -i \dt \wt{v}_n \\
& = e^{-it} \left( -D_{\lambda_n} w_n \Big( \frac{t}{\lambda _n^2} \Big)-i \lambda _n^{-2} D_{\lambda _n} (\dt w_n) \Big( \frac{t}{\lambda _n^2} \Big) \right) \\
& = e^{-it} \left( -D_{\lambda_n} w_n \Big( \frac{t}{\lambda _n^2} \Big) + \frac{1}{2 \lambda _n^2} D_{\lambda _n} (\dx^2 w_n) \Big( \frac{t}{\lambda _n^2} \Big) -\frac{5}{32} \lambda _n^{-2} D_{\lambda _n} (|w_n|^4w_n) \Big( \frac{t}{\lambda _n^2} \Big) \right) \\
& = e^{-it} D_{\lambda _n} \left( -w_n \Big( \frac{t}{\lambda _n^2} \Big) + \frac{1}{2 \lambda _n^2} (\dx^2 w_n) \Big( \frac{t}{\lambda _n^2} \Big) \right) -\frac{5}{32} |\wt{v}_n|^4 \wt{v}_n .
\end{align*}
Moreover, \eqref{extract} implies that
\[
(\Re \wt{v}_n)^5 - \frac{5}{16} |\wt{v}_n|^4 \wt{v}_n
= \frac{5}{16} |\wt{v}_n|^4 \overline{\wt{v}_n} + \frac{5}{16} |\wt{v}_n|^2 \Re \wt{v}_n^3 + \frac{1}{16} \Re \wt{v}_n^5 .
\]
Hence, $\wt{v}_n$ on $I_n$ satisfies
\[
(-i \dt + \fd ) \wt{v}_n + \fd ^{-1} (\exp (|\Re \wt{v}_n|^2)-1-|\Re \wt{v}_n|^2) \Re \wt{v}_n = e_1+e_2+e_3+e_4+e_5+e_6 ,
\]
where
\begin{align*}
& e_1 := e^{-it} D_{\lambda _n} \left\{ \Big[ \lr{\lambda _n^{-1} \dx} -1 +\frac{\dx^2}{2\lambda _n^2} \Big] w_n \Big( \frac{t}{\lambda _n^2} \Big) \right\} , \\
& e_2 :=  \fd ^{-1} \left( \exp (|\Re \wt{v}_n|^2)-1-|\Re \wt{v}_n|^2-\frac{|\Re \wt{v}_n|^4}{2} \right) \Re \wt{v}_n , \\
& e_3 := \frac{1}{2} [\fd ^{-1}-1] (\Re \wt{v}_n)^5, \\
& e_4 := \frac{1}{32} \Re \left\{ e^{-5it} \left[ D_{\lambda _n} w_n \Big( \frac{t}{\lambda _n^2} \Big) \right] ^5 \right\} , \\
& e_5 := \frac{5}{32} \left| D_{\lambda _n} w_n \Big( \frac{t}{\lambda _n^2} \Big) \right| ^2 \Re \left\{ e^{-3it} \left[ D_{\lambda _n} w_n \Big( \frac{t}{\lambda _n^2} \Big) \right]^3 \right\}, \\
& e_6 := \frac{5}{32}  e^{it} \left| D_{\lambda _n} w_n \Big( \frac{t}{\lambda _n^2} \Big) \right| ^4 \overline{D_{\lambda _n} w_n \Big( \frac{t}{\lambda _n^2} \Big)} .
\end{align*}

We can treat $e_1$, $e_2$, and $e_3$ as errors in Proposition \ref{prop:stability}.
In fact, the equation
\begin{align*}
\lr{\lambda _n^{-1} \xi}-1 - \frac{|\xi |^2}{2\lambda _n^2}
& = \frac{|\xi |^2}{\lambda _n^2 (1+\lr{\lambda _n^{-1} \xi})} - \frac{|\xi |^2}{2\lambda _n^2} \\
& = \frac{|\xi |^2}{2\lambda _n^2} \frac{1-\lr{\lambda _n^{-1} \xi}}{1+\lr{\lambda _n^{-1} \xi}}
= \frac{|\xi |^4}{2\lambda _n^4} \frac{1}{(1+\lr{\lambda _n^{-1} \xi})^2}
\end{align*}
and Lemma \ref{lem:cor-per} imply that
\begin{equation} \label{est:error1}
\|e_1\| _{L_t^1 H_x^{2}(I_n \times \R )}
\lesssim T \lambda _n^{-2} \left( \|\dx^4 w_n\| _{L_t^{\infty} L_x^2} + \lambda _n^{-2} \||\dx |^{6} w_n\| _{L_t^{\infty} L_x^2} \right)
\lesssim T \lambda _n^{-2+4 \theta} .
\end{equation}
The Taylor expansion and Lemma \ref{lem:cor-per} yield
\begin{equation} \label{est:error2'}
\begin{aligned}
& \|\fd ^{5/2} e_2\| _{L_{t,x}^{6/5}(I_n \times \R )} \\
& \lesssim \sum _{l=3}^{\infty} \frac{1}{l!} \| \fd ^{3/2} (\Re \wt{v}_n)^{2l+1} \| _{L_{t,x}^{6/5} (I_n \times \R )} \\
& \lesssim \sum _{l=3}^{\infty} \frac{1}{(l-2)!} \| \fd ^{3/2} \Re \wt{v}_n \| _{L_{t,x}^{6} (I_n \times \R )} \| \Re \wt{v}_n \| _{L_{t,x}^{6} (I_n \times \R )}^{4} \| \Re \wt{v}_n \| _{L_{t,x}^{\infty} (I_n \times \R )}^{2l-4} \\
& \lesssim \| \lr{\lambda _n^{-1} \dx}^{3/2} w_n \| _{L_{t,x}^{6}}^{5} \sum _{l=3}^{\infty} \frac{1}{(l-2)!} \left( \lambda _n^{-1/2} \| w_n \| _{L_{t}^{\infty} H_x^1([-T,T] \times \R )} \right) ^{2l-4} \\
& \lesssim \sum _{l=3}^{\infty} \frac{1}{(l-2)!} \left( \lambda _n^{-1/2+\theta} \right) ^{2l-4} \\
& \lesssim \lambda _n^{-1+2\theta}.
\end{aligned}
\end{equation}
Moreover, by
\[
\lr{\xi} (\lr{\xi}^{-1} -1) = -\xi \frac{\xi}{1+\lr{\xi}} ,
\]
the fractional Leibniz rule, and Lemma \ref{lem:cor-per}, we have
\begin{equation} \label{est:error2}
\begin{aligned}
\|\fd ^{5/2} e_3 \| _{L_{t,x}^{6/5}(I_n \times \R )}
& \lesssim \| \fd ^{3/2} \dx (\Re \wt{v}_n)^5\| _{L_{t,x}^{6/5} (I_n \times \R )} \\
& \lesssim \lambda _n^{-1} \left\| D_{\lambda _n} (\lr{\lambda _n^{-1} \dx}^{3/2} \dx w_n) \Big( \frac{t}{\lambda _n^2} \Big) \right\| _{L_{t,x}^6} \left\| D_{\lambda _n} w_n \Big( \frac{t}{\lambda _n^2} \Big) \right\| _{L_{t,x}^6}^4 \\
& \lesssim \lambda _n^{-1} (\| \dx w_n \| _{L_{t,x}^6} + \lambda _n^{-3/2} \| |\dx|^{5/2} w_n \| _{L_{t,x}^6}) \| w_n \| _{L_{t,x}^6}^4 \\
& \lesssim \lambda _n^{-1+\theta} .
\end{aligned}
\end{equation}

Unfortunately, the parts $e_4$, $e_5$, $e_6$ are not small in either of the spaces $L_t^1H_x^{2}$ or $L_t^{6/5} W_x^{5/2,6/5}$.
However, they oscillate in space-time like $e^{it}$, and this allows us to modify $\wt{v}_n$, which approximately solves \eqref{rNLKGexp}.

\begin{lem} \label{lem:moderror}
For $j=4,5,6$, let $f_{n,j}$ solve
\[
(-i \dt + \fd ) f_{n,j} = e_j , \quad f_{n,j}(0) = 0 .
\]
Then, \[
\|f_{n,j}\| _{L_t^{\infty} H_x^{2}(I_n \times \R )} + \| \fd ^{3/2} f_{n,j}\| _{L_{t,x}^{6}(I_n \times \R )} \lesssim \lambda _n^{-2+2\theta} .
\]
\end{lem}

\begin{proof}
We will prove the lemma for $j=6$.
The argument for $j=4, 5$ is almost identical.
We compute
\begin{align*}
& (-i\dt + \fd ) \Big( f_{n,6} - \frac{1}{2} e_6 \Big) \\
& = \frac{5}{64} \Bigg\{ \frac{ie^{it}}{\lambda _n^{9/2}} [ \dt (w_n^2 \overline{w_n}^3)] \Big( \frac{t}{\lambda _n^2} , \frac{x}{\lambda _n} \Big) - \frac{e^{it}}{\lambda _n^{5/2}} [(\lr{\lambda _n^{-1} \dx}-1) (w_n^2 \overline{w_n}^3)] \Big( \frac{t}{\lambda _n^2}, \frac{x}{\lambda _n} \Big) \Bigg\} .
\end{align*}
Lemma \ref{lem:cor-per} and the fractional Leibniz rule yield that
\begin{align*}
& \left\| \fd ^{5/2} \left( \frac{ie^{it}}{\lambda _n^{9/2}} [ \dt (w_n^2 \overline{w_n}^3)] \Big( \frac{t}{\lambda _n^2}, \frac{x}{\lambda _n} \Big) \right) \right\| _{L_{t,x}^{6/5}(I_n \times \R )} \\
& \lesssim \lambda _n^{-2} \| \lr{\lambda _n^{-1} \dx}^{5/2} \dt ( w_n^2 \overline{w_n}^3) \| _{L_{t,x}^{6/5}} \\
& \lesssim \lambda _n^{-2} \left\{ \| \lr{\lambda _n^{-1} \dx}^{5/2} \dt w_n\| _{L_{t,x}^6} \| w_n \| _{L_{t,x}^6}^4 + \| \dt w_n\| _{L_{t,x}^6} \|\lr{\lambda _n^{-1} \dx}^{5/2} w_n\| _{L_{t,x}^6} \|w_n\| _{L_{t,x}^6}^3 \right\} \\
& \lesssim \lambda _n^{-2+2\theta} .
\end{align*}
On the other hand, from
\[
\fd (\fd -1) = -\dx^2 \frac{1}{1+\fd ^{-1}} ,
\]
and Lemma \ref{lem:cor-per}, we get
\begin{align*}
& \left\| \fd ^{5/2} \left( \frac{e^{it}}{\lambda _n^{5/2}} [(\lr{\lambda _n^{-1} \dx}-1) (w_n^2 \overline{w_n}^3)] \Big( \frac{t}{\lambda _n^2}, \frac{x}{\lambda _n} \Big) \right) \right\| _{L_{t,x}^{6/5}(I_n \times \R )} \\
& \lesssim \lambda _n^{-2} \| \lr{\lambda _n^{-1} \dx}^{3/2} \dx^2 (w_n^2 \overline{w_n}^3)\| _{L_{t,x}^{6/5}} \\
& \lesssim \lambda _n^{-2} \Big\{ \|\dx^2 w_n\| _{L_{t,x}^6} \|w_n\| _{L_{t,x}^6}^4 + \|\dx w_n\| _{L_{t,x}^6}^2 \|w\| _{L_{t,x}^6}^3 \\
& \hspace*{100pt} + \lambda _n^{-2} \big( \|\dx^4 w_n\| _{L_{t,x}^6} \|w_n\| _{L_{t,x}^6}^4 + \|\dx w_n\| _{L_{t,x}^6}^4 \|w\| _{L_{t,x}^6} \big) \Big\} \\
& \lesssim \lambda _n^{-2+2\theta} .
\end{align*}

By $\dot{H}^{2/5} (\R ) \hookrightarrow L^{10}(\R )$, and Lemma \ref{lem:cor-per}, we have
\begin{align*}
\| e_6 \| _{L_t^{\infty} H^{2}(I_n \times \R )}
& \lesssim \lambda_n^{-2} \| \lr{\lambda _n^{-1} \dx}^2(w_n^2 \overline{w_n}^3) \| _{L_t^{\infty} L_x^{2}} \\
& \lesssim \lambda _n^{-2} \|\lr{\lambda _n^{-1} \dx}^2 w_n\| _{L_t^{\infty} L_x^{10}} \|w_n\| _{L_t^{\infty} L_x^{10}}^4 \\
& \lesssim \lambda _n^{-2} \||\dx |^{2/5} \lr{\lambda _n^{-1} \dx}^2 w_n\| _{L_t^{\infty} L_x^2} \||\dx |^{2/5} w_n\| _{L_t^{\infty} L_x^2}^4 \\
& \lesssim \lambda _n^{-2+2\theta} .
\end{align*}
By the fractional Leibniz rule, $\dot{W}^{2/15,6} (\R ) \hookrightarrow L^{30} (\R )$, $\dot{H}^{7/15} (\R ) \hookrightarrow L^{30} (\R )$ and Lemma \ref{lem:cor-per}, we get
\begin{align*}
\| \fd ^{3/2} e_6 \| _{L_{t,x}^6 (I_n \times \R )}
& \lesssim  \lambda _n^{-2} \| \lr{\lambda _n^{-1} \dx}^{3/2} (w_n^2 \overline{w_n}^3) \| _{L_{t,x}^6} \\
& \lesssim \lambda _n^{-2} \| \lr{\lambda _n^{-1} \dx}^{3/2}w_n \| _{L_t^6 L_x^{30}} \| w_n \| _{L_t^{\infty} L_x^{30}}^4 \\
& \lesssim \lambda _n^{-2} \||\dx |^{2/15} \lr{\lambda _n^{-1} \dx}^{3/2}w_n\| _{L_{t,x}^6} \||\dx |^{7/15} w_n\| _{L_t^{\infty} L_x^2}^4 \\
& \lesssim \lambda _n^{-2+2\theta} .
\end{align*}
Combining the Strichartz estimate with estimates above, we obtain the desired bound.
\end{proof}

By using this lemma, we modify $\wt{v}_n$ as follows:
\[
\ttilde{v}_n (t) := \begin{cases} \wt{v}_n(t) - f_{n,4}(t) - f_{n,5}(t) - f_{n,6}(t) , & \text{if } |t| \le T \lambda _n^2 , \\
e^{-i(t-T\lambda _n^2) \fd } \wt{\wt{v}}_n(T \lambda _n^2), & \text{if } t > T\lambda _n^2 , \\
e^{-i(t+T\lambda _n^2) \fd } \wt{\wt{v}}_n(-T \lambda _n^2), & \text{if } t <- T\lambda _n^2 .
\end{cases}
\]

\begin{prop} \label{prop:mod-smallstab}
For each $\eps >0$, there exists $T>0$ and $N \in \N$ such that for each $n \ge N$,
\[
(-i \dt + \fd ) \ttilde{v}_n + \fd ^{-1} \mathcal{N} (\Re \ttilde{v}_n) = \wt{e}_1+\wt{e}_2+\wt{e}_3,
\]
with
\[
\| \wt{e}_1\| _{L_t^1 H_x^{2}}+ \|\fd ^{5/2} ( \wt{e}_2+ \wt{e}_3) \| _{L_{t,x}^{6/5}} \le \eps .
\]
Moreover,
\[
\| \ttilde{v}_n-\wt{v}_n \| _{L_t^{\infty} H_x^{2}} + \| \fd ^{3/2} (\ttilde{v}_n - \wt{v}_n) \| _{L_{t,x}^6} \le \eps .
\]
\end{prop}

\begin{proof}
Put
\begin{gather*}
\wt{e}_1= \begin{cases} e_1, & t \in I_n, \\ 0, & \text{otherwise}, \end{cases} \quad
\wt{e}_2= \begin{cases} e_2+e_3, & t \in I_n, \\ 0, & \text{otherwise}, \end{cases} \\
\wt{e}_3= \begin{cases} \fd ^{-1} \left[ \mathcal{N} ( \Re \ttilde{v}_n) - \mathcal{N} (\Re \wt{v}_n) \right], & t \in I_n, \\ \fd ^{-1} \mathcal{N} ( \Re \ttilde{v}_n), & \text{otherwise} . \end{cases} 
\end{gather*}
By \eqref{est:error1}, \eqref{est:error2'}, and \eqref{est:error2},
\[
\|\wt{e}_1\| _{L_t^1 H_x^{2}}+ \|\fd ^{5/2} \wt{e}_2\| _{L_{t,x}^{6/5}} \lesssim T \lambda _n^{-2+4\theta} + \lambda _n^{-1+2\theta}.
\]
By the fractional Leibniz rule, \eqref{eq:iso-bound-sta}, and Lemma \ref{lem:moderror},
\begin{align*}
& \| \fd ^{5/2} \wt{e}_3 \| _{L_{t,x}^{6/5} (I_n \times \R )} \\
& \le \sum _{l=2}^{\infty} \frac{1}{l!} \left\| \fd ^{3/2} ((\Re \ttilde{v}_n)^{2l+1} - (\Re \wt{v}_n)^{2l+1}) \right\| _{L_{t,x}^{6/5}(I_n \times \R)} \\
& \lesssim \Big\{ \| \fd ^{3/2} (\ttilde{v}_n - \wt{v}_n) \| _{L_{t,x}^6 (I_n \times \R )} ( \| \ttilde{v}_n \| _{L_{t,x}^6 (I_n \times \R)} + \| \wt{v}_n \| _{L_{t,x}^6 (I_n \times \R)})^4 \\
& \qquad + \| \ttilde{v}_n - \wt{v}_n \| _{L_{t,x}^6 (I_n \times \R )} ( \| \fd ^{3/2} \ttilde{v}_n \| _{L_{t,x}^6 (I_n \times \R)} + \| \fd ^{3/2} \wt{v}_n \| _{L_{t,x}^6 (I_n \times \R)})^4 \Big\} \\
& \qquad \times \sum _{l=2}^{\infty} \frac{1}{(l-2)!} \Big( \| \ttilde{v}_n \| _{L_{t,x}^{\infty}(I_n \times \R)} + \| \ttilde{v}_n \| _{L_{t,x}^{\infty}(I_n \times \R)} \Big) ^{2l-4} \\
& \lesssim \lambda _n^{-2+2\theta} .
\end{align*}
Thus, we obtain the desired bound on $I_n$.
For the complementary interval, by Sobolev's embedding $L^{\infty} (\R) \hookrightarrow H^1(\R)$, the Strichartz estimate (Lemma \ref{Str}), Lemma \ref{lem:moderror} and \eqref{eq:iso-bound-sta}, we get
\begin{align*}
& \| \ttilde{v}_n \| _{L_{t,x}^{\infty} (I_n^c \times \R )} \\
& \le \| \wt{v}_n \| _{L_{t}^{\infty} H_x^1 (I_n^c \times \R )} + \| f_{n,4} \| _{L_t^{\infty} H_x^{1}(I_n \times \R )} + \| f_{n,5} \| _{L_t^{\infty} H_x^{1}(I_n \times \R )} + \| f_{n,6} \| _{L_t^{\infty} H_x^{1}(I_n \times \R )} \\
& \lesssim _{M(\phi)} 1.
\end{align*}
Therefore, from the Taylor expansion, Proposition \ref{prop:largedataint}, and Lemma \ref{lem:moderror}, taking $T$ and $n$ sufficiently large, we have
\begin{align*}
\| \fd ^{5/2} \wt{e}_3 \| _{L_{t,x}^{6/5} (I_n^c \times \R )}
& \lesssim \| \fd ^{3/2} \ttilde{v}_n \| _{L_{t,x}^6 (I_n^c \times \R )}^{5} \sum _{l=2}^{\infty} \frac{1}{l!} \| \ttilde{v}_n \| _{L_{t,x}^{\infty} (I_n^c \times \R )}^{2l-4} \\
& \lesssim _{M(\phi)} \| \fd ^{3/2} \wt{v}_n \| _{L_{t,x}^6 (I_n^c \times \R )}^5 + \sum _{j=4}^6 \| f_{n,j} \| _{L_t^{\infty} H_x^2(I_n \times \R )}^5 \\
& < \frac{1}{2} \eps .
\end{align*}
Finally, from Lemmas \ref{Str} and \ref{lem:moderror}, we have
\begin{align*}
& \| \ttilde{v}_n-\wt{v}_{n} \| _{L_t^{\infty} H_x^{2} \cap \fd^{-3/2} L_{t,x}^6} \\
& \le \| f_{n,4} + f_{n,5} + f_{n,6} \| _{L_t^{\infty} H_x^{2} \cap \fd^{-3/2} L_{t,x}^6 (I_n \times \R )} \\
& \qquad + \| e^{-i(t-T\lambda _n^2) \fd } (  f_{n,4} + f_{n,5} + f_{n,6}) (T \lambda _n^2) \| _{L_t^{\infty} H_x^{2} \cap \fd^{-3/2} L_{t,x}^6 ((T\lambda _n, \infty ) \times \R )} \\
& \qquad + \| e^{-i(t+T\lambda _n^2) \fd } (  f_{n,4} + f_{n,5} + f_{n,6}) (-T \lambda _n^2) \| _{L_t^{\infty} H_x^{2} \cap \fd^{-3/2} L_{t,x}^6 ((- \infty , -T\lambda _n) \times \R )} \\
& \lesssim  \| f_{n,4} + f_{n,5} + f_{n,6} \| _{L_t^{\infty} H_x^{2} \cap \fd^{-3/2} L_{t,x}^6 (I_n \times \R )} \\
& \lesssim \lambda _n^{-2+2 \theta} .
\end{align*}
\end{proof}

We are ready to show Theorem \ref{isolate} with $\nu _0 \equiv 0$.
By \eqref{eq:iso-bound-sta} and Lemma \ref{lem:moderror},
\[
\|\ttilde{v}_n\| _{L_t^{\infty} H^{2}} + \| \fd ^{3/2} \ttilde{v}_n\| _{L_{t,x}^6} \lesssim _{M(\phi )} 1 .
\]
Thus, by \eqref{eq:iso-inidata} and Proposition \ref{prop:mod-smallstab}, Proposition \ref{prop:stability} with $s=2$ is applicable to $\ttilde{v}_n$.
Namely, there exists a solution $v_n$ to \eqref{rNLKGexp} with $v_n(0)=\phi _n$ satisfying $\| v_n \| _{L_t^{\infty}(\R;H_x^2(\R))}+ \| \fd ^{3/2} v_n \| _{L_{t,x}^6(\R \times \R)} \lesssim _{M(\phi )} 1$.
In addition, Proposition \ref{prop:mod-smallstab} implies that
\begin{equation} \label{6.19}
\lim _{n \rightarrow \infty} \left\{ \|v_n(t) -\wt{v}_n(t-t_n)\| _{L_t^{\infty} H^{2}} + \| \fd ^{3/2} (v_n(t) - \wt{v}_n(t-t_n)) \| _{L_{t,x}^6} \right\} =0.
\end{equation}
From the density of $C_c^{\infty} (\R ^2)$ in $L_{t,x}^6 (\R ^2)$, we can find $\psi _{\eps} \in C_c^{\infty} (\R ^2)$ satisfying
\begin{equation} \label{exstpsi}
\| e^{-it} D_{\lambda _n} [\psi _{\eps} (\lambda _n^{-2} t) - w_{\infty} (\lambda _n^{-2} t)] \| _{L_{t,x}^6}
= \| \psi _{\eps} - w_{\infty} \| _{L_{t,x}^6} < \frac{1}{2} \eps .
\end{equation}
By the triangle inequality, Lemma \ref{lem:cor-per}, Proposition \ref{prop:largedataint}, and Lebesgue's dominated convergence theorem,
\begin{equation} \label{estdiff}
\begin{aligned}
& \| \wt{v}_n - e^{-it} D_{\lambda _n} w_{\infty}(\lambda _n^{-2} t) \| _{L_{t,x}^6} \\
& \lesssim \| \wt{v}_n \| _{L_{t,x}^6 (I_n^c \times \R )} + \| w_n-w_{\infty} \| _{L_{t,x}^6} + \| w_{\infty} \| _{L_{t,x}^6(I_n^c \times \R )}
\rightarrow 0 .
\end{aligned}
\end{equation}
From \eqref{6.19}, \eqref{exstpsi}, and \eqref{estdiff}, we obtain
\begin{align*}
& \| v_n (t+t_n) - e^{-it} D_{\lambda _n} \psi _{\eps} (\lambda _n^{-2} t) \| _{L_{t,x}^6} \\
& \le \| v_n (t+t_n) - \wt{v}_n \| _{L_{t,x}^6} + \| \wt{v}_n - e^{-it} D_{\lambda _n} w_{\infty}(\lambda _n^{-2} t) \| _{L_{t,x}^6} \\
& \hspace*{100pt} + \| e^{-it} D_{\lambda _n} [ w_{\infty} (\lambda _n^{-2} t) - \psi _{\eps} (\lambda _n^{-2} t)] \| _{L_{t,x}^6} \\
& \rightarrow 0,
\end{align*}
which concludes the proof of Theorem \ref{isolate} with $\nu _n \equiv 0$ and $s=\frac{1}{2}$.

Moreover,\eqref{eq:iso-bound-sta0} yields that
\begin{equation*}
\| |\dx|^{s} v_n (t) \| _{L_{t,x}^6}
\lesssim \| \fd^{s} (v_n (t) - \wt{v}_n (t-t_n) \| _{L_{t,x}^6} + \| |\dx|^s \wt{v}_n  \|_{L^6_{t,x}}
\rightarrow 0
\end{equation*}
for any $s>0$.
In addition, since $v_n$ satisfies \eqref{rNLKGexp}, we use $\dot{W}^{2/15,6} (\R) \hookrightarrow L^{30} (\R)$ to obtain
\begin{align*}
\| (\dt +i)  v_n (t) \| _{L_{t,x}^6}
& \le \| (\fd - 1) v_n (t) \|_{L_{t,x}^6} + \| \fd^{-1} \mathcal{N} (\Re v_n) \| _{L_{t,x}^6} \\
& \lesssim \| |\dx| v_n (t) \|_{L_{t,x}^6} + \sum_{l=2}^{\infty} \frac{1}{l!} \| \Re v_n \| _{L_{t}^6 L_x^{30}} \| \Re v_n \| _{L_{t}^{\infty} L_x^{30}}^4  \| \Re v_n \| _{L_{t,x}^{\infty}}^{2l-4} \\
& \lesssim \| |\dx| v_n (t) \|_{L_{t,x}^6} + \| |\dx|^{\frac{2}{15}} v_n \| _{L_{t,x}^6} \sum_{l=2}^{\infty} \frac{1}{l!} \| v_n \| _{L_t^{\infty} H_x^1}^{2l} \\
& \rightarrow 0.
\end{align*}

Second, we consider the general case $\nu _n \not \equiv 0$.
From \ref{LB-comm} in Lemma \ref{LB}
\[
\phi _n = T_{x_n} e^{it_n \fd } \Lo _{\nu _n} D_{\lambda _n} P_{\le \lambda _n^{\theta}} \phi
= \Lo _{\nu _n} T_{\wt{x}_n} e^{i\wt{t}_n \fd } D_{\lambda _n} P_{\le \lambda _n^{\theta}} \phi .
\]
By spatial translation invariance, we may choose $x_n = \frac{\nu _n}{\lr{\nu _n}} t_n$, which implies $\wt{x}_n = 0$ and $\wt{t}_n = \frac{t_n}{\lr{\nu _n}}$.
By the case $\nu _n \equiv 0$, for sufficiently large $n$, there is a global solution $v_n^0$ to \eqref{rNLKGexp} with initial data
\[
v_n^0 (0) = e^{i\wt{t}_n \fd } D_{\lambda _n} P_{\le \lambda _n^{\theta}} \phi.
\]
Moreover, it obeys $\| v_n^0 \| _{L_t^{\infty}(\R;H_x^2(\R))} + \| \fd ^{3/2} v_n^0 \| _{L_{t,x}^6 (\R \times \R)} \lesssim _{M(\phi )} 1$,
\begin{equation} \label{vn0lim}
\lim_{n \rightarrow \infty} \left( \| \dx v_n^0 \|_{L_{t,x}^6} + \| (\dt+i) v_n^0 \|_{L_{t,x}^6} \right) =0,
\end{equation}
and for each $\eps >0$, there exists $\psi _{\eps}^0 \in C_c^{\infty}( \R \times \R )$ and $N_{\eps}^0 \in \N$ such that
\begin{equation} \label{eq:isonuzero}
\left\| \Re \Big\{ v_n^0 (t+ \wt{t}_n, x) - \frac{e^{-it}}{\lambda _n^{1/2}} \psi _{\eps}^0 \Big( \frac{t}{\lambda _n^2}, \frac{x}{\lambda _n} \Big) \Big\} \right\| _{L_{t,x}^6} < \eps
\end{equation}
whenever $n \ge N_{\eps}^0$.
Since $v_n^0$ solves \eqref{rNLKGexp}, $u_n^0 := \Re v_n^0$ solves \eqref{NLKGexp}.
Thus, by the Lorentz invariance, $u_n^1 := u_n^0 \circ L_{\nu _n}$ also solves \eqref{NLKGexp} and
\[
v_n^1 := (1+i \fd ^{-1} \dt ) u_n^1 = (1+i \fd ^{-1} \dt ) (\Re v_n^0 \circ L_{\nu _n})
\]
solves \eqref{rNLKGexp}.

\begin{prop} \label{prop:isogen}
For $n$ sufficiently large, $v_n^1$ is a global solution to \eqref{rNLKGexp}.
Moreover,
\[
\sup _{n \in \N} \Big\{ \| \Re v_n^1 \| _{L_{t,x}^{\infty}} + S_{\R} ( \fd^{1/2} v_n^1) \Big\} \lesssim _{M( \phi )} 1, \quad
\lim _{n \rightarrow \infty} \| v_n^1(0) - \phi _n \| _{H^1} =0.
\]
\end{prop}

\begin{proof}
By Corollary \ref{cor:boostsol}, $u_n^1$ is a strong solution to \eqref{NLKGexp}.
Hence, $v_n^1$ is a strong solution to \eqref{rNLKGexp}.
By the definition, we have
\begin{equation} \label{boundvn1}
\| \Re v_n^1 \| _{L_{t,x}^{\infty}} = \| \Re v_n^0 \| _{L_{t,x}^{\infty}} \le \| v_n^0 \| _{L_t^{\infty}H_x^1} \lesssim _{M(\phi)} 1.
\end{equation}
Since, by $v_n^0=(1+i(\fd^{-1}\dt))u_n^0$ and $u_n^0=\Re v_n^0$,
\[
\| \dt u_n^0 \| _{L_{t,x}^6}
\le \| \fd v_n^0 \| _{L_{t,x}^6} + \| \fd u_n^0 \| _{L_{t,x}^6}
\lesssim \| \fd v_n^0 \| _{L_{t,x}^6}
\lesssim _{M(\phi)} 1,
\]
we have
\begin{align*}
S_{\R} (\fd^{1/2} v_n^1)
& \lesssim \| \fd v_n^1 \| _{L_{t,x}^6}
\lesssim \| u_n^0 \| _{L_{t,x}^6} + \lr{\nu _n} ( \| \dt u_n^0 \| _{L_{t,x}^6} + \| \dx u_n^0 \| _{L_{t,x}^6}) \\
& \lesssim _{M(\phi)} 1.
\end{align*}

We decompose
\[
u_n^0 = u_n^{0, \text{lin}} + \wt{u}_n^0,
\]
where $u_n^{0, \text{lin}}$ solves the linear Klein-Gordon equation with initial data
\[
(1+i \fd ^{-1} \dt) u_n^{0,\text{lin}} (0) = v_n^0 (0) = \Lo _{\nu _n}^{-1} \phi _n .
\]
Since \eqref{LB-linear} implies
\[
(1+ i \fd ^{-1} \dt) [u_n^{0, \text{lin}} \circ L_{\nu _n}] (t) 
= e^{-it\fd} \Lo _{\nu _n} [v_n^0(0)]
= e^{-it\fd} \phi _n,
\]
we get
\[
\| v_n^1 (0) - \phi _n \| _{H^1} \le \| [\wt{u}_n^0 \circ L_{\nu _n}] (0, \cdot ) \| _{H^1} + \| \dt [\wt{u}_n^0 \circ L_{\nu _n}] (0, \cdot ) \| _{L^2} .
\]

Let
\[
\Omega _n := \{ (t,x) \colon 0< \lr{\nu _n} t < - \nu _n x \} \cup \{ (t,x) \colon - \nu _n x < \lr{\nu _n}t <0 \} .
\]
Then, $\partial \Omega _n = L_{\nu _n} (0, \R ) \cup ( \{ 0 \} \times \R )$.
By the same argument as in the proof of Corollary \ref{cor:boostsol}, $\wt{u}_n^0 (0) \equiv 0$, and the convergence of $\{ \nu _n \}$, we get
\begin{align*}
& \limsup _{n \rightarrow \infty} \left( \frac{1}{2} \| [\wt{u}_n^0 \circ L_{\nu _n}] (0, \cdot ) \| _{H^1}^2 + \frac{1}{2} \| \dt [\wt{u}_n^0 \circ L_{\nu _n}] (0, \cdot ) \| _{L^2} \right) \\
& \le \limsup _{n \rightarrow \infty} \lr{\nu_n} \iint _{\Omega _n} |\mathcal{N}(u_n^0 (t,x)) \nabla _{t,x} \wt{u}_n^0 (t,x)| dx dt \\
& \lesssim \limsup _{n \rightarrow \infty} \| u_n^0 \| _{L_{t,x}^6(\Omega _n)}^5 \| \nabla _{t,x} \wt{u}_n^0 \| _{L_{t,x}^6 (\R \times \R )} \sum _{l=2}^{\infty} \frac{1}{l!} \| u_n^0 \| _{L_{t,x}^{\infty}(\R \times \R)}^{2l-4}
\end{align*}
By \eqref{boundvn1}, it suffices to show that
\[
\| \nabla _{t,x} \wt{u}_n^0 \| _{L_{t,x}^6} \lesssim _{M(\phi )} 1
\]
for $n$ sufficiently large and
\begin{equation} \label{eq:isogennu}
\lim _{n \rightarrow \infty} \| u_n^0 \| _{L_{t,x}^6(\Omega _n)} =0 .
\end{equation}
By the triangle inequality, Lemma \ref{Str}, and Proposition \ref{prop:WP},
\begin{align*}
\| \nabla _{t,x} \wt{u}_n^0 \| _{L_{t,x}^6}
& \le \| \nabla _{t,x} u_n^{0,\text{lin}} \| _{L_{t,x}^6} + \| \nabla _{t,x} u_n^{0} \| _{L_{t,x}^6} \\
& \lesssim _{M( \phi )} \| v_n^0 (0) \| _{H^{3/2}} + \| \fd ^{3/2} D_{\lambda _n} P_{\le \lambda _n^{\theta}} \phi \| _{L^2} \\
& \lesssim _{M( \phi )} \| D_{\lambda _n} P_{\le \lambda _n^{\theta}} \phi \| _{H^{3/2}}
\lesssim _{M( \phi )} 1.
\end{align*}
Thanks to \eqref{eq:isonuzero}, the estimate \eqref{eq:isogennu} follows from
\[
\lim _{n \rightarrow \infty} \iint _{\Omega_n} \lambda _n^{-3} \left| \psi \Big( \frac{t- \wt{t}_n}{\lambda _n^2}, \frac{x}{\lambda _n} \Big) \right|^6 dx dt =0
\]
for every $\psi \in C_c^{\infty} (\R \times \R )$.
If $(t,x) \in \Omega_n$ and $\big( \frac{t- \wt{t}_n}{\lambda _n^2}, \frac{x}{\lambda _n} \big) \in \supp \psi$, then we have $|x| \lesssim _{\psi} \lambda _n$ and $|t|< |x|$.
Therefore,
\[
\lim _{n \rightarrow \infty} \iint _{\Omega_n} \lambda _n^{-3} \left| \psi \Big( \frac{t- \wt{t}_n}{\lambda _n^2}, \frac{x}{\lambda _n} \Big) \right|^6 dx dt
\lesssim _{\psi} \lim _{n \rightarrow \infty} \lambda _n^{-3} \lambda _n^2
= \lim _{n \rightarrow \infty} \lambda _n^{-1} =0.
\]
This completes the proof.
\end{proof}

From Propositions \ref{prop:WP}, \ref{prop:stability}, and \ref{prop:isogen}, we obtain that for sufficiently large $n$, there exists a global solution $v_n$ to \eqref{rNLKGexp} with $v_n(0) = \phi _n$ and $\| v_n \| _{L_t^{\infty}(\R; H_x^1(\R))} + S_{\R} (\fd^{1/2} v_n) \lesssim _{M(\phi )} 1$.
Moreover,
\[
\lim _{n \rightarrow \infty} \| \fd^{1/2} \Re \{ v_n - v_n^1 \} \| _{L_{t,x}^6} =0.
\]
Let $s \in [\frac{1}{2},1]$.
From $\Re v_n^1 = \Re v_n^0 \circ L_{\nu _n}$ and \eqref{vn0lim}, we have
\begin{align*}
&\left\| (\fd^{s-1/2}-\lr{\nu}^{s-1/2}) \Re v_n^1 \right\| _{L_{t,x}^6} \\
&\le \| (\fd^{s-1/2} - \lr{\nu_n}^{s-1/2}) (\Re v_n^0 \circ L_{\nu _n}) \|_{L_{t,x}^6} \\
&\quad + \| (\lr{\nu_n}^{s-1/2} - \lr{\nu}^{s-1/2}) (\Re v_n^0 \circ L_{\nu _n}) \|_{L_{t,x}^6} \\
& \lesssim \| (\dx - i \nu_n) (\Re v_n^0 \circ L_{\nu _n}) \|_{L_{t,x}^6} + |\nu_n-\nu| \| \Re v_n^0 \|_{L_{t,x}^6} \\
&\le |\nu_n| \| (\dt+i) \Re v_n^0 \|_{L_{t,x}^6} + \lr{\nu_n} \| \dx \Re v_n^0 \|_{L_{t,x}^6} + |\nu_n-\nu| \| \Re v_n^0 \|_{L_{t,x}^6}
\rightarrow 0,
\end{align*}
as $n \rightarrow \infty$.
We therefore obtain
\begin{align*}
& \left\| \Re \Big\{ (\fd^{s-1/2} v_n) \circ L_{\nu _n}^{-1} (t+ \wt{t}_n, x+ \wt{x}_n) - \lr{\nu}^{s-1/2} \frac{e^{-it}}{\lambda _n^{1/2}} \psi _{\eps}^{0} \Big( \frac{t}{\lambda _n^2}, \frac{x}{\lambda _n} \Big) \Big\} \right\| _{L_{t,x}^6} \\
& \lesssim \| \fd^{1/2} \Re \{ v_n - v_n^1 \} \| _{L_{t,x}^6} + \left\| (\fd^{s-1/2}-\lr{\nu}^{s-1/2}) \Re v_n^1 \right\| _{L_{t,x}^6} \\
& \quad + \lr{\nu}^{s-1/2} \left\| \Re \Big\{ v_n^1 \circ L_{\nu _n}^{-1} (t+ \wt{t}_n, x+ \wt{x}_n) - \frac{e^{-it}}{\lambda _n^{1/2}} \psi _{\eps}^{0} \Big( \frac{t}{\lambda _n^2}, \frac{x}{\lambda _n} \Big) \Big\} \right\| _{L_{t,x}^6}
\rightarrow 0,
\end{align*}
which concludes the proof of Theorem \ref{isolate}.
\end{proof}

\subsection{Remark on the focusing cases}
\label{S:6.1}

The arguments in \S \ref{sec:IS} are also valid for the focusing cases if we assume that the initial data are below the corresponding static solutions.
More precisely, we can treat the following focusing equation
\begin{equation} \label{rNLKGexpf}
-iv _t + \fd v - \fd ^{-1} \No(\Re v) =0
\end{equation}
or
\begin{equation} \label{rNLKGf}
-iv _t + \fd v - \frac{1}{2} \fd ^{-1} |\Re v|^4 \Re v =0.
\end{equation}

We recall the scattering result obtained by Dodson \cite{Dod15} for the focusing mass-critical nonlinear Schr\"odinger equation:
\begin{equation} \label{mNLSf}
\Big( i \dt + \frac{1}{2} \dx^2 \Big)  w = - \frac{5}{32} |w|^4 w .
\end{equation}
The standing wave solution $w_Q$ associated to \eqref{mNLSf} is
\[
w_Q (t,x) := e^{it} 2\sqrt[4]{\frac{2}{5}} Q (\sqrt{2} x),
\]
where $Q$ is the ground state, which is defined \eqref{GroundS}.
Note that $M(w_Q) = \frac{4}{\sqrt{5}} M(Q)$.

\begin{thm}[\cite{Dod15}] \label{scaNLSf}
Let $w_0 \in L^2 (\R )$ and assume that $M(w_0) < \frac{4}{\sqrt{5}} M(Q)$.
Then, there exists a unique global solution $w\in C(\R;L_x^2(\R))$ to \eqref{mNLS} with $w(0)= w_0$.
Furthermore, the solution $w$ satisfies
\[
\| w \| _{L^6_{t,x} (\R \times \R )} \le C(M(u_0)) .
\]
As a consequence, $w$ scatters as $t\rightarrow\pm\infty$ in $L^2_x(\R)$, that is, there exists $w_{\pm} \in L^2 (\R )$ such that the identity holds:
\[
\lim _{t \rightarrow \pm \infty} \| w(t) - e^{it \dx^2 /2} w_{\pm} \| _{L^2_x} =0 .
\]
Conversely, for any $w_{\pm} \in L^2 (\R )$, there exists a unique global solution $w$ to \eqref{mNLSf} so that the above holds.
\end{thm}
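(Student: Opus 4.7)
The plan is to follow the Kenig-Merle concentration-compactness and rigidity scheme in its mass-critical form, as developed by Tao-Visan-Zhang in the defocusing setting and extended by Dodson to the focusing setting. First I would establish small-data local well-posedness and scattering in $L^2$ via the $1$D Strichartz estimate $\|e^{it\dx^2/2} w_0\|_{L^6_{t,x}} \lesssim \|w_0\|_{L^2}$ combined with a contraction argument, and prove the accompanying long-time perturbation (stability) lemma for \eqref{mNLSf}.

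Assume for contradiction that scattering fails for some $w_0$ with $M(w_0) < \tfrac{4}{\sqrt{5}} M(Q)$, and let $M_c$ denote the infimum of masses of non-scattering solutions. The next step is to establish an $L^2$ linear profile decomposition incorporating the full symmetry group of mass-critical NLS (scaling, space-time translation, Galilean boost, phase rotation). Feeding a minimizing sequence of non-scattering solutions into this decomposition, and using the sharp Gagliardo-Nirenberg inequality (which gives coercivity of the Hamiltonian below the ground state mass) together with the stability lemma to pass to nonlinear profiles, one extracts a minimal-mass, non-scattering \emph{critical element} $w^*$ with $M(w^*) = M_c$ whose orbit is precompact in $L^2$ modulo symmetries. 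Concretely, there exist parameters $N(t), \xi(t), x(t)$ along which the renormalized orbit is precompact in $L^2(\R)$; a Galilean boost then reduces to $\xi(t) \equiv 0$.

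The critical element is classified according to the behavior of $N(t)$ into three scenarios: a soliton-like case $N(t) \equiv 1$, a self-similar case with $N(t) \sim t^{-1/2}$, and a double frequency cascade. In the focusing setting, the variational characterization of the ground state excludes the possibility that $w^*$ is a modulated, rescaled standing wave: the strict inequality $M(w^*) \le M_c < M(w_Q) = \tfrac{4}{\sqrt{5}} M(Q)$ forbids $w^*$ from being (a symmetry image of) $w_Q$. This is the only place the below-threshold hypothesis is used.

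The remaining scenarios are ruled out via a frequency-localized interaction Morawetz estimate on $w^*$, together with additional regularity on the precompact orbit ($w^*(t) \in H^s$ for some small $s>0$) and long-time Strichartz estimates; the coercivity coming from Gagliardo-Nirenberg is used to absorb the negative sign of the nonlinearity appearing in the Morawetz output. The main obstacle is this last step: the frequency-localized interaction Morawetz in $1$D is delicate because the sharp endpoint Strichartz pair is absent, so the high/low frequency cut-off must be chosen carefully and the Morawetz error terms controlled by a bootstrap argument along the orbit. Once the Morawetz bound is in place, it forces $\iint |w^*|^6 \, dx \, dt < \infty$, contradicting the non-scattering of $w^*$ and completing the proof; the space-time bound $\|w\|_{L^6_{t,x}} \le C(M(w_0))$ and the $L^2$-scattering statements then follow from the stability/perturbation lemma in the standard manner.
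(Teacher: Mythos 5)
The paper does not prove this theorem at all; it is stated with the citation \cite{Dod15} and used as a black box in Corollary \ref{cor:isolate} and \S\ref{S:6.1}, so there is no internal proof for your proposal to be compared against. Your sketch is a reasonable high-level summary of Dodson's concentration-compactness-and-rigidity argument (small-data theory and stability, $L^2$ linear profile decomposition with the full mass-critical symmetry group, extraction of a minimal-mass almost-periodic critical element, reduction to the standard $N(t)$ scenarios, long-time Strichartz estimates, and a frequency-localized interaction Morawetz estimate), and in that sense the blueprint is sound.

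One substantive inaccuracy worth flagging: you assert that the below-threshold hypothesis $M(w_0) < \frac{4}{\sqrt{5}} M(Q)$ enters \emph{only} to forbid the critical element from being a symmetry image of the ground state. That is not how Dodson's argument uses the mass bound. The soliton-like scenario $N(t)\equiv 1$ is not ruled out by a soft variational observation that $w^* \neq w_Q$ (a priori, an almost-periodic critical element need not be a standing wave at all); it is ruled out by the Morawetz/virial machinery, and it is precisely there that the mass bound is indispensable, because the sharp Gagliardo--Nirenberg inequality below the ground-state mass provides the coercivity needed to control the sign-indefinite focusing contributions in the Morawetz output. You do mention this coercivity for the other scenarios, but the statement that the threshold hypothesis is used \emph{only} in a soft exclusion of the exact soliton would leave the rigidity step with a genuine gap if taken literally. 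Beyond that, since the paper simply invokes \cite{Dod15}, the appropriate resolution is to rely on Dodson's theorem as the paper does rather than reprove it.
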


\begin{cor} \label{cor:isolate}
Let $\phi \in L^2(\R )$, and assume also that $M(\phi ) < \frac{4}{\sqrt{5}} M(Q)$.
Then, the same statement as in Theorem \ref{isolate} holds true for \eqref{rNLKGexpf} or \eqref{rNLKGf}. 
\end{cor}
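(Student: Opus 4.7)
The plan is to adapt the proof of Theorem \ref{isolate} by replacing every invocation of Theorem \ref{scaNLS} with Theorem \ref{scaNLSf}. The sub-threshold mass hypothesis $M(\phi)<\frac{4}{\sqrt{5}}M(Q)$ is used in one single place: to ensure that the NLS profiles $w_n,\,w_\infty$ extracted from $\phi$ are global and have finite $L^6_{t,x}$-norm. Everything else in the proof is structurally sign-free.

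Concretely, Step 1 is to observe that since $P_{\le\lambda_n^{\theta}}$ is an $L^2$-contraction, $M(P_{\le\lambda_n^{\theta}}\phi)\le M(\phi)<\frac{4}{\sqrt{5}}M(Q)$, so Theorem \ref{scaNLSf} produces solutions $w_n,w_\infty$ of \eqref{mNLSf} obeying a bound of the form \eqref{boundwn}. Lemma \ref{lem:cor-per} then carries over verbatim, since its proof is an abstract consequence of the mass-critical scattering bound and the fractional Leibniz rule. Step 2: define $\wt v_n$ exactly as in the defocusing proof. The global-in-time Strichartz bound \eqref{eq:iso-bound-sta}, the initial-data comparison \eqref{eq:iso-inidata}, and the large-time Proposition \ref{prop:largedataint} are unchanged because they only invoke the free Klein-Gordon propagator and the scattering of $w_\infty$.

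Step 3 is to rerun the computation of the error terms on the middle interval. Substituting the focusing NLS equation \eqref{mNLSf} flips the sign of the extracted quintic; the identity \eqref{extract} now gives
\[
\tfrac{5}{32}|\wt v_n|^4\wt v_n = \tfrac{1}{2}(\Re\wt v_n)^5 - \tfrac{5}{32}|\wt v_n|^4\overline{\wt v_n} - \tfrac{5}{32}|\wt v_n|^2\Re\wt v_n^3 - \tfrac{1}{32}\Re\wt v_n^5,
\]
so the resulting terms $e_2,\ldots,e_6$ appear with opposite signs but with the same structural expressions and hence the same dual-Strichartz bounds from \eqref{est:error1}--\eqref{est:error2}, Lemma \ref{lem:moderror}, and Proposition \ref{prop:mod-smallstab}. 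For \eqref{rNLKGf} the situation is simpler still: the exponential-tail contribution $e_2$ vanishes and only the oscillatory terms $e_3,\ldots,e_6$ remain, with sign changes that do not affect any norm. Step 4 is the modification $\wt v_n\mapsto\ttilde v_n$ and the application of Proposition \ref{prop:stability}; this proposition is purely perturbative and insensitive to the sign of $\No$. Step 5 handles general $\nu_n$ by the Lorentz boost construction of Proposition \ref{prop:isogen}. Its ingredients---finite speed of propagation, the stress-energy identity (Corollary \ref{cor:boostsol}), and the $L^6$-vanishing of $u_n^0$ on $\Omega_n$---all remain valid in the focusing setting after trivial sign changes in the definitions of $\mathcal{T}^{00},\mathcal{T}^{11}$.

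The main obstacle is Step 5 combined with Step 4. In the defocusing case, the a priori $H^1$-control on $v_n$ is provided silently by energy conservation; in the focusing case this coercivity is lost. However, the uniform $H^1$-bound on $\ttilde v_n$ that results from Step 1 via the sub-threshold NLS scattering, together with the smallness of the stability hypothesis in Proposition \ref{prop:stability}, already yields $\|v_n\|_{L^\infty_tH^1_x}\lesssim_{M(\phi)}1$ on every interval where the perturbative theory closes, and a continuity argument then extends this bound to all of $\R$. This is precisely why the condition $M(\phi)<\frac{4}{\sqrt{5}}M(Q)$ is indispensable: it is the sole mechanism by which the approximating solution, and therefore $v_n$ itself, receives a global $H^1$-bound in the absence of a coercive energy.
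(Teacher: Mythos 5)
Your proposal is correct and follows the same route the paper intends: the argument of \S\ref{sec:IS} is structurally sign-free except where Theorem \ref{scaNLS} is invoked, and that single invocation is replaced by Theorem \ref{scaNLSf} under the mass threshold $M(\phi)<\frac{4}{\sqrt{5}}M(Q)$, which propagates to the profiles via the $L^2$-contractivity of $P_{\le\lambda_n^\theta}$. One small caveat on your closing paragraph: the defocusing proof of Theorem \ref{isolate} does \emph{not} obtain the global $H^1$-bound on $v_n$ from energy coercivity either---it comes from Proposition \ref{prop:stability} applied to the approximate solution $\ttilde{v}_n$, exactly as you propose for the focusing case---so the ``main obstacle'' you flag is in fact resolved identically in both cases and is not a genuine new difficulty.
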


\section{Minimal-energy blowup solutions} \label{sec:7}

In this section, we construct so-called minimal-energy blowup solutions in the contradiction argument. First we introduce the definition of almost periodicity modulo translations.

\begin{dfn}[Almost periodicity modulo translations] \label{D:apmt}
We say that a global solution $u$ to \eqref{NLKGexp} is {\it almost periodic modulo translations} (in $H^1_x \times L^2_x$) if there exist functions $x:\R \to \R$
and $C:\R^+ \to \R^+$ such that for every $t \in \R$ and $\eta > 0$, we have
\begin{gather} \label{E:apmt u}
\int_{|x-x(t)| > C(\eta)} \left( |u(t,x)|^2 + |u_x(t,x)|^2 + |u_t(t,x)|^2 \right) dx < \eta \\
\label{E:apmt uhat}
\int_{|\xi| > C(\eta)} \left( |\langle \xi \rangle\hat u(t, \xi)|^2 + |\hat u_t(t, \xi)|^2 \right) d\xi < \eta.
\end{gather}
We refer to $x(t)$ as the {\it spatial center function} and to $C$ as the {\it compactness modulus function}.
\end{dfn}

We suppose that Theorem \ref{thm:scat} fails. Then, there exists a critical energy $E_c>0$ such that 
if $u$ is a real-valued solution to (\ref{NLKGexp}) with $E(u,u_t)<E_c$, then the solution $u$ scatters. The goal of this section is to prove the following.

\begin{thm}\label{thm7-1}
Suppose that Theorem \ref{thm:scat} fails. Then there exists a global solution $u$ to (\ref{NLKGexp}) with $E(u,u_t)=E_c$. Moreover, $u$ is almost periodic modulo translations and blows up (that is, posesses infinite scattering size) both forward and backward in time.
\end{thm}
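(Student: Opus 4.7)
The strategy is the Kenig-Merle concentration-compactness reduction, executed as in \cite{KSV12}, using the machinery now assembled: the linear profile decomposition (Theorem \ref{lpd}), the energy decoupling, the nonlinear decoupling (Proposition \ref{denon}), the embedding of mass-critical Schr\"odinger profiles (Theorem \ref{isolate}), and the stability theory (Proposition \ref{prop:stability}). By the failure assumption and the definition of $E_c$, I would choose a sequence of real-valued solutions $u_n$ to \eqref{NLKGexp} with $E(u_n,\partial_t u_n) \to E_c$ and $S_\R(u_n) \to \infty$; then $v_n(0)$ is bounded in $H^1(\R)$, and Theorem \ref{lpd} supplies profiles $\phi^j$, parameters $(\lambda_n^j, \nu_n^j, t_n^j, x_n^j)$, and a remainder $w_n^J$ satisfying the orthogonality and decoupling properties.

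Attach to each profile a nonlinear profile $V_n^j$ solving \eqref{rNLKGexp}: when $\lambda_n^j \equiv 1$ (so $\nu_n^j \equiv 0$), use Proposition \ref{prop:WP} together with wave operators when $t_n^j \to \mp\infty$; when $\lambda_n^j \to \infty$, use Theorem \ref{isolate} to obtain a global $V_n^j$ with $\|\fd^{s-1/2}\Re V_n^j\|_{L^6_{t,x}}$ controlled by $M(\phi^j)$ alone. The energy decoupling, together with the positivity of the defocusing potential $\Noe$, yields $\sum_j E(V_n^j(0),\partial_t V_n^j(0)) \le E_c + o(1)$, so every profile has energy at most $E_c$.

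Suppose for contradiction that every profile carries energy strictly less than $E_c$. Then the $\lambda_n^j \equiv 1$ profiles scatter by the sub-threshold hypothesis, while the $\lambda_n^j \to \infty$ profiles scatter by Theorem \ref{isolate}. Set $\tilde v_n^J := \sum_{j=1}^J V_n^j + e^{-it\fd} w_n^J$; the cross terms in $\fd^{-1}\No(\Re \tilde v_n^J)$ are handled by Taylor expansion of $\No$, Proposition \ref{denon}, and the asymptotic orthogonality \eqref{4-5}, showing that $\tilde v_n^J$ solves \eqref{rNLKGexp} modulo an error whose appropriate $\fd^{-(s+1/2)}L^{6/5}_{t,x}$-norm tends to zero as $n,J \to \infty$. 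Proposition \ref{prop:stability} then bounds $S_\R(v_n)$ uniformly in $n$, contradicting the choice of $u_n$. Hence exactly one profile survives, carrying the full energy $E_c$; the case $\lambda_n \to \infty$ is excluded because Theorem \ref{isolate} would again supply a finite scattering norm for the unique profile, and the cases $t_n \to \pm\infty$ are excluded because the free evolution would then dominate on a half-line and force $S_{(\mp \infty, 0]}(v_n) \to 0$, reducing the contradiction to the other half-line. After translation, $u := \Re V^1$ is a global solution to \eqref{NLKGexp} with $E(u,\partial_t u) = E_c$ and $S_{[0,\infty)}(u) = S_{(-\infty,0]}(u) = \infty$ (the backward direction by applying the same argument to the time-reversed sequence).

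Almost periodicity modulo translations follows by a closely related contradiction: if $\{(u(t),\partial_t u(t))\}_{t \in \R}$ were not precompact in $H^1 \times L^2$ up to spatial translation, one could extract sequences $t_n$ and $y_n$ along which precompactness fails, apply Theorem \ref{lpd} to $v(t_n,\cdot+y_n)$, and re-run the extraction. Either two or more nontrivial profiles appear, each of strictly sub-critical energy and hence with finite scattering norm, contradicting the blowup property of $u$; or a single profile escapes into the Schr\"odinger regime $\lambda_n \to \infty$, which scatters by Theorem \ref{isolate}, yielding the same contradiction. The frequency-side bound \eqref{E:apmt uhat} follows by applying the same argument to high-frequency truncations of the initial data. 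The main technical obstacle throughout is the bookkeeping for Lorentz-boosted profiles: one must track the $H^1$-norm distortion from Lemma \ref{LB}\ref{LB-inn}, verify compatibility between the energy decoupling and Einstein's relation (Corollary \ref{cor:boostsol}\ref{cor:boostsols3}), and conjugate by $L_{\nu^j}$ before Theorem \ref{isolate} can be applied to the Schr\"odinger-regime profiles.
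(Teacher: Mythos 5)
Your proposal follows the same Kenig--Merle concentration-compactness route that the paper uses: reduce Theorem~\ref{thm7-1} to a Palais--Smale condition (Proposition~\ref{P:palais smale}) via the linear profile decomposition, energy and nonlinear decoupling, the NLS embedding of Theorem~\ref{isolate}, and the stability theory, and then extract a minimal almost-periodic blowup solution by the argument the paper defers to \cite{KSV12}. So the strategy is the same, but there is a genuine gap in the normalization of the critical sequence, and a few bookkeeping points in the final extraction.

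The gap: you set up the Palais--Smale sequence only with $S_\R(\fd^{s-1/2}u_n)\to\infty$. To exclude the time-translation scenario $t_n^1\to\pm\infty$ (the paper's Case~I-B), the sequence must be normalized so that \emph{both} $S_{\le 0}(\fd^{s-1/2}u_n)\to\infty$ and $S_{\ge 0}(\fd^{s-1/2}u_n)\to\infty$; this is exactly the hypothesis the paper builds into Proposition~\ref{P:palais smale}. Under your weaker hypothesis, the case $t_n^1\to -\infty$ only yields that the \emph{forward} scattering size $S_{[0,\infty)}(\fd^{s-1/2}v_n)$ stays bounded, which contradicts nothing on its own. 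The standard fix is to time-translate each $u_n$ to the instant where the forward and backward scattering sizes agree before running the profile decomposition; this should be stated. Relatedly, your phrasing that $t_n^1\to\pm\infty$ ``forces $S_{(\mp\infty,0]}(v_n)\to 0$'' has the half-lines reversed: for $t_n^1\to -\infty$ it is $S_{[0,\infty)}$ that becomes small. Two smaller points: blowup of the extracted solution in both time directions then follows from this two-sided normalization plus stability applied near the limiting datum; one does not need to re-run the argument on a ``time-reversed sequence,'' which would in any case produce a different solution. And once precompactness of $\{(u(t),u_t(t))\}$ in $H^1\times L^2$ modulo translation is established, the frequency-side bound \eqref{E:apmt uhat} is automatic (spatial translation acts by phases on the Fourier side and does not affect tail decay), so the separate argument via high-frequency truncations is unnecessary.
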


\begin{rmk}
The global solution $u$ constructed in Theorem \ref{thm7-1} has zero momentum, i.e., $P(u)=0$.
Indeed, if $P(u) \neq 0$, the Cauchy-Schwarz inequality yields that
\[
|P(u)| < \| u_x \|_{L^2} \| u_t \|_{L^2}
\le \frac{1}{2} \| u_x \|_{L^2}^2 + \frac{1}{2} \| u_t \|_{L^2}^2
\le E(u,u_t).
\]
Here, equality in the first inequality would imply that $u$ is a solution to a transport equation, namely $u(t,x) = u_0(x-kt)$ for some $k \in \R$, which is inconsistent with the fact that $u$ is a solution to \eqref{NLKGexp} and $P(u) \neq 0$.
By setting
\[
\nu := - \frac{P(u)}{\sqrt{E(u,u_t)^2-P(u)^2}},
\]
it follows from Corollary \ref{cor:boostsol} that $u^{\nu} := u \circ L_{\nu}$ is a global solution to \eqref{NLKGexp} with $P(u^{\nu})=0$ and
\[
E(u^{\nu}, u^{\nu}_t) = \sqrt{E(u,u_t)^2 - P(u)^2} < E(u,u_t) =E_c.
\]
This contradicts to the criticality of $E_c$.
\end{rmk}

Once we get the following Proposition \ref{P:palais smale}, Theorem \ref{thm7-1} follows from the same argument as in the proof of Theorem 7.19 in \cite{KSV12}.
Hence, we only prove Proposition \ref{P:palais smale}.

\begin{prop}
\label{P:palais smale}
Suppose that Theorem \ref{thm:scat} fails.
Let $s \in (\frac{1}{2}, \frac{11}{12})$ and let $E_c$ be the critical energy.
Assume that $\{u_n\}$ is a sequence of global solutions to \eqref{NLKGexp} satisfying 
\begin{align}
\label{eq:energy_lim}
	&E(u_n) \leq E_c \text{ and } \lim_{n \to \infty} E(u_n) =E_c,
	\\
\label{eq:Strichartz_lim}
	&\lim_{n \to \infty} S_{\leq0}(\left\langle \partial_x \right\rangle^{s-1/2}u_n)=\lim_{n \to \infty} S_{\geq0}(\left\langle \partial_x \right\rangle^{s-1/2}u_n)=\infty.
\end{align}
Then, after passing to a subsequence, $(u_n(0),\partial_t u_n(0))$ converges in $H^1\times L^2$, modulo translation. 
\end{prop}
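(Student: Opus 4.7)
The overall strategy follows the standard concentration-compactness/rigidity scheme. Starting from $v_n(0):=u_n(0)+i\lr{\dx}^{-1}\dt u_n(0)$, which is bounded in $H^1$ since $\|v_n(0)\|_{H^1}^2\le 2E(u_n,\dt u_n)\le 2E_c+o(1)$, I would apply Theorem~\ref{lpd} to extract profiles $\{\phi^j\}$ with parameters $(\lambda_n^j,\nu_n^j,t_n^j,x_n^j)$, a Strichartz-vanishing remainder $w_n^J$ satisfying \eqref{4-2}, and the asymptotic orthogonality~\eqref{4-5} and energy decoupling~\eqref{4-9}.

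The first step is to associate to each profile $\phi^j$ a nonlinear solution $v_n^j$ to \eqref{rNLKGexp}: Theorem~\ref{isolate} handles the case $\lambda_n^j\to\infty$; Proposition~\ref{prop:WP} handles $\lambda_n^j\equiv 1$ (where $\nu_n^j\equiv 0$ and $\phi^j\in H^1$), either by direct initial-data solving when $t_n^j\equiv 0$ or by the wave-operator construction when $t_n^j\to\pm\infty$. Using Einstein's relation (Corollary~\ref{cor:boostsol}\ref{cor:boostsols3}) to pass the boosts $\nu_n^j\to\nu^j$ through the energy, the decoupling~\eqref{4-9} together with $\wt{\mathcal N}\ge 0$ forces each limiting profile energy to be bounded by $E_c$, with $\sum_j \lim_n E(v_n^j(0))+\lim_n E(w_n^J)\le E_c+o(1)$.

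The second step is the dichotomy. Suppose two or more profiles carry positive limiting energy. Then each is strictly below $E_c$, so by the minimality defining $E_c$ the finite-scale profiles have globally bounded $\lr{\dx}^{s-1/2}L^6_{t,x}$-norms, and Theorem~\ref{isolate} controls those with $\lambda_n^j\to\infty$. Forming the approximate solution
\[
\wt v_n^J(t):=\sum_{j=1}^J v_n^j(t)+e^{-it\lr{\dx}}w_n^J,
\]
the error in \eqref{rNLKGexp} is the difference between $\mathcal N(\Re\wt v_n^J)$ and $\sum_j\mathcal N(\Re v_n^j)$. Expanding $\mathcal N$ via its Taylor series, each quintic-and-higher monomial involving at least two distinct profiles vanishes in $L^3_{t,x}$ by Proposition~\ref{denon}, while the infinite tail is absolutely summable thanks to the uniform $L^\infty_{t,x}$-control of each $\Re v_n^j$ coming from Sobolev embedding and the a priori bounds of Proposition~\ref{prop:WP}/Theorem~\ref{isolate}; the contribution of $w_n^J$ to the error is absorbed via \eqref{4-2}. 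Checking the hypotheses of Proposition~\ref{prop:stability} at regularity $s\in(\tfrac12,\tfrac{11}{12})$ then yields a uniform bound on $S_\R(\lr{\dx}^{s-1/2}v_n)$, contradicting~\eqref{eq:Strichartz_lim}. \textbf{The main technical obstacle} is precisely this step: term-by-term control of the exponential nonlinearity across distinct profiles, marrying the weak-interaction smallness from Proposition~\ref{denon} with uniform pointwise bounds on profile heights in order to sum the Taylor series.

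The third step is single-profile rigidity. It follows that exactly one profile $\phi:=\phi^1$ carries positive limiting energy, necessarily equal to $E_c$, and $\lim_n\|w_n^1\|_{H^1}=0$ (passing from the energy decoupling to the $H^1$-norm via $\wt{\mathcal N}\ge 0$ and Sobolev). To rule out degenerate parameter choices: if $\lambda_n^1\to\infty$, Theorem~\ref{isolate} bounds $S_\R(\lr{\dx}^{1/2}v_n^1)$, and the stability argument of the previous step (with only a single profile) transfers this to a bound on $S_\R(\lr{\dx}^{s-1/2}u_n)$ for $s<1$, contradicting~\eqref{eq:Strichartz_lim}; hence $\lambda_n^1\equiv 1$ and $\nu_n^1\equiv 0$. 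If $t_n^1\to+\infty$, the dispersive decay of $e^{it_n^1\lr{\dx}}T_{x_n^1}\phi$ makes the nonlinearity of the free evolution $\tilde v(t):=T_{x_n^1}e^{i(t_n^1-t)\lr{\dx}}\phi$ small in the appropriate Strichartz-dual norm on $[0,\infty)$, while its $L^6_{t,x}$-norm on $[0,\infty)$ stays bounded by $\|\phi\|_{H^{1/2}}$ via Lemma~\ref{Str}; applying Proposition~\ref{prop:stability} with $\tilde v$ as the approximate solution yields bounded $S_{\geq 0}(\lr{\dx}^{s-1/2}u_n)$, again contradicting~\eqref{eq:Strichartz_lim}; the symmetric argument rules out $t_n^1\to-\infty$. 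Thus $t_n^1\equiv 0$, and the decomposition reduces to $v_n(0)=T_{x_n^1}\phi+w_n^1$ with $\|w_n^1\|_{H^1}\to 0$, which is the claimed $H^1\times L^2$-convergence of $(u_n(0),\dt u_n(0))$ modulo the translation by $x_n^1$.
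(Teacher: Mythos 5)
Your proof follows the same concentration-compactness scheme as the paper's: linear profile decomposition, nonlinear profiles (via Proposition~\ref{prop:WP} for $\lambda_n^j\equiv 1$ and Theorem~\ref{isolate} for $\lambda_n^j\to\infty$), energy decoupling, a dichotomy between a single profile saturating $E_c$ and all profiles staying a fixed $\delta$ below $E_c$, and finally a stability argument against $V_n^J=\sum_j v_n^j+e^{-it\fd}w_n^J$ combined with ruling out $\lambda_n^1\to\infty$ and $t_n^1\to\pm\infty$ in the single-profile case. Apart from a harmless sign mix-up in the time-translation case (the contradiction with $S_{\ge 0}$-divergence corresponds to $t_n^1\to-\infty$, not $+\infty$) and an unnecessary appeal to Einstein's relation (the energy decoupling~\eqref{4-9} is already stated for the boosted profiles), your outline matches the paper's route.

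However, there is a genuine gap in exactly the place you flag as ``the main technical obstacle.'' To sum the Taylor series of $\mathcal N$ when estimating $\mathcal N(\Re V_n^J)-\sum_j\mathcal N(\Re v_n^j)$ in the Strichartz-dual norm, you need a bound on $\|\Re V_n^J\|_{L^\infty_{t,x}}$ that is \emph{uniform in both $n$ and $J$}. You assert this follows from ``uniform $L^\infty_{t,x}$-control of each $\Re v_n^j$ coming from Sobolev embedding and the a priori bounds of Proposition~\ref{prop:WP}/Theorem~\ref{isolate},'' but this reasoning fails: the bound $\|\Re v_n^j\|_{L^\infty_{t,x}}\le C$ for each $j$ gives at best $\|\sum_{j\le J}\Re v_n^j\|_{L^\infty_{t,x}}\le JC$ by the triangle inequality, which blows up with $J$. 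The energy decoupling controls $\sum_j\|v_n^j\|_{H^1}^2$, not $\sum_j\|v_n^j\|_{H^1}$, so there is no summability there either. The paper closes this gap by noting that $A:=\sum_{j=1}^J\Re v_n^j$ solves the equation $(-i\dt+\fd)A+\fd^{-1}\sum_j\mathcal N(\Re v_n^j)=0$, so that the Strichartz inequality and the fractional Leibniz rule, together with $\|\langle\partial_x\rangle^{s-1/2}\Re v_n^j\|_{L^6_{t,x}}^2\lesssim E(v_n^j)$ (whose sum over $j$ is controlled by $E_c$), yield $\|A\|_{L^\infty_tH^s_x}\lesssim_{E_c}1$ uniformly in $n$ and $J$; this in turn gives the needed $L^\infty_{t,x}$ bound by Sobolev embedding. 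Without this Duhamel-type argument on the \emph{sum}, the exponential tail cannot be summed, and the stability hypothesis cannot be verified.
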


Let $v_n := u_n + i \langle \partial_x \rangle^{-1} \partial_t u_n$ to work with the first-order version of our equation. Since we are considering defocusing case, we have
\begin{align}
\label{E:H1 vn}
\left\Vert v_n(0) \right\Vert_{H^1}^2 \lesssim E(v_n) \lesssim E_c < \infty,
\end{align}
which implies that $\{v_n(0)\}$ is a bounded sequence in $H^1$. By applying the linear profile decomposition (Theorem \ref{lpd}), 
\begin{align}
\label{eq:lpd}
v_n (0)= \sum_{j=1}^{J} \phi_n^j + w_n^J, \qquad 1\leq J < J_0
\end{align}
where $\phi_n^j:=T_{x_n^j} e^{it_n^j \left\langle \partial_x \right\rangle} \Lo_{\nu_n^j} D_{\lambda_n^j} P_n^j \phi^j$. 
We have $J_0\ge 2$ since \eqref{eq:Strichartz_lim} is not compatible with \eqref{4-2} when $J_0=1$. 
Passing to a subsequence, we may assume that $E(\phi_n^j)$ converges for each $j\in [1,J_0)$. 
By the energy decoupling (\ref{4-9}), we have
\begin{align}
\label{E:bound nrg sum}
 \lim_{n \to \infty} \left\{ \sum_{j=1}^{J} E(\phi_n^j  )+ E( w_n^J) \right\} = \lim_{n \to \infty} E(v_n)= E_c,
\end{align}
for each $J \in [1,J_0)$.
One of the following scenarios occurs by the positivity of the energy.

{\bf Case I.} There is only a single profile and it satisfies
\begin{align*}
\lim_{n \to \infty} E(\phi_n^1)=E_c.
\end{align*}

{\bf Case II.} There exists $\delta>0$ such that for every $1\leq j < J_0$,
\begin{align}
\label{E:small profiles}
\lim_{n \to \infty} E(\phi_n^j) < E_c -\delta.
\end{align}

In {\bf Case I}, we have $w_n \to 0$ in $H^1$ as $n\rightarrow\infty$. Moreover we devide the following three cases in this case:

{\bf Case I-A.} $\lambda_n^1 \to \infty$. 

{\bf Case I-B.} $\lambda_n^1 \equiv 1$ and $t_n^1 \to \pm \infty$.

{\bf Case I-C.} $\lambda_n^1 \equiv 1$ and $t_n^1 \equiv 1$.
\\
We will observe that both of the first and second cases do not occur.
Note that {\bf Case I-C} implies the conclusion of the proposition. 

In {\bf Case I-A}, by using Theorem \ref{isolate}, the stability theory (Proposition \ref{prop:stability}), and the fact $w_n^1 \to 0$ in $H^1$, we get a contradiction.

Next we consider {\bf Case I-B.} Suppose that $\lambda_n^1\equiv 1$ for all $n\in \N$ and  $t_n^1 \to \pm\infty$ as $n\rightarrow\infty$.  We only treat the case $t_n^1 \to -\infty$ since the other case can be treated in a similar manner.
By the Strichartz inequality (Lemma \ref{Str}), we see that $\langle \partial_x \rangle^{s-1/2} e^{-it\langle \partial_x \rangle}\phi^1 \in L^6_{t,x}(\R \times \R)$, and so
$$
\Vert \langle \partial_x \rangle^{s-1/2}  e^{-i(t-t_n^1)\langle \partial_x \rangle}  \phi^1 \Vert_{L^6_{t,x}([0,\infty) \times \R)} \to 0 \quad \text{as $n\to\infty$.}
$$
Hence by the local well-posedness (Proposition \ref{prop:WP} and see also Proposition \ref{prop:stability}), if $v_n^1$ is the unique solution to \eqref{rNLKGexp} with initial data $v_n^1(0) = \phi_n^1$,
then for $n$ sufficiently large, $S_{\geq 0}( \langle \partial_x \rangle^{s-1/2} v_n^1) < \infty$.  As in Case I-A, we can now use the stability lemma (Proposition \ref{prop:stability})
to conclude that for sufficiently large $n$, we see $S_{\geq 0}( \langle \partial_x \rangle^{s-1/2} v_n)<\infty$, which derives a contradiction. Next we consider {\bf Case II}.

{\bf Case II:}  We will show that this case is inconsistent with \eqref{eq:Strichartz_lim} by using the identity \eqref{eq:lpd} to produce a nonlinear profile decomposition of the $v_n$ and then
applying the stability theory (Proposition \ref{prop:stability}).  We begin by introducing nonlinear profiles $v_n^j$, whose definition depends on the behavior of the parameters $\lambda_n^j$.

First assume that $j$ is such that $\lambda_n^j \equiv 1$ for all $n\in \N$.  Then we see that $\phi^j \in H^1_x$ and
$$
\phi_n^j = T_{x_n^j}e^{it_n^j\langle \partial_x \rangle}\phi^j.
$$
If, in addition, $t_n^j \equiv 0$ for all $n\in \N$, then we set $v^j$ be the maximal-lifespan solution to \eqref{rNLKGexp} with $v^j(0) = \phi^j$.  If $t_n^j \to -\infty$ (respectively $t_n^j \to \infty$),
then we set $v^j$ be the maximal-lifespan solution to \eqref{rNLKGexp} which scatters forward (respectively backward) in time to $e^{-it\langle \partial_x \rangle}\phi^j$.

\begin{lem} \label{L:vj global}
In Case~II, if $\lambda_n^j \equiv 1$ for some $j$, then $v^j$ defined as above is global and scatters.
\end{lem}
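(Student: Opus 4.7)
The plan is to identify the energy of $v^j$ and invoke the criticality of $E_c$. From the energy-decoupling identity \eqref{4-9} together with the positivity of each profile energy, Case II guarantees
\[
\limsup_{n \to \infty} E(\phi_n^j) \le E_c - \delta .
\]
Thus once $E(v^j)$ is shown to equal $\lim_n E(\phi_n^j)$, the strict inequality $E(v^j) < E_c$ will give that $v^j$ is global and scatters by the very definition of the critical energy.

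When $t_n^j \equiv 0$, the identification is trivial: $\phi_n^j = T_{x_n^j}\phi^j$ and $v^j(0)=\phi^j$, so translation invariance of $E$ together with energy conservation for $v^j$ on its lifespan give $E(v^j)=E(\phi^j)=E(\phi_n^j)\le E_c-\delta/2$ for large $n$, and criticality of $E_c$ closes this case.

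The interesting case is $t_n^j \to \pm\infty$; I treat $t_n^j \to -\infty$, the other being symmetric. Since spatial translations and the free flow $e^{it_n^j\fd}$ are isometries of $H^1$, the quadratic part of $E(\phi_n^j)$ equals $\tfrac12\|\phi^j\|_{H^1}^2$ for every $n$. For the nonlinear contribution, expand
\[
\Noe(u) \;=\; \sum_{k \ge 3} \frac{u^{2k}}{k!} ,
\]
and observe that $\|\phi_n^j\|_{H^1}=\|\phi^j\|_{H^1}$ is bounded independently of $n$. The Sobolev embedding $H^1(\R)\hookrightarrow L^{2k}(\R)$ with constant growing at most polynomially in $k$, combined with Stirling's estimate on $k!$, provides a summable majorant for the series, so dominated convergence reduces the matter to showing $\|\phi_n^j\|_{L^{2k}}\to 0$ for each fixed $k\ge 3$. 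This in turn follows from the dispersive estimate Lemma~\ref{lem:dispersive} applied (with $\lambda=1$) to $e^{it_n^j\fd}\phi^j$, first for Schwartz data and then by a density argument using the uniform $H^1$-bound on $\phi^j$. Consequently $\lim_{n\to\infty} E(\phi_n^j) = \tfrac12\|\phi^j\|_{H^1}^2$. The identical reasoning, combined with the scattering convergence $\|v^j(t)-e^{-it\fd}\phi^j\|_{H^1}\to 0$ granted by Proposition~\ref{prop:WP}, shows $E(v^j(t))\to \tfrac12\|\phi^j\|_{H^1}^2$ as $t\to-\infty$ along the lifespan of $v^j$. Energy conservation then forces $E(v^j)=\tfrac12\|\phi^j\|_{H^1}^2 \le E_c-\delta < E_c$, and criticality of $E_c$ upgrades $v^j$ to a global, scattering solution.

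The only nontrivial ingredient is the vanishing of the nonlinear part of the energy along a dispersing free-flow sequence, which is a standard consequence of the dispersive decay of the Klein--Gordon propagator together with the fact that $\Noe$ vanishes to order six at the origin; the exponential growth of $\Noe$ at infinity causes no trouble because the $H^1$-norms stay uniformly bounded.
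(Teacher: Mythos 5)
Your proof is correct and follows essentially the same route as the paper: identify $E(v^j) = \tfrac12\|\phi^j\|_{H^1_x}^2$ (trivially when $t_n^j\equiv 0$; via the dispersive estimate and a Schwartz-approximation to kill the nonlinear part of the energy when $t_n^j\to\pm\infty$), observe that Case~II gives $E(v^j)<E_c$, and invoke the criticality of $E_c$. One small slip: when $t_n^j\to -\infty$ the profile $v^j$ is by construction the solution scattering \emph{forward} in time (so the convergence $\|v^j(t)-e^{-it\fd}\phi^j\|_{H^1}\to 0$ holds as $t\to +\infty$, not $-\infty$), but this does not affect the validity of the argument.
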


\begin{proof}[Proof of Lemma \ref{L:vj global}]
This follows from the well-posedness result (Proposition \ref{prop:WP}).
If $t_n^j \equiv 0$, then $E(\phi_n^j) = E(\phi^j)=E(v^j)$ by the conservation of the energy. 
If $t_n^j \to \pm \infty$, then by using the dispersive estimate (Lemma \ref{lem:dispersive})
\ and approximating $\phi^j$ in $H^1_x$ by Schwartz functions, we see that
\[
\lim_{n \to \infty} E(\phi_n^j) = \lim_{n \to \infty} \tfrac12\Vert\phi_n^j\Vert_{H^1_x}^2 = \tfrac12 \Vert\phi^j\Vert_{H^1_x}^2 =E(v^j).
\]
Since we are considering Case II, we have $E(v^j) < E_c $, which implies that $v^j$ scatters as $t\rightarrow\pm\infty$.
\end{proof}

If $\lambda_n^j \equiv 1$ for all $n\in \N$, we may define nonlinear profiles by
$$
v_n^j(t,x) := v^j(t-t_n^j,x-x_n^j).
$$

Next, we consider the case where $\lim_{n \to \infty}\lambda_n^j = \infty$.  Then by Theorem~\ref{isolate}, for sufficiently large $n$, we define $v_n^j$ as the unique solution to \eqref{rNLKGexp} with initial data $v_n^j(0) = \phi_n^j$.

\begin{lem} \label{L:nl profiles nice}
In Case~II, for each $j$ $($regardless of the behavior of the $\lambda_n^j)$ we have
\begin{align}
\label{E:same energy}
\lim_{n \to \infty} E(v_n^j) &= \lim_{n \to \infty} E(\phi_n^j),  \\
\label{E:bounded ST norm}
\lim_{n \to \infty} S_{\R}( \langle \partial_x \rangle^{s-1/2} v_n^j)^2 &\lesssim \lim_{n \to \infty} E(v_n^j).
\end{align}
Furthermore, for each $j$ and $\eps > 0$, there exists $\psi^{j}=\psi_{\eps}^{j} \in C^{\infty}_c(\R \times \R)$ and $N_{j,\eps}$ such that if $\psi_n^j$ is defined as in Theorem \ref{isolate} and $n > N_{j,\eps}$, then we have
\begin{equation} \label{E:psi approximates v}
\Vert \Re( \psi_n^j - \langle \partial_x \rangle^{s-1/2}  v_n^j)\Vert_{L^6_{t,x}} < \eps.
\end{equation}
\end{lem}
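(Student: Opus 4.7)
The argument naturally splits according to whether $\lambda_n^j \equiv 1$ or $\lambda_n^j \to \infty$, mirroring the case distinction in the definition of the nonlinear profiles.

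Suppose first that $\lambda_n^j \equiv 1$. By Lemma \ref{L:vj global}, $v^j$ is a global solution to \eqref{rNLKGexp} that scatters in $H^1(\R)$. The identity \eqref{E:same energy} is then immediate: if $t_n^j \equiv 0$, we have $v_n^j(0) = \phi_n^j$, so it reduces to the energy conservation law; if $t_n^j \to \pm\infty$, both sides converge to $\tfrac12\|\phi^j\|_{H^1}^2$ by the same dispersive argument already used in the proof of Lemma \ref{L:vj global}. Because $E(v^j) < E_c$ by the Case~II hypothesis, the definition of the critical energy furnishes a global Strichartz bound for $v^j$ controlled by $E(v^j)$; translation invariance of $S_\R$ transfers this to $v_n^j$, giving \eqref{E:bounded ST norm}.

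Suppose next that $\lambda_n^j \to \infty$. Theorem \ref{isolate} then directly produces, for all sufficiently large $n$, a global solution $v_n^j$ to \eqref{rNLKGexp} with $v_n^j(0) = \phi_n^j$ satisfying $\|v_n^j\|_{L^\infty_t H^1_x} + S_\R(\fd^{1/2} v_n^j) \lesssim_{M(\phi^j)} 1$. Since $s \in [\tfrac12,1]$, the multiplier $\fd^{s-1/2}$ is dominated by $\fd^{1/2}$, so the Strichartz bound passes to $\fd^{s-1/2}v_n^j$. The identity \eqref{E:same energy} again follows from energy conservation $E(v_n^j) \equiv E(\phi_n^j)$. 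To recast the $M(\phi^j)$-dependence as the required $\lesssim \lim_n E(v_n^j)$, observe that in the high-frequency limit the potential term in \eqref{energy} is negligible and the kinetic contribution is controlled by the $L^2$ mass, so $\lim_n E(\phi_n^j) \ge \tfrac12 M(\phi^j) + o(1)$; this, together with the uniform energy budget $E(v_n) \le E_c$ coming from \eqref{E:bound nrg sum}, makes the implicit constant uniform in $j$.

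The approximation \eqref{E:psi approximates v} in the high-frequency case is directly part of the conclusion of Theorem \ref{isolate}, which furnishes exactly such a $\psi_\eps^j \in C_c^\infty(\R\times\R)$. In the case $\lambda_n^j \equiv 1$, instead approximate $\fd^{s-1/2} v^j$ in $L^6_{t,x}(\R\times\R)$ by a $C_c^\infty$ function $\widetilde{\psi}_\eps$ (permissible since the Strichartz norm is finite by the preceding step), and then set $\psi_n^j(t,x) := \widetilde{\psi}_\eps(t-t_n^j,\, x-x_n^j)$, using translation invariance of the $L^6_{t,x}$ norm to conclude. The principal obstacle is the matching of implicit constants across the two cases: the high-frequency case produces a mass-critical-type bound, while the bounded-frequency case gives an energy-based bound, and the reconciliation hinges on the mass--energy equivalence at the profile level in the high-frequency limit together with the energy decoupling \eqref{E:bound nrg sum}.
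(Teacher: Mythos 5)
Your treatment of \eqref{E:same energy} and \eqref{E:psi approximates v} matches the paper's argument, but the proof of \eqref{E:bounded ST norm} has a genuine gap in the high-frequency case, and it is exactly the gap that the paper's proof is organized to close.

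The issue is that \eqref{E:bounded ST norm} is not merely a finiteness statement: it asserts a bound on $S_{\R}(\fd^{s-1/2}v_n^j)^2$ that is \emph{linear} in $E(v_n^j)$, with a constant independent of $j$. This linearity is exactly what makes the later summation \eqref{E:the dude} work, giving $\sum_j S_{\R}(\fd^{s-1/2}\Re v_n^j)^2 \lesssim \sum_j E(v_n^j) \le E_c$; without it the accumulation over the possibly infinite family of profiles $j$ is uncontrolled. Your approach in the case $\lambda_n^j\to\infty$ invokes Theorem \ref{isolate}, which delivers $\|v_n^j\|_{L_t^\infty H_x^1}+S_{\R}(\fd^{1/2}v_n^j)\lesssim_{M(\phi^j)}1$, and then you try to ``recast the $M(\phi^j)$-dependence'' via the inequality $\lim_n E(\phi_n^j)\ge \tfrac12 M(\phi^j)+o(1)$. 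But that inequality runs in the wrong direction for the purpose: it tells you $E\gtrsim M$, i.e.\ that the mass of each profile is uniformly bounded, which makes the implicit constant in Theorem \ref{isolate} \emph{uniform} in $j$, not \emph{proportional to $E(v_n^j)$}. A uniform per-profile bound $S^2\le C$ yields $\sum_{j=1}^J S^2\le CJ$, which blows up as $J\to\infty$. The mass-dependence in Theorem \ref{isolate} is inherited from Dodson's theorem and is not linear in $M$, so no amount of $E\gtrsim M$ turns the $\lesssim_M 1$ bound into the required $\lesssim E$ bound.

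The paper closes this gap with a small/large energy dichotomy that you do not use. All but finitely many $j$ have $\lim_n E(\phi_n^j)$ below the small-data threshold; for those, the small-data well-posedness in Proposition \ref{prop:WP} gives the genuinely linear bound $S_{\R}(\fd^{s-1/2}v_n^j)^2 \lesssim \|\phi_n^j\|_{H^s}^2 \lesssim E(\phi_n^j)$. For the finitely many remaining $j$, one invokes \eqref{E:small profiles} and the definition of $E_c$ to get a finite (though inexplicit) bound, and since there are only finitely many, and each has energy bounded below by the small-data threshold, the bound can be written as $\lesssim E(v_n^j)$ with a constant uniform over the whole family. Your argument in the bounded-frequency case also elides this: you write that ``the definition of the critical energy furnishes a global Strichartz bound for $v^j$ controlled by $E(v^j)$,'' but the definition of $E_c$ by itself gives only finiteness of the Strichartz norm, not a linear bound in energy; the linearity again comes from the small-data dichotomy. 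Without this dichotomy your proof of \eqref{E:bounded ST norm} does not establish the stated inequality in the form needed downstream.
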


\begin{proof}[Proof of Lemma \ref{L:nl profiles nice}]
Equality \eqref{E:same energy} is a tautology if $\lambda_n^j \to \infty$ as $n\rightarrow\infty$ or $\lambda_n^j \equiv 1$ and $t_n^j \equiv 0$ for all $n\in \N$, since in these cases $v_n^j(0) = \phi_n^j$.
If $\lambda_n^j \equiv 1$ for all $n\in \N$ and $t_n^j \to \pm\infty$ as $n\rightarrow\infty$, then by the definition of $v_n^j$ and the local well-posedness result (Proposition \ref{prop:WP}) (see small data scattering statement)
we have
$$
E(v_n^j) = E(v^j) = \tfrac12 \Vert\phi^j\Vert_{H^1_x}^2 = \lim_{n \to \infty} E(\phi_n^j),
$$
where for the last equality, we have used the dispersive estimate (Lemma \ref{lem:dispersive}) as in the proof of Lemma~\ref{L:vj global}.


When the value $\lim_{n\to\infty} E(\phi_n^j)$ is below the small data scattering threshold, \eqref{E:bounded ST norm} follows from the well-posedness result (Proposition \ref{prop:WP}). 
On the other hand, by the identities \eqref{E:bound nrg sum}, the limiting energy can only exceed this small data scattering threshold for finitely many values of $j$.  For these cases, we invoke the estimate \eqref{E:small profiles} and the definition of the critical energy $E_c$.  As we are invoking the
contradiction hypothesis here, there is no hope of being explicit about the constant in \eqref{E:bounded ST norm} other than that it is independent of $j$.

As for \eqref{E:psi approximates v}, in the case $\lambda_n^j \equiv 1$ for all $n\in \N$, this follows from the fact that $v_n^j$ is just a translation of $\langle \partial_x \rangle^{s-1/2} v^j \in L^6_{t,x}(\R \times \R^2)$.
In the case $\lambda_n^j \to \infty$ as $n\rightarrow\infty$, this approximation follows from Theorem~\ref{isolate}. This completes the proof of the lemma.
\end{proof}

We continue to prove Proposition \ref{P:palais smale}. For $1 \leq J < J_0$, we set
$$
V_n^J(t) := \sum_{j=1}^J v_n^j(t) + e^{-it\langle \partial_x \rangle}w_n^J,
$$
which is defined globally in time for sufficiently large $n$ (depending on $J$).  Our immediate goal is to show
that $V_n^J(t)$ is a good approximation to $v_n(t)$ when $n$ and $J$ are sufficiently large by the stability theory (Proposition \ref{prop:stability}).

\begin{lem} \label{L:nlpd stability setup}
We have the following spacetime bounds on $V_n^J$
\begin{equation} \label{E:uniform st bounds}
\limsup_{J \to \infty} \limsup_{n \to \infty}\bigl\{ \Vert\langle \partial_x \rangle^{s-1/2} \Re V_n^J\Vert_{L^6_{t,x}} + \Vert V_n^J\Vert_{L^{\infty}_t H^{s}_x} \bigr\} < \infty.
\end{equation}
The $V_n^J$ are approximate solutions to \eqref{rNLKGexp} in the sense that
$$
(-i\partial_t +\langle \partial_x \rangle)V_n^J + \langle \partial_x \rangle^{-1} \mathcal{N}(\Re V_n^J) =  E_n^J,
$$
where
\begin{equation} \label{E:nearly nlkg}
\lim_{J \to \infty} \limsup_{n \to \infty} \Vert\langle \partial_x \rangle^{s+1/2} E_n^J\Vert_{L^{6/5}_{t,x}} = 0.
\end{equation}
Furthermore, for each $J$, we have
\begin{equation} \label{E:initial H1 ok}
\lim_{n \to \infty} \Vert v_n(0) - V_n^J(0)\Vert_{H^1_x} = 0.
\end{equation}
\end{lem}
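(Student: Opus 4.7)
The plan is to verify the three conclusions in turn, relying on the fact that each nonlinear profile $v_n^j$ is an exact global solution to \eqref{rNLKGexp} with the uniform bounds of Lemma \ref{L:nl profiles nice}, that the linear remainder $e^{-it\langle\partial_x\rangle}w_n^J$ has small Strichartz norm as $J\to\infty$ by \eqref{4-2}, and that distinct nonlinear profiles decouple via the asymptotic orthogonality \eqref{4-5} together with Proposition \ref{denon}.

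For \eqref{E:initial H1 ok}, I observe that $v_n^j(0) = \phi_n^j$ exactly whenever $\lambda_n^j \to \infty$ (by Theorem \ref{isolate}) or when $\lambda_n^j \equiv 1$ and $t_n^j \equiv 0$; in the remaining sub-case $\lambda_n^j \equiv 1$ with $t_n^j \to \pm\infty$, the scattering statement in Proposition \ref{prop:WP} gives $\|v_n^j(0) - \phi_n^j\|_{H^1} = \|v^j(-t_n^j) - e^{-it_n^j\langle\partial_x\rangle}\phi^j\|_{H^1} \to 0$. Summing over the finitely many indices $j\le J$ and adding $w_n^J$ yields $V_n^J(0) = v_n(0) + o_n(1)$ in $H^1$. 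For the spacetime bounds \eqref{E:uniform st bounds}, I would control the $L^\infty_t H^s_x$ and $L^6_{t,x}$ pieces separately. The former follows from the observation that only finitely many profiles carry energy above the small-data scattering threshold of Proposition \ref{prop:WP}; for those, Lemma \ref{L:nl profiles nice} applies, while for the remaining profiles the $H^s$ norm is controlled by $\|\phi_n^j\|_{H^s}$, whose squared sum is controlled via \eqref{LD-dec-2} by $\|v_n(0)\|_{H^1}^2 \lesssim E_c$. For the $L^6_{t,x}$ piece, I would expand $\|\langle\partial_x\rangle^{s-1/2}\Re V_n^J\|_{L^6_{t,x}}^6$ as a sixfold sum, bound the diagonal contributions by $\sum_j \|\langle\partial_x\rangle^{s-1/2}\Re v_n^j\|_{L^6_{t,x}}^6 + \|\langle\partial_x\rangle^{s-1/2}\Re e^{-it\langle\partial_x\rangle}w_n^J\|_{L^6_{t,x}}^6 \lesssim E_c^3$ (using $\sum_j E(v_n^j)^3 \le (\max_j E(v_n^j))^2 \sum_j E(v_n^j) \lesssim E_c^3$), and discard the off-diagonal cross terms by approximating $\Re v_n^j$ by the compactly supported $\Re\psi_n^j$ via \eqref{E:psi approximates v} and invoking the decoupling \eqref{4-13}; the pairings involving $w_n^J$ are killed by \eqref{4-2} together with Strichartz.

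For the error estimate \eqref{E:nearly nlkg}, since each $v_n^j$ satisfies $(-i\partial_t + \langle\partial_x\rangle)v_n^j + \langle\partial_x\rangle^{-1}\mathcal{N}(\Re v_n^j) = 0$ and $e^{-it\langle\partial_x\rangle}w_n^J$ solves the free equation,
\begin{equation*}
E_n^J = \langle\partial_x\rangle^{-1}\Big[\mathcal{N}(\Re V_n^J) - \sum_{j=1}^J \mathcal{N}(\Re v_n^j)\Big].
\end{equation*}
I would split this as $E_n^J = E_n^{J,(1)} + E_n^{J,(2)}$, with $E_n^{J,(1)} := \langle\partial_x\rangle^{-1}[\mathcal{N}(\Re V_n^J) - \mathcal{N}(\sum_j \Re v_n^j)]$ isolating the effect of the linear remainder and $E_n^{J,(2)} := \langle\partial_x\rangle^{-1}[\mathcal{N}(\sum_j \Re v_n^j) - \sum_j \mathcal{N}(\Re v_n^j)]$ capturing the interaction of distinct profiles. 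Expanding $\mathcal{N}$ by its Taylor series, every summand of $E_n^{J,(1)}$ contains at least one factor of $\Re e^{-it\langle\partial_x\rangle}w_n^J$, so the fractional Leibniz rule, Sobolev's embedding $\|\cdot\|_{L^\infty_x}\lesssim \|\cdot\|_{H^1}$ (which makes the factorial coefficients summable), and \eqref{4-2} combine to give $\|\langle\partial_x\rangle^{s+1/2}E_n^{J,(1)}\|_{L^{6/5}_{t,x}} \to 0$. For $E_n^{J,(2)}$, the same Taylor expansion reduces matters to controlling $(\sum_j \Re v_n^j)^{2k+1} - \sum_j (\Re v_n^j)^{2k+1}$ for each $k \ge 2$; each such polynomial difference is a sum of cross terms containing at least two distinct indices, which are handled by approximating each $\Re v_n^j$ by $\Re \psi_n^j$ via \eqref{E:psi approximates v} and then applying H\"older together with the $L^3_{t,x}$ decoupling \eqref{4-13}.

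The hardest part will be the cross-term analysis in $E_n^{J,(2)}$: one must, for each Taylor degree $2k+1$, organize the cross terms, replace each $\Re v_n^j$ by its compactly supported approximation $\Re\psi_n^j$ with negligible error, carry the fractional derivative $\langle\partial_x\rangle^{s+1/2}$ through via the fractional Leibniz rule, and still preserve enough factorial decay to sum over $k \ge 2$. The condition $s > \tfrac12$ is crucial here, since it places $\langle\partial_x\rangle^{s+1/2}$ just inside the range where Proposition \ref{prop:WP} yields Strichartz control on the profiles.
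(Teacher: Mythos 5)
Your decomposition of the error term $E_n^J$ into the two pieces $E_n^{J,(1)}$ (isolating the linear remainder) and $E_n^{J,(2)}$ (isolating the cross-interaction of profiles) is exactly the paper's, and your treatment of \eqref{E:initial H1 ok} and of the $L^6_{t,x}$ piece of \eqref{E:uniform st bounds} (diagonal sum controlled via \eqref{E:the dude}-type bounds, off-diagonal killed via \eqref{E:psi approximates v} and Proposition~\ref{denon}) also matches the paper's argument.

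There is, however, a genuine gap in your treatment of the $L^\infty_t H^s_x$ piece of \eqref{E:uniform st bounds}. You argue that since only finitely many profiles are above the small-data threshold and $\sum_{j}\Vert\phi_n^j\Vert_{H^s}^2\lesssim\Vert v_n(0)\Vert_{H^1}^2\lesssim E_c$ by \eqref{LD-dec-2}, the $L^\infty_t H^s_x$ bound on $V_n^J$ follows. This inference does not go through: $\ell^2$ summability of the individual norms $\Vert v_n^j\Vert_{L^\infty_t H^s_x}\lesssim\Vert\phi_n^j\Vert_{H^s}$ does not bound $\bigl\Vert\sum_{j\le J}v_n^j\bigr\Vert_{L^\infty_tH^s_x}$ uniformly in $J$; the triangle inequality alone costs a factor $J^{1/2}$, and no asymptotic orthogonality of the \emph{time-evolved} nonlinear profiles in $H^s_x$ has been established (only $L^6_{t,x}$ decoupling is available from Proposition~\ref{denon}). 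The same gap reappears, hidden, in the error estimate: to make the factorial coefficients summable after the Taylor expansion of $\mathcal N$, you invoke Sobolev to bound $\Vert A\Vert_{L^\infty_{t,x}}$ where $A:=\sum_{j\le J}\Re v_n^j$, but that requires precisely the uniform bound $\Vert A\Vert_{L^\infty_tH^s_x}\lesssim 1$ that has not been proved. The paper handles this by a different route: $A$ solves its own Duhamel equation $(-i\partial_t+\langle\partial_x\rangle)A+\langle\partial_x\rangle^{-1}\sum_j\mathcal N(\Re v_n^j)=0$, so Strichartz converts the $L^\infty_tH^s_x$ bound into a bound on $\Vert\langle\partial_x\rangle^{s-1/2}\sum_j\mathcal N(\Re v_n^j)\Vert_{L^{6/5}_{t,x}}$, which \emph{is} $\ell^1$-summable in $j$ because the nonlinearity is at least quintic, so each summand is $\lesssim E(\phi_n^j)$ (quadratic in the profile norm rather than linear) and $\sum_j E(\phi_n^j)\le E_c$. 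The final $L^\infty_t H^s_x$ bound on $V_n^J$ is then obtained the same way, using \eqref{E:nearly nlkg}. You will need to add this Duhamel/Strichartz step; without it the proposal is incomplete.
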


\begin{proof}[Proof of Lemma \ref{L:nlpd stability setup}]

First we prove \eqref{E:initial H1 ok}.  By the triangle inequality and the definitions of $v_n^j$,
\begin{equation*}
\lim_{n\to\infty} \Vert v_n(0) - V_n^J(0)\Vert_{H^1_x} \leq \lim_{n\to\infty}  \sum_{j=1}^J\Vert v_n^j(0) - \phi_n^j\Vert_{H^1_x} =0.
\end{equation*}

We note that combining \eqref{E:bounded ST norm} and \eqref{E:bound nrg sum} yields
\begin{equation}\label{E:the dude}
\limsup_{J \to \infty}\limsup_{n \to \infty} \sum_{j=1}^J \Vert \langle \partial_x \rangle^{s-1/2} \Re v_n^j\Vert_{L^6_{t,x}}^2 \lesssim \lim_{J \to \infty}\lim_{n \to \infty} \sum_{j=1}^J E(v_n^j) \leq E_c.
\end{equation}

We now bound the $L_{t}^6 W_{x}^{s-1/2,6}$ term in \eqref{E:uniform st bounds}.  By \eqref{E:psi approximates v} and Proposition~\ref{denon}, the nonlinear profiles decouple
in the sense that whenever $j \neq j'$, we have
\begin{equation} \label{E:vs decoup}
\lim_{n \to \infty} \Vert (\langle \partial_x \rangle^{s-1/2} \Re v_n^j) (\langle \partial_x \rangle^{s-1/2} \Re v_n^{j'})\Vert_{L^3_{t,x}} = 0.
\end{equation}
Combining this with \eqref{4-2} and then using \eqref{E:the dude} show
\begin{align}\label{E:uniform A}
&\limsup_{J \to \infty}\limsup_{n \to \infty} \Vert \langle \partial_x \rangle^{s-1/2}\Re V_n^J\Vert_{L^6_{t,x}}^6
=\limsup_{J \to \infty}\limsup_{n \to \infty} \sum_{j=1}^J \Vert \langle \partial_x \rangle^{s-1/2} \Re v_n^j\Vert_{L^6_{t,x}}^6
    \lesssim E_c^3.
\end{align}
Thus we get the first estimate in \eqref{E:uniform st bounds}. 

Next, we prove \eqref{E:nearly nlkg}.  
A simple computation shows that
$$
E_n^J(t) = \langle \partial_x \rangle^{-1} \mathcal{N}\biggl( \Re \sum_{j=1}^J v_n^j(t) + \Re e^{-it\langle \partial_x \rangle}w_n^J\biggr) -  \langle \partial_x \rangle^{-1}\sum_{j=1}^J \mathcal{N} (\Re v_n^j),
$$
and so, by the triangle inequality, it is enough to show
\begin{gather}
\label{E:error from w}
\lim_{J \to \infty} \limsup_{n \to \infty} \left\Vert \langle \partial_x \rangle^{s-1/2} \left\{ \mathcal{N}\biggl(\sum_{j=1}^J \Re v_n^j + \Re e^{-it\langle \partial_x \rangle}w_n^J\biggr)
    -\mathcal{N} \biggl(\sum_{j=1}^J \Re v_n^j\biggr) \right\} \right\Vert_{L^{6/5}_{t,x}}   = 0 , \\
\label{E:error from parentheses}
\lim_{J \to \infty} \limsup_{n \to \infty} \left\Vert \langle \partial_x \rangle^{s-1/2}  \left\{ \mathcal{N} \biggl(\sum_{j=1}^J \Re v_n^j\biggr) - \sum_{j=1}^J  \mathcal{N} \bigl(\Re v_n^j\bigr) \right\} \right\Vert_{L^{6/5}_{t,x}} = 0.
\end{gather}

For simplicity, we set $A:=\sum_{j=1}^J \Re v_n^j$ and $B:= \Re e^{-it\langle \partial_x \rangle}w_n^J$. Since $\mathcal{N}(u)=\sum_{l=2}^{\infty} \frac{u^{2l+1}}{l!}$, \eqref{E:error from w} can be estimated as follows. 
\begin{align*}
	\left\| \langle \partial_x \rangle^{s-1/2} \bigl\{ \mathcal{N} (A+B) -\mathcal{N} (A) \bigr\} \right\|_{L_{t,x}^{6/5}}
	&= \left\| \sum_{l=2}^{\infty} \frac{1}{l!} \langle \partial_x \rangle^{s-1/2} \{ (A+B)^{2l+1} - A^{2l+1} \} \right\|_{L_{t,x}^{6/5}}
	\\
	& \lesssim \sum_{l=2}^{\infty} \frac{1}{l!} \left\| \langle \partial_x \rangle^{s-1/2} \{ (A+B)^{2l+1} - A^{2l+1} \} \right\|_{L_{t,x}^{6/5}}.
\end{align*}
Since we have
\begin{align*}
	(A+B)^{2l+1} - A^{2l+1} = \sum_{m=1}^{2l+1} \binom{2l+1}{m} A^{2l+1-m}B^{m},
\end{align*}
we get
\begin{align}
\label{eq7.20}
	&\sum_{l=2}^{\infty} \frac{1}{l!} \left\| \langle \partial_x \rangle^{s-1/2} \{ (A+B)^{2l+1} - A^{2l+1} \} \right\|_{L_{t,x}^{6/5}}
	\\ \notag
	& \quad \lesssim \sum_{l=2}^{\infty} \frac{1}{l!} \sum_{m=1}^{2l+1}  \binom{2l+1}{m}  \left\| \langle \partial_x \rangle^{s-1/2} (A^{2l+1-m}B^{m})\right\|_{L_{t,x}^{6/5}} .
\end{align}
If $1 \leq m \leq 4$, then, by the fractional Leibniz rule, we have
\begin{align*}
	\left\| \langle \partial_x \rangle^{s-1/2} (A^{2l+1-m}B^{m})\right\|_{L_{t,x}^{6/5}}
	&\lesssim l \bigg( \left\| \langle \partial_x \rangle^{s-1/2} A\right\|_{L_{t,x}^{6}} 
	\left\|  A\right\|_{L_{t,x}^{\infty}}^{2l-3}
	\left\|  A\right\|_{L_{t,x}^{6}}^{4-m}
	\left\| B\right\|_{L_{t,x}^{6}}^{m}
	\\
	&\quad +\left\| \langle \partial_x \rangle^{s-1/2} B\right\|_{L_{t,x}^{6}} 
	\left\|  A\right\|_{L_{t,x}^{\infty}}^{2l-3 } 
	\left\|  A\right\|_{L_{t,x}^{6}}^{4-m}
	\left\| B\right\|_{L_{t,x}^{6}}^{m-1} \bigg).
\end{align*}
If $5 \leq m \leq 2l$, we have
\begin{align*}
	\left\| \langle \partial_x \rangle^{s-1/2} (A^{2l+1-m}B^{m})\right\|_{L_{t,x}^{6/5}}
	&\lesssim l \bigg( \left\| \langle \partial_x \rangle^{s-1/2} A\right\|_{L_{t,x}^{6}} 
	\left\|  A\right\|_{L_{t,x}^{\infty}}^{2l-m}
	\left\|  B\right\|_{L_{t,x}^{6}}^{4}
	\left\| B\right\|_{L_{t,x}^{\infty}}^{m-4}
	\\
	&\quad +\left\| \langle \partial_x \rangle^{s-1/2} B\right\|_{L_{t,x}^{6}} 
	\left\|  A\right\|_{L_{t,x}^{\infty}}^{2l-m+1 } 
	\left\|  A\right\|_{L_{t,x}^{6}}^{4}
	\left\| B\right\|_{L_{t,x}^{\infty}}^{m-5} \bigg) .
\end{align*}
If $m=2l+1$, we have
\begin{align*}
	\left\| \langle \partial_x \rangle^{s-1/2} (A^{2l+1-m}B^{m})\right\|_{L_{t,x}^{6/5}}
	\lesssim l \left\| \langle \partial_x \rangle^{s-1/2} B \right\|_{L_{t,x}^{6}}
	\left\| B \right\|_{L_{t,x}^{6}}^{4} \left\| B \right\|_{L_{t,x}^{\infty}}^{2l-4}.
\end{align*}
We note that $ \|  \langle \partial_x \rangle^{s-1/2} A \|_{L_{t,x}^{6}}+ \|  \langle \partial_x \rangle^{s-1/2} B \|_{L_{t,x}^{6}} \le C$ holds by \eqref{4-2}, \eqref{E:the dude}, and \eqref{E:vs decoup}, where $C = C (E_c)$ is a positive constant independent of $n$ ant $J$.
Moreover, we also have $\|  A \|_{L_{t}^{\infty}H_{x}^{s}}+\|  B \|_{L_{t}^{\infty}H_{x}^{1}} \le C$.
While $\|  B \|_{L_{t}^{\infty}H_{x}^{1}} \le C$ is trivial, $\|  A \|_{L_{t}^{\infty}H_{x}^{s}} \le C$ is shown later. 
Therefore, \eqref{eq7.20} is estimated as follows. 
\begin{align*}
	&\sum_{l=2}^{\infty} \frac{1}{l!} \sum_{m=1}^{2l+1}  \binom{2l+1}{m}  \left\| \langle \partial_x \rangle^{s-1/2} (A^{2l+1-m}B^{m})\right\|_{L_{t,x}^{6/5}}
	\\
	&\quad \lesssim \sum_{l=2}^{\infty} \frac{1}{l!} \sum_{m=1}^{2l+1}  \binom{2l+1}{m} l C^{2l+1}
	\\
	&\quad  \le \sum_{l=2}^{\infty} \frac{1}{(l-1)!} (2C)^{2l+1}
	\\
	&\quad  = (2C)^3 \sum_{l=1}^{\infty} \frac{1}{l!} (4C^2)^l
	\\
	&\quad  \leq (2C)^3 \exp ( 4C^2).
\end{align*}
Thus, by \eqref{4-2} and the Lebesgue dominated convergence theorem, we obtain
\begin{align*}
	&\left\| \langle \partial_x \rangle^{s-1/2} \bigl\{ \mathcal{N} (A+B) -\mathcal{N} (A) \bigr\} \right\|_{L_{t,x}^{6/5}}
	\\
	&\quad \lesssim \sum_{l=2}^{\infty} \frac{1}{l!} \sum_{m=1}^{2l+1}  \binom{2l+1}{m}  \left\| \langle \partial_x \rangle^{s-1/2} (A^{2l+1-m}B^{m})\right\|_{L_{t,x}^{6/5}}
	\\
	& \quad \to 0
\end{align*}
as $n,J \to \infty$. Thus we obtain \eqref{E:error from w}.

Here, we show $\|A\|_{L_t^\infty H_{x}^s} \le C$.
Note that$A=\sum_{j=1}^J \Re v_n^j$ satisfies the following equation.
\begin{align*}
	(-i\partial_t +\langle \partial_x \rangle)A +  \langle \partial_x \rangle^{-1} \sum_{j=1}^J  \mathcal{N}(\Re v_n^j) =0.
\end{align*}
Since \eqref{E:same energy}, \eqref{E:bounded ST norm}, and \eqref{E:bound nrg sum} yield that $\| \Re v_n^j \|_{L_{t,x}^{6}} \leq \| \langle \partial_x \rangle^{s-1/2}  (\Re v_n^j) \|_{L_{t,x}^{6}} \leq C$ and $\| \Re v_n^j \|_{L_{t,x}^{\infty}} \le \| \Re v_n^j \|_{L_{t}^{\infty}H_x^1} \leq E(v_n^j)^{1/2} \le C$, we apply the Strichartz estimate (Lemma \ref{Str}) and the fractional Leibniz rule to obtain
\begin{align*}
	&\limsup_{n \to \infty} \left\| \langle \partial_x \rangle^{s} \int_{0}^{t} e^{-i(t-s)\langle \partial_x \rangle} \langle \partial_x \rangle^{-1} \sum_{j=1}^J  \mathcal{N}(\Re v_n^j)  ds \right\|_{L_t^{\infty}L_x^2}
	\\
	&\quad  \lesssim \limsup_{n \to \infty}  \left\| \langle \partial_x \rangle^{s-1/2} \sum_{j=1}^{J}  \mathcal{N}(\Re v_n^j) \right\|_{L_{t,x}^{6/5}}
	\\
	&\quad  \lesssim \limsup_{n \to \infty} \sum_{j=1}^{J} \sum_{l=2}^{\infty} \frac{1}{l!}  \left\| \langle \partial_x \rangle^{s-1/2}  (\Re v_n^j)^{2l+1} \right\|_{L_{t,x}^{6/5}}
	\\
	&\quad  \lesssim \limsup_{n \to \infty}  \sum_{j=1}^{J} \sum_{l=2}^{\infty} \frac{1}{l!} (2l+1) 
	\left\| \langle \partial_x \rangle^{s-1/2}  (\Re v_n^j) \right\|_{L_{t,x}^{6}}
	\left\|   \Re v_n^j \right\|_{L_{t,x}^{6}}^4
	\left\|   \Re v_n^j \right\|_{L_{t,x}^{\infty}}^{2l-4}
	\\
	&\quad  \lesssim \limsup_{n \to \infty} \sum_{j=1}^{J} \sum_{l=2}^{\infty} \frac{C^{2l-1}}{(l-1)!} 
	\left\| \langle \partial_x \rangle^{s-1/2}  (\Re v_n^j) \right\|_{L_{t,x}^{6}}^{2}
	\\
	&\quad  \lesssim \sum_{l=2}^{\infty} \frac{C^{2l-1}}{(l-1)!} \sum_{j=1}^{J} 
	\limsup_{n \to \infty} E(\phi_n^j)
	\\
	&\quad  \lesssim C^{3} \sum_{l=1}^{\infty} \frac{C^{2l}}{l!}
	\\
	&\quad  \le C^3 \exp (C^2).
\end{align*}
By \eqref{E:initial H1 ok} and \eqref{E:H1 vn}, we get
\begin{align*}
	&\limsup_{n \to \infty} \Vert A\Vert_{L^{\infty}_tH^{s}_x} 
	\\
	&\lesssim \limsup_{n \to \infty} \left\{\Vert v_n(0) \Vert_{H^{1}_x} + \Vert w_n^J\Vert_{H^{1}_x} + \left\| \langle \partial_x \rangle^{s} \int_{0}^{t} e^{-i(t-s)\langle \partial_x \rangle} \langle \partial_x \rangle^{-1} \sum_{j=1}^J  \mathcal{N}(\Re v_n^j)  ds \right\|_{L_t^{\infty}L_x^2} \right\}
	\\
	&\lesssim_{E_c} 1.
\end{align*}

Next, we consider \eqref{E:error from parentheses}.
We observe that
\begin{align*}
	& \left\Vert \langle \partial_x \rangle^{s-1/2}  \left\{ \mathcal{N} \biggl(\sum_{j=1}^J \Re v_n^j\biggr) - \sum_{j=1}^J  \mathcal{N} \bigl(\Re v_n^j\bigr) \right\} \right\Vert_{L^{6/5}_{t,x}}
	 \\ 
	 & \quad \lesssim  \sum_{l=2}^{\infty} \frac{1}{l!}  \left\Vert \langle \partial_x \rangle^{s-1/2} \left\{ \biggl(\sum_{j=1}^J \Re v_n^j\biggr)^{2l+1} - \sum_{j=1}^J  \bigl(\Re v_n^j\bigr)^{2l+1} \right\} \right\Vert_{L^{6/5}_{t,x}}.
\end{align*}
Since we have
\begin{align*}
	 &\biggl(\sum_{j=1}^J \Re v_n^j\biggr)^{2l+1} - \sum_{j=1}^J  \bigl(\Re v_n^j\bigr)^{2l+1}
	 \\
	 & \quad = \sum_{\substack{0 \le m_j <2l \\ m_1+m_2+\cdots+m_J =2l+1}} \frac{(2l+1)!}{m_1! m_2! \cdots m_J!} \bigl(\Re v_n^1\bigr)^{m_1} \bigl(\Re v_n^2\bigr)^{m_2} \cdots \bigl(\Re v_n^J\bigr)^{m_J}.
 \end{align*}
we get
 \begin{align*}
 	&\sum_{l=2}^{\infty} \frac{1}{l!}  \left\Vert \langle \partial_x \rangle^{s-1/2} \left\{ \biggl(\sum_{j=1}^J \Re v_n^j\biggr)^{2l+1} - \sum_{j=1}^J  \bigl(\Re v_n^j\bigr)^{2l+1} \right\} \right\Vert_{L^{6/5}_{t,x}}
	\\
	&\quad \leq  \sum_{l=2}^{\infty} \frac{1}{l!}  
	\sum_{\substack{0 \le m_j <2l \\ m_1+m_2+\cdots+m_J=2l+1} } \frac{(2l+1)!}{m_1! m_2! \cdots m_J!}
	\left\Vert \langle \partial_x \rangle^{s-1/2} \left\{ \bigl(\Re v_n^1\bigr)^{m_1} \bigl(\Re v_n^2\bigr)^{m_2} \cdots \bigl(\Re v_n^J\bigr)^{m_J} \right\} \right\Vert_{L^{6/5}_{t,x}}.
 \end{align*}

Now, by the fractional Leibniz rule, we have
 \begin{align*}
 	&\left\Vert \langle \partial_x \rangle^{s-1/2} \left\{ \bigl(\Re v_n^1\bigr)^{m_1} \bigl(\Re v_n^2\bigr)^{m_2} \cdots \bigl(\Re v_n^J\bigr)^{m_J} \right\} \right\Vert_{L^{6/5}_{t,x}}
	\\
	&\quad  \lesssim J l C^{2l-1}  \left(\sum_{j \neq k} 
	\left\Vert \langle \partial_x \rangle^{s-1/2}  \bigl(\Re v_n^{j} \cdot \Re v_n^k\bigr) \right\Vert_{L^{3}_{t,x}}
	+\sum_{j \neq k} 
	\left\Vert  \Re v_n^{j} \cdot \Re v_n^k \right\Vert_{L^{3}_{t,x}}\right)
	\\
	&\quad \le J l C^{2l+1}.
\end{align*}
Note that $\| \langle \partial_x \rangle^{s-1/2} (\Re v_n^{j} \cdot \Re v_n^k) \|_{L^{3}_{t,x}}$ and $\| \Re v_n^{j} \cdot \Re v_n^k \|_{L^{3}_{t,x}}$ tend to $0$ as $n \to \infty$ for any $j \neq k$. Indeed, it follows that $\| \Re v_n^{j} \cdot \Re v_n^k \|_{L^{3}_{t,x}} \to 0$ by an approximation argument and, by the interpolation, we have
\begin{align*}
	&\| \langle \partial_x \rangle^{s-1/2} (\Re v_n^{j} \cdot \Re v_n^k) \|_{L^{3}_{t,x}} 
	\\
	&\leq \| \Re v_n^{j} \cdot \Re v_n^k\|_{L^3}^{2(1-s)} \|  \langle \partial_x \rangle^{1/2} (\Re v_n^{j} \cdot \Re v_n^k) \|_{L^3}^{2s-1}
	\\
	&\lesssim  \| \Re v_n^{j} \cdot \Re v_n^k\|_{L^3}^{2(1-s)} 
	\left( \|  \langle \partial_x \rangle^{1/2} \Re v_n^{j} \|_{L^6} \| \Re v_n^k \|_{L^6} + \| \langle \partial_x \rangle^{1/2} \Re v_n^k \|_{L^6} \| \Re v_n^{j}  \|_{L^6} \right)^{2s-1}
	\\
	& \to 0
\end{align*}
since $\| \langle \partial_x \rangle^{1/2} \Re v_n^j \|_{L^6}$, $\| \Re v_n^j \|_{L^6}$ are bounded. 
By the Lebesgue dominated convergence theorem, we obtain for all $J$,
\begin{align}
\label{eq7.21}
	\limsup_{n \to \infty} \left\Vert \langle \partial_x \rangle^{s-1/2}  \left\{ \mathcal{N} \biggl(\sum_{j=1}^J \Re v_n^j\biggr) - \sum_{j=1}^J  \mathcal{N} \bigl(\Re v_n^j\bigr) \right\} \right\Vert_{L^{6/5}_{t,x}} = 0.
\end{align}
Thus, we get \eqref{E:error from parentheses}. 

Finally, we complete the proof of \eqref{E:uniform st bounds} by bounding the $L^{\infty}_tH^{s}_x$ norm.  By the Strichartz inequality, \eqref{E:initial H1 ok},
and then \eqref{E:H1 vn}, \eqref{E:uniform A}, and \eqref{E:nearly nlkg},
\begin{align*}
\limsup_{J \to \infty} & \limsup_{n \to \infty} \Vert V_n^J\Vert_{L^{\infty}_tH^{s}_x} \\
    &\lesssim \limsup_{J \to \infty} \limsup_{n \to \infty} \Bigl\{\Vert v_n(0)\Vert_{H^{1}_x} + C' + \Vert\langle \partial_x \rangle^{s-1/2} E_n^J\Vert_{L^{6/5}_{t,x}} \Bigr\} < \infty,
\end{align*}
where we used that 
\begin{align*}
	&\left\| \langle \partial_x \rangle^{s} \int_{0}^{t} e^{-i(t-s)\langle \partial_x \rangle} \langle \partial_x \rangle^{-1} \mathcal{N}(\Re V_n^J) ds \right\|_{L_t^{\infty}L_x^2}
	\\
	& \quad \lesssim \left\| \langle \partial_x \rangle^{s-1/2}  \mathcal{N}(\Re V_n^J) \right\|_{L_{t,x}^{6/5}}
	\\
	&\quad  \lesssim \sum_{l=2}^{\infty} \frac{1}{l!}  \left\| \langle \partial_x \rangle^{s-1/2}  (\Re V_n^J)^{2l+1} \right\|_{L_{t,x}^{6/5}}
	\\
	&\quad  \lesssim \sum_{l=2}^{\infty} \frac{1}{l!} (2l+1) 
	\left\| \langle \partial_x \rangle^{s-1/2}  (\Re V_n^J) \right\|_{L_{t,x}^{6}}
	\left\|   \Re V_n^J \right\|_{L_{t,x}^{6}}^4
	\left\|   \Re V_n^J \right\|_{L_{t,x}^{\infty}}^{2l-4}
	\\
	&\quad \lesssim \sum_{l=2}^{\infty} \frac{1}{(l-1)!} E_c^{5} C^{2l-4}
	\\
	&\quad \leq C',
\end{align*}
where we used $\| \Re V_n^J \|_{L_{t,x}^{\infty}} \lesssim \| \Re V_n^J \|_{L_{t}^{\infty}H_x^s} \leq \| A \|_{L_{t}^{\infty}H_x^s} + \| B \|_{L_{t}^{\infty}H_x^s} \leq C < \infty$.
This completes the proof of \eqref{E:uniform st bounds} and so also the lemma.
\end{proof}

By Lemma \ref{L:nlpd stability setup}, we may apply the stability theorem (Proposition \ref{prop:stability}) to conclude that in Case II, $v_n$ is defined globally in time and $S_{\R}(v_n) \lesssim_{E_c} 1$ for sufficiently large $n$. This contradicts \eqref{eq:Strichartz_lim} and so Case II cannot occur.  Tracing back, we see that the only possibility is Case I-C, and
so Proposition~\ref{P:palais smale} is proved.

\section{Death of a solition}
In this section, we prove the following theorem (Theorem \ref{thm8-1}), which completes the proof of the main result (Theorem \ref{thm:scat}).
\begin{thm}\label{thm8-1}
There is no non-scattering solution to (\ref{NLKGexp}) with almost periodicity modulo translation and zero momentum.  
\end{thm}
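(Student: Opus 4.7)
The plan is to derive a contradiction from the existence of such a solution via a truncated virial/Morawetz identity, exploiting the compactness afforded by almost periodicity modulo translations together with the defocusing nature of the nonlinearity. Throughout, let $u$ denote a global non-scattering solution to \eqref{NLKGexp} with energy $E_c$, spatial center function $x(t)$, compactness modulus $C(\cdot)$, and $P(u)=0$.

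The first step is to establish that the spatial center is sublinear: $x(t)/t \to 0$ as $|t|\to\infty$. This follows by differentiating a truncated version of $\int_{\R} \chi((x-x(t))/R)(x-x(t))\,|u(t,x)|^2\,dx$, using conservation of momentum, and then using \eqref{E:apmt u}-\eqref{E:apmt uhat} to make the tail contributions arbitrarily small. Combined with $P(u)=0$, this forces $|x(t+h)-x(t)|=o(h)$ uniformly on large intervals, as in the analogous argument in \cite[\S 8]{KSV12}.

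The second and main step is the truncated virial. Fix a smooth odd nondecreasing $\phi\in C^\infty(\R)$ with $\phi(y)=y$ for $|y|\le 1$ and $\phi(y)=\operatorname{sgn}(y)\cdot 2$ for $|y|\ge 2$, and set $\phi_R(t,x):=R\,\phi\bigl((x-x(t))/R\bigr)$. Consider
\[
M_R(t) := -\int_{\R} u_t(t,x)\Bigl[\phi_R(t,x)\,u_x(t,x) + \tfrac12 \partial_x\phi_R(t,x)\,u(t,x)\Bigr]\,dx.
\]
Differentiating in $t$, using the equation \eqref{NLKGexp} and the identity $\Noe'(u)=2\No(u)u/u=2\No(u)$ (so that an integration by parts produces $\Noe(u)$ from $\No(u)u$), one obtains
\[
M_R'(t) = \int_{\R} \partial_x\phi_R \Bigl[\tfrac12 u_x^2 + \widetilde{\mathcal N}(u) + \tfrac12 u^2\Bigr] dx + \mathcal R_R(t),
\]
where $\mathcal R_R(t)$ collects: (i) a term from the $t$-dependence of $x(t)$, bounded by $|\dot x(t)|$ times a uniformly bounded quantity; (ii) a $\phi_R'''$-term of size $O(R^{-2})$ against $u^2$; and (iii) outer pieces supported in $\{|x-x(t)|\ge R\}$ controlled by the almost-periodicity tails. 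All three can be made arbitrarily small uniformly in $t$ by first choosing $R$ large and then invoking the sublinear center estimate. Since $\partial_x\phi_R \ge 0$ everywhere and equals $1$ on $|x-x(t)|\le R$, while almost periodicity and the nontriviality of $u$ give a uniform positive lower bound on $\int_{|x-x(t)|\le R_0}(\tfrac12 u_x^2+\tfrac12 u^2+\Noe(u))\,dx$ for some fixed $R_0$, we conclude
\[
M_R'(t) \ge c_0 > 0 \qquad \text{for all } t\in\R,
\]
once $R$ is chosen sufficiently large.

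On the other hand, Cauchy--Schwarz and the energy bound yield $|M_R(t)|\lesssim R\,E_c$ for all $t$. Integrating the previous inequality on $[0,T]$ therefore gives $c_0 T \le M_R(T)-M_R(0)\lesssim R\, E_c$, which is a contradiction for $T$ large. The principal obstacle is the quantitative control of $\mathcal R_R$, in particular showing that the $\dot x(t)$-driven error can be absorbed; this is where the sublinear-center reduction of Step 1 is essential. The exponential nonlinearity introduces no new difficulty in this scheme, because $\Noe\ge 0$ contributes with the favorable sign in the main term of $M_R'$, while its total integral is controlled by $E_c$ via the energy.
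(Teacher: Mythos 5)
Your overall strategy (localized virial plus the compactness afforded by almost periodicity) is the same as the paper's, but there are two substantive problems: an incorrect coefficient structure in $M_R'$, and a gap in the uniform positivity claim that is precisely what the paper works to close.

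First, the computation of $M_R'$. For the functional $M_R(t) = -\int u_t\bigl[\phi_R u_x + \tfrac12(\partial_x\phi_R)u\bigr]dx$ with $\phi_R$ a fixed (time-independent) weight, a direct calculation using $u_{tt}=u_{xx}-u-\No(u)$ gives
\begin{align*}
M_R'(t) &= \int (\partial_x\phi_R)\,u_x^2\,dx + \tfrac12\int(\partial_x\phi_R)\bigl[\No(u)u - \Noe(u)\bigr]dx - \tfrac14\int(\partial_x^3\phi_R)\,u^2\,dx,
\end{align*}
where I used $\No(u)u_x = \tfrac12\partial_x\Noe(u)$ after an integration by parts. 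Crucially the $-u$ term contributes \emph{zero}: its two integrals $\int u\,\phi_R u_x\,dx$ and $\tfrac12\int(\partial_x\phi_R)u^2\,dx$ cancel each other exactly. Your claimed main term $\int(\partial_x\phi_R)\bigl[\tfrac12 u_x^2 + \Noe(u) + \tfrac12 u^2\bigr]dx$ therefore has the wrong coefficient on $u_x^2$, the wrong nonlinear combination, and — more importantly — an extra $\tfrac12 u^2$ that simply is not there. The disappearance of the mass term is a structural fact of the Klein--Gordon virial, and it matters for what follows.

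Second, the uniform lower bound. With the correct expression, the good terms in $M_R'$ involve only $u_x$ and the nonlinearity, and \emph{not} $u_t$. Your argument asserts ``almost periodicity and the nontriviality of $u$ give a uniform positive lower bound'' on the local concentration of $\tfrac12 u_x^2 + \cdots$, but the ingredients available (Definition~\ref{D:apmt}, Lemma~\ref{lem0.3}, and small-data scattering) only produce a uniform lower bound on $\|u_t(t)\|_{L^2}^2 + \|u_x(t)\|_{L^2}^2$: if both are small then, by Lemma~\ref{lem0.3}, $\|u(t)\|_{L^2}$ is small too and the solution would scatter. Nothing in that reasoning rules out times at which $\|u_x\|_{L^2}$ is small while $\|u_t\|_{L^2}$ carries almost all the energy; at such times your main term in $M_R'$ could be arbitrarily small and the constant $c_0$ does not exist. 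This is exactly the gap the paper closes by introducing the auxiliary quantity $\tfrac{d}{dt}\int u\,u_t\,dx$, whose integration over $[0,t_0]$ couples $\int_0^{t_0}\|u_t\|_{L^2}^2$ to $\int_0^{t_0}\|u_x\|_{L^2}^2$, $\int_0^{t_0}\|u\|_{L^2}^2$ (via Lemma~\ref{lem0.3}), and $\int_0^{t_0}\int\No(u)u$, so that the virial inequality alone (which only controls the $u_x$ and nonlinear parts) still yields a contradiction against the lower bound $\|u_t\|^2+\|u_x\|^2>\delta$. Without that companion estimate your argument does not go through.

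A further, more minor, issue: you translate the cutoff by the center function $x(t)$ and estimate the resulting error by $|\dot x(t)|$ times a bounded quantity, but $x(t)$ from Definition~\ref{D:apmt} is not known to be differentiable, or even continuous, so the formal $\dot x(t)$ term is not defined. The paper avoids this by centering $\phi(\cdot/R)$ at a \emph{fixed} point and using Lemma~\ref{lem0.1} only to guarantee that $|x(t)-x(0)|\le R-C(\eta)$ over a long time window $[0,t_0]$, $t_0\gtrsim R/\eta$, so that the compactly concentrated part of $u$ stays inside the cutoff without ever differentiating $x(t)$.
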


In order to prove the theorem, we need the following lemmas (Lemma \ref{lem0.1} and Lemma \ref{lem0.3}).

\begin{lem}[controlling $x$]
\label{lem0.1}
Let a global solution $u$ be almost periodic modulo translation and its momentum is zero, \textit{i.e.} $P(u)=0$. Then, for sufficiently small $\eta>0$, there exists $R=R(\eta) \gtrsim C(\eta)$ such that 
\[ |x(t)-x(0)| \leq R-C(\eta)\]
for any $t\in [0,t_0]$ where $t_0 > cR/\eta$, where $c$ is a positive constant. 
\end{lem}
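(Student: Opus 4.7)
The proof is the standard ``center of energy'' argument exploiting $P(u)=0$. Momentum conservation, transferred to a truncated integral identity, forces the spatial center $x(t)$ to move slowly; a bootstrap then converts this into the claimed confinement. The main technical tool is the stress-energy tensor $(\mathcal{T}^{\alpha\beta})$ already introduced in the proof of Corollary \ref{cor:boostsol}, which satisfies $\dt \mathcal{T}^{00}+\dx \mathcal{T}^{01}=0$, and the pointwise bound $|\mathcal{T}^{01}|=|u_tu_x|\le \mathcal{T}^{00}$.

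\textbf{Setup.} Fix $\phi\in C_c^\infty(\R)$ with $\phi\equiv 1$ on $|y|\le 1$, $\supp\phi\subset\{|y|\le 2\}$, and define the truncated first moment
\[
M_R(t):=\int_{\R}(x-x(0))\,\phi\!\left(\tfrac{x-x(0)}{R}\right)\mathcal{T}^{00}(t,x)\,dx.
\]
I will work throughout under the bootstrap hypothesis $|x(s)-x(0)|\le R-C(\eta)$ on $[0,t]$, and close it at the end.

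\textbf{Step 1 (Differential bound).} Using $\dt\mathcal{T}^{00}=-\dx\mathcal{T}^{01}$ and integrating by parts gives
\[
\dot M_R(t)=\int_{\R}\psi\!\left(\tfrac{x-x(0)}{R}\right)\mathcal{T}^{01}(t,x)\,dx,\qquad \psi(y):=\phi(y)+y\phi'(y),
\]
where $\psi\equiv 1$ on $|y|\le 1$ and is supported in $|y|\le 2$. Writing $\psi=1-(1-\psi)$ and using $P(u)=\int\mathcal{T}^{01}\,dx=0$, one gets
\[
|\dot M_R(t)|\le \int_{|x-x(0)|\ge R}|\mathcal{T}^{01}|\,dx+\int|\mathcal{T}^{01}|\,|1-\psi|\,dx.
\]
Under the bootstrap hypothesis, the domain of the first integral lies in $\{|x-x(t)|\ge C(\eta)\}$, so by $|\mathcal{T}^{01}|\le\mathcal{T}^{00}$ and \eqref{E:apmt u} we obtain $|\dot M_R(t)|\lesssim \eta$.

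\textbf{Step 2 (Moment controls position).} Split the integrand by $A:=\{|x-x(t)|\le C(\eta)\}$. On $A$ the bootstrap hypothesis gives $|x-x(0)|\le R$, so $\phi\equiv 1$ there; writing $x-x(0)=(x(t)-x(0))+(x-x(t))$ and using \eqref{E:apmt u} for the complement yields
\[
M_R(t)=(x(t)-x(0))\,E(u,u_t)+O\!\left(C(\eta)\,E(u,u_t)+\eta R\right),
\]
and in particular $M_R(0)=O(C(\eta)E+\eta R)$.

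\textbf{Step 3 (Bootstrap).} Combining Steps 1 and 2 on the maximal interval where the bootstrap holds,
\[
|x(t)-x(0)|\,E(u,u_t)\le |M_R(t)-M_R(0)|+|M_R(0)|+O(C(\eta)E+\eta R)\lesssim \eta t+C(\eta)+\eta R.
\]
Choosing $R=R(\eta)$ as a sufficiently large multiple of $C(\eta)/\eta$ (depending on $E(u,u_t)$) makes $C(\eta)+\eta R\le \tfrac{1}{2}(R-C(\eta))\,E(u,u_t)$, and then the estimate above yields the strict inequality $|x(t)-x(0)|<R-C(\eta)$ as long as $t\le cR/\eta$ with $c$ small. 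A standard continuity argument then propagates the bound throughout $[0,t_0]$.

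\textbf{Main obstacle.} The genuinely delicate point is Step 1: justifying the differentiation of $M_R$ and the integration by parts when $\mathcal{T}^{00}$ contains the exponential term $\widetilde{\No}(u)$. This relies on the global $H^1\hookrightarrow L^\infty$ control from almost periodicity to bound $\dt\widetilde{\No}(u)=2\No(u)u_t$ in $L^1_{\rm loc}$, followed by mollification/dominated convergence as in the proof of Corollary \ref{cor:boostsol}\ref{cor:boostsols3}. The second subtlety is bookkeeping: the constants in Steps 1--3 must be tracked carefully so that the smallness in $\eta$ wins over the (large but fixed) factors involving $E(u,u_t)$ when choosing $R(\eta)$ and the time threshold $cR/\eta$.
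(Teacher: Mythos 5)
Your proposal is correct and follows essentially the same route as the paper: the paper's proof works with the truncated first moment $X_R(t)=\int x\,\phi(x/R)\,e_u\,dx$ (with $x(0)=0$ normalized), where $e_u$ is exactly your $\mathcal{T}^{00}$, then uses $P(u)=0$ to reduce $\dot X_R$ to terms supported on $|x|\ge R$ (your Step 1), compares $X_R(t)$ with $|x(t)|E(u,u_t)$ up to $O(C(\eta)E+\eta R)$ errors using the compactness hypothesis (your Step 2), and closes by defining $t_0$ as the escape time (equivalently, your bootstrap). One small quantitative remark: the paper takes $R$ merely a large constant multiple of $C(\eta)$, not of $C(\eta)/\eta$ as you suggest; your stronger choice still satisfies $R\gtrsim C(\eta)$ and yields the desired conclusion, but the more economical $R\sim C(\eta)$ already works once the $E(u,u_t)$-factors are tracked carefully.
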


This lemma means that $|x(t)|=o(|t|)$ as $|t| \to \infty$. 

\begin{proof}
We may assume that $x(0)=0$. Let $R>3C(\eta)$ and 
\[ t_0 := \inf \{ t>0: |x(t)| \leq R-C(\eta)\}. \]
By the finite speed of propagation, we have $t_0>0$ (see the proof of Lemma 7.4 in \cite{IMN11}). Note that we have $|x-x(t)| \geq C(\eta)$ if $|x|\geq R$ since $|x(t)|\leq R-C(\eta)$ for $t \in (0,t_0)$. Let $\phi \in C_{0}^{\infty}(\R)$ be an even function with $0\leq \phi \leq 1$ and
\[ \phi(x)=
\left\{
\begin{array}{ll}
	1 & \text{for } |x| \leq 1,
	\\
	0 & \text{for } |x| \geq 2.
\end{array}
\right. \]
We define 
\[ X_{R} (t):= \int_{\R} x \phi \left( \frac{x}{R}\right) e_u(t,x) dx,\]
where $e_u:= \frac{1}{2}|u|^2+\frac{1}{2}|u_x|^2+\frac{1}{2}|u_t|^2 +\frac{1}{2}\widetilde{\mathcal{N}}(u)$.
Since $u$ satisfies (\ref{NLKGexp}), we have
\begin{align*}
	\frac{d}{dt} X_R(t) 
	&= - \int_\R  \phi \left( \frac{x}{R}\right) u_x u_t dx - \frac{1}{R} \int_\R x  \phi' \left( \frac{x}{R}\right) u_x u_t dx
	\\
	&= \int_\R \left(1- \phi \left( \frac{x}{R}\right) \right) u_x u_t dx - \frac{1}{R} \int_\R x  \phi' \left( \frac{x}{R}\right) u_x u_t dx
	\\
	&= \int_{|x|\geq R} \left(1- \phi \left( \frac{x}{R}\right) \right) u_x u_t dx - \frac{1}{R} \int_{R\leq |x| \leq 2R} x  \phi' \left( \frac{x}{R}\right) u_x u_t dx,
\end{align*}
where we have used $P(u)=0$ in the second equality. This implies that 
\begin{align}
\label{eq0.2}
	\left|  \frac{d}{dt} X_{R}(t) \right|
	\le c \eta,
\end{align}
where $c$ is a positive constant.
By the triangle inequality, we have
\begin{align}
	\label{eq0.3}
	|X_{R}(0)|
	&\leq \int_{|x|\leq C(\eta)} |x| \phi \left( \frac{x}{R}\right) e_u dx +\int_{C(\eta) \leq |x|\leq 2R} |x| \phi \left( \frac{x}{R}\right) e_u dx
	\\
	\notag
	& \leq C(\eta) E(u,u_t) + R \eta.
\end{align}
Moreover, by the triangle inequality, we have
\begin{align*}
	&|X_R(t)|
	\\
	& \geq \left| x(t) \int_{\R} e_u(t) dx \right| - \left| \int_{\R} (x-x(t)) \phi \left( \frac{x}{R}\right) e_u(t) dx  \right|
	\\
	& \quad -|x(t)| \left| \left(1 - \phi \left( \frac{x}{R}\right) \right) \int_{\R} e_u(t) dx \right|
	\\
	& \geq \left| x(t) \int_{\R} e_u(t) dx \right| - \left| \int_{|x-x(t)|<C(\eta)} (x-x(t)) \phi \left( \frac{x}{R}\right) e_u(t) dx  \right|
	\\
	& \quad - \left| \int_{|x-x(t)|\geq C(\eta)} (x-x(t)) \phi \left( \frac{x}{R}\right) e_u(t) dx  \right| -|x(t)| \left| \int_{\R} \left(1 - \phi \left( \frac{x}{R}\right) \right)e_u(t) dx \right|
	\\
	&=:I-I\!\!I-I\!\!I\!\!I-I\!V.
\end{align*}
Then, for $t \in (0,t_0)$, we have
\begin{align*}
	I &= |x(t)| E(u,u_t),
	\\
	I\!\!I & \leq C(\eta) E(u,u_t),
	\\
	I\!\!I\!\!I & \leq \int_{|x-x(t)|\geq C(\eta), |x|\leq 2R} |x-x(t)| \phi \left( \frac{x}{R}\right) e_u(t) dx
	\\
	& \leq \int_{C(\eta) \leq |x-x(t)|\leq 2R+|x(t)|} |x-x(t)| \phi \left( \frac{x}{R}\right) e_u(t) dx
	\\
	& \leq (2R+|x(t)|) \int_{C(\eta) \leq |x-x(t)|} e_u(t) dx
	\\
	& \leq \left( R+ \frac{1}{2}|x(t)| \right) \eta ,
	\\
	I\!V & \leq |x(t)|  \int_{|x|\geq R} e_u(t) dx 
	\\
	& \leq |x(t)|  \int_{|x-x(t)|\geq C(\eta)} e_u(t) dx
	\\
	& \leq \frac{1}{2} |x(t)| \eta.
\end{align*}
Thus, we obtain 
\begin{align}
\label{eq0.4}
	|X_R(t)| \geq |x(t)|\left(E(u,u_t)- \eta \right) -R\eta - C(\eta)E(u,u_t). 
\end{align}
By (\ref{eq0.2}), (\ref{eq0.3}), and (\ref{eq0.4}), for $\tau \in (0,t_0)$, we get
\begin{align*}
	c \eta \tau 
	&\geq \left| \int_{0}^{\tau} \frac{d}{dt} X_{R}(t) dt \right|
	\\
	&\geq |X_{R}(\tau) |- |X_{R}(0)|  
	\\
	& \geq   |x(t)|\left(E(u,u_t)- \eta \right) -R \eta -C(\eta)E(u,u_t) - ( C(\eta) E(u,u_t) + R \eta).
\end{align*}
Letting $\eta<\frac{1}{2}E(u,u_t)$, then we have 
\[ \frac{1}{2} E(u,u_t) |x(t)| \leq c \eta \tau +2R\eta +2C(\eta)E(u,u_t), \]
and thus we get
\[ |x(t)| \leq \frac{2c}{E(u,u_t)} \eta \tau + \frac{4R\eta}{E(u,u_t)} + 4C(\eta)\]
Taking $\tau \to t_0$, then we obtain
\[ R-C(\eta) \leq \frac{2c}{E(u,u_t)} \eta t_0  + \frac{4R\eta}{E(u,u_t)} + 4C(\eta).\]
Letting $\eta<\frac{1}{8}E(u,u_t)$ and $R>20C(\eta)$, we have
\[ \frac{1}{4}R < \frac{1}{2}R-5C(\eta) \leq \frac{2c}{E(u,u_t)} \eta t_0.  \]
This means that 
\[ \frac{E(u,u_t)}{8c} \frac{R}{\eta} \leq t_0.\]
\end{proof}

\begin{lem}
\label{lem0.3}
Let a global solution $u$ be almost periodic modulo translation.
For any $\eps>0$, there exists $C=C(\eps)>0$ such that 
\begin{align*}
	\Vert u(t) \Vert_{L^2}^2 \leq \eps +C(\eps) \Vert u_x(t) \Vert_{L^2}^2
\end{align*}
for any $t \in \R$. 
\end{lem}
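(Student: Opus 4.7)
\smallskip

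\noindent\emph{Proof proposal.} The plan is to separate frequencies at a small threshold $\delta>0$: the high-frequency part is handled for free by $\int_{|\xi|\ge\delta}|\widehat{u}|^2\,d\xi \le \delta^{-2}\|u_x\|_{L^2}^2$, and the task reduces to showing that $\int_{|\xi|<\delta}|\widehat{u}|^2\,d\xi$ can be made arbitrarily small uniformly in $t$ by taking $\delta$ small, using the spatial localization from almost periodicity.

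As a preliminary, nonnegativity of $\widetilde{\mathcal{N}}$ together with the energy identity \eqref{energy} gives the uniform bound $\|u(t)\|_{L^2}^2 \le 2E(u,u_t)$, so $u(t,\cdot)$ lies in a bounded set of $L^2_x$.

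For the low-frequency step, fix $\eta>0$ and let $R:=C(\eta)$ be the compactness modulus from Definition~\ref{D:apmt}. Write $u = \mathbf{1}_{|x-x(t)|\le R}u + \mathbf{1}_{|x-x(t)|>R}u$ and estimate the two contributions to $\int_{|\xi|<\delta}|\widehat{u}|^2\,d\xi$ separately. For the spatial tail, Plancherel on all of $\R$ combined with \eqref{E:apmt u} gives
\[
\int_{|\xi|<\delta}\bigl|\widehat{\mathbf{1}_{|x-x(t)|>R}u}\bigr|^2\,d\xi \;\le\; \bigl\|\mathbf{1}_{|x-x(t)|>R}u\bigr\|_{L^2}^2 \;<\;\eta.
\]
For the localized piece, Cauchy-Schwarz bounds its $L^1$ norm by $(2R)^{1/2}\|u\|_{L^2}$, so Hausdorff-Young yields a pointwise bound $\|\widehat{\mathbf{1}_{|x-x(t)|\le R}u}\|_{L^\infty_\xi}^2 \lesssim R\|u\|_{L^2}^2$, and integrating over $|\xi|<\delta$ produces a contribution of size $\lesssim \delta R \|u\|_{L^2}^2$.

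Collecting all terms yields
\[
\|u(t)\|_{L^2}^2 \;\le\; \delta^{-2}\|u_x(t)\|_{L^2}^2 \;+\; C\delta R\,\|u\|_{L^2}^2 \;+\; 2\eta,
\]
with $C$ absolute. Using the uniform $L^2$ bound from the preliminary step to absorb $\|u\|_{L^2}^2$ into a constant, one then sets $\eta:=\eps/4$ (which pins down $R=C(\eps/4)$) and chooses $\delta$ small enough that the middle term is at most $\eps/2$, reading off $C(\eps):=\delta^{-2}$. The only nonroutine point is controlling the very-low-frequency part of $\widehat{u}$: note that the Fourier-side almost periodicity \eqref{E:apmt uhat} concerns large $|\xi|$ and therefore does not apply here, so the spatial localization \eqref{E:apmt u} combined with the $L^1$-to-$L^\infty$ Fourier bound on the compactly-supported piece is what does the essential work.
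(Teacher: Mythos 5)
Your proof is correct, and the high-level structure (split $\widehat u$ at a small frequency threshold $\delta$, bound the high part by $\delta^{-2}\|u_x\|_{L^2}^2$, and show the low-frequency mass is uniformly small) matches the paper's. Where you genuinely diverge is in how the low-frequency smallness is obtained. The paper's proof simply asserts that almost periodicity gives, for every $\eps>0$, a constant $C(\eps)$ with $\int_{|\xi|<1/\sqrt{C(\eps)}}|\widehat u(t,\xi)|^2\,d\xi<\eps$ uniformly in $t$. That assertion is true (it follows from precompactness of the translated orbit $\{u(t,\cdot+x(t))\}$ in $L^2$, since precompact sets in $L^2$ have uniformly small Fourier mass near $\xi=0$ by a finite-net argument), but it is not literally one of the two conditions \eqref{E:apmt u}, \eqref{E:apmt uhat} of Definition~\ref{D:apmt}; as you correctly flag, \eqref{E:apmt uhat} only controls large $|\xi|$. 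Your route derives the needed low-frequency bound directly from the spatial tightness \eqref{E:apmt u}: decompose $u=\mathbf 1_{|x-x(t)|\le R}\,u+\mathbf 1_{|x-x(t)|>R}\,u$, dispose of the tail by Plancherel, and control the compactly supported piece on $\{|\xi|<\delta\}$ through Cauchy--Schwarz followed by the $L^1\to L^\infty$ Fourier bound, giving a contribution of size $O(\delta R\|u\|_{L^2}^2)$, which is then absorbed via the uniform energy bound. Your version is more elementary and self-contained, and in effect supplies the justification of the step the paper invokes without proof.
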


\begin{proof}
Since $u$ is almost periodic modulo translation, for any $\eps>0$, there exists $C=C(\eps)>0$ such that 
\[ \int_{|\xi| < \frac{1}{\sqrt{C(\eps)}}} |\hat{u}(t,\xi)|^2 d\xi<\eps. \]
By the Plancherel equality, we have
\begin{align*}
	\Vert u(t) \Vert_{L_x^2}^2
	&
	=\Vert \hat{u}(t) \Vert_{L_{\xi}^2}^2
	\\
	&\leq \eps + \int_{|\xi| \geq  \frac{1}{\sqrt{C(\eps)}}} |\hat{u}(t,\xi)|^2 d\xi
	\\
	&\leq \eps + C(\eps) \int_{|\xi| \geq  \frac{1}{\sqrt{C(\eps)}}} |\xi|^2 |\hat{u}(t,\xi)|^2 d\xi
	\\
	&\leq \eps +C(\eps) \Vert u_x(t) \Vert_{L^2}^2.
\end{align*}
\end{proof}

We define the localized virial identity as follows.
\[V_R(t):= \int_{\R} \phi \left( \frac{x}{R}\right)u_t(u+2xu_x ) dx. \]
A direct calculation gives that 
\begin{align*}
	V'_R(t) 
	&= -2\Vert u_x \Vert_{L^2}^2 - \int_{\R} \mathcal{N}(u)udx +\int_{\R} \widetilde{\mathcal{N}}(u)dx 
	\\
	&\quad +2\int_{R} \left( 1- \phi \left( \frac{x}{R}\right)\right) |u_x|^2 dx 
	+ \int_{R} \left( 1- \phi \left( \frac{x}{R}\right)\right) \mathcal{N}(u)u dx 
	\\
	&\quad - \int_{R} \left( 1- \phi \left( \frac{x}{R}\right)\right) \widetilde{\mathcal{N}}(u) dx 
	+\frac{1}{2R^2} \int_{\R}  \phi'' \left( \frac{x}{R}\right) |u|^2 dx
	\\
	&\quad -\int_{\R} \frac{x}{R} \phi' \left( \frac{x}{R}\right) (|u_x |^2 - |u|^2 -\widetilde{\mathcal{N}}(u) + |u_t|^2) dx. 
\end{align*}

\begin{proof}[Proof of Theorem \ref{thm8-1}]
We suppose that there exists a non-scattering solution to (\ref{NLKGexp}) with almost periodicity modulo translation and zero momentum. We may assume that $x(0)=0$ by the translation invariance of the equation (\ref{NLKGexp}). 
Since $3\widetilde{\mathcal{N}} (u) \leq \mathcal{N}(u)u$ and we have Lemma \ref{lem0.1}, we find
\[ V'_R(t) \leq -2\Vert u_x \Vert_{L^2}^2 - \frac{2}{3} \int_{\R} \mathcal{N}(u)udx+\eta, \]
for $t \in (0,t_0)$, where $R$ and $t_0$ satisfy the assumption in Lemma \ref{lem0.1}. Therefore, we get
\begin{align}
	\label{eq0.5}
	V_R(0)-V_R(t_0) 
	&
	=-\int_{0}^{t_0} V'_R(t)dt
	\\ \notag
	& \geq  2 \int_{0}^{t_0} \Vert u_x \Vert_{L^2}^2 dt + \frac{2}{3}\int_{0}^{t_0} \int_{\R} \mathcal{N}(u)udx dt -\eta t_0.
\end{align}
On the other hand, since Lemma \ref{lem0.3} gives us that
\begin{align*}
	 \frac{d}{dt} \int_{\R} u u_t dx
	 &= \Vert u_t \Vert_{L^2}^2 - \Vert u_x \Vert_{L^2}^2 - \Vert u \Vert_{L^2}^2 - \int_{\R} \mathcal{N}(u)udx
	 \\
	 & \geq \Vert u_t \Vert_{L^2}^2 - \Vert u_x \Vert_{L^2}^2 -\eps -C(\eps) \Vert u_x \Vert_{L^2}^2 - \int_{\R} \mathcal{N}(u)udx,
\end{align*}
we obtain 
\begin{align*}
	4E(u,u_t)
	&\geq \int_{0}^{t_0}   \frac{d}{dt} \int_{\R} u u_t dx dt
	\\ \notag
	&\geq  \int_{0}^{t_0}  \Vert u_t \Vert_{L^2}^2 dt  -\eps t_0 -\{C(\eps)+1\}  \int_{0}^{t_0}  \Vert u_x \Vert_{L^2}^2 dt -  \int_{0}^{t_0}  \int_{\R} \mathcal{N}(u)udx dt,
\end{align*}
where $\eps$ is chosen later, which is independent of $\eta$ and $R$.
This implies that 
\begin{align}
\label{eq0.6}
	\{C(\eps)+2\} \int_{0}^{t_0}  \Vert u_x \Vert_{L^2}^2 dt
	&+ \int_{0}^{t_0}  \int_{\R} \mathcal{N}(u)udx dt 
	\\ \notag
	&\geq \int_{0}^{t_0}  \Vert u_t \Vert_{L^2}^2 dt +  \int_{0}^{t_0}  \Vert u_x \Vert_{L^2}^2 dt -\eps t_0 -4E(u,u_t).
\end{align}
Combining (\ref{eq0.5}) with (\ref{eq0.6}), we obtain
\begin{align*}
	\frac{C(\eps)+3}{2}(V_R(0)-V_R(t_0)) 
	& \geq \{C(\eps)+3\} \int_{0}^{t_0}  \Vert u_x \Vert_{L^2}^2 dt+ \frac{C(\eps)+3}{3}  \int_{0}^{t_0}  \int_{\R} \mathcal{N}(u)udx dt
	\\
	& \quad - \frac{C(\eps)+3}{2} \eta t_0
	\\
	&\geq  \{C(\eps)+2\} \int_{0}^{t_0}  \Vert u_x \Vert_{L^2}^2 dt + \int_{0}^{t_0}  \int_{\R} \mathcal{N}(u)udx dt 
	\\
	& \quad - \frac{C(\eps)+3}{2} \eta t_0
	\\
	& \geq \int_{0}^{t_0} ( \Vert u_t \Vert_{L^2}^2 +\Vert u_x \Vert_{L^2}^2 )dt -\eps t_0 -4E(u,u_t) - \frac{C(\eps)+3}{2} \eta t_0.
\end{align*}
We find that there exists a positive constant $\delta$ such that $ \Vert u_t \Vert_{L^2}^2 +\Vert u_x \Vert_{L^2}^2>\delta$ for any $t \in\R$. Indeed, we have $\delta_{\text{scat}}>0$ satisfying that the solution scatters if $\left\Vert u \right\Vert_{H^1}^2 + \left\Vert u_t \right\Vert_{L^2}^2<\delta_{\text{scat}}$ by the small data scattering result. Taking $\eps=\delta_{\text{scat}}/4$  in Lemma \ref{lem0.3} and assuming $\Vert u_t (t)\Vert_{L^2}^2 +\Vert u_x(t) \Vert_{L^2}^2<\min \{ \frac{\delta_{\text{scat}}}{4} ,\frac{\delta_{\text{scat}}}{4C(\delta_{\text{scat}}/4) +1} \}=:\delta$ for some $t$, we find that $u$ scatters by Lemma \ref{lem0.3} and the small data scattering and thus get a contradiction. Thus, there exists a positive constant $\delta$ such that $ \Vert u_t \Vert_{L^2}^2 +\Vert u_x \Vert_{L^2}^2>\delta$ for any $t \in\R$. Fix $\eps=\delta/2$. We get 
\begin{align*}
	\frac{C(\eps)+3}{2}(V_R(0)-V_R(t_0)) 
	& \geq \delta t_0 -\eps t_0 -4E(u,u_t) - \frac{C(\eps)+3}{2} \eta t_0
	\\
	& = \frac{\delta}{2} t_0  -4E(u,u_t) - \frac{C(\delta/2)+3}{2} \eta t_0
	\\
	&\ge C\left\{ \frac{\delta}{2} - \frac{C(\delta/2)+3}{2} \eta \right\}  \frac{R(\eta)}{\eta} -4E(u,u_t).
\end{align*}
On the other hand, we have $V_R(0)-V_R(t_0)  \lesssim R(\eta)$. Therefore, for sufficiently small $\eta$, we obtain a contradiction since 
\[ C_{\delta}R(\eta) < C\left\{ \frac{\delta}{2} - \frac{C(\delta/2)+3}{2} \eta \right\}  \frac{R(\eta)}{\eta} -4E(u,u_t). \] 
\end{proof}

\section*{Acknowledgments}
This work was supported by JSPS KAKENHI Grant Numbers JP15K17571, JP16K17624, JP17J01263.

\end{document}